\newcommand{\vertiii}[1]{{\left\vert\kern-0.25ex\left\vert\kern-0.25ex\left\vert #1 
    \right\vert\kern-0.25ex\right\vert\kern-0.25ex\right\vert}}
\DeclareMathOperator{\re}{{\mathrm{Re}}}
\DeclareMathOperator{\im}{{\mathrm{Im}}}
\def\P{\mathbb{P}}
\def\E{\mathbb{E}}
\def\Z{\mathbb{Z}}
\def\R{\mathbb{R}}
\def\N{\mathbb{N}}
\def\Q{\mathbb{Q}}
\def\11{{\mathbf{1}}}
\def\Im{\im }
\newcommand{\HS}{\mathrm{HS}}
\newcommand{\BA}{\mathrm{BA}}
\newcommand{\WO}{\mathrm{WO}}
\newcommand{\bp}{\mathbf p}
\newcommand{\T}{\mathbb T}
\newcommand{\bE}{\mathbf{E}}
\newcommand{\fd}{{\mathfrak d}}
\newcommand{\bx}{{\bf{x}}}
\newcommand{\bu}{{\bf{u}}}
\newcommand{\bv}{{\bf{v}}}
\newcommand{\bw}{{\bf{w}}}
\newcommand{\C}{\mathbb C}
\newcommand{\cB}{\mathcal B}
\newcommand{\cK}{\mathcal K}
\newcommand{\cL}{\mathcal L}
\newcommand{\cM}{\mathcal M}
\newcommand{\cW}{\mathcal W}
\newcommand{\OK}{\mathcal O_{\mathcal K}}
\newcommand{\lenk}{l_{\mathcal K}}
\renewcommand{\bar}{\overline}
\newcommand{\wt}{\widetilde}
\newcommand{\wh}{\widehat}
\newcommand{\al}{\alpha}
\newcommand{\qq}[1]{\langle{#1}\rangle}
\newcommand{\qqq}[1]{\llbracket{#1}\rrbracket}
\newcommand{\LK}{{L\to n}}
\newcommand{\Zn}{\widetilde{\mathbb Z}_n^d}
\newcommand{\ZL}{{\mathbb Z}_L^d}
\newcommand{\Gc}{{\mathring G}}
\newcommand{\fa}{{\mathfrak a}}
\newcommand{\fc}{{\mathfrak c}}
\newcommand{\Err}{{\mathcal Err}}
\DeclareMathOperator{\OO}{O}
\DeclareMathOperator{\oo}{o}
\newcommand{\sig}{\sigma}
\newcommand{\bsig}{\boldsymbol{\sigma}}
\newcommand{\bfa}{\boldsymbol{a}}
\newcommand{\cut}{\mathrm{Cut}}
\newcommand{\cutL}{(\mathrm{Cut}_L)}
\newcommand{\cutR}{(\mathrm{Cut}_R)}
\newcommand{\qll}[1]{[\![{#1}]\!]}
\newcommand{\rep}[1]{({#1})}
\newcommand{\erre}{\mathfrak d}
\newcommand{\heta}{\mathfrak h_\lambda}
\DeclareMathOperator{\diag}{diag}
\DeclareMathOperator{\tr}{Tr}
\DeclareMathOperator{\var}{Var}
\newcommand{\be}{\begin{equation}}
\newcommand{\ee}{\end{equation}}
\newcommand{\ii}{\mathrm{i}}
\newcommand{\dd}{\mathrm{d}}
\newcommand{\e}{{\varepsilon}}
\newcommand{\cal}{\mathcal}
\newcommand{\nc}{\normalcolor}
\newcommand{\fn}{{\mathfrak n}}
\newcommand{\fm}{{\mathfrak m}}
\newcommand{\ba}{{\mathbf{a}}}
\newcommand{\bfb}{{\mathbf{b}}}
\newcommand{\thn}{\vartheta}
\newcommand{\dthn}{\boldsymbol{\Theta}}
\DeclarePairedDelimiter{\@p}{\lparen}{\rparen}
\DeclarePairedDelimiter{\@br}{\lbrack}{\rbrack}
\DeclarePairedDelimiter{\@avg}{\langle}{\rangle}
\DeclarePairedDelimiter{\@bbr}{\llbracket}{\rrbracket}
\DeclarePairedDelimiter{\@abs}{\lvert}{\rvert}
\DeclarePairedDelimiter{\@norm}{\lVert}{\rVert}
\DeclarePairedDelimiter{\@set}{\{}{\}}
\newcommand{\p}{\@ifstar{\@p}{\@p*}}
\newcommand{\br}{\@ifstar{\@br}{\@br*}}
\newcommand{\avg}{\@ifstar{\@avg}{\@avg*}}
\newcommand{\bbr}{\@ifstar{\@bbr}{\@bbr*}}
\newcommand*{\abs}{\@ifstar{\@abs}{\@abs*}}
\newcommand*{\norm}{\@ifstar{\@norm}{\@norm*}}
\newcommand{\set}{\@ifstar{\@set}{\@set*}}
\newcommand{\Thetagen}{\widehat{\Theta}}
\newcommand{\MMgen}{\widehat{M}}
\newcommand{\Mgen}{\widehat{\mathcal{M}}}
\newcommand{\Kgen}{\widehat{\mathcal{K}}}
\newcommand{\Sigmagen}{\widehat{\Sigma}}
\newcommand{\suchthat}{\:\middle\vert\:}
\DeclareMathOperator{\TSP}{TSP}
\DeclareMathOperator{\slice}{SLICE}
\newcommand{\numberthis}{\addtocounter{equation}{1}\tag{\theequation}}
\theoremstyle{plain} 
\newtheorem{theorem}{Theorem}[section]
\newtheorem*{theorem*}{Theorem}
\newtheorem{lemma}[theorem]{Lemma}
\newtheorem{assumption}[theorem]{Assumption}
\newtheorem*{lemma*}{Lemma}
\newtheorem{corollary}[theorem]{Corollary}
\newtheorem*{corollary*}{Corollary}
\newtheorem*{proposition*}{Proposition}
\newtheorem{claim}[theorem]{Claim}
\newtheorem*{claim*}{Claim}
\newtheorem{notation}[theorem]{Notation}
\newtheorem{definition}[theorem]{Definition}
\newtheorem*{definition*}{Definition}
\theoremstyle{remark}
\newtheorem{example}[theorem]{Example}
\newtheorem*{example*}{Example}
\newtheorem{remark}[theorem]{Remark}
\newtheorem*{remark*}{Remark}
\newtheorem*{remarks*}{Remarks}
\def\@setthanks{\vspace{-\baselineskip}\def\thanks##1{\@par##1\@addpunct.}\thankses}
\numberwithin{equation}{section}
\title{On the localization length of finite-volume random block Schr{\"o}dinger operators}
\author{Steven Khang Truong$^\dagger$}
\author{Fan Yang$^\star$}
\author{Jun Yin$^\ddagger$}
\thanks{$^\dagger$Department of Mathematics, University of California, Los Angeles, \href{mailto:steven@math.ucla.edu}{steven@math.ucla.edu}.}
 \thanks{$^\star$Yau Mathematical Sciences Center, Tsinghua University, and Beijing Institute of Mathematical Sciences and Applications, \href{mailto:fyangmath@mail.tsinghua.edu.cn}{fyangmath@mail.tsinghua.edu.cn}. 
 }
 \thanks{$^\ddagger$Department of Mathematics, University of California, Los Angeles, \href{mailto:jyin@math.ucla.edu}{jyin@math.ucla.edu}. 
}
\begin{document}

\begin{abstract}
We study a general class of \emph{random block Schr{\"o}dinger operators} (RBSOs) in dimensions 1 and 2, which naturally extend the Anderson model by replacing the random potential with a random block potential. Specifically, we focus on two RBSOs---the \emph{block Anderson} and \emph{Wegner orbital} models---defined on the $d$-dimensional torus $(\mathbb Z/L\mathbb Z)^d$. They take the form $H=V + \lambda \Psi$, where $V$ is a block potential with i.i.d.~$W^d\times W^d$ Gaussian diagonal blocks, $\Psi$ describes interactions between neighboring blocks, and $\lambda>0$ is a coupling parameter. We normalize the blocks of $\Psi$ so that each block has a Hilbert-Schmidt norm of the same order as the blocks of $V$.
Assuming $W\ge L^\delta$ for a small constant $\delta>0$ and $\lambda\gg W^{-d/2}$, we establish the following results. In dimension $d=2$, we establish delocalization and quantum unique ergodicity for bulk eigenvectors. 
Combined with the localization result from \cite{Wegner}, which holds under the condition $\lambda\ll W^{-d/2}$, this provides a rigorous proof of the Anderson localization–delocalization transition as $\lambda$ crosses the critical threshold $W^{-d/2}$. 
In dimension $d=1$, we show that the localization length of bulk eigenvectors is at least of order $(W\lambda )^2$, which is believed to be the correct scaling.

\end{abstract}

\maketitle

{
\hypersetup{linkcolor=black}
\tableofcontents
}



\section{Introduction}

Anderson localization-delocalization transition \cite{Anderson} is a fundamental physical phenomenon describing the metal-insulator transition in disordered quantum systems. 
Mathematically, this phenomenon can be studied using the following random Schr{\"o}dinger operator on the $d$-dimensional lattice $\Z^d$: 
\begin{align}\label{Anderson_orig}
H=-\lambda \Delta + V,
\end{align}
where $\Delta$ is the discrete Laplacian on $\Z^d$, $V$ is a random potential with i.i.d.~diagonal entries, and $\lambda>0$ is a coupling parameter that controls the disorder strength---smaller $\lambda$ corresponds to stronger disorder. A key quantity in the Anderson model is the \emph{localization length} $\ell$, which characterizes the typical length scale of the region in which most $L^2$-mass of an eigenvector is concentrated.
In the strong disorder regime (i.e., small $\lambda$), for all dimensions $d\ge 1$, the eigenvectors of $H$ are expected to have finite localization lengths and exhibit exponential decay beyond this scale. 
In the weak disorder regime (i.e., large $\lambda$), it is conjectured that for dimensions $d\ge 3$, mobility edges exist, separating localized and delocalized phases---near the spectral edge, eigenvectors remain localized, whereas within the bulk of the spectrum, they are expected to be delocalized, meaning their localization lengths become infinite.

The localization phenomenon in the one-dimensional (1D) Anderson model has been well understood for a long time; see, for example, \cite{GMP, KunzSou, Carmona1982_Duke, Damanik2002} among many other references. Studying the Anderson model in higher dimensions $d\ge 2$, however, presents significantly greater challenges. In particular, in the two-dimensional (2D) Anderson model, localization is expected to occur at all energies \cite{PRL_Anderson}, with the localization length conjectured to be of the order $\exp(\OO(\lambda^2))$ under weak disorder.
The first rigorous localization result supporting this conjecture was established by Fr{\"o}hlich and Spencer \cite{FroSpen_1983}
through multi-scale analysis. A simpler proof was later provided by Aizenman and Molchanov \cite{Aizenman1993} using a fractional moment method.
Numerous significant results regarding the localization of the Anderson model in dimensions $d\ge 2$ have been proven (see, for instance, \cite{FroSpen_1985, Carmona1987, SimonWolff, Aizenman1994,ASFH2001, Bourgain2005, Germinet2013, DingSmart2020, LiZhang2019}). 
However, the conjecture remains unresolved, as localization has only been confirmed for strong disorder or energies near the spectral edges. Furthermore, the existing literature does not provide any quantitative bounds (in terms of $\lambda$) on the localization length. 
For more comprehensive reviews and related references, we refer readers to \cite{CarLa1990,Kirsch2007, Stolz2011, Spencer_Anderson, Aizenman_book, Hundertmark_book}. 

To understand the Anderson localization-delocalization transition phenomenon, another well-known model has been investigated in the literature: the \emph{random band matrix} (RBM) ensemble \cite{ConJ-Ref1, ConJ-Ref2,fy}, a finite-volume model defined on $d$-dimensional lattices $\ZL:=\{1,2, \cdots, L\}^d$ of linear size $L$. 
The RBM is a Wigner-type random matrix $(H_{xy})_{x,y\in \Z_L^d}$ with non-negligible entries only when $|x-y|\le W$, where $W$ represents the band width. Heuristically, the RBM and the Anderson model are expected to exhibit similar qualitative properties when $\lambda \asymp  W$. 
More precisely, simulations \cite{ConJ-Ref1, ConJ-Ref2, ConJ-Ref4, ConJ-Ref6} and non-rigorous supersymmetric arguments \cite{fy} suggest that the localization length of the 1D RBM should be of order $W^2$, while the localization length of the 2D RBM is conjectured to grow exponentially as $\exp(\OO(W^{2}))$. In particular, when the localization lengths exceed $L$ as $W$ increases, the RBM exhibits a transition from the localized phase to the delocalized phase.
Significant progress has been made in establishing both the localized and delocalized phases of the RBM in all dimensions. For a brief review of relevant references, we direct readers to \cite{PB_review, PartI, CPSS1_4,BandI}. 
As of now, the best upper bound on the localization length for the 1D RBM is of the order $W^4$, as established in a series of works \cite{Sch2009,Wegner,CS1_4,CPSS1_4}. However, the localization result for 2D random band matrices is still missing from the literature, let alone any quantitative upper bounds on the localization length.
Regarding lower bounds, a recent breakthrough \cite{Band1D} establishes the delocalization of 1D RBM under the sharp condition $W\gg L^{1/2}$ on band width, and hence provides a sharp lower bound of order $W^2$ for the localization length. Later, a delocalization result for 2D RBM was proven in \cite{Band2D} when $W\ge L^\e$ for an arbitrary constant $\e>0$, which implies a lower bound $\Omega(W^C)$ on the localization length of the 2D RBM for any large constant $C>0$.

In this paper, we aim to gain a deeper understanding of the localization length of the Anderson model by exploring a natural class of interpolations between the Anderson model and the RBM, called \emph{random block Schr{\"o}dinger operators} (RBSOs), which are defined by replacing the i.i.d.~potential with an i.i.d.~block potential. 
Specifically, we define our finite-volume RBSO, denoted by $H$, on $\ZL$ as follows: decompose $\ZL$ into $n^d$ disjoint boxes, each of side length $W$ (with $L=nW$). Correspondingly, define $V$ as a diagonal block matrix: $V=\diag(V_1,\ldots, V_{n^d})$, where $V_i$'s are i.i.d.~$W^d\times W^d$ random matrices, with indices labeled by the vertices in these boxes. For simplicity, we assume that they are drawn from the Gaussian unitary ensemble (GUE). 
Then, we replace the interaction term $-\Delta$ in \eqref{Anderson_orig} with a block matrix $\Psi$ that introduces interactions between neighboring blocks, thereby defining the random Hamiltonian $H=\lambda \Psi + V$. 
The nearest-neighbor blocks of $\Psi$ can either be deterministic matrices (which can be almost arbitrary; see \Cref{def: BM} (i) below) or independent random matrices. In the former case, we refer to it as the \emph{block Anderson model}, while in the latter, it is known as the celebrated \emph{Wegner orbital model}.  
Our model is inspired by the work of Wegner \cite{Wegner1} and its subsequent developments in \cite{Wegner2, Wegner3}, which proposed using the Wegner orbital model to describe the motion of quantum particles with multiple internal degrees of freedom (referred to as orbits or spin) in a disordered medium. Physically, the particle moves according to the Anderson model on the lattice $\Z_n^d$ of blocks, with its spin either rotating deterministically or randomly as the particle hops between sites on $\Z_n^d$.

We normalize the blocks of $\Psi$ so that each block has a Hilbert-Schmidt (HS) norm of order $W^{d/2}$, which is the same order as the blocks of $V$. 
The localized regime of the block Anderson and Wegner orbital models has been analyzed in various settings under strong disorder \cite{Sch2009,Wegner,MaceraSodin:CMP,Shapiro_JMP,CPSS1_4,CS1_4}. In particular, it was shown in \cite{Wegner} that for all dimensions $d\ge 1$, the block Anderson and Wegner orbital models are localized with a localization length of order $\OO(W)$ when $\lambda\ll W^{-d/2}$, marking the strong disorder regime.

In this paper, we establish a counterpart to this result for the weak disorder regime when $\lambda\gg W^{-d/2}$. More precisely, suppose $W\ge L^\e$ for an arbitrarily small constant $\e>0$ and $\lambda\gg W^{-d/2}$.
We prove that the bulk eigenvectors of $H$ in 2D are delocalized and satisfy a quantum unique ergodicity (QUE) estimate, implying that the localization length of the finite-volume RBSO is equal to $L$. 
In other words, this result provides a lower bound of order $\Omega(W^C)$ on the localization length for any large constant $C>0$. 
For 1D RBSOs, we show that their localization lengths are of order $\Omega((W\lambda)^2 \wedge L)$. In particular, when $(W\lambda)^2\ll L$, our result offers a lower bound on the localization length of order $(W\lambda)^2$. As $\lambda\to 1$, this aligns with the $W^2$ scaling for the localization length of 1D RBM. 
On the other hand, when $W\to 1$ and $\lambda$ is large, we recover the expected $\lambda^2$ scaling for the localization length of the Anderson model. This suggests that $(W\lambda)^2$ should be the correct scaling for our RBSOs. 
To establish this rigorously, it remains to prove an upper bound of order $(W\lambda)^2$ for the localization length, which is currently missing in the literature. 
For Wegner orbital models with $\lambda=1$ (which reduce to the RBM), the best-known upper bound is $\OO(W^4)$, as previously noted. 

Our result for the block Anderson model in 1D builds upon and extends the findings of \cite{QC_YY}. In that work, a localization–delocalization transition for bulk eigenvectors was established under the simpler regime $W=\Omega(L)$, with the critical threshold occurring at $\lambda\asymp W^{-d/2}$. The results of the present paper are consistent with those of \cite{QC_YY} but significantly extend the analysis to the more challenging regime $W\ll L$. (However, the results in \cite{QC_YY} are more comprehensive in certain aspects: they allow for non-Gaussian entries in $V$, include results in the localized regime, and also address bulk eigenvalue statistics.) 
For the Wegner orbital model, we remark that while it can be viewed as an extension of the RBM studied in \cite{Band1D, Band2D}, our focus will be on the small parameter regime with $\lambda\ll 1$. 
This does not cover the settings discussed in \cite{Band1D, Band2D} where $\lambda\asymp 1$. 
In \cite{Band2D}, it was shown that as long as $W$ is slightly greater than 1 (i.e., $W\ge L^\delta$ for a small constant $\delta>0$), the 2D RBM is already delocalized. Our study of 2D RBSOs near the transition threshold when $\lambda =W^{-d/2+\e}$ reveals an even more surprising fact: provided that the interaction strength, measured by \smash{$\|\lambda\Psi\|_{\HS}^2/n^d$}, is slightly greater than \smash{$W^{-d}\cdot (\|V\|_{\HS}^2/n^d)\approx 1$}, 
the system will be delocalized in 2D. 
In particular, for the Wegner orbital model, our result indicates that as long as the variances of the entries in $\lambda\Psi$ are of order $W^{-2d+\e}$ (compared to $W^{-d}$ in RBM), delocalization will occur. 
We note that this phenomenon is fundamentally related---through the $\Theta$-propagator defined in \Cref{def_Theta}, which is the Green's function for a random walk (RW)---to the fact that the RW is only marginally recurrent in 2D.

Finally, it is worth mentioning the delocalization of the Anderson model in dimensions $d\ge 3$, which is one of the most significant challenges in the study of random Schr{\"o}dinger operators. Currently, our understanding of the delocalization conjecture is far more limited than that for localization---almost nonexistent. To the best of our knowledge, the existence of a delocalized regime has only been established for the Anderson model on the Bethe lattice \cite{Bethe_PRL, Bethe_JEMS,Bethe-Anderson}, and not for any finite-dimensional integer lattice $\Z^d$.
For random band matrices in high dimensions, the delocalization of RBM has been proven in a series of papers \cite{BandI, BandII, BandIII} under the conditions $d\ge 7$ and $W\ge L^\e$. This result was recently extended to RBSOs under the same assumptions \cite{RBSO}.
Compared to \cite{RBSO}, the current paper employs a method recently developed in \cite{Band1D}, which is based on a careful analysis of the so-called \emph{loop hierarchy} (as defined in \Cref{subsec:Gloop} below). In contrast, the proof in \cite{RBSO} relies on a complicated diagrammatic expansion method. It appears that the approach in this paper may be challenging to extend to higher dimensions, as is done in \cite{RBSO}, and vice versa. 
The results presented here can be readily extended to cases where the blocks of $V$ are drawn from i.i.d.~Gaussian Orthogonal Ensemble (GOE) or more general Gaussian divisible ensembles, but extending beyond this assumption using the method in this paper seems to be challenging. On the other hand, extending the method in \cite{RBSO} beyond the Gaussian divisible assumption is relatively straightforward.
Unlike the current paper, the results established in \cite{RBSO} are not sharp in several aspects: delocalization was shown when $\lambda\gg W^{-d/4}$, while the critical threshold is conjectured to be $W^{-d/2}$, and the delocalization estimate provided there is non-optimal. 
In this paper, we achieve an (almost) optimal delocalization estimate under a sharp condition on $\lambda$, which has two crucial implications: it offers the first rigorous proof of the Anderson localization-delocalization transition in the literature for a class of 2D random (block) Schr{\"o}dinger operators as the interaction strength varies, and it provides a sharp lower bound on the localization length in 1D.

\subsection{The model}

For definiteness, throughout this paper, we assume that $L=nW$ for some $n,W\in 2\N+1$. Then, we choose the center of the lattice as $0$. However, our results still hold for even $n$ or $W$, as long as we choose a different center for the lattice. Consider a one or two-dimensional lattice in \(\mathbb{Z}^d\), $d\in \{1,2\}$, with $N=L^d$ lattice points, i.e., \(\Z_L^d:=\qll{ -(L-1)/2 , (L-1)/2}^2 \).
Hereafter, for any $a,b\in \R$, we denote $\llbracket a, b\rrbracket: = [a,b]\cap \Z$. 
We will view $\Z_L^d$ as a torus and denote by $\rep{x-y}_L$ the representative of $x-y$ in $\ZL$, i.e.,  
\be\label{representativeL}\rep{x-y}_L:= \left((x-y)+L\Z^d\right)\cap \Z_L^d.\ee
Now, we impose a block structure on $\Z_L^d$ with blocks of side length $W$.

\begin{definition}[Block structure] \label{def: BM2}
For $d\in \{1,2\}$, suppose 
\be
L=nW,\quad N=L^d,
\ee
for some integers $n, W\in 2\N+1$. We divide $\mathbb Z_L^d$ into $n^d$ blocks of linear size $W$, such that the central one is $\qll{ -(W-1)/2, (W-1)/2}^d$. Given any $x\in \Z_L^d$, denote the block containing $x$ by $[x]$. Denote the lattice of blocks $[x]$ by $\Zn$. We will also view $\Zn$ as a torus and denote by $\rep{[x]-[y]}_n$ the representative of $[x]-[y]$ in $\Zn$. For convenience, we will regard $[x]$ both as a vertex of the lattice $\Zn$ and a subset of vertices on the lattice $\Z_L^d$. Denote by $\{x\}$ the representative of $x$ in the block $[0]$ containing $0$, i.e., 
\be\label{eq:xremainder}\{x\}:=(x+W\Z^d)\cap [0] = x - W[x].\ee
Any $x\in \ZL$ can be labeled as $([x],\{x\})$. Correspondingly, we define the tensor product of two vectors $\bu$ and $\bv$ with entries indexed by \smash{$\Zn$} and $[0]$, respectively, as 
\be\label{eq:tensor} \bu\otimes \bv (x):=\bu([x])\bv(\{x\}),\quad x\in \ZL.
\ee
Then, the tensor product of matrices $A_n$ and $A_W$ indexed by $\Zn$ and $[0]$, respectively, is defined through
\be\label{eq:tensor2} A_n\otimes A_W (\bu\otimes \bv) :=(A_n \bu)\otimes (A_W\bv).\ee
\end{definition}

We will use $x\sim y$ to mean that $x$ and $y$ are neighbors on $\ZL$, i.e., $|x-y|=1$. Similarly, $[x]\sim [y]$ means that $[x]$ and $[y]$ are neighbors on \smash{$\Zn$}. 
For definiteness, we use $L^1$-distance in this paper, i.e., $\|x-y\|_L:=\|\rep{x-y}_L\|_1$, which is the (periodic) graph distance on $\ZL$. 
Similarly, we also define the periodic $L^1$-distance $\|\cdot\|_n$ on {$\Zn$}. For simplicity of notations, we will abbreviate 
\begin{align}\label{Japanesebracket} |x-y|\equiv \|x-y\|_L,\ \  &\langle x-y \rangle \equiv \|x-y\|_L + W,\quad x,y \in \ZL, \\
\label{Japanesebracket2} |[x]-[y]|\equiv \|[x]-[y]\|_n,\ \ &\langle [x]-[y] \rangle \equiv \|[x]-[y]\|_n + 1, \quad    [x],[y] \in \Zn.
\end{align}


We are mainly interested in the localization lengths of RBSOs for $\lambda$ near the critical threshold $W^{-d/2}$. Moreover, the delocalization of our model is in principle ``weaker" as $\lambda$ decreases. Hence, for simplicity of presentation, we will assume throughout this paper that $\lambda=\oo(1)$. 
While our proofs could be extended to larger  $\lambda$ without significant difficulties, we do not pursue this direction to maintain clarity. We will now outline the precise assumptions for our model. 

\begin{definition}[Random block Schr{\"o}dinger operators] \label{def: BM}
Fix $d\in\{1,2\}$. Define an $N\times N$ complex Hermitian random block matrix $V$, whose entries are independent Gaussian random variables up to the Hermitian condition $V_{xy}=\overline V_{yx}$. Specifically, the off-diagonal entries of $V$ are complex Gaussian random variables:
\be\label{bandcw0}
V_{xy}\sim {\cal N }_{\C}(0, s_{xy})\quad \text{with}\quad  s_{xy}:=W^{-d} {\bf 1}\left( [x] =[y] \right), \quad \text{for} \ \  x\ne y,
\ee
while the diagonal entries of $V$ are real Gaussian random variables distributed as ${\cal N }_{\R}(0, W^{-d})$. In other words, $V$ is a diagonal block matrix with i.i.d.~GUE blocks. We will refer to $V$ as a ``block potential". Then, we define a general class of random block Schr{\"o}dinger operators of the form  
$\lambda \Psi + V,$ where $\lambda>0$ is the coupling parameter, and $\Psi$ is the interaction Hamiltonian that introduces hoppings between different blocks. For definiteness, we consider the following two models in this paper.  
\medskip

\noindent{(i)} {\bf Block Anderson ($\BA$) model}. Define 
\be\label{eq:H_blocka}
H\equiv H^{\BA} =\lambda \Psi^\BA +V,\ee 
where $\Psi\equiv \Psi^{\BA}$ takes the following form. Let $\Psi|_{[x][y]}$ denote the $([x],[y])$-th block of $\Psi$. If $|[x]-[y]|>1$, we let $\Psi|_{[x][y]}=0$. Otherwise, we set the diagonal blocks as $\Psi|_{[x][x]}=A_0$, and define the off-diagonal blocks as follows: in 1D, 
\be\label{eq:Psi1D}\Psi|_{[x][y]}=\begin{cases}
    A_1 , \ &\text{if}\  [y] = [x]+1\\
    A_1^* , \ &\text{if}\  [y] = [x]-1
\end{cases},\ee 
and in 2D,  
\be\label{eq:Psi2D}\Psi|_{[x][y]}=\begin{cases}
    A_1 , \ &\text{if}\  [y] = [x]+(1,0)\\
    A_1^* , \ &\text{if}\  [y] = [x]-(1,0)
\end{cases},\quad  \Psi|_{[x][y]}=\begin{cases}
    A_2 , \ &\text{if}\  [y] = [x]+(0,1)\\
    A_2^* , \ &\text{if}\  [y] = [x]-(0,1)
\end{cases}.\ee 
Here, $A_0$, $A_1$, and $A_2$ are deterministic $W^d\times W^d$ matrices satisfying $\|A_0\|_{\HS}^2\lesssim W^d$, $\|A_i\|_{\HS}^2\asymp W^{d}$ for $i\in\{1,2\}$, along with the following condition on their operator norms for some constant $\e_A>0$:
\be\label{eq:cond_A12}
\begin{split}
&\heta:=\lambda + \lambda\max_{i=0}^2\|A_i\| \le W^{-\e_A} \, .
\end{split}
\ee
In addition, in 2D, we assume the following symmetry condition for $\Psi$:
\be\label{sym_cond}
A_1,A_2 \text{ are Hermitian},\ \  \quad \text{or}\quad \ \ \Psi \text{ is symmetric}.
\ee
To the best of our knowledge, all random (block) Schr{\"o}dinger operators in the mathematical literature exhibit this parity symmetry shown in \eqref{sym_cond}, whose physical meaning we will explain around \eqref{eq:explainsym}. 

\medskip
\noindent{(ii)} {\bf Wegner orbital ($\WO$) model}. The neighboring blocks of $ \Psi\equiv \Psi^{\WO}$ consist of independent blocks of $W^d\times W^d$ complex Ginibre matrices up to the Hermitian symmetry $\Psi = \Psi^*$. In other words, the entries of $\Psi$ are independent complex Gaussian random variables up to the Hermitian symmetry:
\be\label{bandcw1}
\Psi_{xy}\sim {\cal N }_{\C}(0, s'_{xy}),\quad \text{with}\quad  s'_{xy}:=W^{-d} {\bf 1}\left( [x] \sim [y] \right).
\ee
For the Wegner orbital model, we define 
\be\label{eq:WO}
H\equiv H^{\WO}=(1+2d\lambda^2)^{-1/2}(\lambda\Psi^\WO + V),
\ee
so the total variance of the entries of $H$ within each row and column is equal to 1. 
\end{definition}

The block Anderson model defined above includes the important special cases where $A_0=0$ and $A_1=A_2=I_W$, with $I_W$ being the $W^d\times W^d$ identity matrix, as well as the case where $\Psi=\Delta_L$, with $\Delta_L$ being the discrete Laplacian on $\ZL$. These scenarios cover the block Anderson models considered in \cite{Wegner,RBSO}. Here, we have generalized these special models to permit almost arbitrary deterministic interactions $A_1$ and $A_2$, provided that their Hilbert–Schmidt norms are normalized to order $W^d$, along with the condition \eqref{eq:cond_A12} and the symmetry condition \eqref{sym_cond} in 2D.

The condition \eqref{eq:cond_A12} ensures that $H$ can be viewed as a perturbation of $V$ in terms of eigenvalues, while the symmetry condition \eqref{sym_cond} guarantees parity symmetry of the model at the block level. 
Under the first case in \eqref{sym_cond}, where $A_1$ and $A_2$ are Hermitian matrices, we have \smash{$H|_{[a][b]}=H|_{[b][a]}$} for any \smash{$[a],[b]\in \Zn$}. In this setting, the block Anderson model \eqref{eq:H_blocka} can be interpreted as an Anderson model on the lattice \smash{$\Zn$}, where the particle carries a fiber space of dimension $W^d$, representing internal degrees of freedom such as orbitals or spin. 
The Hermitianity of $A_1$ implies that the transition from the fiber at $[x]$ to that at $[x] + (1,0)$ is equal to the reverse transition, i.e., for any $\{\al\},\{\beta\}\in[0]$ (recall the notation in \eqref{eq:xremainder}), we have: 
\be\label{eq:explainsym}
\Psi|_{[x]([x]+(1,0))}(\{\al\},\{\beta\})=A_1\p{\{\al\},\{\beta\}}=A_1^*\p{\{\al\},\{\beta\}}=\Psi|_{([x]+(1,0))[x]}(\{\al\},\{\beta\}).
\ee 
This yields parity symmetry along the first coordinate direction. Similarly, the Hermitianity of $A_2$ implies parity symmetry along the second coordinate direction.
In contrast, under the second case in \eqref{sym_cond}, the transpose \smash{$H^\top$} has the same distribution as $H$. In this case, the model is interpreted as a spatial model, where each block \smash{$[a]\in \Zn$} corresponds to a real spatial block in $\ZL$. Hence, the transition from site $([x],\{\al\})$ to $([x]+(1,0),\{\beta\})$ is required to be identical to the reverse transition, which leads to another symmetry condition: 
\be\label{eq:explainsym2}
\Psi|_{[x]([x]+(1,0))}(\{\al\},\{\beta\})=A_1\p{\{\al\},\{\beta\}}=A_1^*\p{\{\beta\},\{\al\}}=\Psi|_{([x]+(1,0))[x]}(\{\beta\},\{\al\}).
\ee
This implies that $A_1$ must be a real matrix. Applying the same reasoning to $A_0$ and $A_2$, we see that $\Psi$ must be real symmetric. We refer the reader to \Cref{Fig:BAP} for an illustration of these two settings: the left panel depicts a model where each $[a]$ is associated with a fiber, while the right panel shows a spatial model where each $[a]$ represents a physical block in $\ZL$.




\begin{figure}[h]
\begin{center}
 \scalebox{0.8}{
\begin{tikzpicture}
  \draw (-4, -2) -- (2, -2) -- (4.5, 2) -- (-1.5, 2) -- cycle;
\draw (-2.25, 0) -- (2.75, 0);
\draw (-1, -1.6) -- (1, 1.6);
  \draw[red] (0, 0) -- (0, 3);
  \draw[red] (2.25, 0) -- (2.25, 3);
  \draw[red] (0.8, 1.28) -- (0.8, 4.28);
  \draw[red] (-1.75, 0) -- (-1.75, 3);
  \draw[red] (-0.85, -1.36) -- (-0.85, 1.64);
  \fill (0, 0) circle (1pt) node[below right]{$\br{0}$};
  \fill (2.25, 0) circle (1pt) node[below]{$\p{1, 0}$};
  \fill (0.8, 1.28) circle (1pt) node[right, yshift=-2pt]{$\p{0, 1}$};
  \fill (-1.75, 0) circle (1pt) node[below]{$\p{-1, 0}$};
  \fill (-0.85, -1.36) circle (1pt) node[right, yshift=-2pt]{$\p{0, -1}$};
  \draw[blue, ->, shorten <=2pt, shorten >=2pt] (0, 2.5) to[out=30, in=150]
    node[pos=0.5, above]{$A_1$} (2.25, 2.5);
  \draw[blue, ->, shorten <=2pt, shorten >=2pt] (0, 2.5) to[out=150, in=30]
    node[pos=0.5, above]{$A_1$} (-1.75, 2.5);
  \draw[blue, ->, shorten <=4pt, shorten >=2pt] (0, 2.5) to[out=70, in=220]
    node[pos=0.5, above left]{$A_2$} (0.8, 3.78);
  \draw[blue, ->, shorten <=2pt, shorten >=6pt] (0, 2.5) to[out=210, in=70]
    node[pos=0.5, above left]{$A_2$} (-0.85, 1.14);


  
    
  \draw ++(8, 0.4) +(-2.4, -2.4) -- +(2.4, -2.4) -- +(2.4, 2.4) -- +(-2.4, 2.4) -- cycle;
  \draw ++(8, 0.4) +(-2.4, 0.8) -- +(2.4, 0.8);
  \draw ++(8, 0.4) +(-2.4, -0.8) -- +(2.4, -0.8);
  \draw ++(8, 0.4) +(-0.8, -2.4) -- +(-0.8, 2.4);
  \draw ++(8, 0.4) +(0.8, -2.4) -- +(0.8, 2.4);
  
  \node at (8, 0.4) {$\br{0}$};
  \node at (8, 2.0) {$\p{0, 1}$};
  \node at (8, -1.2) {$\p{0, -1}$};
  \node at (6.4, 0.4) {$\p{-1, 0}$};
  \node at (9.6, 0.4) {$\p{1, 0}$};
  
  \draw[->, blue, shorten <=8pt, shorten >=16pt] (8, 0.4) -- (8, 2.0) node[pos=0.7, right]{$A_2$};
  \draw[->, blue, shorten <=8pt, shorten >=16pt] (8, 0.4) -- (8, -1.2) node[pos=0.7, right]{$A_2$};
  \draw[->, blue, shorten <=8pt, shorten >=16pt] (8, 0.4) -- (6.4, 0.4) node[pos=0.3, above]{$A_1$};
  \draw[->, blue, shorten <=8pt, shorten >=16pt] (8, 0.4) -- (9.6, 0.4) node[pos=0.3, above]{$A_1$};
\end{tikzpicture}
}
\caption{Illustration of the block Anderson model and the two parity symmetry conditions in \eqref{sym_cond}.}\label{Fig:BAP} 
\end{center}
\end{figure}

We note that the parity symmetry condition is only required to refine the technical bound from \eqref{sum_res_2} to the improved version \eqref{sum_res_2_sym} stated below. However, in 1D, these two bounds are equivalent, and thus the symmetry condition \eqref{sym_cond} is not necessary in that case. Aside from the symmetry condition \eqref{sym_cond}, our assumptions on $\Psi$ are nearly as general as possible. To illustrate this, suppose the assumption \eqref{eq:cond_A12} fails. Consider an extreme case in 1D, where $\lambda A_1$ contains only a small number of very large non-zero entries---significantly larger than $\|V\|$, which is of order 1 with high probability---a typical bulk eigenvector of $\Psi$ may become highly concentrated on those few large entries. In such a scenario, the ``perturbation" introduced by $V$ may not induce sufficient delocalization within each block, and as a result, our main conclusion \eqref{eq:delocalmax} can fail to hold. 


\begin{remark} \label{rmk:extension}
Due to the specific focus on random Schr{\"o}dinger operators with nearest-neighbor interactions in the literature, we have restricted our attention in \Cref{def: BM} to RBSOs involving only nearest-neighbor block interactions. However, our results can be readily extended to more general settings that include non-nearest-neighbor (either deterministic or random) block interactions, provided two key structural properties of $\Psi$ are preserved: translation invariance and parity symmetry at the block level.
A direct extension is to allow each block $[a]$ to interact with all blocks within a fixed distance $\le K\in \N$. In this case, all proofs in the current paper remain valid, provided that the blocks of $\Psi$ satisfy the following conditions: HS norm bounded by $\OO(W^{d/2})$, operator norms satisfying \eqref{eq:cond_A12}, and in each coordinate direction, at least one off-diagonal block has HS norm of order $\Omega(W^{d/2})$.
Furthermore, our framework can accommodate long-range interactions, where the HS and operator norms of the blocks $\Psi|_{[a][b]}$ decay faster than \smash{$|[a]-[b]|^{-C}$} for some large constant $C>d$. In such cases, our arguments still apply with minimal modifications, except possibly in the proof of certain deterministic estimates, such as those in \Cref{lem_propTH}.
Additionally, one can consider more intricate models involving non-flat variance profiles for both the block potential $V$ and the interaction $\Psi$ (in the Wegner orbital model). These generalizations can also be handled using the tools developed in this paper; see our later discussion in \Cref{sec:newidea}. 
For clarity and to keep assumptions concise, we do not pursue these extensions in the present work and instead leave them for future research.\end{remark}

\subsection{Overview of the main results}
Before discussing our main results, we first state the \emph{localization} of the block Anderson and Wegner orbital models established in the literature. We define the Green's function (or resolvent) of the Hamiltonian $H$ (for $H\in\{H^\BA,H^\WO\}$) as
\be\label{def_Green}
G(z):=(H-z)^{-1} = \left(\lambda \Psi + V-z\right)^{-1},  \quad z\in \C.
\ee
Since the entries of $H$ have continuous density, $G(z)$ is well-defined almost surely even when $z\in \R$. 
Utilizing the fractional moment method \cite{Aizenman1993}, it was proved in \cite{Wegner} that the fractional moments of the $G$ entries are exponentially localized for small enough $\lambda\ll W^{-d/2}$. More precisely, we consider the models $H$ in \Cref{def: BM}; for the block Anderson model, assume in addition that the interacting matrices are given by $A_1=A_2=I_W$. Fix any $s\in (0,1)$, there exists a constant $c_{s}>0$ such that if $\lambda \le c_{s}W^{-d/2},$ then we have a constant $C_{s}>0$ such that for all $L\ge W \ge 1$ and $z\in \R$,
\be\label{fraction-decay}
\mathbb E |G_{xy}(z)|^s\le  C_{s} W^{ds/2} \left( C_{s} \lambda W^{d/2}\right)^{s|[x]-[y]|}.
\ee
Hence, if we take $\lambda\le c W^{-d/2}$ for a constant $0<c<c_{s}\wedge C_{s}^{-1}$, then the estimate \eqref{fraction-decay} gives an exponential decay of $\mathbb E |G_{xy}(z)|^s$, from which we readily derive the localization of eigenvectors by \cite[Theorem A.1]{ASFH2001}.

In this paper, we give a counterpart of the above localization result. We define the \emph{localization length} of the eigenvectors $\{\bu_k\}$ of $H$ as
\be\label{eq:localization_length}
\ell(\bu_k):= \inf\Big\{0\le \ell\le L: \min_{x_0\in \ZL} \sum_{|x-x_0|\le \ell}|\bu(x)|^2 \ge 1/2 \Big\}.
\ee
We note that there are several reasonable ways to define localization length. For instance, the $1/2$ in \eqref{eq:localization_length} can be replaced with any fixed value $a \in (0,1)$. Another common approach, used in \cite{delocal,BandI}, defines $\ell(\bu_k)$ as the largest scale $\ell$ such that $\min_{x_0}\sum_x |\bu_k(x)|^2\exp\left[(|x-x_0|/\ell)^\gamma\right]=\OO(1)$ for some exponent $\gamma\in (0,1)$. 
For our results, all such definitions yield localization lengths of the same order. Therefore, for definiteness, we adopt the definition \eqref{eq:localization_length} in this paper. Assuming that \smash{$\lambda \gg W^{-d/2}$ and $W \ge L^{\delta}$} for some small constant $\delta > 0$, we establish the following results concerning the localization lengths of bulk eigenvectors:
\begin{itemize}

\item {\bf Localization length} (\Cref{thm:supu}). 
With very high probability, the localization lengths $\ell(\bu_k)$ corresponding to bulk eigenvalues have the following lower bounds: of order $ [(W\lambda)^2\wedge L]^{1-\e}$ in 1D, and of order $L^{1-\e}$ in 2D, for any constant $\e>0$. 

\item {\bf Quantum unique ergodicity} (\Cref{thm:QUE}). 
In 2D, we prove that with probability $1-\oo(1)$, every bulk eigenvector is nearly flat on all scales $\Omega(W)$. This implies that the localization lengths of bulk eigenvectors are indeed of order $L$. The same conclusion holds in 1D if $(W\lambda)^2\gg L$. 
\end{itemize}
The proofs of the above results are based on the following local law and quantum diffusion estimates. 
\begin{itemize}
\item {\bf Local law} (\Cref{thm_locallaw}). Within the bulk of the spectrum, we establish a sharp local law for the Green's function $G(z)$ for $\im z$ down to the scale $N^\e \eta_*$, where
\be\label{eq:defeta*}
\eta_*:=\frac{1}{(W\lambda)^2}\mathbf 1_{d=1}+\frac{1}{N}=\frac{1}{(W\lambda)^2}\mathbf 1_{d=1}+\frac{1}{L^d}.
\ee

\item {\bf Quantum diffusion} (\Cref{thm_diffu}). The evolution of the quantum particle follows a quantum diffusion on scales $\gg W$ and for times $t\gg 1$.

\end{itemize}
We refer readers to \Cref{sec:main} below for more precise statements of these results. Combined with the localization result \eqref{fraction-decay}, our findings establish the {\bf Anderson metal-insulator transition} for finite-volume RBSOs in 2D as $\lambda$ crosses the critical threshold $W^{-d/2}$. In 1D, when $(W\lambda)^2\ll L$, the established lower bound $(W\lambda)^2$ on the localization length is conjectured to be the correct scaling, as previously discussed. 

\subsection{New ideas}\label{sec:newidea}
Our proof of the main results extends the method recently developed in \cite{Band1D} for 1D RBM, which is based on a careful analysis of a system of infinitely many ``self-consistent equations" for the $G$-loops (defined in \Cref{Def:G_loop} below), referred to as the \emph{loop hierarchy} (given by \Cref{lem:SE_basic} below). The core of the proof involves identifying the deterministic limits, called \emph{primitive loops} (defined in \Cref{Def_Ktza} below), for the $G$-loops, which enables us to properly truncate the loop hierarchy and establish sharp estimates for the difference between the $G$-loops and the primitive loops. To extend the method to our setting, particularly for the block Anderson model, we need to develop new tools, which we briefly summarize here.

The core of our proof strategy for the block Anderson model is a new---and arguably more fundamental---tree representation formula for primitive loops. A key feature that distinguishes this new formula from the one developed in \cite{Band1D} for the 1D RBM is that it does not rely on any specific structure in the variance profile. Instead, we derive a pointwise representation of primitive loops, with indices ranging over \(\ZL\), in contrast to the earlier approach in \cite{Band1D}, where the formula depends crucially on the underlying block structure and uses indices in \smash{$\Zn$}. 
Our new formula encompasses both RBM and the RBSOs defined in \Cref{def: BM} as special cases and can be readily extended to more general models beyond our setting. For instance, in a forthcoming work \cite{Band_nonblock}, we extend the results of \cite{Band1D,Band2D} and this paper to RBM with translationally invariant variance profiles on $\ZL$ (which has no block structure), and RBSOs with (almost arbitrary) non-flat variance profiles within each block. 
The foundation of that extension is precisely the primitive loop formula introduced in the present paper. 
In addition, this pointwise representation of the primitive loops offers substantial flexibility. In principle, it could even be applied to the original Anderson model. However, it remains unclear how to effectively exploit the formula in that context, due to the lack of other essential tools.

In the setting of the RBM or the Wegner orbital model, the deterministic limit of the Green's function in \eqref{def_Green} is a scalar matrix $M(z)=m_{sc}(z)I_N$, where $m_{sc}$ denotes the Stieltjes transform of the Wigner semicircle law (see \eqref{self_mWO} below). In contrast, for the block Anderson model, the deterministic limit $M(z)$ is a non-diagonal matrix expressed in terms of the interaction matrix $\lambda\Psi$; see \eqref{def_G0}.
In \cite{Band1D}, the coefficients in the primitive loop representation formulas involve powers of $m_{sc}$ and $\overline m_{sc}$. When extending these formulas to the block Anderson model, these scalar quantities must be replaced by the matrices $M$ and $M^*$. This raises a fundamental challenge: how to incorporate these matrix-valued entries into the tree structure of the representation formula developed in \cite{Band1D}.
Fortunately, we identify a canonical solution by organizing the $M$-entries into properly structured loops, which we refer to as $M$-loops. Although the resulting representation may appear more intricate, it actually clarifies the structure of the primitive loops---even in the RBM and Wegner orbital settings. 
This insight leads to a new proof of Ward's identity for primitive loops, which is both more direct and simpler than the original argument in \cite[Section 3]{Band1D}. While the proof in \cite{Band1D} relied on analyzing the dynamics of primitive loops, our proof uses only the tree representation formula and a few deterministic matrix identities (see \eqref{ward-M}, \eqref{ward-Theta-1}, and \eqref{ward-Theta-2}), which can themselves be seen as certain versions of ``Ward's identity". 
This new approach supports the view that our representation formula reveals a more fundamental underlying structure of the primitive loops.

Finally, there is an additional challenge in establishing the upper bound on the primitive loops, as shown in \Cref{ML:Kbound}. This difficulty arises from the presence of the small coupling parameter $\lambda$, which does not appear in the RBM setting considered in \cite{Band1D,Band2D}.
Since $\lambda\ll 1$, the diffusion of the Green's function in our RBSO model—described by a deterministic propagator (see \Cref{def_Theta})---is significantly slower than in the RBM case. As a result, our propagator satisfies a weaker bound than that in \cite{Band1D,Band2D}, incurring an additional factor of $\lambda^{-2}$.
If one were to apply the methods from \cite{Band1D,Band2D} naively, the resulting estimates for the primitive loops would involve undesirable powers of $\lambda^{-2}$, rendering the bounds too weak for our purposes. In our proof, we show how to recover compensating factors of $\lambda^{2}$ in the application of certain sum-zero properties, which allow us to cancel these problematic $\lambda^{-2}$ terms.
We remark that this mechanism is closely related to the coupling renormalization phenomenon that has recently emerged in the study of higher-dimensional RBSOs \cite{RBSO}. 
The new tree representation formula for primitive loops, along with the proofs of several key properties---including Ward's identity, the sum-zero property, and sharp upper bounds---will be presented in detail in \Cref{sec:prim}.

Another challenge in extending the proof for RBM to the block Anderson model lies in establishing resolvent entry estimates---also known as local laws---for the entries of the Green's function $G$, based on $G$-loop bounds. In \cite{Band1D}, such estimates are derived using standard resolvent identities and large deviation bounds. However, this approach does not apply to the block Anderson model when the interaction matrix $\lambda\Psi$ is non-diagonal.
A natural alternative is the cumulant expansion method, as used in studies of deformed Wigner-type matrices (see, e.g., \cite{He2018,AEK_PTRF,EKS_Forum}). In the existing literature, this method typically employs Ward's identities to control resolvent expressions. However, in our non-mean-field setting where $W\ll L$, these tools are not sufficiently effective.
Instead, we establish (almost) optimal bounds on the \emph{$T$-variables} (defined in \eqref{eq:Tvariable}), which have played a central role in previous works on RBMs (e.g., \cite{delocal,PartI,PartII,Band1D_III,BandI,BandII,BandIII}). In our setting, these bounds serve as a substitute for Ward's identities.
Our goal is to obtain entrywise local laws for the resolvent defined in \eqref{def_Green}. However, under the general assumptions in \Cref{def: BM}, where the matrices $A_i$ (for $i=0,1,2$) are essentially arbitrary, we find it necessary to derive more general \emph{isotropic local laws} to achieve optimal entrywise control. This requires introducing a new class of \emph{generalized $T$-variables} (see \eqref{eq:gen_T}), which extend the original $T$-variables to the isotropic setting.
Our proof of the resolvent entry estimates then consists of two main steps: we bound the generalized $T$-variables using the $G$-loop bounds, and then we control the isotropic resolvent entries via the generalized $T$-variables.

Beyond controlling the resolvent in the $\max$-norm, it is also essential to establish good \emph{decay estimates} for the $G$-entries. In \Cref{appd:MDE}, we implement an iterative procedure based on Gaussian integration by parts to transfer decay from the $G$-loops to the generalized $T$-variables, ultimately yielding the desired entrywise decay of $G$. 
The introduction of generalized $T$-variables, the isotropic local laws, and our new procedure for deriving decay estimates represent new contributions to the study of RBMs and RBSOs. These ideas will be presented in detail in \Cref{appd:MDE}, and we expect them to be useful in future work on more general non-mean-field random matrices and operators, some of which have been discussed in \Cref{rmk:extension}.


The extension of the proof in \cite{Band1D} to 2D RBSOs presents additional challenges. A critical issue involves estimates related to the ``evolution kernels" (defined in \Cref{DefTHUST} below). Both this paper and \cite{Band1D,Band2D} employ a flow argument, considering a flow for $H$ and the spectral parameter $z$. 
Along this flow, we utilize It\^o's formula and Duhamel’s principle to derive an integrated loop hierarchy (see equation \eqref{int_K-LcalE} below), which allows us to transfer the $G$-loop estimates from smaller times to larger times via the evolution kernel. 
However, the $(L^\infty\to L^\infty)$-norm bound (specifically, equation \eqref{sum_res_2}) for the evolution kernel is insufficient in 2D. To address this, we need to explore a CLT-type cancellation mechanism, which is simultaneously demonstrated in \cite{Band2D}. 
Due to the similarity to the argument in \cite{Band2D}, and because this is not the focus of this paper, we choose to postpone the relevant proof to \Cref{sec:add_d=2}. 

With all the new tools mentioned above, our analysis of the loop hierarchy roughly follows the strategy outlined in \cite[Section 5]{Band1D}, albeit with many modifications to the relevant arguments. For reader convenience, we will briefly outline the proof in  \Cref{Sec:Stoflo}, while postponing (almost full) details to the appendix.  

\medskip
\noindent{\bf Organization of the remaining text.} 
In \Cref{sec:main}, we present the main results of this paper. \Cref{sec:tools} introduces several key tools that will be utilized in the proofs, including the flow framework, the $G$-loops, and the primitive loops. 
\Cref{sec:prim} is dedicated to establishing the tree representation for the primitive loops and proving some fundamental properties for them. 
The proof of the main results is presented in \Cref{sec:proof}, and is based on a key theorem, \Cref{lem:main_ind}, which extends the $G$-loop estimates along the flow to progressively larger times $t$. 
The proof of \Cref{lem:main_ind} is the primary focus of \Cref{Sec:Stoflo}, which conducts an analysis of the loop hierarchy for the $G$-loops. 
This analysis crucially relies on a technical lemma---\Cref{lem_GbEXP}---which is established separately in \Cref{appd:MDE}. 
Due to the similarity between our proof of \Cref{lem:main_ind} and the argument in \cite[Section 5]{Band1D}, most technical details in \Cref{Sec:Stoflo} are deferred to \Cref{sec:main_appd}. Lastly, \Cref{appd:deter} contains the proofs of several deterministic estimates in Lemmas \ref{lem:propM} and \ref{lem_propTH}.


To facilitate the presentation, we introduce some necessary notations that will be used in the proof. We will use the set of natural numbers $\N=\{1,2,3,\ldots\}$ and the upper half complex plane $\C_+:=\{z\in \C:\im z>0\}$.  
In this paper, we are interested in the asymptotic regime with $N\to \infty$. When we refer to a constant, it will not depend on $N$ or $\lambda$. Unless otherwise noted, we will use $C$, $D$ etc.~to denote large positive constants, whose values may change from line to line. Similarly, we will use $\e$, $\delta$, $\tau$, $c$, $\fc$, $\fd$ etc.~to denote small positive constants. 
For any two (possibly complex) sequences $a_N$ and $b_N$ depending on $N$, $a_N = \OO(b_N)$, $b_N=\Omega(a_N)$, or $a_N \lesssim b_N$ means that $|a_N| \le C|b_N|$ for some constant $C>0$, whereas $a_N=\oo(b_N)$ or $|a_N|\ll |b_N|$ means that $|a_N| /|b_N| \to 0$ as $N\to \infty$. 
We say that $a_N \asymp b_N$ if $a_N = \OO(b_N)$ and $b_N = \OO(a_N)$. For any $a,b\in\R$, we denote $\llbracket a, b\rrbracket: = [a,b]\cap \Z$, $\qqq{a}:=\qqq{1,a}$, $a\vee b:=\max\{a, b\}$, and $a\wedge b:=\min\{a, b\}$. For an event $\Xi$, we let $\mathbf 1_\Xi$ or $\mathbf 1(\Xi)$ denote its indicator function.  
Given a vector $\mathbf v$, $|\mathbf v|\equiv \|\mathbf v\|_2\equiv \|\mathbf v\|$ denotes the Euclidean norm and $\|\mathbf v\|_p$ denotes the $L^p$-norm. 
Given a matrix $\cal A = (\cal A_{ij})$, $\|\cal A\|$, $\|\cal A\|_{p\to p}$, and $\|\cal A\|_{\infty}\equiv \|\cal A\|_{\max}:=\max_{i,j}|\cal A_{ij}|$ denote the operator (i.e., $L^2\to L^2$) norm,  $L^p\to L^p$ norm (where we allow $p=\infty$), and maximum (i.e., $L^\infty$) norm, respectively. We will use $\cal A_{ij}$ and $ \cal A(i,j)$ interchangeably in this paper. Moreover, we introduce the following simplified notation for trace: $ \left\langle \cal A\right\rangle=\tr (\cal A) .$

\medskip
\noindent{\bf Acknowledgement.}   
Fan Yang is supported in part by the National Key R\&D Program of China (No. 2023YFA1010400). 
We would like to thank Horng-Tzer Yau for fruitful discussions.

\section{Main results} \label{sec:main}

For the RBSOs in \Cref{def: BM}, we denote the eigenvalues of $H$ by $\lambda_1\le \lambda_2\le \cdots\le \lambda_N$ and the corresponding eigenvectors by $\bu_1,\bu_2,\ldots, \bu_N$. Our first main result concerns the localization lengths (as defined in \eqref{eq:localization_length}) of the bulk eigenvectors.  

\begin{theorem}[Localization length]\label{thm:supu}
Consider the RBSOs in \Cref{def: BM} with $d\in\{1,2\}$. Let $\kappa,\erre,\delta\in (0,1)$ be arbitrarily small constants. Assume that $W\ge L^\delta$, and that $\lambda$ satisfies  
\be\label{eq:cond-lambda2}
\lambda \ge W^{-d/2+\fd}.
\ee
Then, the following estimate holds for any constants $\tau, D>0$ when $L$ is sufficiently large:
\begin{align}
	\P\bigg(\sup_{k: |\lambda_k | \leq 2 - \kappa} \|\bu_k\|_\infty^2 \leq \frac{W^\tau}{(W\lambda)^2}\mathbf 1_{d=1}+\frac{W^\tau}{N} \bigg) &\ge 1- L^{-D}. \label{eq:delocalmax}  
\end{align}
As a consequence, the following lower bounds hold with probability $\ge 1-L^{-D}$ for any constants $\tau,D>0$ and large enough $L$: 
\begin{align}
\inf_{k: |\lambda_k | \leq 2 - \kappa} \ell(\bu_k) \ge \br{(W\lambda)^2\wedge L}^{1-\tau} \ \ \text{in} \ \ d=1,\quad \text{and}\quad   \inf_{k: |\lambda_k | \leq 2 - \kappa} \ell(\bu_k) \ge  L^{1-\tau} \ \ \text{in} \ \ d=2.\label{eq:length}  
\end{align}
\end{theorem}

In the delocalized regime, where the localization length is of order $\Omega(L^{1-\oo(1)})$, we can further establish a stronger \emph{quantum unique ergodicity} (QUE) estimate for the bulk eigenvectors of $H$, albeit at the cost of a slightly weaker probability bound. 

\begin{theorem}[Quantum unique ergodicity]\label{thm:QUE}
In the setting of \Cref{thm:supu}, assume in addition that $W\lambda\ge W^\fc N^{1/2}$ for a constant $\fc>0$ when $d=1$. Given a constant $\e\in (0,\fd\wedge \fc)$ (where $\fd$ is the constant in \eqref{eq:cond-lambda2}), define that ${\cal I}_E\equiv {\cal I}_E(\e):=\left\{x: |x-E|\le W^{-\e}\eta_{0}\right\}$, where 
$$ \eta_{0}:=\begin{cases}
{W\lambda}/{N}, & \text{if} \ d=2\\
W\lambda/N^{3/2}, & \text{if} \ d=1\\
\end{cases}.$$ 
Then, for each $d\in\{1,2\}$, there exists a small constant $c$ depending on $\e$, $\fd$, $\fc$ such that the following estimate holds for large enough $L$:
\begin{equation}\label{Meq:QUE}
\sup_{E: |E|\le 2-\kappa}  
\max_{ [a]\in \Zn} \P\bigg(\max_{i,j:\lambda_i, \lambda_j \in {\cal I}_E} 
 \bigg| \sum_{x\in[a]}\overline \bu_i(x)\bu_j(x)-\frac{W^{d}}{N}\delta_{ij}  \bigg|^2 \ge \frac{W^{d-c}}{N} \bigg) \le  W^{-c} \, .
\end{equation}
More generally, for any subset $A\subset \Zn$, we have
\begin{equation}\label{Meq:QUE2}
\sup_{E: |E|<2-\kappa} \mathbb{P}\bigg(\max_{k: \lambda_k\in {\cal I}_E} 
\bigg|\sum_{[a]\in A}\sum_{x\in [a]}\left|\bu_k(x)\right |^2 -\frac{W^d}{N}|A|\bigg| \ge  \frac{W^{d-c}|A|}{N}  \bigg) \le W^{-c}.
\end{equation}  
\end{theorem}

The above QUE estimates \eqref{Meq:QUE} and \eqref{Meq:QUE2} indicate that all bulk eigenvectors in dimensions $1$ (when $(W\lambda )^2\gg L$) and $2$ are asymptotically uniformly distributed (in the sense of $L^2$-mass) across all scales larger than $W$. In particular, this implies that in the delocalized regime, the localization length of every bulk eigenvector is of order $\ell(\bu_k)=\Omega(L)$ with probability $1-\oo(1)$.

To establish the above results, we will derive a sharp local law for the Green's function of $H$, defined as in \eqref{def_Green}. In previous works (see e.g., \cite{LeeSchSteYau2015,Anisotropic,He2018,AEK_PTRF,EKS_Forum} 
for various settings of deformed Wigner-type matrices), it has been shown that if $W\to \infty$, $G(z)$ converges to a deterministic matrix limit $M(z)$ in the sense of local laws if $\im z\gg W^{-d}$ (although the convergence estimate is non-optimal in the literature).

\begin{definition}
\label{defn_Mm}
For the block Anderson model \eqref{eq:H_blocka}, define $m(z)\equiv m_N(z)$ as the unique solution to 
 \be\label{self_m}
\frac{1}{N}\tr   \frac{1}{\lambda \Psi -z- m(z)} = m(z)
\ee
 such that $\im m(z)>0$ for $z\in \C_+$. Then, we define the matrix $M(z)\equiv M_N(z)$ as 
\be\label{def_G0}
M(z):= \frac{1}{\lambda \Psi -z- m(z)}.
 \ee
For the Wegner orbital model \eqref{eq:WO}, we define $m(z)$ and $M(z)$ as
\be\label{self_mWO}
m(z)=m_{sc}(z),\quad M(z):=m_{sc}(z)I_N,\quad \text{with}\quad m_{sc}(z)=\frac{1}{2}\br{-z+\sqrt{z^2-4}},
\ee
where $m_{sc}$ denotes the Stieltjes transform of the Wigner semicircle law.
 \end{definition}

By the block translation symmetry of $\Psi$ for the block Anderson model, $M(z)$ in \eqref{def_G0} satisfies that 
\be\label{eq:averm}
W^{-d}\sum_{x\in[a]}M_{xx}=m, \quad \forall [a]\in \Zn.\ee 
It is known that $m(z)$ is the Stieltjes transform of a probability measure $\mu_{N}$, called the \emph{free convolution of the semicircle law and the empirical measure of $\lambda\Psi$}.  
Under \eqref{eq:cond_A12}, the support of $\mu_{N}$ consists of a single component $[a_\lambda, b_\lambda]$, with $a_\lambda$ and $b_\lambda$ denoting the left and right spectral edges, respectively. 
Since the semicircle law has edges $\pm 2$ and $\|\lambda\Psi\|$ is of order $\oo(1)$ by \eqref{eq:cond_A12}, we have that $|a_\lambda + 2| \vee |b_\lambda-2|=\oo(1)$. As a consequence, we can still take our bulk eigenvalue spectrum as $[-2+\kappa,2-\kappa]$ for a small constant $\kappa>0$, as we have done in the statements of \Cref{thm:supu,thm:QUE}. 
From \eqref{self_m}, we readily see that $m(z)=m_{sc}(z)+\oo(1)$. In fact, we can derive that 
\be\label{eq:expandm} 
m(z,\lambda)=m_{sc}(z) + \left( \frac{m_{sc}(z)^3}{1-m_{sc}(z)^2}\cdot \frac{1}{N}\tr (\Psi^2) \right)\lambda^2 +\oo(\lambda^2).
\ee 
Our next main result gives an (almost) sharp local law for the Green's function $G(z)$.

\begin{theorem}[Local law]\label{thm_locallaw}
In the setting of \Cref{thm:supu}, for any constants $\e, \tau,D>0$, the following events hold with probability $\ge 1-L^{-D}$ for large enough $L$ and  $z=E+\ii\eta$: 
\begin{align}\label{locallaw}
&\bigcap_{W^{\e}\eta_*\le \eta\le 1}\bigcap_{|E|\le 2- \kappa} \left\{ \|G(z) - M(z) \|_{\max}^2  \le \frac{W^\tau}{W^d[\ell(\eta)]^d \eta}\right\} \, , 
\\
\label{locallaw_aver}
&\bigcap_{W^{\e}\eta_*\le \eta\le 1}\bigcap_{|E|\le 2- \kappa}\bigg\{\max_{[a]} \Big|W^{-d}\sum_{x\in [a]}G_{xx}(z) - m(z) \Big| \le \frac{W^\tau}{W^d[\ell(\eta)]^d \eta}\bigg\}\, , 
\end{align}
where $G$, $M$, and $\eta_*$ are defined in \eqref{def_Green}, \eqref{def_G0}, and \eqref{eq:defeta*}, respectively, and $\ell(\eta)$ is defined by 
\be\label{eq:elleta}
\ell(\eta):= \min\big(\lambda \eta^{-1/2}+1, n\big) \, . 
\ee
\end{theorem}

Note \eqref{eq:delocalmax} is an immediate corollary of the local law \eqref{locallaw}. 
\begin{proof}[\bf Proof of \cref{thm:supu}]
For any $\eta>0$, we have the bound $|\bu_k(x)|^2 \le \eta\im G_{xx}(\lambda_k + \ii \eta)$. Then, taking $\eta=W^\tau\eta_*$ and using the local law \eqref{locallaw}, we conclude \eqref{eq:delocalmax}.
Combining \eqref{eq:delocalmax} with Markov's inequality (with respect to the counting measure on $\ZL$), we conclude \eqref{eq:length}. 
\end{proof}


Similar to RBM \cite{BandI,Band1D}, our RBSOs also satisfy the \emph{quantum diffusion conjecture}, which will be used to establish the QUE estimates in \Cref{thm:QUE}. To state it, we first define the relevant matrices $\Theta$ and $S^\pm$. 

\begin{definition}\label{def:projlift}
Define the variance matrix $S=(S_{xy})$ as
\(S_{xy}=\var(H_{xy})\) for $x,y\in \ZL.$ 
For the block Anderson model, we can express $S$ as 
\be\label{eq:SBA}
S\equiv S^{\BA}=I_n \otimes \bE,
\ee
where $\bE$ is the $W^d\times W^d$ matrix with $\bE_{ij}\equiv W^{-d}$; for the Wegner orbital model, we have 
\be\label{eq:SWO}
S\equiv S^{\WO}= \left(1+2d\lambda^2\right)^{-1}\left(I_n  + \lambda^2 \Lambda_n\right)\otimes \bE,
\ee
where $I_n$ and $\Lambda_n$ are respectively the identity and adjacency matrix matrices defined on \smash{$\Zn$}. 
Then, we define the following matrices adopting the notations in \cite{RBSO}:
\be \label{def:Theta}
\Theta: = \frac{1}{1-M^0 S}M^0,\quad S^+=(S^-)^*:= \frac{1}{1-M^+ S}M^+, 
\ee
where the matrices ${M}^0$ and $M^+$ are defined through ${M}^0_{xy}:= |M_{xy}|^2$ and $M^+_{xy}:=M_{xy}M_{yx}$ for $x,y\in \ZL$. 
\end{definition}

\begin{theorem}[Quantum diffusion]\label{thm_diffu}
In the setting of \Cref{thm:supu}, for any constants $\e,\tau, D>0$, the following two events hold with probability $\ge 1-L^{-D}$ for large enough $L$ and  $z=E+\ii\eta$:
\begin{align}
&\bigcap_{W^{\e}\eta_*\le \eta\le 1}\bigcap_{|E|\le 2- \kappa}\bigg\{\max_{[a],[b]} \bigg|\frac{1}{W^{2d}}\sum_{x\in[a],y\in[b]}\left(|G_{xy}|^2 -\Theta_{xy}\right)\bigg| \le \p{\frac{W^\tau}{W^d[\ell(\eta)]^d \eta}}^2\bigg\}\, ,
\label{eq:diffu1}\\
&\bigcap_{W^{\e}\eta_*\le \eta\le 1}\bigcap_{|E|\le 2- \kappa}\bigg\{\max_{[a],[b]} \bigg|\frac{1}{W^{2d}}\sum_{x\in[a],y\in[b]}\left(G_{xy}G_{yx} -S^+_{xy}\right)\bigg| \le \p{\frac{W^\tau}{W^d[\ell(\eta)]^d \eta}}^2\bigg\} \, .
\label{eq:diffu2}
\end{align}
Moreover, stronger bounds hold in the sense of expectation for each $z$ with $|E|\le 2- \kappa$ and $W^{\e}\eta_*\le \eta\le 1$: 
\begin{align}
&\max_{[a],[b]}\bigg|\frac{1}{W^{2d}}\sum_{x\in[a],y\in[b]}\E\left(|G_{xy}|^2 -\Theta_{xy}\right) \bigg| \le \p{\frac{W^\tau}{W^d[\ell(\eta)]^d \eta}}^3,\label{eq:diffuExp1}\\ 
&\max_{[a],[b]}\bigg|\frac{1}{W^{2d}}\sum_{x\in[a],y\in[b]}\E\left(G_{xy}G_{yx} -S^+_{xy}\right)\bigg|\le \p{\frac{W^\tau}{W^d[\ell(\eta)]^d \eta}}^3.\label{eq:diffuExp2}
\end{align}
\end{theorem}

Roughly speaking, the quantum diffusion phenomenon indicates that the long-time behavior of a particle's quantum evolution exhibits diffusive characteristics for $t\gg 1$. In the context of Green's function, this amounts to saying that $|G_{xy}|^2$ can be approximated by the Green's function of a classical random walk on $\ZL$ for $|x-y|\gg W$ and $\eta=t^{-1}\ll 1$. This Green's function of the random walk is represented by our matrix $\Theta$, so the above \Cref{thm_diffu} states that the quantum diffusion conjecture holds for our setting of RBSOs under the condition \eqref{eq:cond-lambda2}, provided we take a local average of $|G_{xy}|^2$ on the scale $W$. A similar quantum diffusion result was established in \cite{RBSO} for RBSOs in high dimensions with $d\ge 7$; however, a proper \emph{self-energy renormalization} must be introduced there to preserve the diffusive behavior.

In the delocalized regime assumed in \Cref{thm:QUE}, utilizing the local laws from \Cref{thm_locallaw}, the delocalization of eigenvectors from \Cref{thm:supu}, and the QUE estimates from \Cref{thm:QUE}, we can show that the eigenvalue statistics of $H$ asymptotically match those of GUE by employing the Green's function comparison argument developed in \cite{BandIII}. This argument is analogous to that used in the proof of \cite[Theorem 1.3]{BandIII}, and a similar approach has also been implemented in \cite{Band1D} for 1D RBM. Therefore, due to length constraints, we do not present it here.

The proof of the main results, \Cref{thm_locallaw} and \Cref{thm_diffu}, will be given in \Cref{sec:proof}, while the proof of \Cref{thm:QUE} will be postponed to \Cref{sec:propagator} after presenting the necessary estimates on $\Theta$ and $S^\pm$. 
For the remainder of this paper, we focus on the block Anderson model. The proof for the Wegner orbital model is simpler, and we will outline the necessary (minor) modifications in \Cref{rmk:ext_Wegner}.


\section{Main tools}\label{sec:tools}

In this section, we introduce several key tools for our proof. It is important to note that the stochastic and deterministic flows defined in \Cref{subsec:flow} are applicable exclusively to the block Anderson model, while the concepts of $G$-loops and primitive loops discussed in \Cref{subsec:Gloop}, along with the definition of the propagators in \Cref{sec:propagator}, are relevant to both models. The flow framework used for the Wegner orbital model is the same as that employed for RBM in \cite{Band1D}, which we will discuss in \Cref{rmk:ext_Wegner} below.

\subsection{Flows}\label{subsec:flow}

Consider the following matrix Brownian motion: 
\begin{align}\label{MBM}
\dd (H_{t})_{xy}=\sqrt{S_{xy}}\dd (B_{t})_{xy}, \quad H_{0}=\lambda\Psi, \quad \forall x,y\in \ZL.
\end{align}
Here, $(B_{t})_{xy}$ are independent complex Brownian motions up to the Hermitian symmetry \smash{$(B_{t})_{xy}=\overline {(B_{t})_{yx}}$},
i.e., $t^{-1/2}B_t$ is an $N\times N$ GUE whose entries have zero mean and unit variance;  
$S=(S_{xy})$ is the variance matrix defined in \Cref{def:projlift}, which is taken as \eqref{eq:SBA} in the following proof since we focus on the block Anderson model. Correspondingly, we define the deterministic flow as follows.  

\begin{definition}[Deterministic flow]\label{def_flow}
Given any $z\in \C_+$, define $m(z)\equiv m(z,\lambda_0)$ as the unique solution to (recall equation \eqref{self_m}) 
 \be\label{self_m0}
\frac{1}{N}\tr   \frac{1}{\lambda_0 \Psi -z- m(z)} = m(z)
\ee
such that $\im m(z)>0$ for $z\in \C_+$. Then, $M(z)\equiv M(z,\lambda_0)$ is defined as in \eqref{def_G0} with $\lambda=\lambda_0$.  
Next, for any parameter $\lambda_0>0$ and $t\in [0,1]$, define $m_t(z) \equiv m_{t}(z,\lambda_0)$ as the unique solution to 
 \be\label{self_mt}
\frac{1}{N}\tr   \frac{1}{\lambda_0 \Psi -z- tm_t(z,\lambda_0)} = m_t(z,\lambda_0)
\ee
such that $\im m_t(z,\lambda_0)>0$ for $z\in \C_+$. Then, we define the matrix flow $M_t(z)\equiv M_t(z,\lambda_0)$ as 
\be\label{def_G0t}
M_t(z,\lambda_0):= \frac{1}{\lambda_0 \Psi -z- tm_t(z,\lambda_0)}.
 \ee
For $E \in \mathbb R$, we denote $m{(E,\lambda_0)}:=m(E+\ii 0_+,\lambda_0)$. We then define the flow $z_t$ by 
\be\label{eq:zt}
  z_t(E,\lambda_0) = E + (1-t) m(E,\lambda_0),\quad t\in [0, 1].  
\ee
Finally, we will denote $z_t(E,\lambda_0)=E_t(E,\lambda_0) + \ii \eta_t(E,\lambda_0)$ with 
\begin{align}\label{eta}
E_t(E,\lambda_0)=E+(1-t)\re m(E,\lambda_0),\quad \eta_t(E,\lambda_0) = (1-t)  \im m(E,\lambda_0).
\end{align}

\end{definition}
Note that under the above flow, we have 
\be\label{eq:mt_stay} m_t(z_t(E,\lambda_0),\lambda_0)\equiv 
m(E,\lambda_0),\quad M_t(z_t(E,\lambda_0),\lambda_0)\equiv M(E,\lambda_0), \quad \forall t\in [0,1].
\ee
In the proof, we will use a slightly different $\lambda_0$ from $\lambda$; see \Cref{zztE} below. Next, we define the stochastic flow for the Green's function with $H_t$ given by \eqref{MBM} and $z_t$ given by \eqref{eq:zt}.

\begin{definition}[Stochastic flow]\label{Def:stoch_flow}
Consider the matrix dynamics $H_t\equiv H_t(\lambda)$  evolving according to \eqref{MBM}. We denote it as $ H_t(\lambda)=V_t+ \lambda\Psi$, where $V_t$ is the matrix Brownian motion with zero initial condition. We denote Green's function of $H_{t}$ as
$$G_t(z,\lambda):=(H_t(\lambda)-z)^{-1},\quad z\in \C_+,$$
and define the resolvent flow as 
\begin{align}
G_{t,E}\equiv G_{t,E}^{(\lambda)}\equiv G_t(z_t(E),\lambda):=(H_{t}(\lambda)-z_t{(E,\lambda)})^{-1}.
\end{align}
By It{\^o}'s formula, $G_{t,E}$ satisfies the following SDE:
\begin{align}\label{eq:SDE_Gt}
\dd G_{t,E}=-G_{t,E}(\dd H_{t})G_{t,E}+G_{t,E}\{\mathcal{S}[G_{t,E}]-m_t(z_t(E,\lambda),\lambda)\}G_{t,E}
\dd t,
\end{align}
where $\mathcal{S}:M_{N}(\C)\to M_{N}(\C)$ is a linear operator defined as
\begin{align}\label{eq:opS}
\mathcal{S}[X]_{ij}:=\delta_{ij}\sum_{k\in \qqq{N}}S_{ik}X_{kk},\quad \text{for}\quad X\in M_{N}(\C).
\end{align}
\end{definition}

For any spectral parameter $z$ in the bulk of the spectrum, we are interested in the resolvent  $G(z)=(H-z)^{-1}$. This can be achieved through the flow by carefully choosing the parameters $E$ and $\lambda$.

\begin{lemma}\label{zztE}
For the block Anderson model, fix any $z\in \mathbb C_+$ with $\im z\in (0, 1]$ and $|\re z|\le  2-\kappa$ for a constant $\kappa>0$. Choosing $t_0$, $E_0$, and $\lambda_0$ as follows: 
\be\label{eq:t0E0}t_0=\frac{\im m(z,\lambda)}{\im m(z,\lambda)+ \im z},\quad E_0= \frac{t_0 \re z -(1-t_0)\re m(z,\lambda)}{\sqrt{t_0}},\quad \lambda_0=\sqrt{t_0}\lambda,\ee
we have that 
 \begin{equation}\label{eq:zztE}
\sqrt{t_0}m(E_0,\lambda_0)=m(z,\lambda), \ \  z_{t_0}(E_0,\lambda_0) = \sqrt{t_0}z, \ \  \sqrt{t_0}M(E_0,\lambda_0)=M(z,\lambda), 
 \end{equation} 
and the following equality in distribution (denoted by ``$\stackrel{d}{=}$"):
   \begin{equation}\label{GtEGz}
    G(z,\lambda) \stackrel{d}{=} \sqrt{t_0} G_{t_0,E_0}^{(\lambda_0)} ,
   \end{equation}
where $G(z,\lambda):=(H-z)^{-1}=(V+\lambda\Psi-z)^{-1}$. Under the choice of parameters in \eqref{eq:t0E0}, we have  
\be\label{eq:E0bulk}
E_t(E_0,\lambda_0) = \frac{(1+t)\sqrt{t_0}}{1+t_0}\re z +\OO(\lambda^2),\quad \eta_t(E_0,\lambda_0) = \frac{1-t}{\sqrt{t_0}}\im m(z,\lambda) . 
\ee
These identities imply that $1-t\asymp \eta_t$ uniformly in $t\in [0,t_0]$ and $|E_t|\le (2-\kappa)\sqrt{t}+\oo(1)$ for $|1-t|=\oo(1)$. In other words, during the flow from $t=0$ to $t_0$, the imaginary part $\eta_t$ decreases from $\eta_0\asymp 1$ to $\eta_{t_0}\asymp 1-t_0 \asymp \im z $, while the spectral parameter $z_t$ stays within the bulk of the limiting spectrum of $H_t$, provided $t\ge 1 - \e$ for a small enough constant $\e>0$ depending on $\kappa$. 
\end{lemma}
\begin{proof}
First, the choice of $t_0$, $E_0$, and $\lambda_0$ in \eqref{eq:t0E0} gives that 
\be\label{eq:zE0}z=t_0^{-1/2}\left[ E_0 + (1-t_0)t_0^{-1/2}m(z,\lambda)\right].\ee
By \eqref{eq:zE0}, $m(E_0,\lambda_0)$ satisfies the self-consistent equation 
\begin{align*}
m(E_0,\lambda_0)&= \frac{1}{N}\tr \frac{1}{\lambda_0\Psi - E_0 - m(E_0,\lambda_0)} =  \frac{1}{N}\tr \frac{1}{\sqrt{t_0}\lambda\Psi - \sqrt{t_0} z + (1-t_0)t_0^{-1/2}m(z,\lambda) - m(E_0,\lambda_0)} . 
\end{align*}
Noticing that $m(E_0,\lambda_0)=t_0^{-1/2}m(z,\lambda)$ satisfies this equation, we obtain the first identity in \eqref{eq:zztE}. Plugging it into \eqref{eq:zE0} yields the second identity in \eqref{eq:zztE}. Using these two identities and $\lambda_0=\sqrt{t_0}\lambda$, we obtain the final identity in \eqref{eq:zztE} by applying \eqref{def_G0t} at $z=z_{t_0}(E_0,\lambda_0)$ with $t=t_0$ and utilizing the identities in \eqref{eq:mt_stay}. 

Next, using \eqref{eq:zztE} and the fact that \smash{$V_{t_0}\stackrel{d}{=}\sqrt{t_0}V$}, we get 
\begin{align*}
    G_{t_0,E_0}^{(\lambda_0)}\stackrel{d}{=}\frac{1}{\sqrt{t_0}V + \lambda_0\Psi - z_{t_0}(E_0,\lambda_0)}=\frac{t_0^{-1/2}}{ V + \lambda \Psi - z} = t_0^{-1/2} G(z,\lambda),
\end{align*}
which concludes \eqref{GtEGz}. The second identity of \eqref{eq:E0bulk} follows immediately from \eqref{eta} and \eqref{eq:zztE}. For the first identity, we use that (recall \eqref{eq:expandm})
$$m(E_0,\lambda_0)=m_{sc}(E_0)+\OO(\lambda_0^2) = \frac{-E_0+\sqrt{E_0^2-4}}{2}+\OO(\lambda^2).$$ 
Together with \eqref{eq:t0E0} and \eqref{eq:zztE}, it gives that 
\be\nonumber
E_0 = \sqrt{t_0}\re z - (1-t_0)\re m(E_0,\lambda_0) = \sqrt{t_0}\re z + \frac{1}{2}(1-t_0)E_0+\OO(\lambda^2) . 
\ee
From this equation, we derive that 
$$ E_0=\frac{2\sqrt{t_0}}{1+t_0}\re z +\OO(\lambda^2).$$
Plugging it into \eqref{eta} yields that 
\begin{align*}
   E_t(E_0,\lambda_0)&= E_0+(1-t)\re m(E_0,\lambda_0)=  E_0 - \frac{1}{2}(1-t)E_0+\OO(\lambda^2) =\frac{(1+t)\sqrt{t_0}}{1+t_0}\re z +\OO(\lambda^2), 
\end{align*}
which concludes the first identity in \eqref{eq:E0bulk}.
\end{proof}

In the proof, we will fix a target spectral parameter $z\in \C_+$ with $|\re z|\le 2-\kappa$ and $\im z \in [N^\fd\eta_*,1]$ for a small constant $\fd>0$. Accordingly, we choose $t_0$, $E_0$, and $\lambda_0$ as specified in \eqref{eq:t0E0}. For clarity, we will denote $\lambda_0$ and $E_0$ simply as $\lambda$ and $E$ throughout the remainder of the proof. Unless we want to emphasize their dependence on $\lambda$ or $E$, we will often omit these variables from various notations---particularly from $m$ and $M$ in \eqref{eq:mt_stay}---whose values remain fixed during the flow. 

\subsection{\texorpdfstring{$G$}{G}-loops and primitive loops}\label{subsec:Gloop}

Our focus will be on the dynamics of $G_{t}\equiv G_{t,E}^{(\lambda)}$ and the corresponding $G$-loops for $t\in [0,t_0]$ with $1-t_0\gtrsim \im z \ge N^\fd\eta_*$.

\begin{definition}[$G$-Loop]\label{Def:G_loop}
For $\sigma\in \{+,-\}$, we denote
 \begin{equation}\nonumber
  G_{t}(\sigma):=\begin{cases}
       (H_t-z_t)^{-1}, \ \ \text{if} \ \  \sigma=+,\\
        (H_t-\bar z_t)^{-1}, \ \ \text{if} \ \ \sigma=-.
   \end{cases}
 \end{equation}
In other words, we denote $G_{t}(+)\equiv G_{t}$ and $G_{t}(-)\equiv G_{t}^*$. Denote by $I_{[a]}$ and $E_{[a]}$, $[a]\in \Zn$, the block identity and rescaled block identity matrices, defined as 
\be\label{def:Ia} (I_{[a]})_{ij}= \delta_{ij}\cdot \mathbf 1_{i\in [a]} , \quad E_{[a]}=W^{-d}I_{[a]}.\ee
For any $\fn\in \N$, for $\bsig=(\sigma_1, \cdots \sigma_\fn)\in \{+,-\}^\fn$ and $\bfa=([a_1], \ldots, [a_\fn])\in (\Zn)^\fn$, we define the $\fn$-$G$ loop by 
\begin{equation}\label{Eq:defGLoop}
    {\cal L}^{(\fn)}_{t, \boldsymbol{\sigma}, \ba}=\left \langle \prod_{i=1}^\fn \left(G_{t}(\sigma_i) E_{[a_i]}\right)\right\rangle .
\end{equation}
Furthermore, we denote (recall \eqref{eq:mt_stay})
\begin{equation}\label{def_mtzk}
\begin{split}
& m (\sigma ):= \begin{cases}
    m(E) \equiv m_t(z_t(E)),  &\text{if} \ \ \sigma  =+ \\
    \bar m(E) \equiv m_t(\bar z_t(E)),  &\text{if} \ \ \sigma = -
\end{cases},\ \ M (\sigma ):= \begin{cases}
    M(E) \equiv M_t(z_t(E)),  &\text{if} \ \ \sigma  =+ \\
    M(E)^* \equiv M_t(\bar z_t(E)),  &\text{if} \ \ \sigma = -
\end{cases},    
\end{split}
\end{equation}
and introduce the ``centered resolvent" as
\begin{equation}\label{Eq:defwtG}
 \Gc_t(\sigma) := G_t(\sigma) -M (\sigma).
\end{equation}
\end{definition}

To derive the loop hierarchy for the $G$-loops, we introduce the following cut-and-glue operations. We refer readers to Figures 1--3 of \cite{Band1D} for illustrations of these operations.

\begin{definition}[Loop operations]\label{Def:oper_loop}
For the $G$-loop in \eqref{Eq:defGLoop}, we define the following operations on it.

\medskip
 
\noindent \emph{(1)} 
 For $k \in \qqq{\fn}$ and $[a]\in \Zn$, we define the first cut-and-glue operator ${\cut}^{[a]}_{k}$ as:
\be\label{eq:cut1}
{\cut}^{[a]}_{k} \circ {\cal L}^{(\fn)}_{t, \boldsymbol{\sigma}, \ba}:= \left \langle \prod_{i<k}  \left(G_{t}(\sigma_i) E_{[a_i]}\right)\left( G_{t}(\sigma_k) E_{[a]} G_{t}(\sigma_k)E_{[a_k]}\right)\prod_{i>k}  \left(G_{t}(\sigma_i) E_{[a_i]}\right)\right\rangle,
\ee
i.e., it is the $(\fn+1)$-$G$ loop obtained by replacing $G_t(\sigma_k)$ with $G_{t}(\sigma_k) E_{[a]} G_{t}(\sigma_k)$. Graphically, the operator \smash{${\cut}^{[a]}_{k}$} cuts the $k$-th edge $G_t(\sigma_k)$ and glues the two new ends with $E_{[a]}$. 
It can also be considered as an operator on the indices $(\boldsymbol{\sigma},\ba)$: 
\begin{align*}
{\cut}^{[a]}_{k} (\boldsymbol{\sigma}, \ba) = \big( &(\sigma_1,\ldots, \sigma_{k-1}, \sigma_k,\sigma_k ,\sigma_{k+1},\ldots, \sigma_\fn ),  ([a_1],\ldots, [a_{k-1}], [a],[a_k],[a_{k+1}],\ldots, [a_\fn] )\big).
\end{align*}
Hence, we will also express \eqref{eq:cut1} as 
    $${\cut}^{[a]}_{k} \circ {\cal L}^{(\fn)}_{t, \boldsymbol{\sigma}, \ba}\equiv 
     {\cal L}^{(\fn+1)}_{t, \;  {\cut}^{[a]}_{k} (\boldsymbol{\sigma}, \ba)}.
    $$

\medskip

 \noindent 
 \emph{(2)} For $k < l \in \qqq{\fn}$, we define the second cut-and-glue operator ${\cutL}^{[a]}_{k,l}$ from the left (``L") of $k$ as: 
 \begin{align}\label{eq:cutL}
 {\cutL}^{[a]}_{k,l} \circ {\cal L}^{(\fn)}_{t, \boldsymbol{\sigma}, \ba}:= \left \langle \prod_{i<k}  \left[G_{t}(\sigma_i) E_{[a_i]}\right]\left( G_{t}(\sigma_k) E_{[a]} G_{t}(\sigma_l)E_{[a_l]}\right)\prod_{i>l}  \left[G_{t}(\sigma_i) E_{[a_i]}\right]\right\rangle, 
 \end{align}
and define the third cut-and-glue operator ${\cutR}^{[a]}_{k,l}$ from the right (``R") of $k$ as: 
 \begin{align}\label{eq:cutR}
 {\cutL}^{[a]}_{k,l} \circ {\cal L}^{(\fn)}_{t, \boldsymbol{\sigma}, \ba}:= \left \langle \prod_{k\le i <l}  \left[G_{t}(\sigma_i) E_{[a_i]}\right]\cdot \left( G_{t}(\sigma_l) E_{[a]}\right)\right\rangle. 
 \end{align}
In other words, the above operators cut the $k$-th and $l$-th edges $G_t(\sigma_k)$ and $G_t(\sigma_l)$ and creates two chains: the left chain to the vertex $[a_k]$ is of length $(\fn+k-l+1)$ and contains the vertex $[a_\fn]$, while the right chain to the vertex $[a_k]$ is of length $(l-k+1)$ and does not contain the vertex $[a_\fn]$.  
Then, \smash{${\cutL}^{[a]}_{k,l}\circ {\cal L}^{(\fn)}_{t, \boldsymbol{\sigma}, \ba}$} (resp.~\smash{${\cutR}^{[a]}_{k,l}\circ {\cal L}^{(\fn)}_{t, \boldsymbol{\sigma}, \ba}$}) gives a $(\fn+k-l+1)$-$G$ loop (resp.~$(l-k+1)$-$G$ loop) obtained by gluing the left chain (resp.~right chain) at the new vertex $[a]$.
Again, we can also consider the two operators to be defined on the indices $(\boldsymbol{\sigma},\ba)$: 
\begin{align*}
    &{\cutL}^{[a]}_{k,l} (\boldsymbol{\sigma}, \ba) = \left((\sigma_1,\ldots, \sigma_k,\sigma_l ,\ldots, \sigma_\fn ),([a_1],\ldots, [a_{k-1}], [a],[a_l],\ldots, [a_\fn] )\right),\\
    &{\cutR}^{[a]}_{k,l} (\boldsymbol{\sigma}, \ba) = \left((\sigma_k,\ldots, \sigma_l),([a_k],\ldots, [a_{l-1}], [a])\right).
\end{align*}
Hence, we will also express \eqref{eq:cutL} and \eqref{eq:cutR} as 
    $${\cutL}^{[a]}_{k,l} \circ {\cal L}^{(\fn)}_{t, \boldsymbol{\sigma}, \ba}\equiv 
     {\cal L}^{(\fn+k-l+1)}_{t, \;  {\cutL}^{[a]}_{k,l} (\boldsymbol{\sigma}, \ba)},\quad {\cutR}^{[a]}_{k} \circ {\cal L}^{(\fn)}_{t, \boldsymbol{\sigma}, \ba}\equiv 
     {\cal L}^{(l-k+1)}_{t, \;  {\cutR}^{[a]}_{k,l} (\boldsymbol{\sigma}, \ba)}.
    $$

\end{definition}

Given a matrix $\cal A$ defined on $ \ZL$, we define its ``projection" $\cal A^{L\to n}$ to $\Zn$ as
\be\nonumber 
\cal A^{\LK}_{[x][y]}:=  W^{-d}\, \sum_{x'\in [x]}\sum_{y'\in [y]}\cal A_{x'y'}. 
\ee
Under this definition, the projections of \eqref{eq:SBA} and \eqref{eq:SWO} are given by
\be\label{eq:SLton}(S^\BA)^{\LK}=I_n,\quad (S^\WO)^{\LK}=(1+2d\lambda^2)^{-1}\left( I_n + \lambda^2\Lambda_n\right).\ee 
For $(x,y)\in (\ZL)^2$, we denote $\partial_{(x,y)}\equiv\partial_{xy}:=\partial_{(H_t)_{xy}}$. 
Then, by It\^o's formula and equation \eqref{eq:SDE_Gt}, it is easy to derive the following SDE satisfied by the $G$-loops. 

\begin{lemma}[Loop hierarchy] \label{lem:SE_basic}
An $\fn$-$G$ loop satisfies the following SDE, called the {\bf loop hierarchy}:   
\begin{align}\label{eq:mainStoflow}
    \dd \mathcal{L}^{(\fn)}_{t, \boldsymbol{\sigma}, \ba} &=\dd  
    \mathcal{B}^{(\fn)}_{t, \boldsymbol{\sigma}, \ba} 
    +\mathcal{W}^{(\fn)}_{t, \boldsymbol{\sigma}, \ba} \dd t 
    +  W^d\sum_{1 \le k < l \le \fn} \sum_{[a], [b]} \left( \cutL^{[a]}_{k, l} \circ \mathcal{L}^{(\fn)}_{t, \boldsymbol{\sigma}, \ba} \right) S^{\LK}_{[a][b]} \left( \cutR^{[b]}_{k, l} \circ \mathcal{L}^{(\fn)}_{t, \boldsymbol{\sigma}, \ba} \right) \dd t, 
\end{align}
 where the martingale term $\mathcal{B}^{(\fn)}_{t, \boldsymbol{\sigma}, \ba}$ and the term $\mathcal{W}^{(\fn)}_{t, \boldsymbol{\sigma}, \ba}$ are defined by 
 \begin{align} \label{def_Edif}
\dd\mathcal{B}^{(\fn)}_{t, \boldsymbol{\sigma}, \ba} :  = & \sum_{x,y\in \ZL} 
  \left( \partial_{xy}  {\cal L}^{(\fn)}_{t, \boldsymbol{\sigma}, \ba}  \right)
 \cdot \sqrt{S _{xy}}
  \left(\dd B_t\right)_{xy}, \\\label{def_EwtG}
\mathcal{W}^{(\fn)}_{t, \boldsymbol{\sigma}, \ba}: = &  {W}^d \sum_{k=1}^\fn \sum_{[a], [b]\in \Zn} \; 
 \left \langle \Gc_t(\sigma_k) E_{[a]} \right\rangle
   S^{\LK}_{[a][b]} 
  \left( {\cut}^{[b]}_{k} \circ {\cal L}^{(\fn)}_{t, \boldsymbol{\sigma}, \ba} \right) . 
\end{align}
\end{lemma}
  
We will see that this loop hierarchy is well-approximated by the primitive loops, defined as follows. 

\begin{definition}[Primitive loops]\label{Def_Ktza}
We define the primitive loop of length 1 as: 
$${\cal K}^{(1)}_{t,\sigma,[a]}:=m(\sigma),\quad \forall t\in [0,1],\ \sigma\in \{+,-\}.$$ 
For $\fn\ge 2$, we define the function ${\cal K}^{(\fn)}_{t, \boldsymbol{\sigma}, \ba}$ (of $t\in[0,1]$, $\bsig\in \{+,-\}^\fn$, and $\bfa\in (\Zn)^\fn$) to be the unique solution to the following system of differential equation: 
 \begin{align}\label{pro_dyncalK}
       \partial_t {\cal K}^{(\fn)}_{t, \boldsymbol{\sigma}, \ba} 
       =  
       W^d \sum_{1\le k < l \le \fn} \sum_{[a], [b]} \left( \cutL^{[a]}_{k, l} \circ \mathcal{K}^{(\fn)}_{t, \boldsymbol{\sigma}, \ba} \right) S^{\LK}_{[a][b]} \left( \cutR^{[b]}_{k, l} \circ \mathcal{K}^{(\fn)}_{t, \boldsymbol{\sigma}, \ba} \right) ,
    \end{align}
with the following initial condition at $t=0$: 
\be\label{eq:initial_K}
{\cal K}^{(k)}_{0, \boldsymbol{\sigma}, \ba} =  {\cal M}^{(k)}_{\boldsymbol{\sigma}, \ba} ,\quad \forall k\in \N, \ \bsig\in \{+,-\}^k,\ \bfa\in (\Zn)^k,\ee
where ${\cal M}^{(k)}_{\boldsymbol{\sigma}, \ba}$ is a $k$-$M$ loop defined as
\be\label{eq:KMloop} {\cal M}^{(k)}_{\boldsymbol{\sigma}, \ba}:=\left \langle \prod_{i=1}^k \left(M(\sigma_i) E_{[a_i]}\right)\right\rangle .\ee
In equation \eqref{pro_dyncalK}, the operators $\cutL$ and $\cutR$ act on ${\cal K}^{(\fn)}_{t, \boldsymbol{\sigma}, \ba}$ through the actions on indices, that is, 
  \be\label{calGonIND}
     {\cutL}^{[a]}_{k,l}  \circ {\cal K}^{(\fn)}_{t, \boldsymbol{\sigma}, \ba} := {\cal K}^{(\fn+k-l+1)}_{t,  {\cutL}^{[a]}_{k,l}  (\boldsymbol{\sigma}, \ba)} , 
     \quad  {\cutR}^{[b]}_{k,l}  \circ {\cal K}^{(\fn)}_{t, \boldsymbol{\sigma}, \ba} := {\cal K}^{(l-k+1)}_{t,  {\cutR}^{[b]}_{k,l}  (\boldsymbol{\sigma}, \ba)}.  
       \ee
    We will call ${\cal K}^{(\fn)}_{t, \boldsymbol{\sigma}, \ba}$ a primitive loop of length $\fn$ or a $\fn$-$\cK$ loop.  
\end{definition}

In equation \eqref{eq:mainStoflow}, the derivative $ \partial_{ij}  {\cal L}^{(\fn)}$ in $\mathcal{B}^{(\fn)}$ is a product of $(\fn+1)$-resolvent entries, while the term $\mathcal{W}^{(\fn)}$ includes an $(\fn+1)$-$G$ loop \smash{${\cut}^{[b]}_{k} \circ {\cal L}^{(\fn)}_{t, \boldsymbol{\sigma}, \ba}$}. Hence, equation \eqref{eq:mainStoflow} represents a ``hierarchy" rather than a ``self-consistent equation" for the $G$-loops. However, in equation \eqref{pro_dyncalK}, the primitive loops \smash{$\cutL^{[a]}_{k, l} \circ \mathcal{K}^{(\fn)}_{t, \boldsymbol{\sigma}, \ba} $} and \smash{$\cutR^{[a]}_{k, l} \circ \mathcal{K}^{(\fn)}_{t, \boldsymbol{\sigma}, \ba}$} have lengths of at most $\fn$. Therefore, \eqref{pro_dyncalK} indeed gives a self-consistent equation that can be solved inductively in $\fn$. In \Cref{sec:prim}, we will present an explicit representation of the primitive loops. 

For clarity of presentation, we will also call $G$-loops and primitive loops as $\cL$-loops and $\cK$-loops, respectively. Moreover, we will call \smash{$(\cL-{\cal K})^{(\fn)}_{t, \boldsymbol{\sigma}, \ba}\equiv \cL^{(\fn)}_{t, \boldsymbol{\sigma}, \ba}-\cK^{(\fn)}_{t, \boldsymbol{\sigma}, \ba}$} an $(\cL-\cK)$-loop. 

\subsection{Propagators} \label{sec:propagator}
The primitive loops will be expressed with the $\Theta$-propagators defined as follows.

\begin{definition}[$\Theta$-propagator]\label{def_Theta}
Given $\sigma_1,\sigma_2\in \{+,-\}$, define an $n^d\times n^d$ matrix $M^{(\sigma_1,\sigma_2)}$ as a rescaled 2-$M$ loop:
\be\label{eq:Msig} M^{(\sigma_1,\sigma_2)}_{[x][y]}:= W^d\left\langle M(\sigma_1) E_{[x]}M(\sigma_2) E_{[y]}\right\rangle,\quad \forall [x],[y]\in \Zn.\ee
Then, for $t\in[0,1]$ and $\sigma_1,\sigma_2\in \{+,-\}$, the $\Theta$-propagator $\Theta_t^{(\sigma_1,\sigma_2)}$ is an $n^d\times n^d$ matrix defined as:
\begin{equation}\label{def_Thxi}
    \Theta_{t}^{(\sigma_1,\sigma_2)} := \frac{1}{1 - t M^{(\sigma_1,\sigma_2)}S^{\LK}}, 
\end{equation}
where $S^{\LK}$ is given by \eqref{eq:SLton}. 
For the block Anderson model, it can be simplified by using $S^{\LK}=I_n$. For the Wegner orbital model, we have $ M^{(\sigma_1,\sigma_2)}=m(\sigma_1)m(\sigma_2)I_n$, allowing us to simplify \eqref{def_Thxi} as:
\be\label{eq:Theta_WO}\Theta_{t}^{(\sigma_1,\sigma_2)}= \frac{1}{1-t m(\sigma_1)m(\sigma_2)S^{\LK}}.\ee
\end{definition}

We now state some fundamental properties of $M^{(\sigma_1,\sigma_2)}$ and $\Theta_{t}^{(\sigma_1,\sigma_2)}$ in the following two lemmas, \Cref{lem:propM} and \Cref{lem_propTH}. 
Their proofs are basic and will be outlined in \Cref{appd:deter}. 

\begin{lemma} [Properties of $M$]\label{lem:propM}
For the block Anderson model, $M$ and $M^{(\sig_1,\sig_2)}$ satisfy the following properties for any $|E|\le 2-\kappa$ for a constant $\kappa>0$ (recall that $E$ is the spectral parameter in the flow \eqref{eq:zt}). 
\begin{enumerate}
    \item {\bf Transposition}: $M$ and $M^{(\sigma_1,\sigma_2)}$ are generally neither symmetric nor Hermitian, but we have that 
\be\label{eq:Msym}
(M^{(\sigma_1,\sigma_2)})^\top =M^{(\sigma_2,\sigma_1)}.
\ee

\item {\bf Symmetry}: The following block translation invariance holds: 
\be\label{symmetryM}
  \qquad  M^{(\sigma_1,\sigma_2)}([x]+[a],[y]+[a])= M^{(\sigma_1,\sigma_2)}([x],[y]),\ \ \forall [x],[y],[a]\in \Zn.
\ee
In addition, in 2D, we have the parity symmetry: 
\be\label{symmetryP} \qquad  M^{(\sigma_1,\sigma_2)}(0,[x])= M^{(\sigma_1,\sigma_2)}([x], 0)= M^{(\sigma_1,\sigma_2)}(0,-[x]),\ \ \forall [x] \in \Zn.\ee

\item {\bf Ward's identity}: For any $\sig_1,\sig_2\in \{+,-\}$, we have
\be\label{eq:WardM2}
\sum_{[b]}|M^{(\sigma_1,\sigma_2)}_{[a][b]}|\le 1,\quad \forall [a]\in \Zn.
\ee
Moreover, if $\sigma_1\ne \sigma_2$, then we have 
\be\label{eq:WardM1}
\sum_{[b]}M^{(\sigma_1,\sigma_2)}_{[a][b]} = 1,\quad \forall [a]\in \Zn.
\ee

\item {\bf Basic bounds}: There exists a constant $C>0$ such that the following estimates hold:
\begin{align}\label{eq:M-msc} \|M - m_{sc}(E)I_N\|\le &C\heta ,\quad \|M^+ - [m_{sc}(E)]^2 I_N\|_{\infty\to\infty}\le C\heta ,
\\ \label{Mbound_AO}
&\|M|_{[x][y]}\|\le (C\heta)^{|[x]-[y]|},
\end{align}
where we recall that $M^+$ was defined below \eqref{def:Theta}, $\heta$ was defined in \eqref{eq:cond_A12}, $|\cdot|$ denotes the (periodic) $L^1$-distance, and we have used \smash{$M|_{[x][y]}$} to denote the $([x],[y])$-th block of $M$, which is a \smash{$W^d\times W^d$} matrix. 
Furthermore, for any $\sig_1,\sig_2\in \{+,-\}$ and $x\ne y\in \ZL$, we have
\be\label{Mbound_AO2}
C^{-1}\lambda^2 \mathbf 1([x]\sim [y])\le |M_{[x][y]}^{(\sig_1,\sig_2)}|\le C\lambda^2(C\heta)^{2|[x]-[y]|-2}.
\ee

\end{enumerate}
 \end{lemma}


\begin{lemma}\label{lem_propTH}
Under the flow setup in \Cref{zztE}, define
\be\label{eq:ellt}
 \ell_t:= \min\left(\lambda|1-t|^{-1/2}+1,\; n\right). 
\ee
Note that since $\eta_t\asymp 1-t$ by \eqref{eq:E0bulk}, $\ell_t$ is of the same order as $\ell(\eta_t)$ defined in \eqref{eq:elleta}. 
The $\Theta$-propagators satisfy the following properties for any $t\in [0,1-N^{-1}]$ and $\sigma_1,\sigma_2\in \{+,-\}$:
\begin{enumerate}
    
\item {\bf Symmetry}: $\Theta_{t}^{(\sigma_1,\sigma_2)}$ also satisfies the properties \eqref{eq:Msym}--\eqref{symmetryP}.

\item {\bf Exponential decay}: For any large constant $D>0$, there exists a constant $c>0$ such that the following estimate holds when $\sigma_1\ne \sigma_2$: 
\begin{equation}\label{prop:ThfadC}     
\qquad\quad \Theta^{(\sigma_1,\sigma_2)}_{t}(0,[x])\prec \frac{e^{-c |[x]|/ {\ell}_t}}{|1-t|  {\ell}_t^d}+ W^{-D}.  
\end{equation}
For $\sigma_1=\sigma_2$, there exists a constant $C>0$ such that 
\begin{equation}\label{prop:ThfadC_short}     \Theta^{(\sigma_1,\sigma_2)}_{t}(0,[x])\le C\left(\mathbf 1_{[x]=0}+\lambda^2(C\heta)^{2(|[x]|-1)}\mathbf 1_{[x]\ne 0}\right)+W^{-D}.
\end{equation}

\item {\bf First-order finite difference}: The following estimate holds for any $[x],[y]\in \Zn$ and $\sigma_1\ne \sigma_2$: \begin{equation}\label{prop:BD1}     
\qquad   \left| \Theta^{(\sigma_1,\sigma_2)}_{t}(0, [x])-\Theta^{(\sigma_1,\sigma_2)}_{t}(0, [y])\right|\prec \frac{1}{ \lambda^{2}+|1-t|}\frac{|[x]-[y]|}{\langle [x]\rangle^{d-1}+\qq{[y]}^{d-1}}.
     \end{equation}

\item {\bf Second-order finite difference}: The following estimate holds for any \smash{$[x],[y]\in \Zn$} and $\sigma_1\ne \sigma_2$: 
\begin{equation}\label{prop:BD2}
\begin{split}
&\Theta^{(\sigma_1,\sigma_2)}_{t} (0,[x]+[y]) + \Theta^{(\sigma_1,\sigma_2)}_{t} (0,[x]-[y])-  2\Theta^{(\sigma_1,\sigma_2)}_{t} (0,[x]) \prec \frac{1}{\lambda^{2}+|1-t|}\frac{|[y]|^2}{\langle [x]\rangle^{d}} .    
\end{split}
 \end{equation}
\end{enumerate}
The properties listed above all apply to $\Theta_{t}^{(\sigma_1,\sigma_2)}$ in \eqref{eq:Theta_WO} defined for the Wegner orbital model (whose corresponding flow will be detailed in \Cref{rmk:ext_Wegner} below).
\end{lemma}

In the statement of \Cref{lem_propTH}, we have adopted the following notion of stochastic domination introduced in \cite{EKY_Average}. It will be used frequently throughout the remainder of this paper for clarity of presentation.

\begin{definition}[Stochastic domination and high probability event]\label{stoch_domination}
	{\rm{(i)}} Let
	\[\xi=\left(\xi^{(N)}(u):N\in\mathbb N, u\in U^{(N)}\right),\hskip 10pt \zeta=\left(\zeta^{(N)}(u):N\in\mathbb N, u\in U^{(N)}\right),\]
	be two families of non-negative random variables, where $U^{(N)}$ is a possibly $N$-dependent parameter set. We say $\xi$ is stochastically dominated by $\zeta$, uniformly in $u$, if for any fixed (small) $\tau>0$ and (large) $D>0$, 
	\[\mathbb P\bigg(\bigcup_{u\in U^{(N)}}\left\{\xi^{(N)}(u)>N^\tau\zeta^{(W)}(u)\right\}\bigg)\le N^{-D}\]
	for large enough $N\ge N_0(\tau, D)$, and we will use the notation $\xi\prec\zeta$. 
	If for some complex family $\xi$ we have $|\xi|\prec\zeta$, then we will also write $\xi \prec \zeta$ or $\xi=\OO_\prec(\zeta)$. 
	
	\vspace{5pt}
	\noindent {\rm{(ii)}} As a convention, for two \emph{deterministic} quantities $\xi$ and $\zeta$, we will write $\xi\prec\zeta$ if and only if $|\xi|\le N^\tau |\zeta|$ for any constant $\tau>0$. 
	
	\vspace{5pt}
	\noindent {\rm{(iii)}} We say an event $\Xi$ holds with high probability (w.h.p.) if for any constant $D>0$, $\mathbb P(\Xi)\ge 1- N^{-D}$ for large enough $N$. More generally, we say an event $\Xi'$ holds $w.h.p.$ in $\Xi$ if for any constant $D>0$,
	$\P( \Xi\setminus \Xi')\le N^{-D}$ for large enough $N$.
\end{definition}


We can solve the primitive equation \eqref{pro_dyncalK} explicitly in the special cases $\fn = 2,  3$.

\begin{example} 
We claim that the $2$-$\cK$ loop is given by  
\be\label{Kn2sol}\cK^{(2)}_{t,(\sig_1,\sig_2),([a_1],[a_2])}=W^{-d} \left(\Theta_t^{(\sigma_1,\sigma_2)}  M^{(\sig_1,\sig_2)}\right)_{[a_1][a_2]}.\ee
In fact, when $t=0$, we have $\Theta_t^{(\sigma_1,\sigma_2)}=I$, so $W^{-d}\cdot  \Theta_t^{(\sigma_1,\sigma_2)} M^{(\sig_1,\sig_2)}$ satisfies the initial condition \eqref{eq:initial_K}. Moreover, taking the derivative with respect to $t$, we get
\be\label{eq:diff_Theta}\partial_t \Theta_t^{(\sigma_1,\sigma_2)} = \Theta_t^{(\sigma_1,\sigma_2)} M^{(\sig_1,\sig_2)}S^{\LK}\Theta_t^{(\sigma_1,\sigma_2)} .\ee
With \eqref{eq:diff_Theta}, we see that \eqref{Kn2sol} satisfies the desired equation \eqref{pro_dyncalK} with $\fn=2$. 
\end{example}

\begin{remark}
In the setting of \Cref{thm_diffu}, given $z=E+\ii \eta$ with $|E|\le 2-\kappa$ and $W^\e\eta_* \le \eta \le 1$, we can choose the deterministic flow as described in \Cref{zztE} such that \eqref{eq:zztE} and \eqref{GtEGz} hold. Then, using the notations introduced earlier, we can express the quantities in \Cref{thm_diffu} as
\be\label{eq:avg_GTheta}
\begin{split}
W^{-2d}\sum_{x\in[a],y\in[b]}\left(|G_{xy}|^2-\Theta_{xy}\right)&= t_0 \left(\cL-\cK\right)^{(2)}_{t_0,(-,+),([a],[b])},\\ W^{-2d}\sum_{x\in[a],y\in[b]}\left(G_{xy}G_{yx}-S^{+}_{xy}\right)&= t_0\left(\cL-\cK\right)^{(2)}_{t_0,(+,+),([a],[b])}.
\end{split}\ee
For the Wegner orbital model, these identities follow directly from the definitions \eqref{Eq:defGLoop}, \eqref{def:Theta}, \eqref{eq:Theta_WO}, and \eqref{Kn2sol}. To understand why they hold for the block Anderson model, observe that by using the identity $S^2=S$ for the variance matrix in \eqref{eq:SBA} and the last identity in \eqref{eq:zztE}, we can express $\Theta$ in \eqref{def:Theta} as
$$S\Theta S= \frac{SM^0S}{1-S M^0 S} = \frac{t_0 M^{(-,+)}}{1-t_0 M^{(-,+)}}\otimes \mathbf E \implies
W^{-2d}\sum_{x\in[a],y\in[b]}\Theta_{xy}=t_0W^{-d}\cdot \left(\Theta_{t_0}^{(-,+)}M^{(-,+)}\right)_{[a][b]}. $$
A similar identity holds for $S^+$. By \eqref{eq:avg_GTheta}, \Cref{thm_diffu} essentially states that the $2$-$G$ loops are well-approximated by the 2-$\cK$ loops as $N\to \infty.$
\end{remark}

\begin{example}
For  $k=3$, let 
$\boldsymbol{\sigma}=(\sigma_1, \sigma_2,\sigma_3)$ and $
\ba=([a_1], [a_2],[a_3])$.
Then, equation \eqref{pro_dyncalK} becomes 
\begin{align}    
\partial_t \mathcal{K}^{(3)}_{t, \boldsymbol{\sigma}, \ba} &= W^d \sum_{[b_1], [c_1]} \mathcal{K}^{(2)}_{t, (\sigma_1,\sigma_2), ([a_1], [b_1])}  S^{\LK}_{[b_1][c_1]}  \mathcal{K}^{(3)}_{t, \boldsymbol{\sigma}, ([c_1], [a_2], [a_3])}\nonumber \\
&+ W^d \sum_{[b_2], [c_2]} \mathcal{K}^{(2)}_{t, (\sigma_2,\sigma_3), ([a_2], [b_2])}  S^{\LK}_{[b_2][c_2]}  \mathcal{K}^{(3)}_{t, \boldsymbol{\sigma}, ([a_1], [c_2], [a_3])}\nonumber \\
&+ W^d \sum_{[b_3], [c_3]} \mathcal{K}^{(2)}_{t, (\sigma_3,\sigma_1), ([a_3], [b_3])}  S^{\LK}_{[b_3][c_3]}  \mathcal{K}^{(3)}_{t, \boldsymbol{\sigma}, ([a_1], [a_2], [c_3])} . \nonumber 
\end{align}
Recalling the $M$-loop defined in \eqref{eq:KMloop} and using \eqref{Kn2sol}--\eqref{eq:diff_Theta}, one can verify that 
\be\label{Kn3sol} \mathcal{K}^{(3)}_{t, \boldsymbol{\sigma}, \ba} = \sum_{[b_1],[b_2],[b_3]}\Theta_{t}^{(\sig_1,\sig_2)}([a_1],[b_1])\Theta_{t}^{(\sig_2,\sig_3)}([a_2],[b_2])\Theta_{t}^{(\sig_3,\sig_1)}([a_3],[b_3]) \cM^{(3)}_{\bsig,\mathbf b}
\ee
satisfies the initial condition \eqref{eq:initial_K} and the above equation.
\end{example}

In \Cref{sec:prim}, we will give a tree representation formula for general $k$-$\cK$ loops with $k\ge 4$. Moreover, using this tree representation, the estimates in \Cref{lem_propTH}, and a key \emph{sum zero property}, we will establish the following upper bound \eqref{eq:bcal_k} for $\mathcal{K}$-loops. 

\begin{lemma}[Upper bounds on $\cK$-loops]\label{ML:Kbound}
The primitive loops satisfy the following upper bound for each $d\in \{1,2\}$, $\fn\in \N$, and $t\in [0,t_0]$:
\begin{equation}\label{eq:bcal_k}
\max_{\bsig\in\{+,-\}^\fn}\max_{\ba\in(\Zn)^\fn}  {\cal K}^{(\fn)}_{t, \boldsymbol{\sigma}, \ba}  \prec \left(W^d \ell_t^d (1-t) \right)^{-\fn+1}\asymp \left(W^d \ell_t^d \eta_t \right)^{-\fn+1}.
\end{equation}
\end{lemma}

The following classical Ward's identity, which follows from a simple algebraic calculation, will be used
tacitly throughout this paper. 
\begin{lemma}[Ward's identity]\label{lem-Ward}
Given any Hermitian matrix $\cal A$, define its resolvent as $R(z):=(\cal A-z)^{-1}$ for a $z= E+ \ii \eta\in \C_+$. Then, we have 
    \be\label{eq_Ward0}
    \begin{split}
\sum_x \overline {R_{xy'}}  R_{xy} = \frac{R_{y'y}-\overline{R_{yy'}}}{2\ii \eta},\quad
\sum_x \overline {R_{y'x}}  R_{yx} = \frac{R_{yy'}-\overline{R_{y'y}}}{2\ii \eta}.
\end{split}
\ee
As a special case, if $y=y'$, we have
\be\label{eq_Ward}
\sum_x |R_{xy}( z)|^2 =\sum_x |R_{yx}( z)|^2 = \frac{\im R_{yy}(z) }{ \eta}.
\ee
\end{lemma}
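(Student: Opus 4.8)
The plan is to reduce both displayed identities to the second resolvent identity applied to the pair of resolvents $R(z)$ and $R(\bar z)$. First I would record the elementary fact that, since $\mathcal A$ is Hermitian, $R(z)^* = \bigl((\mathcal A - z)^{-1}\bigr)^* = (\mathcal A - \bar z)^{-1} = R(\bar z)$, so that $\overline{R_{xy'}(z)} = (R(z)^*)_{y'x} = R_{y'x}(\bar z)$. This rewrites the left-hand side of the first identity as a matrix product, $\sum_x \overline{R_{xy'}}\,R_{xy} = \sum_x R_{y'x}(\bar z)\,R_{xy}(z) = \bigl(R(\bar z)R(z)\bigr)_{y'y}$, and likewise $\sum_x \overline{R_{y'x}}\,R_{yx} = \bigl(R(z)R(\bar z)\bigr)_{yy'}$.

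Next I would invoke the resolvent identity with $A = \mathcal A - \bar z$ and $B = \mathcal A - z$, namely $A^{-1} - B^{-1} = A^{-1}(B-A)B^{-1} = (\bar z - z)A^{-1}B^{-1}$; that is, $R(\bar z) - R(z) = -2\ii\eta\,R(\bar z)R(z)$, hence $R(\bar z)R(z) = \dfrac{R(z) - R(\bar z)}{2\ii\eta}$. Since $R(z)$ and $R(\bar z)$ are both functions of the single Hermitian matrix $\mathcal A$, they commute, so the identical relation holds for $R(z)R(\bar z)$. Reading off the $(y',y)$-entry of the first and the $(y,y')$-entry of the second, and using once more $R_{y'y}(\bar z) = \overline{R_{yy'}(z)}$, yields exactly the two formulas in \eqref{eq_Ward0}.

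Finally, the special case \eqref{eq_Ward} is obtained by setting $y' = y$: the right-hand side collapses to $\dfrac{R_{yy} - \overline{R_{yy}}}{2\ii\eta} = \dfrac{\im R_{yy}}{\eta}$, while the left-hand sides become $\sum_x |R_{xy}|^2$ and $\sum_x |R_{yx}|^2$ respectively. There is no genuine obstacle in this proof; the only points demanding a little care are the Hermitian-conjugation bookkeeping $R(z)^* = R(\bar z)$ and keeping track of the sign in $\bar z - z = -2\ii\eta$, after which everything is a one-line manipulation.
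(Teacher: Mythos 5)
Your proof is correct. The paper states this lemma without proof, remarking only that it ``follows from a simple algebraic calculation''; your argument, via $R(z)^* = R(\bar z)$ and the second resolvent identity $R(\bar z) - R(z) = -2\ii\eta\,R(\bar z)R(z)$, is precisely that standard calculation, and the bookkeeping of signs and conjugations is right.
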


Applying \eqref{eq_Ward0} to $G$, we can show that the $G$-loops satisfy the following identity \eqref{WI_calL}, which we will also refer to as a ``Ward's identity". In \Cref{sec:prim}, we will show that a similar Ward's identity \eqref{WI_calK} holds for the $\cal K$-loops. 
\begin{lemma}[Ward's identity for $G$-loops and $\cK$-loops]\label{lem_WI_K} 
For an $\fn$-$G$ loop ${\cal L}^{(\fn)}_{t, \boldsymbol{\sigma}, \ba}$ with $\fn\ge 2$ and $\sigma_1=-\sig_{\fn}$, 
we have the following identities, which are called Ward's identities at the vertex $[a_\fn]$: 
\begin{align}\label{WI_calL}
&\sum_{a_\fn}{\cal L}^{(\fn)}_{t, \boldsymbol{\sigma}, \ba}=
\frac{1}{2\ii W^d\eta_t}\left( {\cal L}^{(\fn-1)}_{t,  \wh\bsig^{(+,\fn)}, \wh\ba^{(\fn)}}- {\cal L}^{(\fn-1)}_{t,  \wh\bsig^{(-,\fn)} , \wh\ba^{(\fn)}}\right),  \\
\label{WI_calK}
& \sum_{a_\fn}{\cal K}^{(\fn)}_{t, \boldsymbol{\sigma}, \ba}=
\frac{1}{2\ii W^d\eta_t}\left( {\cal K}^{(\fn-1)}_{t,  \wh\bsig^{(+,\fn)}, \wh\ba^{(\fn)}}- {\cal K}^{(\fn-1)}_{t,  \wh\bsig^{(-,\fn)}, \wh\ba^{(\fn)}}\right) , 
\end{align}
where $\wh\bsig^{(\pm,\fn)}$ is obtained by removing $\sigma_\fn$ from $\boldsymbol{\sigma}$ and replacing $\sigma_1$ with $\pm$, i.e., $\wh\bsig^{(\pm,\fn)}=(\pm, \sigma_2, \cdots \sigma_{\fn-1})$, and  $\wh\ba^{(\fn)}$ is obtained by removing $a_\fn$ from $\ba$, i.e., 
$\wh\ba^{(\fn)}=(a_1, a_2,\cdots, a_{\fn-1}).$ 
\end{lemma}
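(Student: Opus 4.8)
The proof of \eqref{WI_calL} is a direct consequence of the classical Ward's identity for resolvents. Write the $\fn$-$G$ loop as
\[
{\cal L}^{(\fn)}_{t,\bsig,\ba}=\left\langle \prod_{i=1}^{\fn}\bigl(G_t(\sigma_i)E_{[a_i]}\bigr)\right\rangle
= W^{-d}\sum_{x\in[a_\fn]}\Bigl(G_t(\sigma_\fn)E_{[a_\fn-\text{(chain)}]}\cdots\Bigr)(x,x),
\]
and isolate the two resolvent factors adjacent to the summation index over $[a_\fn]$, namely $G_t(\sigma_\fn)$ on one side and $G_t(\sigma_1)$ on the other (this is where the hypothesis $\sigma_1=-\sigma_\fn$ is used). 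Summing over all $a_\fn\in\Zn$ amounts to summing $\sum_{x}$ over the whole torus $\ZL$, so the two adjacent $G$-factors get contracted through $\sum_x \overline{(G_t)_{\cdot x}}(G_t)_{x\cdot}$ (up to the $\sigma$-signs). By \Cref{lem-Ward} applied to $\cal A=H_t$ with $z=z_t(E)$, this contraction equals $\frac{1}{2\ii\eta_t}$ times the difference of $G_t(+)$ and $G_t(-)$ at the merged vertex, which collapses the two $G$-edges at $[a_\fn]$ into a single edge and produces the $(\fn-1)$-loop with $\sigma_1$ replaced by $+$ (resp.\ $-$). Tracking the factors of $W^{-d}$ from the definition of $E_{[a_\fn]}$ yields exactly \eqref{WI_calL}. (One must be slightly careful with the degenerate case $\fn=2$, where the "chain" between the two relevant edges is empty and the resulting $1$-loop is $\langle M(\pm)E_{[a_1]}\rangle = m(\pm)$; but this is consistent with the stated formula.)

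The proof of \eqref{WI_calK} cannot invoke \Cref{lem-Ward} directly, since the $\cK$-loops are defined not as traces of resolvents but as the solution of the ODE system \eqref{pro_dyncalK} with initial condition \eqref{eq:initial_K}. The plan is a Gr\"onwall/uniqueness argument: define the error
\[
{\cal D}^{(\fn)}_{t,\bsig,\ba}:= \sum_{a_\fn}{\cal K}^{(\fn)}_{t,\bsig,\ba}-\frac{1}{2\ii W^d\eta_t}\Bigl({\cal K}^{(\fn-1)}_{t,\wh\bsig^{(+,\fn)},\wh\ba^{(\fn)}}-{\cal K}^{(\fn-1)}_{t,\wh\bsig^{(-,\fn)},\wh\ba^{(\fn)}}\Bigr),
\]
and show that $ {\cal D}^{(\fn)}$ vanishes identically in $t$. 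At $t=0$, this is the analogue of \eqref{WI_calL} for the $M$-loops \eqref{eq:KMloop}, which again follows from \Cref{lem-Ward} applied to $\cal A=H_0=\lambda\Psi$ at $z_0(E)$ (indeed $M_t(z_t(E))$ is exactly the resolvent $M$ of $\lambda\Psi$ shifted by $z_t$, and the $M$-loop Ward identity is \eqref{L2M} at the block level — this is precisely how the Ward identity \eqref{eq:WardM1} for $M^{(\sigma_1,\sigma_2)}$ with $\sigma_1\ne\sigma_2$ is obtained). For the induction in $t$, differentiate $\sum_{a_\fn}{\cal K}^{(\fn)}_{t,\bsig,\ba}$ using \eqref{pro_dyncalK} and differentiate the right-hand side using \eqref{pro_dyncalK} for the $(\fn-1)$-loops together with $\frac{\dd}{\dd t}(W^d\eta_t)^{-1}$; the key is to check that the sum over $a_\fn$ of the quadratic right-hand side of \eqref{pro_dyncalK} reorganizes — via the fact that summing the $\cut^{[b]}$/$\cutR^{[b]}$ output over its last vertex again produces a Ward contraction one order down — into exactly the time-derivative of the claimed right-hand side, plus lower-order $\cal D^{(m)}$ terms with $m\le\fn$. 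This closes a linear ODE system for $(\cal D^{(m)})_{m\le \fn}$ with zero initial data, forcing $\cal D^{(\fn)}\equiv 0$ by uniqueness. Alternatively, and perhaps more cleanly, one can observe that the $\cK$-loops admit the explicit tree/$\Theta$-representation developed in \Cref{sec:prim} (cf.\ \eqref{Kn2sol}, \eqref{Kn3sol}), and verify \eqref{WI_calK} directly from that representation using the Ward-type identity \eqref{eq:WardM1} for $M^{(\sigma_1,\sigma_2)}$ and a telescoping identity for $\Theta_t^{(\sigma_1,\sigma_2)}$; since the definition \eqref{def_Thxi} gives $\Theta_t^{(\sigma_1,\sigma_2)}S^{\LK}=t^{-1}(\Theta_t^{(\sigma_1,\sigma_2)}-I)(M^{(\sigma_1,\sigma_2)})^{-1}$ when $\sigma_1\ne\sigma_2$, summing a $\Theta$-factor over its endpoint collapses it, which is the structural source of \eqref{WI_calK}.

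The main obstacle is bookkeeping rather than conceptual: in the ODE approach one must verify that the derivative of the right-hand side of \eqref{WI_calK} matches $\sum_{a_\fn}$ of the right-hand side of \eqref{pro_dyncalK} term by term, carefully handling (i) the case where the cut indices $k<l$ straddle the position $\fn$ versus the case where they do not, since summing over $a_\fn$ interacts differently with $\cutL^{[a]}_{k,l}$ and $\cutR^{[b]}_{k,l}$ depending on whether $l=\fn$, and (ii) the extra term coming from differentiating the explicit prefactor $(2\ii W^d\eta_t)^{-1}$, which must be cancelled by the "diagonal" contribution of the hierarchy. In the tree-representation approach the obstacle is instead to set up the right telescoping identity for sums of products of $\Theta$-propagators along a tree, but given the machinery of \Cref{sec:prim} this is routine. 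Either way, the proof of \eqref{WI_calL} is genuinely short (one application of \Cref{lem-Ward} plus counting $W^{-d}$ factors), and the content of the lemma is really the transfer of this identity to the deterministic objects $\cK$.
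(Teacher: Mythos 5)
Your proof of \eqref{WI_calL} coincides with the paper's: a single application of \Cref{lem-Ward} to the two $G$-edges adjacent to the summed vertex $[a_\fn]$, plus bookkeeping of $W^{-d}$ factors. This part is fine.

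For \eqref{WI_calK} the paper does not give either of your arguments directly; its proof is ``a consequence of \Cref{ward-identity} (see also \Cref{lem:KgentoK})'', i.e.\ it first proves the Ward identity for the unblocked $\Kgen$-loops via the tree representation of \Cref{sec_tree}, and then applies the block reduction \eqref{K-rescaling}. Your approach (a) --- the Gr\"onwall/uniqueness argument on the primitive-loop ODE \eqref{pro_dyncalK} --- is precisely the route of Lemma~3.6 in \cite{Band1D}, which the paper explicitly \emph{declines} to follow (see the remark just before \Cref{slice-decomposition}), partly because the tree approach yields a stronger, per-graph statement (\Cref{ward-identity-gamma}). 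Your approach (b) --- verify \eqref{WI_calK} from the tree representation via a telescoping identity --- does match the paper's route in spirit, but the sketch skips the two ingredients that do the actual work: the slice decomposition (\Cref{slice-decomposition}), which expresses $\TSP(\mathcal{P}_{(\ba,a_{\fn+1})})$ as a disjoint union $\bigsqcup_{\Gamma}\slice(\Gamma)$, and the edge-by-edge telescoping \eqref{telescoping-ward} along the directed path $p_1$ of each graph $\Gamma$, in which replacing $\sigma_1=+$ by $-$ one factor at a time is tied to a Ward identity for that factor. You also cite \eqref{eq:WardM1} (double stochasticity of $M^{(\sigma_1,\sigma_2)}$) as the relevant Ward identity; what the paper actually invokes in \Cref{ward-identity-gamma} are the finer, $t$-dependent identities \eqref{ward-M}, \eqref{ward-Theta-1}, \eqref{ward-Theta-2} for $\Thetagen_t$ and $S_t\Thetagen_t$, and the sum \eqref{eq:WardM1} alone would not suffice to resolve the $S_t\Thetagen_t$-edges inside the graph. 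Your algebraic identity $\Theta_t^{(\sigma_1,\sigma_2)}S^{\LK}=t^{-1}(\Theta_t^{(\sigma_1,\sigma_2)}-I)(M^{(\sigma_1,\sigma_2)})^{-1}$ is correct for the two models of the paper (where $M^{(\sigma_1,\sigma_2)}$ and $S^{\LK}$ commute), but it is not by itself the mechanism that ``collapses'' the $\Theta$-edge in the telescoping. In short: \eqref{WI_calL} is done; for \eqref{WI_calK} you have identified the right landscape (ODE route vs.\ graphical route) but not the key combinatorial lemmas that make the graphical route close.
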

\begin{proof}
The identity \eqref{WI_calL} follows from \eqref{eq_Ward0}, while \eqref{WI_calK} is a consequence of \Cref{ward-identity} below (see also \Cref{lem:KgentoK}). 
\end{proof}

\begin{remark}\label{rmk:ext_Wegner}
For the Wegner orbital model, the stochastic flow is defined in the same way as in \eqref{MBM} and \Cref{Def:stoch_flow}, while the deterministic flow $z_t$ remains as in \eqref{eq:zt}. Then, as shown in Lemma 2.8 of \cite{Band1D}, for any $z\in \mathbb C_+$ with $\im z\in (0, 1]$ and $|\re z|\le  2-\kappa$, we can choose the parameters $t_0$ and $E_0$ as follows such that \eqref{eq:zztE} and \eqref{GtEGz} still hold: 
\be \nonumber
t_0=|m_{sc}(z)|^2=\frac{\im m_{sc}(z)}{\im m_{sc}(z)+ \im z},\quad E_0= -2\frac{\re m_{sc}(z)}{\sqrt{t_0}}.\ee
Under this flow, using the notations introduced in \Cref{subsec:Gloop} and the estimates established in \Cref{lem_propTH} for the $\Theta$-propagators, we can complete the proof of the main results, \Cref{thm_locallaw} and \Cref{thm_diffu}, following almost the same arguments as in Sections \ref{sec:proof} and \ref{Sec:Stoflo} below. 
The only major modification is that our entrywise estimates on $G_t$ in terms of 2-$G$ loops, as given in Lemmas \ref{lem_GbEXP} and \ref{lem G<T} below, do not directly apply to the Wegner orbital model. However, these results can be established using a similar (and much simpler) argument as in \Cref{appd:MDE}. 
Alternatively, one can employ Lemma 4.1 in \cite{Band1D}, which was proved using resolvent identities and can be easily adapted to the Wegner orbital model setting.

We also remark that the tree representation for the Wegner orbital model is identical to that of RBM presented in Section 3 of \cite{Band1D}. Additionally, the corresponding Ward’s identity for $G$ and $\cK$-loops has also been established in \cite{Band1D}.
However, the upper bound on 
$\cK$-loops, as stated in \Cref{ML:Kbound}, is not fully covered by the arguments in \cite{Band1D}, because our $\Theta$-propagators satisfy much weaker bounds than those for RBM due to the presence of the small parameter $\lambda$. 
In fact, the tree representation, the Ward’s identity, and the upper bounds on $\cK$-loops, which we will establish in \Cref{sec:prim}, also apply to the Wegner orbital model as a special case. In particular, we will explain how to obtain an additional 
$\lambda^2$ factor when applying the sum-zero property to derive the upper bound \eqref{eq:bcal_k}.  

\end{remark}

Before concluding this section, we present the proof of \Cref{thm:QUE} using \Cref{thm_diffu} and \Cref{lem_propTH}.

\begin{proof}[\bf Proof of \Cref{thm:QUE}]  
Let $z=E+\ii\eta$ with $|E|\le 2-\kappa$ and $\eta=W^{-\e}\eta_0$. With the spectral decomposition of $G(z)$ and the definition of $ {\cal I}_E$, we find that 
\begin{align}\label{ssfa2}
 \sum_{i,j\in \ZL: \lambda_i,\lambda_j\in {\cal I}_E} &\left|\bu_i^*\left(E_{[a]}-N^{-1}\right) \bu_j\right|^2 \lesssim  \eta^4 \sum_{i, j  } \frac{\left|\bu_i^*\left(E_{[a]}-N^{-1}\right) \bu_j\right|^2}{\left|\lambda_i-z\right|^2\left|\lambda_j-z\right|^2}\nonumber\\
& = 
\eta^2\tr \left[\operatorname{Im} G(z)\left(E_{[a]}-N^{-1}\right) \operatorname{Im} G(z)\left(E_{[a]}-N^{-1}\right)\right].
\end{align}
The expectation of the right-hand side (RHS) can be written as 
\begin{align*}
&~\frac{\eta^2}{n^{2d}}\sum_{[b],[b']\in \Zn}
\mathbb{E}  
 \tr \left[\operatorname{Im} G(z)\left(E_{[a]}-E_{[b]}\right) \operatorname{Im} G(z)\left(E_{[a]}-E_{[b']}\right)\right] \nonumber
\\ 
\lesssim &~ \eta^2\max_{[a],[b],[b']}
\Big|\mathbb{E}\tr \left(\operatorname{Im} G(z) E_{[a]}  \operatorname{Im} G(z)E_{[b]} \right)
-\mathbb{E}\tr \left(\operatorname{Im} G(z) E_{[a]}  \operatorname{Im} G(z)E_{[b']}\right) \Big|. 
\end{align*}
Writing $\im G=(G-G^*)/(2\ii)$, 
we can bound the RHS by $I_1+I_2$, where 
\begin{align*}
I_1:=&~\eta^2 \max_{[a],[b]}\max_{\sig_1,\sig_2} \left|\E (\cL-\cK)^{(2)}_{t_0,(\sig_1,\sig_2),([a],[b])}\right| \\
I_2:=&~\eta^2  \max_{[a],[b],[b']}\max_{\sig_1,\sig_2} \left|\cK^{(2)}_{t_0,(\sig_1,\sig_2),([a],[b])}-\cK^{(2)}_{t_0,(\sig_1,\sig_2),([a],[b'])}\right| . 
\end{align*}

Under the assumptions \eqref{eq:cond-lambda2} and that $W\lambda \ge W^\fc N^{1/2}$ in 1D, we have $W^{-\e}\eta_0 \ge W^{\fd\wedge \fc - \e}/N$. Thus, when $\e<\fd\wedge \fc$, the QUE estimates \eqref{eq:diffuExp1} and \eqref{eq:diffuExp2} apply (recall \eqref{eq:avg_GTheta}), yielding the following bound on $I_1$:  
\be\label{eq:I1} I_1 \le \frac{\eta^2 W^\tau}{(W^d[\ell(\eta)]^d \eta)^3}\le \frac{ W^\tau\eta^2}{(N\eta)^3}\mathbf 1_{\eta\le \frac{\lambda^2}{n^2}} + \frac{W^\tau {\eta}^{1/2} }{(W^d\lambda)^3}\mathbf 1_{\eta> \frac{\lambda^2}{n^2}}
\le \frac{W^\tau}{N^2} \left(\frac{1}{N\eta}+W^{-\e/2-5\fc/2}\right) ,
\ee
for any constant $\tau>0$. Here, in the second step we used the definition of $\ell(\eta)$ in \eqref{eq:elleta}; in the third step, we also used the fact that $W^{-\e}\eta_0 =\lambda W^{d/2-\e}/N \ll \lambda^2/n^2$ for $d=2$. To bound $I_2$, we apply the estimates \eqref{prop:ThfadC_short} and \eqref{prop:BD1} to the 2-$\cK$ loop in \eqref{Kn2sol}. More precisely, we can choose the flow as described in \Cref{zztE} (for the block Anderson model) or \Cref{rmk:ext_Wegner} (for the Wegner orbital model) and apply these estimates at time $t_0$ with $1-t_0\asymp \eta$ and $\ell_{t_0}\asymp \ell(\eta)$. By \eqref{prop:ThfadC_short}, we have that 
\be\label{eq:samesig}\eta^2 \max_{[a],[b],[b']}\max_{\sig_1=\sig_2} \left|\cK^{(2)}_{t_0,(\sig_1,\sig_2),([a],[b])}-\cK^{(2)}_{t_0,(\sig_1,\sig_2),([a],[b'])}\right| \lesssim W^{-d}\eta^2 \le \frac{W^{-2\e}}{N^2}.\ee
For the case $\sig_1\ne \sig_2$, with \eqref{prop:BD1}, we can get the following bounds: when $d=2$,
\be\label{eq:diffsig1}\eta^2 \max_{[a],[b],[b']}\max_{\sig_1\ne \sig_2} \left|\cK^{(2)}_{t_0,(\sig_1,\sig_2),([a],[b])}-\cK^{(2)}_{t_0,(\sig_1,\sig_2),([a],[b'])}\right| \prec \frac{\eta^2}{W^{d}\lambda^{2}}\le \frac{W^{-2\e}}{N^2};
\ee
when $d=1$, we have that 
\be\label{eq:diffsig2}
\eta^2 \max_{[a],[b],[b']}\max_{\sig_1\ne \sig_2} \left|\cK^{(2)}_{t_0,(\sig_1,\sig_2),([a],[b])}-\cK^{(2)}_{t_0,(\sig_1,\sig_2),([a],[b'])}\right| \prec \frac{\eta^2 n}{W\lambda^{2}}\lesssim \frac{W^{-2\e}}{N^2}.
\ee
Combining \eqref{eq:I1}--\eqref{eq:diffsig2}, we get that $ I_1+I_2 \prec (W^{-2\e}+ W^{-\fd\wedge \fc+\e})/N^2$ since $\tau$ can be arbitrarily small. Together with \eqref{ssfa2}, 
it gives that 
\begin{align*}
    \E \sum_{\lambda_i,\lambda_j\in {\cal I}_E} \bigg|\frac{N}{W^{d}}\sum_{x\in[a]}\overline \bu_i(x)\bu_j(x) -\delta_{ij} \bigg|^2 &= \E \sum_{\lambda_i,\lambda_j\in {\cal I}_E} \left|N\cdot \bu_i^*\left(E_{[a]}-N^{-1}\right) \bu_j\right|^2  \prec W^{-2\e}+ W^{-\fd\wedge \fc+\e} .
\end{align*} 
Applying Markov's inequality and taking $c$ sufficiently small depending on $\e$, $\fd$, $\fc$, we conclude \eqref{Meq:QUE}. The proof of \eqref{Meq:QUE2} follows similarly---we simply replace $E_{[a]}$ in \eqref{ssfa2} with $|A|^{-1}\sum_{[a]\in A} E_{[a]}$, after which all subsequent arguments remain valid. 
\end{proof}

\section{Properties of primitive loops}\label{sec:prim}

All the results in this section, except those in \Cref{subsec:estimates}, apply to a broader class of models beyond the RBSOs considered in this paper. In particular, we will not impose any block structure on this general model. This approach is chosen to ensure that the results developed here can be applied in future studies of other random matrices, particularly those without block structures. More precisely, we will analyze the primitive loops defined in terms of the primitive equation presented in \Cref{general-primitive-equation} below, with initial conditions expressed in terms of the $M$-loops of a matrix $M$ that satisfies the following assumption.

\begin{assumption}\label{assm:general}
We consider an $N\times N$ matrix $M\in \C^N$ satisfying the following $t$-dependent equation:
\be\label{eq:Mt}
M  = \frac{1}{\Lambda - z_t - t\cal S(M)},\quad \forall t\ge 0.
\ee
Here, $\Lambda$ is an arbitrary (deterministic) Hermitian matrix, $z_t\in \C_+$ is an arbitrary deterministic flow, and $\cal S(\cdot)$ is a linear operator defined as in \eqref{eq:opS}. Specifically, $\cal S(M)$ is a diagonal matrix with entries \smash{$\cal S(M)_{ii}=\sum_{j}S_{ij}M_{jj}$}, where $S$ is an arbitrary symmetric matrix (which may not necessarily represent any variance profile). In equation \eqref{eq:Mt}, we implicitly assume that $M$ and $z_t + t\cal S(M)$ do not depend on $t$. Additionally, we do not impose the constraint $t \le 1$.
\end{assumption}

The tree representation formula for primitive loops and Ward's identity, stated in Lemmas \ref{tree-representation} and \ref{ward-identity-gamma}, respectively, rely only on the assumption above.  
In contrast, the proofs of the sum-zero property and the upper bounds on $\cal{K}$-loops, given by Lemmas \ref{sum-zero} and \ref{ML:Kbound}, respectively, require the more specific assumptions for RBSOs. This is because they rely on the estimates in Lemmas \ref{lem:propM} and \ref{lem_propTH}. 
However, we emphasize that these proofs can be extended to more general models, provided that the corresponding estimates for the matrices appearing in the tree representation formula are established.

\subsection{Canonical partitions with $M$-loops}\label{subsec:partition}
The general case will build upon the concept of \emph{canonical partitions of polygons} introduced in \cite[Definition 3.1]{Band1D}. Essentially, canonical partitions of an oriented polygon $\mathcal{P}_{\ba}$ are partitions in which each edge of the polygon is in one-to-one correspondence with each region in the partition. 

\begin{definition}[Canonical partitions]\label{def:canpnical_part}
Fix $\fn\ge 3$ and let $\cal P_{\ba}$ be an oriented polygon with vertices $\ba=(a_1,a_2, \ldots ,a_\fn)$ arranged in counterclockwise order. We adopt the cyclic convention that $a_{i}=a_{j}$ if and only if $i=j\mod \fn$. Then, we use $(a_{k-1},a_k)$ to denote the $k$-th side of $\cal P_{\ba}$. A planar partition of the polygonal domain enclosed by $\cal P_{\ba}$ is called {\bf canonical} if the following properties hold:
\begin{itemize}
\item Every sub-region in the partition is also a polygonal domain. 
\item There is a one-to-one correspondence between the edges of the polygon and the sub-regions, such that each side $(a_{k-1},a_k)$ belongs to exactly one sub-region, and each sub-region contains exactly one side of $\cal P_{\ba}$. Denote the sub-region containing $(a_{k-1},a_k)$ by $R_k$. 

\item Every vertex $a_k$ of $\cal P_{\ba}$ belongs to exactly two regions, $R_k$ and $R_{k+1}$ (with the convention $R_{\fn+1}=R_1$). 
  
\end{itemize}
Given a canonical partition, removing the $\fn$ sides of the polygon ${\cal P}_{\ba}$ leaves a collection of interior edges that form a tree. In this tree, the leaves correspond to the vertices of ${\cal P}_{\ba}$, and each internal vertex has degree at least three. 
Following \cite{Band1D}, we define the equivalence classes of all such trees under graph isomorphism, and denote the collection of equivalence classes by $\TSP({\cal P}_{\ba})$. Subsequently, we will consider each element of $\TSP({\cal P}_{\ba})$ as an abstract tree structure,  and call it a \emph{canonical tree partition}.  
\end{definition}

We now introduce several notations. Given two vertices $b_1$ and $ b_2$ in a graph, we will use $\p{b_1, b_2}$ to represent an \emph{undirected edge} and $\p{b_1 \to b_2}$ to represent a \emph{directed edge}. In particular, we identify $\p{b_1, b_2}$ with $\p{b_2, b_1}$, but do not identify $\p{b_1 \to b_2}$ with $\p{b_2 \to b_1}$. We may also refer to a directed edge $\p{b_1 \to b_2}$ with the undirected notation $\p{b_1, b_2}$ when the direction is not relevant.
Typically, we will use the letter $a$ to refer to the vertices of the polygon $\mathcal{P}_{\ba}$, which we call \emph{external vertices}, while $b$ and $c$ generally denote the remaining interior vertices, referred to as \emph{internal vertices}. 
We will call an edge containing exactly one external vertex as an \emph{external edge}, and an edge between internal vertices as an \emph{internal edge}. 
In the definition above, we refer to both the external and internal edges as \emph{interior} edges, in contrast to the \emph{boundary edges} which denote the sides of $\mathcal{P}_{\ba}$. 
In the following proof, for clarity of presentation, it is useful to view $\TSP({\cal P}_{\ba})$ also as a specific partition of the polygon $\cal P_{\ba}$, which includes sub-regions as well as internal, external, and boundary edges. 
Moreover, we will draw boundary edges as \emph{dashed lines}, since they are irrelevant to the graph values as we will see.

Next, we define the edges (i.e., matrices) that will appear in the tree representation for primitive loops. 
Since we do not assume any block structure in this section, we will use slightly different notations to avoid confusion with the notations used elsewhere in the paper.

\begin{definition}[$\Theta$-propagator and charge symmetry]\label{def:propagator-entrywise}
For any $a\in\qqq{N}$, let $F_a$ be the matrix with only one non-zero entry: $\p{F_a}_{ij} = \mathbf 1(i = j = a)$. For $\sigma, \sigma' \in \set{+, -}$ and $a, b \in \bbr{N}$, we define
  \begin{equation}\label{M-definition}
    \MMgen^{(\sigma,\sigma')}_{ab}
      \coloneqq \avg{M\p{\sigma}F_a M\p{\sigma'}F_b} = M(\sig)_{ba}M(\sig')_{ab},
  \end{equation}
  which corresponds to \eqref{eq:Msig}. Then, similar to \eqref{def_Thxi}, we define the $\Theta$-propagator as
  \begin{equation}\label{Theta-general-propagator}
    \Thetagen_t^{(\sigma,\sigma')}
      \coloneqq \big(I - \MMgen^{(\sigma,\sigma')} S_t\big)^{-1},
  \end{equation}
  where we abbreviate $S_t\equiv tS$.
  Note that $\MMgen$ and $\Thetagen_t$ are generally neither symmetric nor Hermitian. However, they satisfy the following identities as in \eqref{eq:Msym} by definitions and the fact that \smash{$S_t$} is symmetric: 
  \begin{equation}\label{Theta-almost-symmetry}
(\MMgen^{(\sigma,\sigma')})^\top = \MMgen^{(\sigma',\sigma)}, \ \ \big(S_t\Thetagen_t^{(\sigma,\sigma')}\big)^\top = S_t\Thetagen_t^{(\sigma',\sigma)},\  \ \big(\Thetagen_t^{(\sigma,\sigma')}\MMgen^{\p{\sigma,\sigma'}})^\top = \Thetagen_t^{(\sigma',\sigma)} \MMgen^{(\sigma',\sigma)}.       
\end{equation}
  We will refer to these identities as \textbf{charge symmetry}.
\end{definition}


The concept of charge symmetry motivates the definition of vertex charges in our graphs (see \Cref{vertex-charge} below). Indeed, if we view \smash{$\MMgen$ and $S_t\Thetagen_t$} as matrices with indices $\p{a, \sigma} \in \bbr{N} \times \set{+, -}$, then they become true symmetric matrices.

We now introduce an extension of \Cref{def:canpnical_part} in the following definition, which incorporates loops consisting of edges labeled by $M$. As the name suggests, these edges will correspond to entries of $M$, whereas the remaining non-boundary edges in our graphs will be unlabeled and correspond to entries of $\Thetagen_t$ or $S_t\Thetagen_t$.

\begin{definition}[Canonical partitions with $M$-loops]\label{m-loop-tsp}

  Let $\Gamma_{\ba} \in \TSP\p{\mathcal{P}_{\ba}}$ be a canonical tree partition of the oriented polygon $\mathcal{P}_{\ba}$, and denote the internal vertices of $\Gamma_{\ba}$ by $\mathbf b = \p{b_1, \ldots, b_\fm}$. We define the graph {$\Gamma^{\p{M}}_{\ba}$} by replacing each $b_i$ with an $M$-loop in the following way.
\begin{enumerate}[label=\arabic*.]
\item 
 Consider the subgraph $(V^{\p{b_i}}, E^{\p{b_i}})$ of all vertices in $\Gamma_{\ba}$ connected to $b_i$. More precisely, we let
      \begin{equation}\label{b_i-subgraph}
        V^{\p{b_i}} \coloneqq \set{b_i,c_1, \ldots, c_{\delta_i}},
        \quad \text{and}\quad E^{\p{b_i}} = \set{\p{c_1, b_i}, \ldots, \p{c_{\delta_i}, b_i}}
      \end{equation}
      be the subsets of all vertices (including $b_i$) and edges connected to $b_i$, respectively. Reordering the $c_k$'s if necessary, we can ensure that $\p{c_1, \ldots, c_{\delta_i}}$ form a polygon without any crossing edges and with vertices arranged in counterclockwise order.
      
\item Construct a new graph $(\widetilde{V}^{\p{b_i}}, \widetilde{E}^{\p{b_i}})$ with vertices $\widetilde{V}^{\p{b_i}} = \set{b_{i,1}, \ldots, b_{i,\delta_i}, c_1, \ldots, c_{\delta_i}}$ and edges
      \[
        \widetilde{E}^{\p{b_i}}
          \coloneqq \set{\p{c_j, b_{i,j}} \mid 1 \leq j \leq \delta_i} \cup \set{\p{b_{i,1}, b_{i,2}; M}, \p{b_{i,2}, b_{i,3}; M}, \ldots, \p{b_{i,\delta_i}, b_{i,1}; M}}.
      \]
      Here, the notation $e = \p{e_i, e_f; M}$ refers to an edge labeled with $M$. We will call such an edge an $M$-edge. For convenience, we denote
      \begin{equation}\label{b_i-cycle}
        C^{\p{b_i}} \coloneqq \p{b_{i,1}, b_{i,2}, \ldots, b_{i,\delta_i}},
      \end{equation}
      which is a polygon obtained by ``splitting" $b_i$ into $\delta_i$ new vertices and connecting them by $M$-edges. 
      Relabeling the $b_{i,j}$'s if needed, we ensure that $\p{b_{i,1} \to b_{i,2} \to \cdots \to b_{i,\delta_i}}$ are also arranged in counterclockwise order. 
    
\item Lastly, we replace the subgraph $(V^{\p{b_i}}, E^{\p{b_i}})$ with $(\widetilde{V}^{\p{b_i}}, \widetilde{E}^{\p{i}})$.
 \end{enumerate}
  In \Cref{Fig:Mloop}, we illustrate the above procedure for an example with $\delta_i=5$. 
  \begin{figure}[h]\label{b_i-loop-replacement}
    \centering
    \begin{tikzpicture}
      \coordinate (bi) at (0, 0);
      
      \fill (bi) circle (1pt) node[left=4pt]{$b_i$};
      \foreach \i/\t in {1/72, 2/144, 3/216, 4/288, 5/360} {
        \fill (\t:2) circle (1pt);
        \draw (bi) -- (\t:2);
        \node at (\t+72:2.3) {$c_{\i}$};
      }
    \end{tikzpicture}
    $\quad$
    \begin{tikzpicture}
      \node at (-3, 0) {$\to$};
      \node at (0, 0) {$M$};
      \foreach \i/\t in {1/72, 2/144, 3/216, 4/288, 5/360} {
        \fill (\t:2) circle (1pt);
        \fill (\t:1) circle (1pt);
        \draw (\t:1) -- (\t:2);
        \draw (\t:1) -- (\t+72:1);
        \node at (\t+72:2.3) {$c_{\i}$};
        \node at (\t+92:1.3) {$b_{i,\i}$};
      }
    \end{tikzpicture}
    \caption{Replacing $b_i$ with an $M$-loop with 5 sides.}\label{Fig:Mloop}
  \end{figure}

Repeating these steps for each internal vertex $b_i$, $i\in\qqq{\fm}$, we get a graph \smash{$\Gamma^{\p{M}}_{\ba}$}, which we will refer to as the \emph{$M$-graph corresponding to $\Gamma_{\ba}$}. Note that the order in which we replace $b_i$'s by $M$-loops does not matter, 
and every boundary edge $\p{a_{k-1}, a_k}$ still belongs to exactly one polygonal region in the $M$-graph \smash{$\Gamma^{\p{M}}_{\ba}$}. 
With a slight abuse of notation, we will use $R_k$ to denote the sub-region containing $\p{a_{k-1}, a_k}$ in both $\Gamma_{\ba}$ and \smash{$\Gamma^{\p{M}}_{\ba}$}; see \Cref{Fig:Mloop2} for an example.
\begin{figure}[h]\label{m-graph-region}
    \centering
    \begin{tikzpicture}
      \fill (-1, 0) circle (1pt);
      \fill (1, 0) circle (1pt);
      \fill (0, 2) circle (1pt);
      
      \draw (-1, 0) node[below]{$a_{k-1}$}
        [dashed]-- (1, 0) node[below]{$a_k$}; 
    \draw (1, 0)
        -- (0, 2) node[above]{$b_i$}
        -- (-1, 0);
      \node at (0, 0.75) {$R_k$};
      \node at (0, -1) {$\Gamma_{\ba}$};
    \end{tikzpicture}
    \hspace{1in}
    \begin{tikzpicture}
      \fill (-1, 0) circle (1pt);
      \fill (1, 0) circle (1pt);
      \fill (0.5, 2) circle (1pt);
      \fill (-0.5, 2) circle (1pt);
      
      \draw (-1, 0) node[below]{$a_{k-1}$}
        [dashed]-- (1, 0) node[below]{$a_k$};
    \draw (1,0)
        -- (0.5, 2)  node[above right]{$b_{i,j}$}
        -- (-0.5, 2) node[above left]{$b_{i,j-1}$}
                     node[pos=0.5, above]{$M$}
        -- (-1,0);
      \node at (0, 0.75) {$R_k$};
      \node at (0, -1) {$\Gamma^{\p{M}}_{\ba}$};
    \end{tikzpicture}
    \caption{$R_k$ refers to the region containing $\protect\p{a_{k-1}, a_k}$ in both $\Gamma_{\ba}$ and its $M$-graph.} 
    \label{Fig:Mloop2}
  \end{figure}
\end{definition}

We refer readers to \Cref{example} for an example of a canonical tree partition $\Gamma_{\ba}$ and its $M$-graph $\Gamma_{\ba}^{(M)}$. By definition, it is easy to see the following properties for $M$-graphs.
  \begin{figure}[h]
    \centering
     \scalebox{0.9}{
    \begin{tikzpicture}
      \coordinate (b1) at (-1, 0);
      \coordinate (b2) at (1, 0);
      
      \fill (b1) circle (1pt);
      \fill (b2) circle (1pt);

      \foreach \i/\t in {1/60, 2/120, 3/180, 4/240, 5/300, 6/360} {
        \coordinate (a\i) at (\t+90:2.5);
        \fill (a\i) circle (1pt);
        \draw (a\i) [dashed]-- (\t+150:2.5);
        \node at (\t+90:2.8) {$a_{\i}$};
        \node at (\t+60:2.5) {$\sigma_{\i}$};
      }
      
      \draw (a6) -- (b1);
      \draw (a1) -- (b1);
      \draw (a2) -- (b1);
      \draw (a3) -- (b1);
      
      \draw (a4) -- (b2);
      \draw (a5) -- (b2);
      
      \draw (b1) -- (b2);

      \node at (0, -3.5) {$\Gamma_{\ba}$};
    \end{tikzpicture}
    \hspace{1in}
    \begin{tikzpicture}
      \foreach \i/\t in {1/60, 2/120, 3/180, 4/240, 5/300, 6/360} {
        \coordinate (a\i) at (\t+90:2.5);
        \fill (a\i) circle (1pt);
        \draw (a\i) [dashed]-- (\t+150:2.5);
        \node at (\t+90:2.8) {$a_{\i}$};
        \node at (\t+60:2.5) {$\sigma_{\i}$};
      }
      
      \coordinate (c11) at ($(b1)!0.45!(a6)$);
      \coordinate (c12) at ($(b1)!0.45!(a1)$);
      \coordinate (c13) at ($(b1)!0.45!(a2)$);
      \coordinate (c14) at ($(b1)!0.45!(a3)$);
      \coordinate (c15) at ($(b1)!0.45!(b2)$);
      
      \coordinate (c21) at ($(b2)!0.25!(b1)$);
      \coordinate (c22) at ($(b2)!0.45!(a4)$);
      \coordinate (c23) at ($(b2)!0.45!(a5)$);
      
      \foreach \i/\nc in {1/5, 2/3} {
        \foreach \j in {1, 2, ..., \nc} {
          \fill (c\i\j) circle (1pt);
        }
      }
      
      \foreach \i/\nc in {1/5, 2/3} {
        \draw (c\i1)
        \foreach \j in {1, ..., \nc} {
           -- (c\i\j)
        }
          -- cycle;
      }
      
      \draw (c11) -- (a6);
      \draw (c12) -- (a1);
      \draw (c13) -- (a2);
      \draw (c14) -- (a3);
      
      \draw (c21) -- (c15);
      
      \draw (c22) -- (a4);
      \draw (c23) -- (a5);
      
      \node at ($0.2*(c11) + 0.2*(c12) + 0.2*(c13) + 0.2*(c14) + 0.2*(c15)$) {$M$};
      \node at ($0.333*(c21) + 0.333*(c22) + 0.333*(c23)$) {$M$};
      
      \node at (0, -3.5) {$\Gamma^{\p{M}}_{\ba}$};
    \end{tikzpicture}}
    \caption{Example of $\Gamma_{\ba} \in \TSP\protect\p{\mathcal{P}_{\ba}}$ and its corresponding $M$-graph.}\label{example}
  \end{figure}
  
\begin{claim}[$3$-connectedness of $M$-graphs]\label{unique-interior-edge}
  Let $\Gamma_{\ba} \in \TSP\p{\mathcal{P}_{\ba}}$ and let $\Gamma^{\p{M}}_{\ba}$ be its corresponding $M$-graph as in Definition \ref{m-loop-tsp}. Then, every vertex $c$ is connected to exactly three edges in \smash{$\Gamma^{\p{M}}_{\ba}$}:
  \begin{enumerate}
    \item if $c$ is an internal vertex, then $c$ is connected to exactly $2$ $M$-edges; 
    \item if $c$ is an external vertex, then $c$ is connected to exactly $2$ boundary edges;
    \item every vertex is connected to exactly one unlabeled interior edge (i.e., an internal edge between $M$-loops or an external edge between an external vertex and an $M$-loop). 
  \end{enumerate}
\end{claim}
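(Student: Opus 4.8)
The plan is to verify the claim directly from the construction in \Cref{m-loop-tsp}, treating internal and external vertices separately and then checking the final ``moreover'' sentence.

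\textbf{Setup and warm-up on the underlying tree.} First I would recall the structure of a canonical tree partition $\Gamma_{\ba} \in \TSP(\mathcal P_{\ba})$ from \Cref{def:canpnical_part}: after deleting the $\fn$ boundary edges, the interior edges form a tree whose leaves are exactly the external vertices $a_1, \ldots, a_\fn$ and whose remaining (internal) vertices are $b_1, \ldots, b_\fm$. Thus in $\Gamma_{\ba}$ every external vertex $a_k$ is incident to exactly two boundary edges (the sides $(a_{k-1}, a_k)$ and $(a_k, a_{k+1})$) plus exactly one interior tree-edge, while every internal vertex $b_i$ has some degree $\delta_i \ge 1$ in the tree (and, since it is not a leaf, in fact $\delta_i \ge 2$; I would note $\delta_i \ge 3$ is not needed here). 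The passage from $\Gamma_{\ba}$ to $\Gamma^{(M)}_{\ba}$ does not touch the external vertices or the boundary edges at all, so claim (2) and the incidence count of external vertices are immediate: an external vertex $a_k$ remains incident to its two boundary edges and to exactly one non-boundary edge, namely the (former tree-)edge $(c_j, b_{i,j})$ inherited from $(a_k, b_i)$ after $b_i$ is split. This edge is unlabeled, which already secures the ``moreover'' statement for external vertices.

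\textbf{Internal vertices after the $M$-loop replacement.} Now fix an internal vertex of $\Gamma^{(M)}_{\ba}$; by construction every such vertex is one of the new vertices $b_{i,j}$ created when splitting some $b_i$ of degree $\delta_i$ into the cycle $C^{(b_i)} = (b_{i,1}, \ldots, b_{i,\delta_i})$. Reading off $\widetilde E^{(b_i)}$ from \Cref{m-loop-tsp}: the vertex $b_{i,j}$ is an endpoint of the two consecutive $M$-edges $(b_{i,j-1}, b_{i,j}; M)$ and $(b_{i,j}, b_{i,j+1}; M)$ of the cycle (indices mod $\delta_i$), together with exactly one further edge $(c_j, b_{i,j})$ connecting it to the $j$-th neighbor $c_j$ of the original $b_i$. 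That is three edges total, two of them $M$-edges, proving (1) and the degree count for internal vertices. The only subtlety is the degenerate case $\delta_i = 2$, where the ``cycle'' $C^{(b_i)}$ on two vertices consists of two parallel $M$-edges between $b_{i,1}$ and $b_{i,2}$; I would point out that these are genuinely two distinct edges in the multigraph $\Gamma^{(M)}_{\ba}$, so each of $b_{i,1}, b_{i,2}$ is still incident to two $M$-edges plus one edge to a $c$-vertex, and the count is unaffected.

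\textbf{Finishing the ``moreover'' statement.} It remains to see that every vertex is incident to exactly one unlabeled non-boundary edge. For an external vertex this was already noted. For an internal vertex $b_{i,j}$, of its three incident edges two are $M$-labeled and the third is $(c_j, b_{i,j})$; this edge is not a boundary edge (boundary edges join two external vertices) and carries no label, so it is the unique unlabeled non-boundary edge at $b_{i,j}$. I do not expect any real obstacle here — the statement is essentially a bookkeeping consequence of the explicit local replacement rule in \Cref{m-loop-tsp} — so the ``hard part'' is merely being careful about the $\delta_i = 2$ multigraph case and making explicit that the replacement procedure is purely local (it changes only the star around each $b_i$), so that incidences at untouched vertices are literally inherited from $\Gamma_{\ba}$ and the warm-up facts about $\TSP(\mathcal P_{\ba})$ apply verbatim.
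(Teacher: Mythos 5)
Your proof is correct and is exactly the direct verification from Definition \ref{m-loop-tsp} that the paper has in mind — the paper offers no proof, merely asserting that the claim is ``easy to see from the definition.'' Your care about the degenerate $\delta_i=2$ multigraph case and the locality of the replacement is sound bookkeeping, though neither point creates any real difficulty.
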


Each vertex will inherit a charge from one of the boundary edges. However, each vertex is adjacent to two regions. To have a consistent way of fixing charges for all vertices, we introduce the following notation. 

\begin{definition}[Canonical directed edge]\label{canonical-directed-edge}
Let $\Gamma_{\ba} \in \TSP\p{\mathcal{P}_{\ba}}$ and let $\Gamma^{\p{M}}_{\ba}$ be its corresponding $M$-graph as in Definition \ref{m-loop-tsp}. 
For each $k \in \qqq{\fn}$, the region $R_k$ is a polygon and hence has exactly two paths from $a_{k-1}$ to $a_k$; one is $\p{a_{k-1} \to a_k}$, and we label the other as $p_k \coloneqq \p{d_{k,1} \to d_{k,2} \to \cdots \to d_{k,\ell_k}}$ with $d_{k,1} = a_{k-1}$ and $d_{k,\ell_k} = a_k$. We call it a \emph{canonical directed path}.

Now, for each vertex $c \in \Gamma^{\p{M}}_{\ba}$, \Cref{unique-interior-edge} guarantees that there is a unique {\bf unlabeled interior edge} (i.e., a non-$M$ and non-boundary edge) $\p{c_0, c}$ connected to $c$. Then, there is a unique pair $k \in \qqq{\fn}$ and $2 \leq j \leq \ell_k$ such that $\p{c_0 \to c} = \p{d_{k,j-1} \to d_{k,j}}$. We call $\p{d_{k,j-1} \to d_{k,j}}$ a \emph{canonical directed edge}. 

We refer readers to \Cref{Fig:charge-example} for an example of the above definitions: the canonical directed path in $R_6$ is $(a_5\to b_{2,3}\to b_{2,1}\to b_{1,5}\to b_{1,1}\to a_6)$, which leads to the canonical directed edges $\p{a_5 \to b_{2,3}}$, $\p{b_{2,1}\to b_{1,5}}$, and $\p{b_{1,1} \to a_6}$ for the vertices $b_{2,3}$, $b_{1,5}$, and $a_6$, respectively. 
\end{definition}


For each vertex $c$, with its canonical directed edge selected as in \Cref{canonical-directed-edge}, we can define its principal region and its charge. 

\begin{definition}[Charges of vertices]\label{vertex-charge}
  For each vertex $c \in \Gamma^{\p{M}}_{\ba}$, let $\p{k, j}$ be the unique pair that labels its canonical directed edge \(\p{d_{k,j-1} \to d_{k,j}}\) from \Cref{canonical-directed-edge}. (For example, we have $(k,j)=(6,4)$ for the vertex $b_{1,5}$ in \Cref{Fig:charge-example}.) 
  Then, we call $R_k$ the \emph{principal region of $c$} and denote $\mathcal{P}R\p{c} := k$. 
  We then define the \textbf{charge} $q\p{c} \in \set{+, -}$ of $c$ by $q\p{c} := \sigma_{\mathcal{P}R\p{c}}$, i.e., the charge of its principal region.
\end{definition}

\begin{example}\label{charge-example}
  \begin{figure}[h]
    \centering
    \scalebox{0.8}{
    \begin{tikzpicture}
      \foreach \i/\t/\co/\charge in {1/60/red/+, 2/120/blue/-, 3/180/red/+, 4/240/blue/-, 5/300/red/+, 6/360/blue/-} {
        \coordinate (a\i) at (\t+90:3.5);
        \fill (a\i) circle (1pt);
        \draw[
    \co, arrows={->[scale=1.5]},
    shorten >= 2pt
        ] (a\i) [dashed]-- (\t+150:3.5);
        \node at (\t+90:3.8) {$a_{\i}$};
        \node at (\t+60:3.5) {$\charge$};
      }
      
      \coordinate (c11) at ($(b1)!0.45!(a6)$);
      \coordinate (c12) at ($(b1)!0.45!(a1)$);
      \coordinate (c13) at ($(b1)!0.45!(a2)$);
      \coordinate (c14) at ($(b1)!0.45!(a3)$);
      \coordinate (c15) at ($(b1)!0.45!(b2)$);
      
      \coordinate (c21) at ($(b2)!0.25!(b1)$);
      \coordinate (c22) at ($(b2)!0.45!(a4)$);
      \coordinate (c23) at ($(b2)!0.45!(a5)$);
      
      \foreach \i/\nc in {1/5, 2/3} {
        \foreach \j in {1, 2, ..., \nc} {
          \fill (c\i\j) circle (1pt);
        }
      }
      
      \foreach \i/\nc in {1/5, 2/3} {
        \draw (c\i1)
        \foreach \j in {1, ..., \nc} {
           -- (c\i\j)
        }
          -- cycle;
      }
      
      \draw (c11) node[xshift=14pt]{$b_{1,1}$} -- (a6);
      \draw (c12) node[above=6pt]{$b_{1,2}$} -- (a1);
      \draw (c13) node[xshift=2pt, yshift=-14pt]{$b_{1,3}$} -- (a2);
      \draw (c14) node[xshift=14pt]{$b_{1,4}$} -- (a3);
      
      \draw (c21) node[yshift=12pt]{$b_{2,1}$} -- (c15) node[xshift=8pt, yshift=-12pt]{$b_{1,5}$};
      
      \draw (c22) node[below left]{$b_{2,2}$} -- (a4);
      \draw (c23) node[xshift=12pt, yshift=-8pt]{$b_{2,3}$} -- (a5);
      
      \node at ($0.2*(c11) + 0.2*(c12) + 0.2*(c13) + 0.2*(c14) + 0.2*(c15)$) {$M$};
      \node at ($0.333*(c21) + 0.333*(c22) + 0.333*(c23)$) {$M$};
    \end{tikzpicture}
    \hspace{1in}
    \begin{tikzpicture}
      \fill (a5) circle (1pt);
      \fill (a6) circle (1pt);
      
      \node at (420:3.5) {$-$};
      \node at (390:3.8) {$a_5$};
      \node at (90:3.8) {$a_6$};
      
      \fill (c11) circle (1pt);
      \fill (c15) circle (1pt);
      \fill (c21) circle (1pt);
      \fill (c23) circle (1pt);
      
      \draw[red, arrows={->[scale=1.5]}, shorten >= 2pt] (a5) [dashed]-- (a6);
      \draw[red, arrows={->[scale=1.2]},
    shorten >= 2pt] (a5) [thick]-- (c23) node[below right, black]{$b_{2,3}$};
      \draw[arrows={->[scale=1.2]},
    shorten >= 2pt] (c23) [thick]-- (c21) node[pos=0.5, xshift=6pt, yshift=-8pt, black]{$M$};
      \draw[red, arrows={->[scale=1.2]},
    shorten >= 2pt] (c21)  node[xshift=6pt, yshift=-10pt, black]{$b_{2,1}$} [thick]-- (c15);
      \draw[arrows={->[scale=1.2]},
    shorten >= 2pt] (c15) node[xshift=-6pt, yshift=-10pt, black]{$b_{1,5}$} [thick]-- (c11) node[pos=0.5, xshift=-10pt, yshift=-5pt, black]{$M$};
      \draw[red, arrows={->[scale=1.2]},
    shorten >= 2pt] (c11) node[left, black]{$b_{1,1}$} [thick]-- (a6);
      
      \node at (65:1.75) {$R_6$};
    \end{tikzpicture}}
    \caption{Example of an $M$-graph with an alternating boundary. The right picture depicts the principal region for $b_{2,3}$, $b_{1,5}$, and $a_6$, which
    assigns the charges $q\protect\p{b_{2,3}} = q\protect\p{b_{1,5}} = q\protect\p{a_6} = -$. Here, the corresponding canonical directed edges are highlighted in red. 
    }\label{Fig:charge-example}
  \end{figure}
  In Figure \ref{Fig:charge-example}, the external vertices have charges
  \begin{gather*}
    q\p{a_1} = q\p{a_3} = q\p{a_5} = +, \quad
    q\p{a_2} = q\p{a_4} = q\p{a_6} = -,
  \end{gather*}
  and the internal vertices have charges
  \begin{gather*}
    q\p{b_{1,1}} = q\p{b_{1,3}} = q\p{b_{2,2}} = +, \quad
    q\p{b_{1,2}} = q\p{b_{1,4}} = q\p{b_{1,5}} = q\p{b_{2,1}} = q\p{b_{2,3}}  = -.
  \end{gather*}
  As illustrated in the right picture, the vertices $b_{2,3}$, $b_{1,5}$, and $a_6$ all have principal region $R_6$, and hence have charge $-$. To see two more examples of the charges: for the vertex $b_{1,1}$, the canonical directed edge is $\p{a_6 \to b_{1,1}}$ in $R_1$, which gives ${\cal P}R(b_{1,1})=R_1$ and the charge $q\p{b_{1,1}} = +$; for the vertex $b_{2,1}$, the canonical directed edge is $\p{b_{1,5} \to b_{2,1}}$ in $R_4$, which gives ${\cal P}R(b_{2,1})=R_4$ and $q\p{b_{2,1}} = -$.
\end{example}

With the charges of all vertices fixed, we can now assign values to the unlabeled interior edges in \smash{$\Gamma^{\p{M}}_{\ba}$}. 
\begin{definition}[Values of unlabeled edges]\label{edge-value-definition}
 Let $\Gamma_{\ba} \in \TSP\p{\mathcal{P}_{\ba}}$ and let $\Gamma^{\p{M}}_{\ba}$ be its corresponding $M$-graph as in Definition \ref{m-loop-tsp}. Set
  \begin{equation}\label{unlabeled-edge-set}
    \mathcal{E}_0\p{\Gamma^{\p{M}}_{\ba}}
      \coloneqq \set{e \suchthat e\text{ is an unlabeled, non-boundary edge in }\Gamma^{\p{M}}_{\ba}}.
  \end{equation}
Given an arbitrary $\bsig\in\{+,-\}^\fn$, we assign charges to the vertices of \smash{$\Gamma^{\p{M}}_{\ba}$} according to the rule described in \Cref{vertex-charge}. Then, we define the value function $ f_{\bsig} \colon \mathcal{E}_0(\Gamma^{\p{M}}_{\ba}) \to \C$ in the following way.
  \begin{enumerate}
    \item If $e = \p{a_k, b}$, i.e., $e$ is an external edge, then 
      \begin{equation}\label{f-external}
        f_{\bsig}\p{e} \coloneqq \Thetagen^{\p{q\p{a_k}, q\p{b}}}_{t,a_kb}.
      \end{equation}
    \item If $e = \p{b_1, b_2}$, i.e., $e$ is an internal edge, then 
      \begin{equation}\label{f-internal}
        f_{\bsig}\p{e}
          \coloneqq \p{S_t\Thetagen_t^{\p{q\p{b_1}, q\p{b_2}}}}_{b_1b_2}
          = \p{S_t\Thetagen_t^{\p{q\p{b_2}, q\p{b_1}}}}_{b_2b_1},
      \end{equation}
      where the second equality is due to the charge symmetry (\ref{Theta-almost-symmetry}).
  \end{enumerate}
  We emphasize that $\Thetagen_t$ (defined under  \Cref{assm:general}) need not satisfy the charge symmetry, so the order of the lower indices in (\ref{f-external}) is important.
\end{definition}

\begin{definition}[$M$-loops]\label{M-loop-value-definition}
  In the setting of Definition \ref{m-loop-tsp}, for each $i \in \qqq{\fm}$, recall the notation $C^{\p{b_i}}$ from (\ref{b_i-cycle}). Then, we define the \emph{$M$-loop generated by $b_i$} as
  \begin{equation}\label{eq:Mloop_defgen}
    \cal F\p{b_i} \equiv \Mgen_{\bm{\sigma}{\p{b_i}}, \ba{\p{b_i}}}^{(\delta_i)}
      := \avg{\prod_{j=1}^{\delta_i} \left[M\p{\sigma_j{(b_i)}}F_{a_j(b_i)}\right]},
  \end{equation}
  where the parameters $\bm{\sigma}{\p{b_i}}\in\{+,-\}^{\delta_i}$ and $\ba{\p{b_i}}\in\qqq{N}^{\delta_i}$ are given as follows. 
  \begin{enumerate}
    \item For $j \in \qqq{\delta_i}$, take the edge $e_j{\p{b_i}} = \p{b_{i,j-1} \to b_{i,j}}$, with the cyclic convention that $b_{i,0} = b_{i,\delta_i}$. By the construction in Definition \ref{m-loop-tsp}, there is exactly one region $R_{k_j}$ that contains $e_j$, and we set ${\sigma}_j{\p{b_i}} = {\sigma}_{k_j}$. In other words, 
    $e_j{\p{b_i}}$ is an edge on the directed path $p_{k_j}$ as in Definition \ref{canonical-directed-edge} with the same charge as the region $R_{k_j}$.
    \item For $j \in \qqq{\delta_i}$, we set $a_j{\p{b_i}} = b_{i,j}$.
  \end{enumerate}
  In simpler terms, $\cal F\p{b_i}$ is an $M$-loop that shares the same orientation as  $\mathcal{P}_{\ba}$, with each edge having the same color as the region it lies in. See \Cref{Fig2:charge-example} for an example of how to assign charges and directions to the $M$-loops.
  \begin{figure}[h]
    \centering
     \scalebox{0.8}{
    \begin{tikzpicture}
      \foreach \i/\t/\co/\charge in {1/60/red/+, 2/120/blue/-, 3/180/red/+, 4/240/blue/-, 5/300/red/+, 6/360/blue/-} {
        \coordinate (a\i) at (\t+90:3.5);
        \fill (a\i) circle (1pt);
        \draw[\co, dashed,
        arrows={->[scale=1.5]},
    shorten >= 2pt
        ] (a\i) -- (\t+150:3.5);
        \node at (\t+90:3.8) {$a_{\i}$};
        \node at (\t+60:3.5) {$\charge$};
      }
      
      \coordinate (c11) at ($(b1)!0.45!(a6)$);
      \coordinate (c12) at ($(b1)!0.45!(a1)$);
      \coordinate (c13) at ($(b1)!0.45!(a2)$);
      \coordinate (c14) at ($(b1)!0.45!(a3)$);
      \coordinate (c15) at ($(b1)!0.45!(b2)$);
      
      \coordinate (c21) at ($(b2)!0.25!(b1)$);
      \coordinate (c22) at ($(b2)!0.45!(a4)$);
      \coordinate (c23) at ($(b2)!0.45!(a5)$);
      
      \foreach \i/\nc in {1/5, 2/3} {
        \foreach \j in {1, 2, ..., \nc} {
          \fill (c\i\j) circle (1pt);
        }
      }
      
      \foreach \i/\j/\co in {1/2/blue, 2/3/red, 3/4/blue, 4/5/red, 5/1/red} {
        \draw[\co, arrows={->[scale=1.5]},
    shorten >= 2pt] (c1\i) -- (c1\j);
      }
      
      \foreach \i/\j/\co in {1/2/red, 2/3/blue, 3/1/red} {
        \draw[\co, arrows={->[scale=1.5]},
    shorten >= 2pt] (c2\i) -- (c2\j);
      }
      
      \draw (c11) node[xshift=14pt]{$b_{1,1}$} -- (a6);
      \draw (c12) node[above=6pt]{$b_{1,2}$} -- (a1);
      \draw (c13) node[xshift=2pt, yshift=-14pt]{$b_{1,3}$} -- (a2);
      \draw (c14) node[xshift=14pt]{$b_{1,4}$} -- (a3);
      
      \draw (c21) node[yshift=12pt]{$b_{2,1}$} -- (c15) node[xshift=8pt, yshift=-12pt]{$b_{1,5}$};
      
      \draw (c22) node[below left]{$b_{2,2}$} -- (a4);
      \draw (c23) node[xshift=12pt, yshift=-8pt]{$b_{2,3}$} -- (a5);
      
      \node at ($0.2*(c11) + 0.2*(c12) + 0.2*(c13) + 0.2*(c14) + 0.2*(c15)$) {$M$};
      \node at ($0.333*(c21) + 0.333*(c22) + 0.333*(c23)$) {$M$};
    \end{tikzpicture}}
    \caption{Example of how to color and orient the $M$-loops.} \label{Fig2:charge-example}
  \end{figure}
\end{definition}

\begin{remark}
  From \Cref{unique-interior-edge}, it is not hard to see that for $j \in \qqq{\delta_i}$, we have $q\p{b_{i,j-1}} = \sigma_j{\p{b_i}}$. In other words, given an $M$-edge $e_j{\p{b_i}} = \p{b_{i,j-1} \to b_{i,j}}$, the charge of $b_{i,j-1}$ is the same as the charge of $e_j{\p{b_i}}$. This provides a more convenient, though perhaps less natural, method for assigning charges to internal vertices.
\end{remark}

With the above preparations, we are now ready to define the values of the $M$-graphs corresponding to the canonical partitions. 

\begin{definition}\label{M-graph-value-definition}
  In the setting of Definitions \ref{m-loop-tsp} and \ref{edge-value-definition}, 
  define
  \begin{equation}\label{M-graph-value-unsummed}
    \Gamma^{\p{M;\mathbf{b}}}_{t,\bsig,\ba}
      \coloneqq \p{\prod_{i=1}^\fm \cal F\p{b_i}} \cdot  \prod_{e \in \mathcal{E}_0(\Gamma^{\p{M}}_{\ba})} f_{\bsig}\p{e}
        .
  \end{equation}
Here, $f_{\bsig}\p{e}$ may depend on $t$, while $\cal F\p{b_i}$ does not. Then, we define
  \begin{equation}\label{M-graph-value}
    \Gamma^{\p{M}}_{t,\bm{\sigma},\ba}
      \coloneqq \sum_{i=1}^\fm \sum_{j=1}^{\delta_i} \sum_{b_{i,j} \in \qqq{N}} \Gamma^{\p{M;\mathbf{b}}}_{t,\bsig,\ba}.
  \end{equation}
Note $\Gamma^{\p{M}}_{\ba}$ represents a graph, while $\Gamma^{\p{M;\mathbf{b}}}_{t,\bsig,\ba}$ and $\Gamma^{\p{M}}_{t,\bm{\sigma},\ba}$ are complex functions of $t$, $\bm{\sigma}$, $\ba$, and $\bfb$. 
Moreover, the boundary edges are irrelevant to the graph values---one can think that each of them has a value of 1.
\end{definition}

\begin{example}\label{n=6,one-graph}
We show how to assign the value to the graph from Figure \ref{Fig2:charge-example}. Recall that the charges of all vertices have been fixed in \Cref{charge-example}.
  \begin{enumerate}
    \item Since $q\p{a_1} = +$ and $q\p{b_{1,2}} = -$, by \eqref{f-external}, we have 
      \(
        f_{\bsig}\p{\p{a_1, b_{1,2}}} = \Thetagen^{\p{+,-}}_{t,a_1b_{1,2}}.
      \)
    \item 
    Since $q\p{a_2} = -$ and $q\p{b_{1,3}} = +$, by \eqref{f-external}, we have 
      \(
        f_{\bsig}\p{\p{a_2, b_{1,3}}} = \Thetagen^{\p{-,+}}_{t,a_2b_{1,3}}.
      \)
    \item For the edges between $M$-loops, we have $q\p{b_{2,1}} = q\p{b_{1,5}} = -$, so by \eqref{f-internal}, we have 
      \[
        f_{\bsig}\p{\p{b_{2,1}, b_{1,5}}}
          = \p{S_t\Thetagen_t^{\p{-,-}}}_{b_{2,1}b_{1,5}}
          = \p{S_t\Thetagen_t^{\p{-,-}}}_{b_{1,5}b_{2,1}}.
      \]
  \end{enumerate}
  Assigning values to all edges in \Cref{Fig2:charge-example} as above, we finally get that 
  \begin{align*}
    &\Gamma^{\p{M;\mathbf{b}}}_{t,\bsig,\ba}
      = \Thetagen^{\p{+,-}}_{t,a_1b_{1,2}}
        \Thetagen^{\p{-,+}}_{t,a_2b_{1,3}}
        \Thetagen^{\p{+,-}}_{t,a_3b_{1,4}}
        \Thetagen^{\p{-,+}}_{t,a_4b_{2,2}}
        \Thetagen^{\p{+,-}}_{t,a_5b_{2,3}}
        \Thetagen^{\p{-,+}}_{t,a_6b_{1,1}} \\
        &\times
          \avg{M^*F_{b_{1,1}} MF_{b_{1,2}} M^*F_{b_{1,3}} MF_{b_{1,4}} M^*F_{b_{1,5}}}
          \p{S_t\Thetagen_t^{\p{-,-}}}_{b_{1,5}b_{2,1}}
          \avg{M^*F_{b_{2,1}} M^*F_{b_{2,2}} MF_{b_{2,3}}}.
  \end{align*}
\end{example}

\begin{figure}[h]
    \hspace{\stretch{1}}
     \scalebox{0.7}{
    \begin{tikzpicture}
      \coordinate (a_1) at (-2.7, 0.0);\fill (a_1) circle (1pt);\node at (-2.97, 0.0) {$a_1$};\coordinate (a_2) at (-0.0, -2.7);\fill (a_2) circle (1pt);\node at (-0.0, -2.97) {$a_2$};\coordinate (a_3) at (2.7, -0.0);\fill (a_3) circle (1pt);\node at (2.97, -0.0) {$a_3$};\coordinate (a_4) at (0.0, 2.7);\fill (a_4) circle (1pt);\node at (0.0, 2.97) {$a_4$};\coordinate (d_{9,1}) at (-1.238, -0.562);\fill (d_{9,1}) circle (1pt);\coordinate (d_{9,2}) at (-0.563, -1.238);\fill (d_{9,2}) circle (1pt);\coordinate (d_{5,3}) at (1.238, 0.562);\fill (d_{5,3}) circle (1pt);\coordinate (d_{5,4}) at (0.563, 1.238);\fill (d_{5,4}) circle (1pt);\coordinate (d_{5,9}) at (0.45, 0.45);\fill (d_{5,9}) circle (1pt);\coordinate (d_{9,5}) at (-0.45, -0.45);\fill (d_{9,5}) circle (1pt);\node at (-1.62, -1.62) {$\sigma_{2}$};\node at (1.62, -1.62) {$\sigma_{3}$};\node at (1.62, 1.62) {$\sigma_{4}$};\node at (-1.62, 1.62) {$\sigma_{1}$};\node at (-0.72, -0.72) {$M$};\node at (0.72, 0.72) {$M$};\draw (a_1) -- (d_{9,1});\draw (a_1) [dashed]-- (a_2);\draw (d_{9,1}) -- (d_{9,2});\draw (a_2) -- (d_{9,2});\draw (a_2) [dashed]-- (a_3);\draw (d_{9,2}) -- (d_{9,5});\draw (a_3) -- (d_{5,3});\draw (a_3) [dashed]-- (a_4);\draw (d_{5,3}) -- (d_{5,4});\draw (a_4) -- (d_{5,4});\draw (a_4) [dashed]-- (a_1);\draw (d_{5,4}) -- (d_{5,9});\draw (d_{5,9}) -- (d_{9,5});\draw (d_{5,9}) -- (d_{5,3});\draw (d_{9,5}) -- (d_{9,1});
          \node[above] at (d_{9,1}) {$b_{1,1}$};
          \node[right] at (d_{9,2}) {$b_{1,2}$};
          \node[below] at (d_{5,3}) {$b_{2,1}$};
          \node[left] at (d_{5,4}) {$b_{2,2}$};
          \node[xshift=-12pt, yshift=2pt] at (d_{5,9}) {$b_{2,3}$};
          \node[xshift=12pt, yshift=-4pt] at (d_{9,5}) {$b_{1,3}$};
    \end{tikzpicture}
    \hspace{\stretch{1}}
    \begin{tikzpicture}
      \coordinate (a_1) at (-2.7, 0.0);\fill (a_1) circle (1pt);\node at (-2.97, 0.0) {$a_1$};\coordinate (a_2) at (-0.0, -2.7);\fill (a_2) circle (1pt);\node at (-0.0, -2.97) {$a_2$};\coordinate (a_3) at (2.7, -0.0);\fill (a_3) circle (1pt);\node at (2.97, -0.0) {$a_3$};\coordinate (a_4) at (0.0, 2.7);\fill (a_4) circle (1pt);\node at (0.0, 2.97) {$a_4$};\coordinate (d_{8,1}) at (-1.237, 0.563);\fill (d_{8,1}) circle (1pt);\coordinate (d_{7,2}) at (0.562, -1.238);\fill (d_{7,2}) circle (1pt);\coordinate (d_{7,3}) at (1.238, -0.563);\fill (d_{7,3}) circle (1pt);\coordinate (d_{8,4}) at (-0.562, 1.238);\fill (d_{8,4}) circle (1pt);\coordinate (d_{8,7}) at (-0.45, 0.45);\fill (d_{8,7}) circle (1pt);\coordinate (d_{7,8}) at (0.45, -0.45);\fill (d_{7,8}) circle (1pt);\node at (-1.62, -1.62) {$\sigma_{2}$};\node at (1.62, -1.62) {$\sigma_{3}$};\node at (1.62, 1.62) {$\sigma_{4}$};\node at (-1.62, 1.62) {$\sigma_{1}$};\node at (-0.75, 0.75) {$M$};\node at (0.75, -0.75) {$M$};\draw (a_1) -- (d_{8,1});\draw (a_1) [dashed]-- (a_2);\draw (d_{8,1}) -- (d_{8,7});\draw (a_2) -- (d_{7,2});\draw (a_2) [dashed]-- (a_3);\draw (d_{7,2}) -- (d_{7,3});\draw (a_3) -- (d_{7,3});\draw (a_3) [dashed]-- (a_4);\draw (d_{7,3}) -- (d_{7,8});\draw (a_4) -- (d_{8,4});\draw (a_4) [dashed]-- (a_1);\draw (d_{8,4}) -- (d_{8,1});\draw (d_{8,7}) -- (d_{7,8});\draw (d_{8,7}) -- (d_{8,4});\draw (d_{7,8}) -- (d_{7,2});
          \node[xshift=2pt, yshift=-10pt] at (d_{8,1}) {$b_{1,1}$};
          \node[left] at (d_{7,2}) {$b_{2,1}$};
          \node[above] at (d_{7,3}) {$b_{2,2}$};
          \node[xshift=12pt] at (d_{8,4}) {$b_{1,3}$};
          \node[xshift=12pt] at (d_{8,7}) {$b_{1,2}$};
          \node[xshift=-12pt, yshift=-4pt] at (d_{7,8}) {$b_{2,3}$};
    \end{tikzpicture}
    \hspace{\stretch{1}}
    
    \hspace{\stretch{1}}
    \begin{tikzpicture}
      \coordinate (a_1) at (-2.7, 0.0);\fill (a_1) circle (1pt);\node at (-2.97, 0.0) {$a_1$};\coordinate (a_2) at (-0.0, -2.7);\fill (a_2) circle (1pt);\node at (-0.0, -2.97) {$a_2$};\coordinate (a_3) at (2.7, -0.0);\fill (a_3) circle (1pt);\node at (2.97, -0.0) {$a_3$};\coordinate (a_4) at (0.0, 2.7);\fill (a_4) circle (1pt);\node at (0.0, 2.97) {$a_4$};\coordinate (d_{6,1}) at (-0.9, -0.0);\fill (d_{6,1}) circle (1pt);\coordinate (d_{6,2}) at (-0.0, -0.9);\fill (d_{6,2}) circle (1pt);\coordinate (d_{6,3}) at (0.9, -0.0);\fill (d_{6,3}) circle (1pt);\coordinate (d_{6,4}) at (0.0, 0.9);\fill (d_{6,4}) circle (1pt);\node at (-1.62, -1.62) {$\sigma_{2}$};\node at (1.62, -1.62) {$\sigma_{3}$};\node at (1.62, 1.62) {$\sigma_{4}$};\node at (-1.62, 1.62) {$\sigma_{1}$};\node at (0.0, 0.0) {$M$};\draw (a_1) -- (d_{6,1});\draw (a_1) [dashed]-- (a_2);\draw (d_{6,1}) -- (d_{6,2});\draw (a_2) -- (d_{6,2});\draw (a_2) [dashed]-- (a_3);\draw (d_{6,2}) -- (d_{6,3});\draw (a_3) -- (d_{6,3});\draw (a_3) [dashed]-- (a_4);\draw (d_{6,3}) -- (d_{6,4});\draw (a_4) -- (d_{6,4});\draw (a_4) [dashed]-- (a_1);\draw (d_{6,4}) -- (d_{6,1});
          \node[xshift=-6pt, yshift=-8pt] at (d_{6,1}) {$b_{1,1}$};
          \node[xshift=12pt, yshift=-6pt] at (d_{6,2}) {$b_{1,2}$};
          \node[xshift=6pt, yshift=8pt] at (d_{6,3}) {$b_{1,3}$};
          \node[xshift=-12pt, yshift=6pt] at (d_{6,4}) {$b_{1,4}$};
    \end{tikzpicture}}
    \hspace{\stretch{1}}
    \caption{$M$-graphs for $\fn = 4$.}\label{n=4,all-graphs}
  \end{figure}

\begin{example}\label{example:n=4}
We now describe all $M$-graphs for the $\fn=4$ case. From Figure \ref{n=4,all-graphs}, we get the following values:
  \begin{align*}
    &\Thetagen^{\p{\sigma_1,\sigma_2}}_{t,a_1b_{1,1}}
      \Thetagen^{\p{\sigma_2,\sigma_3}}_{t,a_2b_{1,2}}
      \Thetagen^{\p{\sigma_3,\sigma_4}}_{t,a_3b_{2,1}}
      \Thetagen^{\p{\sigma_4,\sigma_1}}_{t,a_4b_{2,2}}
      \p{S_t\Thetagen_t^{\p{\sigma_1,\sigma_3}}}_{b_{1,3}b_{2,3}} \\
      &\hspace{0.5in} \times
        \avg{M\p{\sigma_1}F_{b_{1,1}} M\p{\sigma_2}F_{b_{1,2}} M\p{\sigma_3}F_{b_{1,3}}}
        \avg{M\p{\sigma_3}F_{b_{2,1}} M\p{\sigma_4}F_{b_{2,2}} M\p{\sigma_1}F_{b_{2,3}}}, \\
    &\Thetagen^{\p{\sigma_1,\sigma_2}}_{t,a_1b_{1,1}}
      \Thetagen^{\p{\sigma_2,\sigma_3}}_{t,a_2b_{2,1}}
      \Thetagen^{\p{\sigma_3,\sigma_4}}_{t,a_3b_{2,2}}
      \Thetagen^{\p{\sigma_4,\sigma_1}}_{t,a_4b_{1,3}}
      \p{S_t\Thetagen_t^{\p{\sigma_4,\sigma_2}}}_{b_{1,2}b_{2,3}} \\
      &\hspace{0.5in} \times
        \avg{M\p{\sigma_1}F_{b_{1,1}} M\p{\sigma_2}F_{b_{1,2}} M\p{\sigma_4}F_{b_{1,3}}}
        \avg{M\p{\sigma_2}F_{b_{2,1}} M\p{\sigma_3}F_{b_{2,2}} M\p{\sigma_4}F_{b_{2,3}}}, \\
    &\Thetagen^{\p{\sigma_1,\sigma_2}}_{t,a_1b_{1,1}}
      \Thetagen^{\p{\sigma_2,\sigma_3}}_{t,a_2b_{1,2}}
      \Thetagen^{\p{\sigma_3,\sigma_4}}_{t,a_3b_{1,3}}
      \Thetagen^{\p{\sigma_4,\sigma_1}}_{t,a_4b_{1,4}} \\
      &\hspace{0.5in} \times
        \avg{M\p{\sigma_1}F_{b_{1,1}} M\p{\sigma_2}F_{b_{1,2}} M\p{\sigma_3}F_{b_{1,3}} M\p{\sigma_4}F_{b_{1,4}}}.
  \end{align*}
\end{example}

\subsection{Tree representation}\label{sec_tree}

We now introduce the primitive equation for primitive loops under the more general \Cref{assm:general}. Later, in \Cref{subsec:estimates}, we will see that the equation (\ref{primitive-equation}) will reduce to (\ref{pro_dyncalK}) after introducing the block averages. 

\begin{definition}[Primitive loops]\label{general-primitive-equation}
We define the primitive loops of length $\fn\ge 2$ as the unique solution to the following system of \emph{primitive equations}:
  \begin{equation}\label{primitive-equation}
    \partial_t \Kgen_{t,\bm{\sigma},\ba}^{(\fn)}
      = \sum_{1 \leq k < \ell \leq \fn} \sum_{c, d}
        \p{(\mathcal{G}_L)^{\p{c}}_{k,\ell} \circ \Kgen^{(\fn)}_{t,\bm{\sigma},\ba}}
        S_{cd}
        \p{(\mathcal{G}_R)^{\p{d}}_{k,\ell} \circ \Kgen^{(\fn)}_{t,\bm{\sigma},\ba}},
  \end{equation}
  with the following initial condition at $t=0$ (recall the $M$-loop defined in \eqref{eq:Mloop_defgen}):
  \begin{equation}\label{initial-condition}
    \Kgen_{0,\bm{\sigma},\ba}^{(\fn)} = \Mgen_{\bm{\sigma},\ba}^{(\fn)}.
  \end{equation}
  Here, $\mathcal{G}_L$ and $\mathcal{G}_R$ are cut-and-glue operators understood as actions on $\p{\bm{\sigma}, \ba}$:
  \begin{align*}
(\mathcal{G}_L)^{\p{c}}_{k,\ell}\p{\bm{\sigma}, \ba}&\coloneqq \p{\p{\sigma_1, \ldots, \sigma_k, \sigma_\ell, \ldots, \sigma_\fn}, \p{a_1, \ldots, a_{k-1}, c, a_\ell, \ldots, a_\fn}}, \\(\mathcal{G}_R)^{\p{d}}_{k,\ell}\p{\bm{\sigma}, \ba}
&\coloneqq \p{\p{\sigma_k, \ldots, \sigma_\ell}, \p{a_k, \ldots, a_{\ell-1}, d}}.
  \end{align*}
  Note that these operators are simply reindexing of the $\cutL$ and $\cutR$ operators from Definition \ref{Def:oper_loop}, with a different notation to avoid confusion. 
\end{definition}

The next lemma gives the tree representation formula for the primitive loops.
\begin{lemma}\label{tree-representation}
When $\fn=2$, the primitive loop is given by 
\be\label{Kn2sol_gen}\Kgen^{(2)}_{t,(\sigma_1,\sigma_2)}=\Thetagen_t^{(\sig_1,\sig_2)}\MMgen^{(\sig_1,\sig_2)},\ee 
where we regard $\Kgen^{(2)}_{t,(\sigma_1,\sigma_2)}$ as a matrix with entries $(\Kgen^{(2)}_{t,\p{\sigma_1,\sigma_2}})_{ab} \equiv \Kgen^{(2)}_{t,\p{\sigma_1,\sigma_2},\p{a,b}}$. For primitive loops of length $\fn\ge 3$, we have the following representation formula:
  \begin{equation}\label{eq:tree_rep}
\Kgen_{t,\bm{\sigma},\ba}^{(\fn)}
      = \sum_{\Gamma \in \TSP\p{\mathcal{P}_{\ba}}} \Gamma^{\p{M}}_{t,\bm{\sigma},\ba}.
  \end{equation}
\end{lemma}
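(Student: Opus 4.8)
The plan is to argue by strong induction on the loop length $\fn$, using the fact that the primitive equations \eqref{primitive-equation} are a triangular (self-consistent) system: the right-hand side of the equation for $\Kgen^{(\fn)}$ involves only $\cutL$-loops of length $k-\ell+\fn+1 \le \fn$ and $\cutR$-loops of length $\ell-k+1 \le \fn$, with strict inequality whenever both pieces are nontrivial (if one piece has length $\fn$, the complementary piece has length $2$, and the length-$2$ case is already pinned down by \eqref{Kn2sol_gen}). First I would dispose of the base case $\fn = 2$: the formula \eqref{Kn2sol_gen} is verified exactly as in the Example following \eqref{Kn2sol} — one checks the initial condition ($\Thetagen_0 = I$, so $\Kgen^{(2)}_0 = \MMgen = \Mgen^{(2)}$ by \eqref{Theta-general-propagator} and \eqref{M-definition}) and then differentiates, using the identity $\partial_t \Thetagen_t^{(\sig_1,\sig_2)} = \Thetagen_t^{(\sig_1,\sig_2)} \MMgen^{(\sig_1,\sig_2)} S \,\Thetagen_t^{(\sig_1,\sig_2)}$ (the analogue of \eqref{eq:diff_Theta}), which makes $\Thetagen_t^{(\sig_1,\sig_2)}\MMgen^{(\sig_1,\sig_2)}$ solve \eqref{primitive-equation} for $\fn=2$.

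For the inductive step, since \eqref{primitive-equation} with initial condition \eqref{initial-condition} has a unique solution, it suffices to show that the candidate $\widehat{\Kgen}^{(\fn)}_{t,\bsig,\ba} := \sum_{\Gamma \in \TSP(\mathcal P_{\ba})} \Gamma^{(M)}_{t,\bsig,\ba}$ satisfies both. For the initial condition: at $t=0$ every $S_t\Thetagen_t$ factor on an internal edge vanishes unless the edge is absent, i.e. only the unique tree partition with no internal edges survives — this is the ``star'' partition whose single internal vertex $b_1$ carries one big $M$-loop of length $\fn$ and whose $\fn$ external edges are $\Thetagen_0 = I$ factors — and its value collapses to $\Mgen^{(\fn)}_{\bsig,\ba}$ by the definitions in \Cref{M-loop-value-definition} and \Cref{M-graph-value-definition}. (Here I would quote \Cref{example:n=4}: the third graph there is exactly this star.) For the differential equation: differentiate $\Gamma^{(M)}_{t,\bsig,\ba}$ in $t$ via the product rule over $\mathcal E_0(\Gamma^{(M)}_{\ba})$, and observe that differentiating a single unlabeled edge $e$ — using $\partial_t \Thetagen_t = \Thetagen_t \MMgen S \Thetagen_t$ and $\partial_t(S_t\Thetagen_t) = S\Thetagen_t \MMgen S \Thetagen_t$ — ``inserts'' one more propagator edge together with an $S$-factor and splits the graph. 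The combinatorial content is that summing $\partial_t$ of each edge over all $\Gamma \in \TSP(\mathcal P_{\ba})$ reassembles precisely the right side of \eqref{primitive-equation}: each term $(\cutL)^{(c)}_{k\ell}\circ \Kgen^{(\fn)} \cdot S_{cd}\cdot (\cutR)^{(d)}_{k\ell}\circ \Kgen^{(\fn)}$, expanded via the inductive hypothesis on the two shorter loops, is a sum of products of an $M$-graph for the left polygon and an $M$-graph for the right polygon, glued by an $S$-edge; gluing two canonical tree partitions of the two sub-polygons across the diagonal $(a_k, a_\ell)$ through one extra $S\Thetagen$-edge produces exactly a canonical tree partition of $\mathcal P_{\ba}$, and conversely every edge of every $\Gamma^{(M)}$ arises this way. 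So the map ``(choice of $\Gamma$, choice of edge $e$ to differentiate)'' $\leftrightarrow$ ``(choice of $k<\ell$, choice of tree partitions on the two sides)'' is a bijection matching values, which closes the induction.

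The main obstacle — and where I would spend most of the write-up — is this last bijective/combinatorial identity at the level of $M$-graphs: one must check that (i) the charges assigned by \Cref{vertex-charge} to vertices of the glued graph agree with those assigned on the two sub-polygons (this is where the charge-symmetry identities \eqref{Theta-almost-symmetry} and the remark after \Cref{Fig2:charge-example} that $q(b_{i,j-1}) = \sigma_j(b_i)$ are used to see the labels are unambiguous), (ii) when the ``cut'' falls \emph{inside} an $M$-loop rather than on a clean internal edge, differentiating the corresponding $S_t\Thetagen_t$ factor still yields a valid splitting into two $M$-loops consistent with \Cref{m-loop-tsp}, and (iii) the orientations/counterclockwise orderings are preserved so that the $M$-loop values $\cal F(b_i)$ match on both sides. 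A clean way to organize (i)–(iii) is to phrase everything in terms of the ``doubled'' symmetric matrices $\MMgen$ and $S_t\Thetagen_t$ indexed by $(a,\sigma)$, as suggested in the paragraph after \Cref{def:propagator-entrywise}, which removes the asymmetry bookkeeping; then the edge-differentiation identity becomes a single symmetric statement and the gluing map is manifestly well-defined. I would also remark that uniqueness of the solution to \eqref{primitive-equation} (needed to conclude $\widehat\Kgen^{(\fn)} = \Kgen^{(\fn)}$) is immediate since, given the lower-order loops, \eqref{primitive-equation} is a linear ODE in $\Kgen^{(\fn)}$ with locally Lipschitz (indeed polynomial) coefficients.
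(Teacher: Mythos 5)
Your proposal is correct and follows essentially the same route as the paper's proof: verify the $\fn=2$ base case as in \eqref{Kn2sol_gen}, check the initial condition by noting only the star partition survives at $t=0$, and then differentiate the tree sum edge-by-edge using the identities for $\partial_t\Thetagen_t$ and $\partial_t(S_t\Thetagen_t)$ to match the bilinear right-hand side of \eqref{primitive-equation} via the reindexing bijection between (tree partition, differentiated edge) pairs and $(k<\ell,$ left tree, right tree) triples. The one cosmetic difference is that you frame the verification as a strong induction on $\fn$, substituting the inductive hypothesis into the bilinear form and invoking ODE uniqueness, whereas the paper avoids the explicit induction by showing the candidate $\sum_\Gamma\Gamma^{(M)}$ satisfies the \emph{same} closed triangular system and appealing to uniqueness once; the content is identical. (One small note: your worry in item (ii) about the ``cut falling inside an $M$-loop'' is a non-issue, since the diagonal $R_k\cap R_\ell$ always lands on an unlabeled $\Thetagen_t$ or $S_t\Thetagen_t$ edge in $\mathcal E_0(\Gamma^{(M)}_{\ba})$, never on an $M$-edge; the paper's Figures 14--15 cover exactly these two cases.)
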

\begin{proof}
Similar to \eqref{Kn2sol}, we can verify that \eqref{Kn2sol_gen} satisfies the primitive equation \eqref{primitive-equation} and the initial condition \eqref{initial-condition}  for $\fn=2$.
It remains to show that for $\fn\ge 3$, \eqref{eq:tree_rep} satisfies the primitive equation (\ref{primitive-equation}) along with the initial condition (\ref{initial-condition}). By definition, we have
  \smash{\(
    \Thetagen_0 = I\) and \(   S_0\Thetagen_0 = 0.
  \)}
  Thus, at $t = 0$, the nonzero graphs must contain no unlabeled edges between different $M$-loops. The only such graph is the $M$-graph corresponding to the star graph, which has exactly one internal vertex of degree $\fn$. It is straightforward to see that this graph has a value of \smash{$\Mgen_{\bm{\sigma},\ba}^{(\fn)}$},   showing that \smash{${\Kgen}^{(\fn)}_{0,\bm{\sigma},\ba}$} satisfies the initial condition (\ref{initial-condition}). 


  We now verify (\ref{primitive-equation}) with the following identities: 
  \begin{align}
    \partial_t \Thetagen_t^{\p{\sigma,\sigma'}}
      = \Kgen^{(2)}_{t,\p{\sigma,\sigma'}} S \Thetagen_t^{\p{\sigma,\sigma'}},\quad \partial_t \Big(S_t\Thetagen_t^{\p{\sigma, \sigma'}}\Big)
      = \Big(\Thetagen_t^{\p{\sigma',\sigma}}\Big)^\top S\Thetagen_t^{\p{\sigma,\sigma'}}.
        \label{theta-derivative} 
  \end{align}
  Note that, on the RHS above, we have $S$ instead of $S_t$. Recalling \Cref{edge-value-definition}, we partition $\mathcal{E}_0(\Gamma_{\ba}^{(M)})$ into two subsets: 
  \begin{align*}
    \mathcal{E}_{\Thetagen}
      &\coloneqq \set{\p{a_k, c_k} \in \mathcal{E}_0\p{\Gamma_{\ba}^{(M)}} \suchthat 1 \leq k \leq \fn} ,\quad
      \mathcal{E}_{S\Thetagen}
      \coloneqq \set{\text{internal edges in } \mathcal{E}_0\p{\Gamma_{\ba}^{(M)}} }.
  \end{align*}
  We then calculate $\partial_t \Gamma^{\p{M;\mathbf{b}}}_{t,\bm{\sigma},\ba}$ as:
  \begin{align}
    \partial_t \Gamma^{\p{M;\mathbf{b}}}_{t,\bm{\sigma},\ba}
      &= \sum_{k=1}^\fn \frac{\Gamma^{\p{M;\mathbf{b}}}_{t,\bm{\sigma},\ba}}{\Thetagen^{\p{\sigma_k,\sigma_{k+1}}}_{t,a_kc_k}} \partial_t \Thetagen^{\p{\sigma_k,\sigma_{k+1}}}_{t,a_kc_k} + \sum_{\p{e_1, e_2} \in \mathcal{E}_{S\Thetagen}} \frac{\Gamma^{\p{M;\mathbf{b}}}_{t,\bm{\sigma},\ba}}{(S_t\Thetagen_t^{\p{q\p{e_1}, q\p{e_2}}})_{e_1e_2}} \partial_t \p{S_t\Thetagen_t^{\p{q\p{e_1}, q\p{e_2}}}}_{e_1e_2}.\label{eq:differentiate}
  \end{align}
  
  For the first term on the RHS of \eqref{eq:differentiate}, using (\ref{theta-derivative}), we get
  \begin{equation}
    \frac{\Gamma^{\p{M;\mathbf{b}}}_{t,\bm{\sigma},\ba}}{\Thetagen^{\p{\sigma_k,\sigma_{k+1}}}_{t,a_kc_k}} \partial_t \Thetagen^{\p{\sigma_k,\sigma_{k+1}}}_{t,a_kc_k}
      = \sum_{c, d}
          \Kgen^{(2)}_{t,\p{\sigma_k,\sigma_{k+1}},\p{a_k,c}}
          S_{cd}
          \Thetagen^{\p{\sigma_k,\sigma_{k+1}}}_{t,dc_k}
          \frac{\Gamma^{\p{M;\mathbf{b}}}_{t,\bm{\sigma},\ba}}{\Thetagen^{\p{\sigma_k,\sigma_{k+1}}}_{t,a_kc_k}}.
  \end{equation}
  Here, $\Thetagen^{\p{\sigma_k,\sigma_{k+1}}}_{t,dc_k} \Gamma^{\p{M;\mathbf{b}}}_{t,\bm{\sigma},\ba} / \Thetagen^{\p{\sigma_k,\sigma_{k+1}}}_{t,a_kc_k}$ is the graph $\Gamma^{\p{M;\mathbf{b}}}_{t,\bm{\sigma},\ba}$ but with the vertex $a_k$ replaced by $d$ as in Figure \ref{differentiating-Theta}. Moreover, it is clear that $q\p{d} = q\p{a_k} = \sigma_k$. Hence, we get   
  \begin{align*}
    \sum_{k=1}^\fn \frac{\Gamma^{\p{M;\mathbf{b}}}_{t,\bm{\sigma},\ba}}{\Thetagen^{\p{\sigma_k,\sigma_{k+1}}}_{t,a_kc_k}} \partial_t \Thetagen^{\p{\sigma_k,\sigma_{k+1}}}_{t,a_kc_k}
      &= \sum_{k=1}^\fn \sum_{c, d}
          {\Kgen}^{(2)}_{t,\p{\sigma_k,\sigma_{k+1}},\p{a_k,c}}
          S_{cd}
          \Gamma^{\p{M;\mathbf{b}}}_{t,\bm{\sigma},\p{a_1, \ldots, a_{k-1}, d, a_{k+1}, \ldots, a_\fn}} \\
      &= \sum_{k=1}^\fn \sum_{c, d}
          \Gamma^{\p{M;\mathbf{b}}}_{t,(\mathcal{G}_L)^{\p{d}}_{k,k+1}\p{\bm{\sigma}, \ba}}
          S_{dc}
          \Gamma^{\p{M;\mathbf{b}}}_{t,(\mathcal{G}_R)^{\p{c}}_{k,k+1}\p{\bm{\sigma}, \ba}}, \numberthis\label{theta-derivative-result}
  \end{align*}
  where we used the symmetry of $S$ in the second step. 
  \begin{figure}[h]
    \centering
    \scalebox{0.9}{
    \begin{tikzpicture}
      \coordinate (ak) at (-2, 0);
      \coordinate (ak-1) at (2, 3);
      \coordinate (ak+1) at (2, -3);
      
      \coordinate (b11) at (1, 0);
      \coordinate (b10) at (2, 1);
      \coordinate (b12) at (2, -1);
      
      \fill (ak-1) circle (1pt) node[above]{$a_{k-1}$};
      \fill (ak) circle (1pt) node[left]{$a_k$};
      \fill (ak+1) circle (1pt) node[below]{$a_{k+1}$};
      
      \fill (b11) circle (1pt) node[below, xshift=-8pt]{$b_{1,1}$};
      \fill (b12) circle (1pt) node[right]{$b_{1,2}$};
      \fill (b10) circle (1pt) node[right]{$b_{1,0}$};
      
      \draw (ak-1) [dashed]-- (ak) node[pos=0.5, above left]{$\sigma_k$};
      \draw (ak) [dashed]-- (ak+1)  node[pos=0.5, below left]{$\sigma_{k+1}$};
      
      \draw (b11) -- (ak) node[pos=0.5, above]{$\partial_t$};
      
      \draw (b10) -- (b11) -- (b12);
      
      \draw[dashed] (b12) [dotted]-- (ak+1);
      \draw[dashed] (b10) [dotted]-- (ak-1);
      
      \node at (1.75, 0) {$M$};
    \end{tikzpicture}
    \begin{tikzpicture}
      \coordinate (ak-1) at (2, 3);
      \coordinate (ak+1) at (2, -3);
      
      \coordinate (b11) at (1, 0);
      \coordinate (b10) at (2, 1);
      \coordinate (b12) at (2, -1);
      
      \coordinate (d) at (-0.5, 0);
      
      \fill (ak-1) circle (1pt) node[above]{$a_{k-1}$};
      \fill (ak+1) circle (1pt) node[below]{$a_{k+1}$};
      
      \fill (b11) circle (1pt) node[below, xshift=-8pt]{$b_{1,1}$};
      \fill (b12) circle (1pt) node[right]{$b_{1,2}$};
      \fill (b10) circle (1pt) node[right]{$b_{1,0}$};
      
      \fill (d) circle (1pt) node[below, xshift=-4pt]{$d$};
      
      \draw (ak-1) [dashed]-- (d) node[pos=0.5, above left]{$\sigma_k$};
      \draw (d) [dashed]-- (ak+1)  node[pos=0.5, below left]{$\sigma_{k+1}$};
      
      \draw (b11) -- (d) -- +(-0.25, 0);
      
      \draw (b10) -- (b11) -- (b12);
      
      \draw[dashed] (b12) [dotted]-- (ak+1);
      \draw[dashed] (b10) [dotted]-- (ak-1);
      
      \node at (1.75, 0) {$M$};
      
      \node[anchor=east, yshift=-4.5pt] at (-0.8, 0) {$= \displaystyle\sum_{c, d} \Kgen^{(2)}_{t,\p{\sigma_k,\sigma_{k+1}}, \p{a_k, c}} S_{cd}$};
      \node at (1.75, 0) {$M$};
    \end{tikzpicture}
    }
    \caption{The relevant subgraph of $\Gamma^{\protect\p{M}}_{\ba}$ when differentiating $\Thetagen^{\protect\p{\sigma_k,\sigma_{k+1}}}_{t,a_kb_{1,1}}$.}\label{differentiating-Theta}
  \end{figure}
  
  For the second term on the RHS of \eqref{eq:differentiate}, using (\ref{theta-derivative}), we get
  \begin{align*}    \frac{\Gamma^{\p{M;\mathbf{b}}}_{t,\bm{\sigma},\ba} \partial_t \big(S_t\Thetagen_t^{\p{q\p{e_1}, q\p{e_2}}}\big)_{e_1e_2}}{\big(S_t\Thetagen_t^{\p{q\p{e_1}, q\p{e_2}}}\big)_{e_1e_2}} 
      &= \sum_{c, d} \frac{\Gamma^{\p{M;\mathbf{b}}}_{t,\bm{\sigma},\ba}}{\big(S_t\Thetagen_t^{\p{q\p{e_1}, q\p{e_2}}}\big)_{e_1e_2}}
        \Thetagen^{\p{q\p{e_2},q\p{e_1}}}_{t,ce_1}
        S_{cd}
        \Thetagen^{\p{q\p{e_1}, q\p{e_2}}}_{t,de_2}.
  \end{align*}
  Without loss of generality, suppose $\p{e_1, e_2} = \p{b_{2,1}, b_{1,1}} = R_k \cap R_\ell$ with $k < \ell$. Referring to Figure \ref{differentiating-STheta}, we see that $q\p{c} = q\p{e_2} = \sigma_\ell$ and $q\p{d} = q\p{e_1} = \sigma_k$. Thus, we have  $\Thetagen^{\p{q\p{e_2},q\p{e_1}}}_{t,ce_1} = \Thetagen^{\p{q\p{c},q\p{e_1}}}_{t,ce_1}$ and $\Thetagen^{\p{q\p{e_1}, q\p{e_2}}}_{t,de_2} = \Thetagen^{\p{q\p{d}, q\p{e_2}}}_{t,de_2}$, which shows that  
  \begin{align*}
  \frac{\Gamma^{\p{M;\mathbf{b}}}_{t,\bm{\sigma},\ba} \partial_t\big(S_t\Thetagen_t^{\p{q\p{e_1}, q\p{e_2}}}\big)_{e_1e_2} }{\big(S_t\Thetagen_t^{\p{q\p{e_1}, q\p{e_2}}}\big)_{e_1e_2}}   
    &= \sum_{c, d} \Gamma^{\p{M;\mathbf{b}}}_{t,\p{\sigma_k, \ldots, \sigma_\ell},\p{a_k, \ldots, a_{\ell-1}, c}}
        S_{cd}
        \Gamma^{\p{M;\mathbf{b}}}_{t,\p{\sigma_1, \ldots, \sigma_k, \sigma_\ell, \ldots, \sigma_\fn},\p{a_1, \ldots, a_{k-1}, d, a_\ell, \ldots, a_\fn}} \\
      &= \sum_{c, d} \Gamma^{\p{M;\mathbf{b}}}_{t,(\mathcal{G}_L)^{\p{d}}_{k,\ell}\p{\bm{\sigma},\ba}}
        S_{dc}
        \Gamma^{\p{M;\mathbf{b}}}_{t,(\mathcal{G}_R)^{\p{c}}_{k,\ell}\p{\bm{\sigma},\ba}}. \numberthis\label{stheta-derivative-result}
  \end{align*}
  \begin{figure}[h]
    \topskip0pt
    \centering
    \scalebox{0.9}{
    \begin{tikzpicture}
      \coordinate (ak) at (-2, 2);
      \coordinate (ak-1) at (2, 3);
      \coordinate (al-1) at (-2, -2);
      \coordinate (al) at (2, -3);
      
      \coordinate (b11) at (1, 0);
      \coordinate (b10) at (2, 1);
      \coordinate (b12) at (2, -1);
      
      \coordinate (b21) at (-1, 0);
      \coordinate (b20) at (-2, -1);
      \coordinate (b22) at (-2, 1);
      
      \fill (ak-1) circle (1pt) node[above]{$a_{k-1}$};
      \fill (ak) circle (1pt) node[above]{$a_k$};
      \fill (al) circle (1pt) node[below]{$a_\ell$};
      \fill (al-1) circle (1pt) node[below]{$a_{\ell-1}$};
      
      \fill (b11) circle (1pt) node[below, xshift=-8pt]{$b_{1,1}$};
      \fill (b12) circle (1pt) node[right]{$b_{1,2}$};
      \fill (b10) circle (1pt) node[right]{$b_{1,0}$};
      
      \fill (b21) circle (1pt) node[below, xshift=6pt]{$b_{2,1}$};
      \fill (b22) circle (1pt) node[left]{$b_{2,2}$};
      \fill (b20) circle (1pt) node[left]{$b_{2,0}$};
      
      \draw (ak-1) [dashed]-- (ak) node[pos=0.5, above]{$\sigma_k$};
      \draw (al) [dashed]-- (al-1)  node[pos=0.5, below]{$\sigma_\ell$};
      
      \draw (b11) -- (b21) node[pos=0.5, above]{$\partial_t$};
      
      \draw (b10) -- (b11) -- (b12);
      \draw (b22) -- (b21) -- (b20);
      
      \draw[dashed] (b12) [dotted]-- (al);
      \draw[dashed] (b20) [dotted]-- (al-1);
      \draw[dashed] (b10) [dotted]-- (ak-1);
      \draw[dashed] (b22) [dotted]-- (ak);
      
      \node at (-1.75, 0) {$M$};
      \node at (1.75, 0) {$M$};
    \end{tikzpicture}
    \begin{tikzpicture}
      \coordinate (ak) at (-4, 2);
      \coordinate (ak-1) at (2, 3);
      \coordinate (al-1) at (-4, -2);
      \coordinate (al) at (2, -3);
      
      \coordinate (b11) at (1, 0);
      \coordinate (b10) at (2, 1);
      \coordinate (b12) at (2, -1);
      
      \coordinate (b21) at (-3, 0);
      \coordinate (b20) at (-4, -1);
      \coordinate (b22) at (-4, 1);
      
      \coordinate (c) at (-1.75, 0);
      \coordinate (d) at (-0.5, 0);
      
      \fill (ak-1) circle (1pt) node[above]{$a_{k-1}$};
      \fill (ak) circle (1pt) node[above]{$a_k$};
      \fill (al) circle (1pt) node[below]{$a_\ell$};
      \fill (al-1) circle (1pt) node[below]{$a_{\ell-1}$};
      
      \fill (b11) circle (1pt) node[below, xshift=-8pt]{$b_{1,1}$};
      \fill (b12) circle (1pt) node[right]{$b_{1,2}$};
      \fill (b10) circle (1pt) node[right]{$b_{1,0}$};
      
      \fill (b21) circle (1pt) node[below, xshift=6pt]{$b_{2,1}$};
      \fill (b22) circle (1pt) node[left]{$b_{2,2}$};
      \fill (b20) circle (1pt) node[left]{$b_{2,0}$};
      
      \fill (c) circle (1pt) node[below]{$c$};
      \fill (d) circle (1pt) node[below, xshift=-4pt]{$d$};
      
      \draw (ak-1) [dashed]-- (d) node[pos=0.5, above left]{$\sigma_k$};
      \draw (c) [dashed]-- (ak) node[pos=0.5, above right]{$\sigma_k$};
      \draw (al) [dashed]-- (d)  node[pos=0.5, below left]{$\sigma_\ell$};
      \draw (c) [dashed]-- (al-1)  node[pos=0.5, below right]{$\sigma_\ell$};
      
      \draw (b11) -- (d) -- +(-0.25, 0);
      \draw (b21) -- (c) -- +(0.25, 0);
      
      \draw (b10) -- (b11) -- (b12);
      \draw (b22) -- (b21) -- (b20);
      
      \draw[dashed] (b12) [dotted]-- (al);
      \draw[dashed] (b20) [dotted]-- (al-1);
      \draw[dashed] (b10) [dotted]-- (ak-1);
      \draw[dashed] (b22) [dotted]-- (ak);
      
      \node at (-3.75, 0) {$M$};
      \node at (1.75, 0) {$M$};
      \node at (-1.125, 0) {$S_{cd}$};
      \node at (-5.5, 0) {$= \displaystyle\sum_{c, d}$};
    \end{tikzpicture}
    }
    \caption{The relevant subgraph of $\Gamma^{\protect\p{M}}_{\ba}$ when differentiating $\protect(S_t\Thetagen_t^{\protect\p{\sigma_k,\sigma_\ell}})_{b_{2,1}b_{1,1}}$.}\label{differentiating-STheta}
  \end{figure}
  
  Putting (\ref{theta-derivative-result}) and (\ref{stheta-derivative-result}) together, we get
  \begin{equation}\label{full-derivative}
    \partial_t \Gamma^{\p{M;\mathbf{b}}}_{t,\bm{\sigma},\ba}
      = \sum_{1 \leq k < \ell \leq \fn} \sum_{e \in \mathcal{E}_0(\Gamma_{\ba}^{(M)}): e = R_k \cap R_\ell} 
      \sum_{c, d} \Gamma^{\p{M;\mathbf{b}}}_{t,(\mathcal{G}_L)^{\p{c}}_{k,\ell}\p{\bm{\sigma},\ba}}
        S_{cd}
        \Gamma^{\p{M;\mathbf{b}}}_{t,(\mathcal{G}_R)^{\p{d}}_{k,\ell}\p{\bm{\sigma},\ba}}.
  \end{equation}
  Note that for fixed $k < \ell$ and $c, d \in \qqq{N}$, the following mapping is a bijection:
  \begin{equation}\label{reindex}
    \p{\bm{\sigma}, \ba}
      \mapsto \p{\bm{\sigma}_c', \ba_c', \bm{\sigma}_d'', \ba_d''} = \p{(\mathcal{G}_L)^{\p{c}}_{k,\ell}\p{\bm{\sigma},\ba}, (\mathcal{G}_R)^{\p{d}}_{k,\ell}\p{\bm{\sigma},\ba}}.
  \end{equation}
 Thus, after summing over $\Gamma_{\ba} \in \TSP\p{\cal P_{\ba}}$, $b_{i,j} \in \qqq{N}$, and re-indexing with (\ref{reindex}) and (\ref{full-derivative}), we obtain that 
  \begin{align*}
    \partial_t \sum_{\Gamma \in \TSP\p{\mathcal{P}_{\ba}}} \Gamma^{\p{M}}_{t,\bm{\sigma},\ba}
      &= \sum_{1 \leq k < \ell \leq \fn} \sum_{c, d}
        \sum_{\Gamma' \in \TSP(\mathcal{P}_{\ba_c'})} \Gamma^{\p{M}}_{t,\bm{\sigma}_c',\ba_c'} \times 
        S_{cd} \times 
        \sum_{\Gamma'' \in \TSP(\mathcal{P}_{\ba_d''})} \Gamma^{\p{M}}_{t,\bm{\sigma}_d'',\ba_d''}.
  \end{align*}
  This shows that \eqref{eq:tree_rep} satisfies the primitive equation \eqref{primitive-equation}. 
\end{proof}

\subsection{Ward's identity}

In this subsection, we derive a ``Ward's identity" for the $\Kgen$-loops under \Cref{assm:general}, which will, in particular, imply the identity \eqref{WI_calK} as a special case. We begin with the following identity for the $\Theta$-propagator, which we will also refer to as its ``Ward's identity". 

\begin{lemma}
  If $\sigma \neq \sigma'$, then we have
  \begin{equation}\label{ward-M}
    \sum_{a, b} \Thetagen^{(\sigma,\sigma')}_{t,ab} M_{xb}\p{\sigma} M_{by}\p{\sigma'} = \frac{(\Im M)_{xy}}{\eta_t}.
  \end{equation}
\end{lemma}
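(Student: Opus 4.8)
\emph{Proof strategy.} The plan is to extract from the defining equation \eqref{eq:Mt} a Dyson-type (``Ward'') identity for the matrix $D := \Im M/\eta_t$, and then solve it in two stages: first obtain a closed linear equation for the diagonal of $D$, and then reconstruct all entries of $D$ using the algebraic identities relating $\Thetagen_t$ and $\MMgen$. Throughout, write $z(+):=z_t$ and $z(-):=\bar z_t$, so that $\eta_t = \Im z_t = \frac{1}{2\ii}(z(+)-z(-))$, and recall $M(+)=M$, $M(-)=M^*$.

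\textbf{Step 1: the Dyson equation for $D$.} From \eqref{eq:Mt} and its conjugate transpose --- using that $\Lambda$ is Hermitian and $\cal S(M)^* = \cal S(M^*)$, which holds because $S$ is real symmetric --- one has $M(\sigma)^{-1} = \Lambda - z(\sigma) - t\,\cal S(M(\sigma))$ for both $\sigma = \pm$. Subtracting these equations for $\sigma$ and $\sigma'$ and then multiplying on the left by $M(\sigma)$ and on the right by $M(\sigma')$ gives
\[
M(\sigma) - M(\sigma') = \big(z(\sigma)-z(\sigma')\big)M(\sigma)M(\sigma') + t\,M(\sigma)\,\cal S\big(M(\sigma)-M(\sigma')\big)M(\sigma').
\]
This is where the hypothesis $\sigma \neq \sigma'$ is essential: then $M(\sigma)-M(\sigma') = \pm(M-M^*) = \pm 2\ii\,\Im M$ and $z(\sigma)-z(\sigma') = \pm 2\ii\eta_t$ with the \emph{same} sign, so dividing by $z(\sigma)-z(\sigma')\neq 0$ yields
\[
D = M(\sigma)M(\sigma') + t\,M(\sigma)\,\cal S(D)\,M(\sigma'), \qquad D := \Im M/\eta_t .
\]
Taking the $(x,y)$-entry and abbreviating $\psi^{(x,y)}_b := M(\sigma)_{xb}M(\sigma')_{by}$, this reads $D_{xy} = \sum_b \psi^{(x,y)}_b + t\sum_{a,b}\psi^{(x,y)}_a S_{ab} D_{bb}$.

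\textbf{Step 2: the diagonal.} Specializing to $x=y=b'$ and noting $\psi^{(b',b')}_a = \MMgen^{(\sigma,\sigma')}_{ab'}$ by \eqref{M-definition}, the diagonal vector $v := (D_{bb})_b$ satisfies, after using the charge symmetry $(\MMgen^{(\sigma,\sigma')})^\top = \MMgen^{(\sigma',\sigma)}$ from \eqref{Theta-almost-symmetry} and $S^\top = S$,
\[
(I - \MMgen^{(\sigma',\sigma)}S_t)\,v = \MMgen^{(\sigma',\sigma)}\mathbf 1 ,\qquad \text{hence}\qquad v = \Thetagen_t^{(\sigma',\sigma)}\MMgen^{(\sigma',\sigma)}\mathbf 1 ,
\]
where $\mathbf 1$ is the all-ones vector; here $\Thetagen_t^{(\sigma',\sigma)}$ is well-defined whenever $\Thetagen_t^{(\sigma,\sigma')}$ is, since $\det(I-\MMgen^{(\sigma',\sigma)}S_t) = \det(I-S_t\MMgen^{(\sigma',\sigma)}) = \det(I-\MMgen^{(\sigma,\sigma')}S_t)$. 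Substituting $v$ into the $(x,y)$-identity of Step 1 gives $D_{xy} = (\psi^{(x,y)})^\top\big(I + S_t\Thetagen_t^{(\sigma',\sigma)}\MMgen^{(\sigma',\sigma)}\big)\mathbf 1$. Using the purely algebraic identity $I + S_t(I-\widehat M S_t)^{-1}\widehat M = (I - S_t\widehat M)^{-1}$ (checked by multiplying by $I - S_t\widehat M$) with $\widehat M = \MMgen^{(\sigma',\sigma)}$, together with $I - S_t\MMgen^{(\sigma',\sigma)} = (I - \MMgen^{(\sigma,\sigma')}S_t)^\top = \big((\Thetagen_t^{(\sigma,\sigma')})^\top\big)^{-1}$, we conclude $I + S_t\Thetagen_t^{(\sigma',\sigma)}\MMgen^{(\sigma',\sigma)} = (\Thetagen_t^{(\sigma,\sigma')})^\top$, and therefore
\[
D_{xy} = (\psi^{(x,y)})^\top(\Thetagen_t^{(\sigma,\sigma')})^\top\mathbf 1 = \sum_{a,b}\Thetagen^{(\sigma,\sigma')}_{t,ab}\,M(\sigma)_{xb}M(\sigma')_{by} ,
\]
which is exactly \eqref{ward-M} because $D_{xy} = \Im M_{xy}/\eta_t$.

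The genuinely delicate part is the charge-and-transpose bookkeeping in Step 2: one must recognize that the diagonal vector solves an equation governed by $\MMgen^{(\sigma',\sigma)}$ (not $\MMgen^{(\sigma,\sigma')}$), and that the reassembly produces $(\Thetagen_t^{(\sigma,\sigma')})^\top$ rather than $\Thetagen_t^{(\sigma,\sigma')}$ --- which is precisely what makes the final double sum $\sum_{a,b}\Thetagen^{(\sigma,\sigma')}_{t,ab}(\cdots)$ appear with the correct ordering of lower indices, as emphasized after \eqref{f-external}. The only structural inputs beyond this are the self-consistent equation \eqref{eq:Mt}, the reality and symmetry of $S$, and the condition $\sigma\neq\sigma'$; everything else is routine linear algebra.
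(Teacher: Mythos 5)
Your proof is correct and follows essentially the same route as the paper: derive the resolvent-type identity relating $\Im M$ to $M(\sigma)\mathcal S(\Im M)M(\sigma')$, specialize to the diagonal to solve a closed linear system via $\Thetagen_t$, and then substitute back using the resolvent identity $I + \Thetagen_t^{(\sigma,\sigma')}\MMgen^{(\sigma,\sigma')}S_t = \Thetagen_t^{(\sigma,\sigma')}$ (or equivalently its transpose, as you phrase it). The only differences are cosmetic: you make explicit the derivation of \eqref{resolvent-M} from \eqref{eq:Mt} (which the paper merely asserts can be done), and you carry out the transpose bookkeeping so that the diagonal equation is governed by $\MMgen^{(\sigma',\sigma)}$ before converting to $(\Thetagen_t^{(\sigma,\sigma')})^\top$, whereas the paper works directly with $\MMgen^{(\sigma,\sigma')}$ and invokes the charge symmetry \eqref{Theta-almost-symmetry} once --- both equivalent by the same identity.
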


\begin{proof}
  Our starting point is the following resolvent identity that can be derived directly from \eqref{eq:Mt}: 
  \begin{equation}\label{resolvent-M}
    (\Im M)_{xy} = \sum_{a} M_{xa}(\sigma)M_{ay}(\sigma')   \Big(\sum_b tS_{ab} \Im M_{bb} + \eta_t\Big). 
  \end{equation}
  Setting $x = y$, we get the self-consistent equation
  \[
    \Im M_{xx} = \sum_b \big(\MMgen^{\p{\sigma',\sigma}}S_t\big)_{xb} \p{\Im M_{bb}} + \sum_a \eta_t \MMgen^{\p{\sigma',\sigma}}_{xa}.
  \]
  Solving this equation and using (\ref{Theta-almost-symmetry}), we get
  \begin{equation}\label{ward-M-diagonal}
    \Im M_{xx} = \sum_a \big(\Thetagen_t^{(\sigma,\sigma')} \MMgen^{(\sigma,\sigma')}\big)_{ax} \eta_t.
  \end{equation}
  Substituting (\ref{ward-M-diagonal}) into (\ref{resolvent-M}) and using the identity $I + \Thetagen_t^{(\sigma,\sigma')} \MMgen^{(\sigma,\sigma')} S_t = \Thetagen_t^{(\sigma,\sigma')}$, 
  we obtain (\ref{ward-M}).
\end{proof}

With \eqref{ward-M} and the equation
\[
    \Thetagen_t^{(\sigma,\sigma')} - \Thetagen_t^{(\sigma,\sigma'')}
      = \Thetagen_t^{(\sigma,\sigma'')} \big(\MMgen^{(\sigma,\sigma')} - \MMgen^{(\sigma,\sigma'')}\big) S_t\Thetagen_t^{(\sigma,\sigma')},
  \]
we immediately derive the following identities:
for each $\sigma \in \set{+, -}$,  
  \begin{align}
 \Thetagen_{t,ab}^{\p{+,\sigma}} - \Thetagen_{t,ab}^{\p{-,\sigma}} 
      &= 2\ii\eta_t \sum_{a', b', x, y} \Thetagen^{\p{-,\sigma}}_{t,ax} \Thetagen^{\p{-,+}}_{t,a'b'} \p{S_t\Thetagen_t^{\p{+,\sigma}}}_{yb} \avg{M\p{\sigma}F_y M^*F_{b'} MF_x}, \label{ward-Theta-1} \\
\Thetagen_{t,ab}^{\p{\sigma,+}} - \Thetagen_{t,ab}^{\p{\sigma,-}}
      &= 2\ii\eta_t \sum_{a', b', x, y} \Thetagen^{\p{\sigma,-}}_{t,ax} \Thetagen^{\p{-,+}}_{t,a'b'} \p{S_t\Thetagen_t^{\p{\sigma,+}}}_{yb} \avg{M\p{\sigma}F_x M^*F_{b'} MF_y}. \label{ward-Theta-2}
  \end{align}

In contrast to the proof of Lemma 3.6 in \cite{Band1D}, our proof of Ward's identity does not use the dynamics given by the primitive equation (\ref{primitive-equation}) but instead relies only on the graphical representations of the \smash{$\Kgen$}-loops and the above identities \eqref{ward-M}, \eqref{ward-Theta-1} and \eqref{ward-Theta-2}. This approach allows us to establish a subtler Ward's identity that applies to individual graphs $\Gamma^{(M)}$, as stated in \Cref{ward-identity-gamma} below. We can then derive the corresponding identity for the \smash{$\Kgen$}-loops by summing over $\Gamma$, leading to \Cref{ward-identity}. 
The key observation in our proof is that the $\TSP$ graphs on $(\fn + 1)$ vertices can be constructed from $\TSP$ graphs on $\fn$ vertices, which is the content of  \Cref{slice-decomposition}.
\begin{definition}
  Let $\ba = \p{a_1, \ldots, a_\fn}$ and fix $\Gamma \in \TSP\p{\mathcal{P}_{\ba}}$. We define the \textbf{slices} of $\Gamma$ as follows.
  \begin{enumerate}
    \item Let $p_1$ be the canonical directed path in $R_1$, as defined in Definition \ref{canonical-directed-edge}, but specified in $\Gamma_\ba$ rather than in its $M$-graph. 
    Suppose $p_1$ can be written as 
    \be\label{eq:p1}p_1 = \p{a_\fn \to b_1 \to \cdots \to b_{k-1} \to a_1}.\ee 
    \item For each edge of the form $\p{b_{i-1}, b_i}$, $1 \leq i \leq k$ (with the convention $b_0 = a_\fn$ and $b_k = a_1$), we define $\Gamma_{\p{b_{i-1}, b_i}} \in \TSP(\mathcal{P}_{\p{\ba,a_{\fn+1}}})$ by adding new vertices $a_{\fn+1}, b'$ and replacing $\p{b_{i-1}, b_i}$ with the subgraph consisting of three edges $\p{b_{i-1}, b'},\ \p{b', b_i},$ and $ \p{a_{\fn+1}, b'}$.
    \item For each vertex $b_i$, $1 \leq i \leq k - 1$ (i.e., we exclude the external vertices $a_1, a_\fn$), we define $\Gamma_{\p{b_i}} \in \TSP(\mathcal{P}_{\p{\ba, a_{\fn+1}}})$ by simply adding the vertex $a_{\fn+1}$ and the edge $\p{a_{\fn+1}, b_i}$.
  \end{enumerate}
  See \Cref{Fig:slice} for illustrations of the above slicing operations. We then define the collection of slices of $\Gamma$ by
  \begin{equation}
    \slice\p{\Gamma}
      \coloneqq \set{\Gamma_{\p{b_{i-1},b_i}} \suchthat 1 \leq i \leq k} \cup \set{\Gamma_{\p{b_i}} \suchthat 1 \leq i \leq k-1}. \label{gamma-slices}
  \end{equation}
  \begin{figure}[h]
    \centering
    \scalebox{0.9}{
    \begin{tikzpicture}
      \coordinate (a1) at (180:2);
      \coordinate (an) at (90:2);
      \coordinate (b) at (0, 0);
      
      \fill (a1) circle (1pt) node[left]{$a_1$};
      \fill (an) circle (1pt) node[above]{$a_\fn$};
      \fill (b) circle (1pt) node[below right]{$b_1$};
      
      \draw (an) [dashed]-- (a1);
      \draw (a1) -- (b);
      \draw (b) -- (an);
      
      \node at (-1, -0.5) {$\Gamma$};
    \end{tikzpicture}
    \begin{tikzpicture}
      \coordinate (a1) at (180:2);
      \coordinate (an) at (90:2);
      \coordinate (an+1) at (135:2);
      \coordinate (b) at (0, 0);
      \coordinate (c) at (90:1);
      
      \fill (a1) circle (1pt) node[left]{$a_1$};
      \fill (an) circle (1pt) node[above]{$a_\fn$};
      \fill (b) circle (1pt) node[below right]{$b_1$};
      \fill (an+1) circle (1pt) node[above left]{$a_{\fn+1}$};
      \fill (c) circle (1pt);
      
      \draw (an) [dashed]-- (an+1);
      \draw (an+1) [dashed]-- (a1);
      \draw (a1) -- (b);
      \draw (b) -- (c) -- (an);
      \draw (an+1) -- (c);
      
      \node at (-1, -0.5) {$\Gamma_{\p{a_\fn,b_1}}$};
      \node at (-3, 1) {$\to$};
    \end{tikzpicture}
    $\quad$
    \begin{tikzpicture}
      \coordinate (a1) at (180:2);
      \coordinate (an) at (90:2);
      \coordinate (an+1) at (135:2);
      \coordinate (b) at (0, 0);
      
      \fill (a1) circle (1pt) node[left]{$a_1$};
      \fill (an) circle (1pt) node[above]{$a_\fn$};
      \fill (b) circle (1pt) node[below right]{$b_1$};
      \fill (an+1) circle (1pt) node[above left]{$a_{\fn+1}$};
      
      \draw (an) [dashed]-- (an+1);
      \draw (an+1) [dashed]-- (a1);
      \draw (a1) -- (b);
      \draw (b) -- (an);
      \draw (an+1) -- (b);
      
      \node at (-1, -0.5) {$\Gamma_{\p{b_1}}$};
    \end{tikzpicture}
    $\quad$
    \begin{tikzpicture}
      \coordinate (a1) at (180:2);
      \coordinate (an) at (90:2);
      \coordinate (an+1) at (135:2);
      \coordinate (b) at (0, 0);
      \coordinate (c) at (180:1);
      
      \fill (a1) circle (1pt) node[left]{$a_1$};
      \fill (an) circle (1pt) node[above]{$a_\fn$};
      \fill (b) circle (1pt) node[below right]{$b_1$};
      \fill (an+1) circle (1pt) node[above left]{$a_{\fn+1}$};
      \fill (c) circle (1pt);
      
      \draw (an) [dashed]-- (an+1);
      \draw (an+1) [dashed]-- (a1);
      \draw (a1) -- (c);
      \draw (c) -- (b) -- (an);
      \draw (an+1) -- (c);
      
      \node at (-1, -0.5) {$\Gamma_{\p{b_1,a_1}}$};
    \end{tikzpicture}
    }
    \caption{$\slice\protect\p{\Gamma}$ is created by ``slicing" up $R_1$ into two pieces.}\label{Fig:slice}
  \end{figure}
\end{definition}

\begin{claim}\label{slice-decomposition}
$\TSP(\mathcal{P}_{\p{\ba, a_{\fn+1}}})$ can be expressed as a disjoint union
  \[
    \TSP\p{\mathcal{P}_{\p{\ba, a_{\fn+1}}}}
      = \bigsqcup_{\Gamma \in \TSP\p{\mathcal{P}_{\ba}}} \slice\p{\Gamma}.
  \]
\end{claim}
\begin{proof}
  It is not hard to see that the slices are graphs in $\TSP(\mathcal{P}_{\p{\ba, a_{\fn+1}}})$. Given $\Gamma' \in \TSP(\mathcal{P}_{\p{\ba, a_{\fn+1}}})$, we can construct a $\Gamma\in \TSP\p{\mathcal{P}_{\ba}}$ by removing all edges and vertices connected to $a_{\fn+1}$ except for the external vertices $a_1, a_\fn$ and adding the edge $\p{a_\fn, a_1}$. 
  In other words, if the partition of $\mathcal{P}_{\p{\ba, a_{\fn+1}}}$ that gives $\Gamma'$ is $\set{R_1, R_2, \ldots, R_{\fn+1}}$, then the partition of $\mathcal{P}_{\ba}$ that gives $\Gamma$ is essentially $\set{R_1 \cup R_{\fn+1}, R_2, \ldots, R_\fn}$.
\end{proof}

We are ready to derive the Ward's identity for {$\Gamma^{(M)}_{t,\p{+,\bm{\sigma}},\ba}-\Gamma^{(M)}_{t,\p{-,\bm{\sigma}},\ba}.$} 
Note that these two graphs differ only in the edges within the region $R_1$. To transform \smash{$\Gamma^{(M)}_{t,\p{+,\bm{\sigma}},\ba}$} into {$\Gamma^{(M)}_{t,\p{-,\bm{\sigma}},\ba}$}, we will sequentially replace each $+$ in these edges with a $-$, one at a time. Each such replacement corresponds to a graph in $\slice\p{\Gamma}$. 
Thus, the bulk of the following proof is to ensure that charges are assigned correctly throughout this process.

\begin{lemma}[Ward's identity]\label{ward-identity-gamma}
  Let $\ba = \p{a_1, \ldots, a_\fn}$, $\bm{\sigma} \in \set{+, -}^{\fn-1}$, and pick any $\Gamma \in \TSP\p{\mathcal{P}_{\ba}}$. Let $\Gamma^{\p{M}}$ be its corresponding $M$-graph. Then, we have
  \[
    \frac{1}{2\ii\eta_t} \p{\Gamma^{(M)}_{t,\p{+,\bm{\sigma}},\ba} - \Gamma^{(M)}_{t,\p{-,\bm{\sigma}},\ba}}
      = \sum_{\Gamma' \in \slice\p{\Gamma}} \sum_{a_{\fn+1}} \p{\Gamma'}^{(M)}_{t,\p{+,\bm{\sigma},-},\p{\ba,a_{\fn+1}}}.
  \]
\end{lemma}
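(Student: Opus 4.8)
The statement is a Ward's identity at the level of individual $\TSP$ graphs, from which \Cref{ward-identity} (and hence \eqref{WI_calK}) follows by summing over $\Gamma\in\TSP(\mathcal{P}_{\ba})$. The plan is to perform a telescoping sum inside the region $R_1$ of $\Gamma$, flipping the charge of the first external vertex $a_\fn$ from $+$ to $-$ one $\Theta$-edge (and one $M$-edge) at a time, and to recognize each increment as the value of a slice graph summed over the new vertex $a_{\fn+1}$.

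\textbf{Step 1: isolate the dependence on the charge $\sigma_1$.} First I would observe that $\Gamma^{(M)}_{t,(+,\bsig),\ba}$ and $\Gamma^{(M)}_{t,(-,\bsig),\ba}$ differ only through the factors associated to edges lying in the region $R_1$ (the region containing the boundary edge $(a_\fn,a_1)$ carries charge $\sigma_1$), because the charge of a vertex is determined by its principal region via \Cref{vertex-charge}, and only vertices whose principal region is $R_1$ see the charge $\sigma_1$. Writing $p_1=(a_\fn\to b_1\to\cdots\to b_{k-1}\to a_1)$ as in \Cref{canonical-directed-edge} (with $b_0=a_\fn$, $b_k=a_1$), the vertices $b_0,\ldots,b_{k-1}$ are exactly the vertices with $\mathcal{P}R=R_1$, so flipping $\sigma_1$ flips precisely the charges $q(b_0),\ldots,q(b_{k-1})$, as well as changing the first upper index of the $\Theta$-propagators on the external edges emanating from $b_1,\ldots,b_{k-1}$ and the charges in the two $M$-loops touching these vertices. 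I would record this as: both graph values are products $\prod_e f(e)\cdot\prod_i\mathcal F(b_i)$, and the difference is confined to the sub-product indexed by $R_1$.

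\textbf{Step 2: telescoping and identification with slices.} Next I would set up the telescope. Order the edges inside $R_1$ that are affected by the charge flip; for the $j$-th step change the charges of $b_0,\ldots,b_{j-1}$ to $-$ while keeping $b_j,\ldots,b_{k-1}$ at $+$, and write the difference $\Gamma^{(M)}_{t,(+,\bsig),\ba}-\Gamma^{(M)}_{t,(-,\bsig),\ba}$ as a sum over $j$ of consecutive differences. Each consecutive difference involves replacing a single $\Theta_t^{(+,\sigma')}$ by $\Theta_t^{(-,\sigma')}$ (for the external edge at $b_j$) or an $S_t\Theta_t$-edge of the appropriate charge type, together with the corresponding change of one $M$-entry in the adjacent $M$-loop. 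At exactly this point I would invoke \eqref{ward-Theta-1}--\eqref{ward-Theta-2}, which express $(\Theta_t^{(+,\sigma)}-\Theta_t^{(-,\sigma)})_{ab}$ as $2\ii\eta_t$ times a sum $\sum_{a',b',x,y}\Theta^{(-,\sigma)}_{t,ax}\Theta^{(-,+)}_{t,a'b'}(S_t\Theta_t^{(+,\sigma)})_{yb}\langle M(\sigma)F_yM^*F_{b'}MF_x\rangle$; the new summation variables $a',b',x,y$ are precisely the new vertices created by the slicing operation $\Gamma\mapsto\Gamma_{(b_{j-1},b_j)}$ or $\Gamma\mapsto\Gamma_{(b_j)}$, with $a_{\fn+1}=a'$ the new external vertex. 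I would verify that the new $M$-loop $\langle M(\sigma)F_yM^*F_{b'}MF_x\rangle$ that appears is exactly the $M$-loop $\mathcal F(b')$ generated in the sliced graph, and that the pair of $\Theta$-edges and the single $S_t\Theta_t$-edge match $f$ on the new unlabeled edges of the slice with the charges prescribed by \Cref{vertex-charge}; the edge $\Theta^{(-,+)}_{t,a'b'}$ is the external edge from the new vertex $a_{\fn+1}$, which carries charge $+$ (its principal region is the new $R_{\fn+1}$ with charge $\sigma_1=+$ after we append the trailing $-$ in $(+,\bsig,-)$). The telescope thus reassembles as $\sum_{\Gamma'\in\slice(\Gamma)}\sum_{a_{\fn+1}}(\Gamma')^{(M)}_{t,(+,\bsig,-),(\ba,a_{\fn+1})}$, where the two families $\{\Gamma_{(b_{i-1},b_i)}\}$ and $\{\Gamma_{(b_i)}\}$ in \eqref{gamma-slices} correspond respectively to the two types of consecutive differences (edge-replacement vs. the boundary terms of the telescope where a vertex rather than an edge is split off).

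\textbf{Main obstacle.} The genuinely delicate part is the bookkeeping of charges and orientations in Step 2: one must check that after slicing, every vertex of $\Gamma'$ receives, via its principal region and canonical directed edge (\Cref{canonical-directed-edge}, \Cref{vertex-charge}), exactly the charge that makes the factor produced by \eqref{ward-Theta-1}--\eqref{ward-Theta-2} agree with $f(e)$ and $\mathcal F(b_i)$ in \Cref{M-graph-value-definition} for the graph $(\Gamma')^{(M)}_{t,(+,\bsig,-),(\ba,a_{\fn+1})}$ — in particular that the trailing $-$ in the charge vector $(+,\bsig,-)$ is what forces the $(-,\sigma)$ and $(-,+)$ superscripts on the right-hand sides of \eqref{ward-Theta-1}--\eqref{ward-Theta-2}, and that $q(x),q(y)$ inside the new $M$-loop come out right. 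Once this charge-matching is confirmed slice-by-slice, together with the combinatorial fact (\Cref{slice-decomposition}) that $\slice(\Gamma)$ enumerates exactly the right set of augmented graphs, the identity follows. I would organize the verification by treating one generic affected vertex $b_j$ with its adjacent $M$-loop and applying \eqref{ward-Theta-1} (the case \eqref{ward-Theta-2} being symmetric), then noting the telescope's endpoints $j=1$ and $j=k$ degenerate into the vertex-slices $\Gamma_{(b_i)}$ rather than edge-slices.
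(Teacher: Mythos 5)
Your high-level strategy---telescope along $p_1$ inside $R_1$, apply Ward-type identities, and recognize each increment as a slice---is exactly the paper's. But the decomposition you set up is wrong in two related ways, and you are missing one of the two key identities.

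First, the telescope must be taken factor-by-factor along the path $p_1$, not vertex-by-vertex. After replacing interior vertices by $M$-loops, the $\sigma_1$-dependent part of $\Gamma^{(M;\bfb)}$ factors into $2k-1$ scalars: the external $\Theta$-edge out of $a_\fn$, then alternating $M$-entries (one per $M$-loop touching $p_1$) and internal $S_t\Theta_t$-edges, ending with the external $\Theta$-edge into $a_1$. Your proposal telescopes by flipping the charge of one vertex $b_{j-1}$ at a time, but flipping a single vertex charge changes \emph{two} of these factors at once (the adjacent $\Theta$- or $S_t\Theta_t$-edge, and the adjacent $M$-entry), so your ``consecutive difference'' is a difference of a two-factor product, to which \eqref{ward-Theta-1}--\eqref{ward-Theta-2} do not directly apply. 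The paper avoids this by telescoping on the $2k-1$ factors themselves.

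Second, and consequently, you only invoke \eqref{ward-Theta-1}--\eqref{ward-Theta-2} and never mention \eqref{ward-M}. But half of the telescope increments are $M$-entry differences $M_{b_{p,1}b_{p,2}}-M^*_{b_{p,1}b_{p,2}}=2\ii\,\Im M_{b_{p,1}b_{p,2}}$, which are handled by \eqref{ward-M}, and these are precisely what produce the vertex-slices $\Gamma_{(b_i)}$. Your final remark that ``the telescope's endpoints degenerate into the vertex-slices'' inverts this: the endpoint increments are the two external $\Theta$-edge differences, which produce the edge-slices $\Gamma_{(a_\fn,b_1)}$ and $\Gamma_{(b_{k-1},a_1)}$ via \eqref{ward-Theta-1}--\eqref{ward-Theta-2}, while the vertex-slices come from the interior $M$-entry increments at even positions via \eqref{ward-M}. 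With the correct factor-by-factor telescope and both identities in hand, the charge-matching verification you sketch in your ``main obstacle'' paragraph then goes through as in Figures \ref{ward-even-i}--\ref{ward-odd-i}, and the $2k-1$ increments exhaust $\slice(\Gamma)$ exactly ($k$ edge-slices from odd steps, $k-1$ vertex-slices from even steps).
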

\begin{proof}
  Let $\mathbf{b}$ denote the internal vertices of $\Gamma$. We consider the edges of \smash{$\Gamma^{\p{M;\bfb}}_{t, \p{\sigma_1, \bm{\sigma}},\ba}$} along the path in \eqref{eq:p1}. 
  More precisely, we define
  \[
    \mathcal{G}^{\p{\sigma_1; \bfb}}
      \coloneqq \Thetagen^{\p{\sigma_\fn,\sigma_1}}_{t,a_\fn b_{1,1}}
        \p{\prod_{i=2}^{k-1} M^{\p{\sigma_1}}_{b_{i-1,1}b_{i-1,2}} \p{S_t\Thetagen_t^{\p{\tau_i,\sigma_1}}}_{b_{i-1,2}b_{i,1}}}
        M^{\p{\sigma_1}}_{b_{k-1,1}b_{k-1,2}}
        \Thetagen^{\p{\sigma_1,\sigma_2}}_{t,a_1b_{k-1,2}},
  \]
  where we used the notation $M^{(\sigma)}\equiv M(\sigma)$ for $\sig\in\{+,-\}$, $\tau_i:=q(b_{i-1,2})$ for $2\le i \le k-1$, and $b_{i,j}$ for $1\le i \le k-1$ to represent the vertices on the $M$-loops obtained from the replacements of the vertices $b_i$ in $\Gamma_{\ba}$. 
  \begin{figure}[h]
    \begin{tikzpicture}
      \coordinate (an) at (0, 3);
      \coordinate (a1) at (-3, 0);
      \coordinate (b11) at (0.1, 2);
      \coordinate (b12) at (0, 1);
      \coordinate (b21) at (-1, 0);
      \coordinate (b22) at (-2, -0.1);
      
      \fill (an) circle (1pt) node[above]{$a_\fn$};
      \fill (a1) circle (1pt) node[left]{$a_1$};
      \fill (b11) circle (1pt) node[right]{$b_{1,1}$};
      \fill (b12) circle (1pt) node[right]{$b_{1,2}$};
      \fill (b21) circle (1pt) node[below]{$b_{2,1}$};
      \fill (b22) circle (1pt) node[below]{$b_{2,2}$};
      
      \draw[blue, arrows={->[scale=1.5]},
    shorten >= 2pt] (an) [dashed]-- (a1) node[black, pos=0.5, above left]{$+$};
      \draw (an) -- (b11);
      \draw[blue, arrows={->[scale=1.5]},
    shorten >= 2pt] (b11) -- (b12);
      \draw (b12) -- (b21);
      \draw[blue, arrows={->[scale=1.5]},
    shorten >= 2pt] (b21) -- (b22);
      \draw (b22) -- (a1);
    \end{tikzpicture}
 
\caption{When $\sigma_1 = +$ and $k = 3$, we have $\mathcal{G}^{\protect\p{+;\bfb}} = \Thetagen^{\protect\p{\sigma_\fn ,+}}_{t,a_\fn b_{1,1}} M_{b_{1,1}b_{1,2}} \protect (S_t\Thetagen^{\protect\p{\tau,+}})_{t,b_{1,2}b_{2,1}} M_{b_{2,1}b_{2,2}} \Thetagen^{\protect\p{+,\sigma_2}}_{t,a_1b_{2,2}}$.}\label{calG-k=3}
  \end{figure}
  Note that these are the only factors depending on $\sigma_1$, and that they are listed in the order in which their indices appear in $p_1$. For brevity, we denote
  \[
    g^{\p{\sigma_1; \bfb}}_i \coloneqq \begin{cases}
        \Thetagen^{\p{\sigma_\fn ,\sigma_1}}_{t,a_\fn b_{1,1}} &\text{if } i = 1,  \\[1ex]
        M^{\p{\sigma_1}}_{b_{p,1}b_{p,2}}         &\text{if } 1 < i < 2k - 1,\ i = 2p, \\[1ex]
        \big(S_t\Thetagen_t^{\p{\tau_i,\sigma_1}}\big)_{b_{p-1,2}b_{p,1}}  &\text{if } 1 < i < 2k - 1,\ i = 2p - 1, \\[1ex]
        \Thetagen^{\p{\sigma_1,\sigma_2}}_{t,a_1b_{k-1,2}} &\text{if } i = 2k - 1,
      \end{cases}
  \]
  so that we can express $\mathcal{G}^{\p{\sigma_1; \bfb}}$ as 
  \[
    \mathcal{G}^{\p{\sigma_1; \bfb}} = \prod_{i=1}^{2k-1} g^{\p{\sigma_1; \bfb}}_i.
  \]
We then have the telescoping sum
  \begin{equation}\label{telescoping-ward}
  \begin{split}
    \mathcal{G}^{\p{+; \bfb}} - \mathcal{G}^{\p{-; \bfb}}
      &= \sum_{j=1}^{2k-1} \p{\prod_{i=1}^{j-1} g_i^{\p{-;\bfb}} \cdot \p{g_j^{\p{+; \bfb}} - g_j^{\p{-; \bfb}}} \cdot \prod_{i=j+1}^{2k-1} g_i^{\p{+; \bfb}}}
      \eqqcolon \sum_{j=1}^{2k-1} \mathcal{G}^{\p{\Delta;\bfb}}_j.      
  \end{split}
  \end{equation}
  Notice that given $1 \leq j \leq 2k - 1$, each factor to the left of $j$ has $\sigma_1 = -$, whereas each factor to the right of $j$ has $\sigma_1 = +$. We will see that these factors are the edges in $R_{\fn+1}$ and $R_{1}$, respectively, of the resulting slice of $\Gamma$.
  
  Let $1 \leq j \leq 2k - 1$. We split up the computation into two cases: when $j$ is even (in which case we deal with an $M$-edge) and when $j$ is odd (in which case we deal with a \smash{$\Thetagen_t$ or $S_t\Thetagen_t$-edge}). In the first case, (\ref{ward-M}) yields that 
  \[
    g_j^{\p{+; \bfb}} - g_j^{\p{-; \bfb}}
      = 2\ii\eta_t \sum_{a_{\fn+1}, b'} \Thetagen^{\p{-,+}}_{t,a_{\fn+1}b'} M^*_{b_{p,1}b'} M_{b'b_{p,2}}.
  \]
  Referring to Figure \ref{ward-even-i}, we readily observe that the $M$-edges have the correct colors and all vertices carry the correct charges; specifically, $q\p{b_{p,1}} = q\p{a_{\fn+1}}= -$, $q\p{b'} =  q\p{a_1} = +$, $q(a_\fn)=\sig_\fn$, and $q(b_{p,2})$ remains unchanged. 
  It is also easy to see that \smash{$g_i^{\p{\pm; \bfb}}$}, $i \neq j$, have the correct colors as well. In other words, we have shown that
  \[
    \mathcal{G}^{\p{\Delta;\mathbf{b}}}_j \frac{\Gamma^{\p{M;\mathbf{b}}}_{t,\p{\sigma_1, \bm{\sigma}},\ba}} {\mathcal{G}^{\p{\sigma_1; \mathbf{b}}}}
      = 2\ii\eta_t \sum_{a_{\fn+1}, b'} \p{\Gamma_{\p{b_{p-1}}}}^{\p{M; \mathbf{b}}}_{t, \p{+, \bm{\sigma}, -}, (\ba,a_{\fn+1})}.
  \]
  \begin{figure}[h]
    \centering
    \begin{tikzpicture}
      \coordinate (an) at (0, 3);
      \coordinate (a1) at (-3, 0);
      \coordinate (b11) at (0, 1);
      \coordinate (b12) at (-1, 0);
      
      \fill (an) circle (1pt) node[above]{$a_\fn$};
      \fill (a1) circle (1pt) node[left]{$a_1$};
      \fill (b11) circle (1pt) node[right]{$b_{p,1}$};
      \fill (b12) circle (1pt) node[below]{$b_{p,2}$};
      
      \draw[dashed, arrows={->[scale=1.5]},
    shorten >= 2pt] (an) -- (a1) node[black, pos=0.5, above left]{$\sigma_1$};
      \draw[dotted] (an) -- (b11);
      \draw[arrows={->[scale=1.5]},
    shorten >= 2pt] (b11) -- (b12);
      \draw[dotted] (b12) -- (a1);
      
      \node at (135:0.25) {$M$};
    \end{tikzpicture}
    \begin{tikzpicture}
      \coordinate (an) at (0, 3);
      \coordinate (a1) at (-3, 0);
      \coordinate (an+1) at (135:3);
      \coordinate (b11) at (0, 1);
      \coordinate (b12) at (-1, 0);
      \coordinate (b') at (135:1);
      
      \fill (an) circle (1pt) node[above]{$a_\fn$};
      \fill (a1) circle (1pt) node[left]{$a_1$};
      \fill (an+1) circle (1pt) node[above left]{$a_{\fn+1}$};
      \fill (b11) circle (1pt) node[right]{$b_{p,1}$};
      \fill (b12) circle (1pt) node[below]{$b_{p,2}$};
      \fill (b') circle (1pt) node[above, xshift=2pt, yshift=2pt]{$b'$};
      
      \draw[dashed, red, arrows={->[scale=1.5]},
    shorten >= 2pt] (an) -- (an+1) node[black, pos=0.5, above left]{$-$};
      \draw[dashed, blue, arrows={->[scale=1.5]},
    shorten >= 2pt] (an+1) -- (a1) node[black, pos=0.5, above left]{$+$};
      \draw[red, arrows={->[scale=1.5]},
    shorten >= 2pt] (b11) -- (b');
      \draw[dotted] (an) -- (b11);
      \draw[blue, arrows={->[scale=1.5]},
    shorten >= 2pt] (b') -- (b12);
      \draw[dotted] (b12) -- (a1);
      \draw (an+1) -- (b');
      
      \node at (135:0.25) {$M$};
      \node[anchor=east] at (-3.5, 1.5) {$\to \displaystyle 2\ii\eta_t \sum_{a_{\fn+1}, b'}$};
    \end{tikzpicture}
    \caption{When $i$ is even, we apply (\ref{ward-M}).}\label{ward-even-i}
  \end{figure}
  
When $j$ is odd and $j\notin\{1,2k-1\}$, suppose $\p{b_{p-1,2}, b_{p,1}} = R_1 \cap R_\ell$. Then, the charge $\tau_j$ is given by $\tau_j = \sigma_\ell$, so (\ref{ward-Theta-2}) yields
  \begin{equation}\label{ward-STheta-odd}
    g_j^{\p{+; \bfb}} - g_j^{\p{-; \bfb}}
      = 2\ii\eta_t \sum_{a_{\fn+1}, b', x, y} \p{S_t\Thetagen_t^{\p{\sigma_\ell,-}}}_{b_{p-1,2}x} \Thetagen^{\p{-,+}}_{t,a_{\fn+1}b'} \p{S_t\Thetagen_t^{\p{\sigma_\ell,+}}}_{yb_{p,1}}\avg{M\p{\sigma_\ell}F_x M^*F_{b'} MF_y}.      
  \end{equation}
  Referring to Figure \ref{ward-odd-i}, we can check that the $M$-edges have the correct color and that the charges of $b_{p-1,2}, x, b', y, b_{p,1}, a_{\fn+1}$ are consistent with the charges on the $\Thetagen_t$ and $S_t\Thetagen_t$-factors. The cases $j \in\{ 1, 2k - 1\}$ can be handled in exactly the same way, except the first $(S_t\Thetagen_t)$-factor in (\ref{ward-STheta-odd}) is replaced by a $\Thetagen$-factor. This leads to that
  \[
    \mathcal{G}^{\p{\Delta;\mathbf{b}}}_j \frac{\Gamma^{\p{M;\mathbf{b}}}_{t, \p{\sigma_1, \bm{\sigma}},\ba}}{\mathcal{G}^{\p{\sigma_1; \bfb}}}
      = 2\ii\eta_t \sum_{a_{\fn+1}, b', x, y} \p{\Gamma_{\p{b_{p-1}, b_p}}}^{\p{M; \mathbf{b}}}_{t,\p{+, \bm{\sigma}, -}, (\ba,a_{\fn+1})}.
  \]
  \begin{figure}[h]
    \centering
    \begin{tikzpicture}
      \coordinate (an) at (0, 3);
      \coordinate (a1) at (-3, 0);
      \coordinate (b12) at (0, 1);
      \coordinate (b21) at (-1, 0);
      
      \fill (an) circle (1pt) node[above]{$a_\fn$};
      \fill (a1) circle (1pt) node[left]{$a_1$};
      \fill (b12) circle (1pt) node[right]{$b_{p-1,2}$};
      \fill (b21) circle (1pt) node[below]{$b_{p,1}$};
      
      \draw[dashed, arrows={->[scale=1.5]},
    shorten >= 2pt] (an) -- (a1) node[black, pos=0.5, above left]{$\sigma_1$};
      \draw[dotted] (an) -- (b12);
      \draw[arrows={->[scale=1.5]},
    shorten >= 2pt] (b12) -- (b21);
      \draw[dotted] (b21) -- (a1);
      
      \node at (135:0.25) {$R_\ell$};
    \end{tikzpicture}
    \begin{tikzpicture}
      \coordinate (an) at (0, 3);
      \coordinate (a1) at (-3, 0);
      \coordinate (an+1) at (135:3);
      \coordinate (b11) at (0, 1);
      \coordinate (b12) at (-1, 0);
      \coordinate (b') at (135:2);
      \coordinate (x) at (-0.5, 1.25);
      \coordinate (y) at (-1.25, 0.5);
      
      \fill (an) circle (1pt) node[above]{$a_\fn$};
      \fill (a1) circle (1pt) node[left]{$a_1$};
      \fill (an+1) circle (1pt) node[above left]{$a_{\fn+1}$};
      \fill (b11) circle (1pt) node[right]{$b_{p-1,2}$};
      \fill (b12) circle (1pt) node[below]{$b_{p,1}$};
      \fill (b') circle (1pt) node[above, xshift=3pt]{$b'$};
      \fill (x) circle (1pt) node[above, xshift=2pt]{$x$};
      \fill (y) circle (1pt) node[left, yshift=-2pt]{$y$};
      
      \draw[dashed, red, arrows={->[scale=1.5]},
    shorten >= 2pt] (an) -- (an+1) node[black, pos=0.5, above left]{$-$};
      \draw[dashed, blue, arrows={->[scale=1.5]},
    shorten >= 2pt] (an+1) -- (a1) node[black, pos=0.5, above left]{$+$};
      \draw[arrows={->[scale=1.5]},
    shorten >= 2pt] (b11) -- (x);
      \draw[red, arrows={->[scale=1.5]},
    shorten >= 2pt] (x) -- (b');
      \draw[blue, arrows={->[scale=1.5]},
    shorten >= 2pt] (b') -- (y);
      \draw[arrows={->[scale=1.5]},
    shorten >= 2pt] (y) -- (x);
      \draw[dotted] (an) -- (b11);
      \draw[arrows={->[scale=1.5]},
    shorten >= 2pt] (y) -- (b12);

      \draw[dotted] (b12) -- (a1);
      \draw (an+1) -- (b');
      
      \node at (135:1.5) {$M$};
      \node at (135:0.75) {$R_\ell$};
      \node[anchor=east] at (-3.5, 1.5) {$\to \displaystyle 2\ii\eta_t \sum_{a_{\fn+1}, b', x, y}$};
    \end{tikzpicture}
    \caption{When $i$ is odd, we apply (\ref{ward-Theta-2}).}\label{ward-odd-i}
  \end{figure}
  
  Putting everything together and summing over the $b_{i,j}$ vertices, we get
  \[
    \Gamma^{\p{M}}_{t,\p{+,\bm{\sigma}},\ba} - \Gamma^{\p{M}}_{t,\p{-,\bm{\sigma}},\ba}
      = \sum_{\mathbf b} \sum_j \mathcal{G}^{\p{\Delta;\mathbf{b}}}_j \frac{\Gamma^{\p{M;\mathbf{b}}}_{t, \p{\sigma_1, \bm{\sigma}},\ba}}{\mathcal{G}^{\p{\sigma_1; \mathbf{b}}}}
      = 2\ii\eta_t \sum_{\Gamma' \in \slice\p{\Gamma}} \sum_{a_{\fn+1}} \p{\Gamma'}^{\p{M}}_{t,\p{+, \bm{\sigma}, -},\p{\ba,a_{\fn+1}}},
  \]
  which concludes the proof of \Cref{ward-identity-gamma}.
\end{proof}
Combining Lemma \ref{ward-identity-gamma} with \Cref{slice-decomposition}, we immediately obtain the following corollary.
\begin{corollary}\label{ward-identity}
  Let $\ba = \p{a_1, \ldots, a_\fn}$ and $\bm{\sigma} \in \set{+, -}^{\fn-1}$. Then, we have
  \[
    \frac{1}{2\ii\eta_t} \p{\Kgen^{(\fn)}_{t,\p{+,\bm{\sigma}},\ba} - \Kgen^{(\fn)}_{t,\p{-,\bm{\sigma}},\ba}}
      = \sum_{a_{\fn+1}} \Kgen^{(\fn+1)}_{t,\p{+,\bm{\sigma},-},\p{\ba,a_{\fn+1}}}.
  \]
\end{corollary}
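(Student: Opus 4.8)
\textbf{Proof strategy for Corollary \ref{ward-identity}.}

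The plan is to derive the corollary by summing the graphwise Ward's identity from \Cref{ward-identity-gamma} over all canonical tree partitions $\Gamma \in \TSP(\mathcal P_{\ba})$, and then recognizing the resulting sum via the tree representation formula \eqref{eq:tree_rep} together with the slice decomposition \Cref{slice-decomposition}. Concretely, for $\fn \ge 2$, I would start from the tree representation $\Kgen^{(\fn)}_{t,(+,\bm\sigma),\ba} = \sum_{\Gamma \in \TSP(\mathcal P_{\ba})} \Gamma^{(M)}_{t,(+,\bm\sigma),\ba}$ (and similarly with $+$ replaced by $-$), take the difference, and apply \Cref{ward-identity-gamma} to each summand. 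This gives
\[
\frac{1}{2\ii\eta_t}\Big(\Kgen^{(\fn)}_{t,(+,\bm\sigma),\ba} - \Kgen^{(\fn)}_{t,(-,\bm\sigma),\ba}\Big) = \sum_{\Gamma \in \TSP(\mathcal P_{\ba})} \sum_{\Gamma' \in \slice(\Gamma)} \sum_{a_{\fn+1}} (\Gamma')^{(M)}_{t,(+,\bm\sigma,-),(\ba,a_{\fn+1})}.
\]
Then I would invoke \Cref{slice-decomposition}, which says $\TSP(\mathcal P_{(\ba,a_{\fn+1})}) = \bigsqcup_{\Gamma} \slice(\Gamma)$ as a disjoint union, so the double sum over $\Gamma$ and $\Gamma' \in \slice(\Gamma)$ collapses to a single sum over $\Gamma' \in \TSP(\mathcal P_{(\ba, a_{\fn+1})})$. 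Applying the tree representation \eqref{eq:tree_rep} once more — this time for the $(\fn+1)$-loop with index sequence $(+, \bm\sigma, -)$ — identifies $\sum_{\Gamma'} (\Gamma')^{(M)}_{t,(+,\bm\sigma,-),(\ba,a_{\fn+1})}$ with $\Kgen^{(\fn+1)}_{t,(+,\bm\sigma,-),(\ba,a_{\fn+1})}$, and summing over $a_{\fn+1}$ yields exactly the claimed identity.

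The case $\fn = 2$ requires a small separate check, since the tree representation in that degenerate case takes the form \eqref{Kn2sol_gen} rather than a sum over $\TSP$; here one verifies the identity directly using the $\Theta$-propagator Ward identities \eqref{ward-Theta-1}–\eqref{ward-Theta-2}, or more simply by viewing $\TSP(\mathcal P_{(a_1,a_2)})$ and $\TSP(\mathcal P_{(a_1,a_2,a_3)})$ as consisting of the trivial/star partitions and checking the slice decomposition by hand. One should also record that the charges are assigned consistently: in $\Kgen^{(\fn+1)}_{t,(+,\bm\sigma,-),(\ba,a_{\fn+1})}$, the last index carries charge $-$, which matches the convention in \Cref{ward-identity-gamma} where the newly added external vertex $a_{\fn+1}$ is assigned charge $-$ and the replaced first charge is $+$; no sign errors arise because the $2\ii\eta_t$ factor is already built into the graphwise statement.

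The main obstacle — already dispatched in the preceding lemmas — is the correctness of the bookkeeping in \Cref{ward-identity-gamma} and \Cref{slice-decomposition}: one must be certain that (i) the slices of $\Gamma$ are genuinely valid canonical tree partitions of the $(\fn+1)$-gon, (ii) the charge/color assignments produced by the telescoping argument match those dictated by \Cref{vertex-charge} and \Cref{M-loop-value-definition} for the slice graphs, and (iii) the slicing map is a bijection onto $\TSP(\mathcal P_{(\ba,a_{\fn+1})})$. Given those results, the present corollary is a purely formal consequence — essentially a reindexing — so there is no genuine new difficulty here beyond assembling the pieces carefully. In particular, one does not need to re-examine the dynamics equation \eqref{primitive-equation} at all, which is the chief simplification over the approach in \cite[Section 3]{Band1D}.
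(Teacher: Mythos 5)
Your proposal is correct and follows exactly the paper's route: the paper's own proof consists of the one-line observation that \Cref{ward-identity} follows by combining \Cref{ward-identity-gamma} with \Cref{slice-decomposition}, precisely the sum-over-$\Gamma$ and reindex-via-slices argument you spell out. Your remark about the degenerate case $\fn=2$ (where $\TSP$ as given in \Cref{def:canpnical_part} is not formally defined and one must either verify the identity directly from \eqref{Kn2sol_gen} and \eqref{ward-Theta-1}--\eqref{ward-Theta-2} or extend the conventions to 2-gons) is a fair point that the paper glosses over, but it is a routine check and does not change the substance of the argument.
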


\subsection{Core decomposition and molecule}\label{subsec:molecule} 

Before proving the sum-zero property and the $\cal K$-loop bounds in \Cref{ML:Kbound}, we first introduce the \emph{core decomposition} of {$\Gamma^{(M)}_\ba$} and the concept of \emph{molecule} in this subsection. 
Define the subset 
\[
\Z^{\mathrm{off}}_\fn \coloneqq \set{\set{k, \ell} \suchthat 1 \leq k < \ell \leq \fn,\, \abs{k - \ell \mod \fn} \neq 1 }.
\]
Given $\Gamma \in \TSP\p{\mathcal{P}_{\ba}}$, if $R_k \cap R_\ell\ne \emptyset$ with $\{k,\ell\}\in \Z^{\mathrm{off}}_\fn$, then we say that $R_k$ and $ R_\ell$ are \emph{non-trivial neighbors} (as opposed to the trivial neighbor pairs $R_{k}$ and $R_{k+1}$).

\begin{lemma}[Recursive decomposition]\label{recursive-decomposition-lemma}
For $\Gamma \in \TSP\p{\mathcal{P}_{\ba}}$, assume that $R_k$ and $ R_\ell$ are non-trivial neighbors. Let $\p{c, c'} = R_k \cap R_\ell$ denote the shared edge in \smash{$\Gamma^{(M)}_\ba$}. 
Suppose $c$ and $c'$ represent the vertices in the cycles obtained by replacing the internal vertices $b$ and $b'$ in $\Gamma_\ba$, respectively.  
Then, we have the decomposition
  \begin{equation}\label{recursive-decomposition}
    \Gamma_{t,\bm{\sigma},\ba}^{\p{M; \mathbf{b}}}
      = \frac{\p{\Gamma_c}^{\p{M; \mathbf{b}_c}}_{t,(\mathcal{G}_L)^{\p{a}}_{k\ell}\p{\bm{\sigma}, \ba}}}{\Thetagen_{t,ac}^{\p{\sigma_\ell, \sigma_k}}}
        \p{S_t\Thetagen_t^{\p{\sigma_k, \sigma_\ell}}}_{cc'}
        \frac{\p{\Gamma_{c'}}^{\p{M; \mathbf{b}_{c'}}}_{t,(\mathcal{G}_R)^{(a')}_{k\ell}\p{\bm{\sigma}, \ba}}}{\Thetagen_{t,a'c'}^{\p{\sigma_k, \sigma_\ell}}},
  \end{equation}
  where the $\TSP$ graphs $\Gamma_c$ and $\Gamma_{c'}$ are defined as follows. Since $\Gamma$ is a tree, removing the edge $\p{b, b'}$ results in two connected subtrees: 
one connected to $b$ and the other connected to $b'$. Denote these subtrees as $\mathcal{T}_c$ and $\mathcal{T}_{c'}$, respectively.
Then, we define 
\[\Gamma_c \coloneqq \mathcal{T}_c \cup \p{c, a} \in \TSP\Big(\mathcal{P}_{(\mathcal{G}_L)^{\p{a}}_{k\ell}\p{\ba}}\Big),\quad  \Gamma_{c'}\coloneqq \mathcal{T}_{c'} \cup \p{c', a'} \in \TSP\Big(\mathcal{P}_{(\mathcal{G}_R)^{\p{a'}}_{k\ell}\p{\ba}}\Big).\]
This decomposition is unique in the sense that for every such edge $\p{c, c'}$, there is exactly one pair $\p{\Gamma_c, \Gamma_{c'}}$ satisfying (\ref{recursive-decomposition}), up to relabeling of vertices. We refer readers to Figure \ref{recursive-decomposition-figure} for an illustration of this decomposition.
\end{lemma}
\begin{proof}
It is not hard to check that the charges of the vertices and $M$-edges in $\Gamma^{\p{M}}_c$ and \smash{$\Gamma^{\p{M}}_{c'}$} are consistent with those of $\Gamma^{\p{M}}$, with which we readily conclude the proof.
  \begin{figure}[h]
    \centering
    \scalebox{0.9}{
    \begin{tikzpicture}
      \coordinate (ak) at (-2, 2);
      \coordinate (ak-1) at (2, 3);
      \coordinate (al-1) at (-2, -2);
      \coordinate (al) at (2, -3);
      
      \coordinate (b11) at (1, 0);
      \coordinate (b10) at (2, 1);
      \coordinate (b12) at (2, -1);
      
      \coordinate (b21) at (-1, 0);
      \coordinate (b20) at (-2, -1);
      \coordinate (b22) at (-2, 1);
      
      \fill (ak-1) circle (1pt) node[above]{$a_{k-1}$};
      \fill (ak) circle (1pt) node[above]{$a_k$};
      \fill (al) circle (1pt) node[below]{$a_\ell$};
      \fill (al-1) circle (1pt) node[below]{$a_{\ell-1}$};
      
      \fill (b11) circle (1pt) node[below, xshift=-4pt]{$c'$};
      \fill (b12) circle (1pt) node[right]{$b_{1,2}$};
      \fill (b10) circle (1pt) node[right]{$b_{1,0}$};
      
      \fill (b21) circle (1pt) node[below, xshift=2pt]{$c$};
      \fill (b22) circle (1pt) node[left]{$b_{2,2}$};
      \fill (b20) circle (1pt) node[left]{$b_{2,0}$};
      
      \draw[arrows={->[scale=1.5]},
    shorten >= 2pt,dashed] (ak-1) -- (ak) node[pos=0.5, above]{$\sigma_k$};
      \draw[arrows={->[scale=1.5]},
    shorten >= 2pt,dashed] (al-1) -- (al)  node[pos=0.5, below]{$\sigma_\ell$};
      
      \draw[green] (b11) -- (b21);
      
      \draw (b10) -- (b11) -- (b12);
      \draw (b22) -- (b21) -- (b20);
      
      \draw[dotted] (b12) -- (al);
      \draw[dotted] (b20) -- (al-1);
      \draw[dotted] (b10) -- (ak-1);
      \draw[dotted] (b22) -- (ak);
      
      \node at (-1.75, 0) {$M$};
      \node at (1.75, 0) {$M$};
    \end{tikzpicture}
    \begin{tikzpicture}
      \coordinate (ak) at (-2, 2);
      \coordinate (ak-1) at (2, 3);
      \coordinate (al-1) at (-2, -2);
      \coordinate (al) at (2, -3);
      
      \coordinate (b11) at (1, 0);
      \coordinate (b10) at (2, 1);
      \coordinate (b12) at (2, -1);
      
      \coordinate (b21) at (-1, 0);
      \coordinate (b20) at (-2, -1);
      \coordinate (b22) at (-2, 1);
      
      \coordinate (shift) at (0, -4);
      \coordinate (ak-1) at ($(ak-1) + (shift)$);
      \coordinate (al) at ($(al) + (shift)$);
      \coordinate (b12) at ($(b12) + (shift)$);
      \coordinate (b10) at ($(b10) + (shift)$);
      \coordinate (d') at ($(b21) + (shift)$);
      \coordinate (c') at ($(b11) + (shift)$);
      
      \fill (ak-1) circle (1pt) node[above]{$a_{k-1}$};
      \fill (ak) circle (1pt) node[above]{$a_k$};
      \fill (al) circle (1pt) node[below]{$a_\ell$};
      \fill (al-1) circle (1pt) node[below]{$a_{\ell-1}$};
      \fill (c') circle (1pt) node[below, xshift=-4pt]{$c'$};
      \fill (d') circle (1pt) node[left]{$a'$};
      
      \fill (b11) circle (1pt) node[right]{$a$};
      \fill (b12) circle (1pt) node[right]{$b_{1,2}$};
      \fill (b10) circle (1pt) node[right]{$b_{1,0}$};
      
      \fill (b21) circle (1pt) node[below, xshift=2pt]{$c$};
      \fill (b22) circle (1pt) node[left]{$b_{2,2}$};
      \fill (b20) circle (1pt) node[left]{$b_{2,0}$};
      
      \draw[dashed,arrows={->[scale=1.5]},
    shorten >= 2pt] (b11) -- (ak) node[pos=0.5, above right]{$\sigma_k$};
      \draw[dashed,arrows={->[scale=1.5]},
    shorten >= 2pt] (al-1) -- (b11)  node[pos=0.5, below right]{$\sigma_\ell$};
      
      \draw[dashed,arrows={->[scale=1.5]},
    shorten >= 2pt] (ak-1) -- (d') node[pos=0.5, above left]{$\sigma_k$};
      \draw[dashed,arrows={->[scale=1.5]},
    shorten >= 2pt] (d') -- (al)  node[pos=0.5, below left]{$\sigma_\ell$};
      
      \draw[green] (b11) -- (b21);
      \draw[green] (c') -- (d');

      \draw (b10) -- (c') -- (b12);
      \draw (b22) -- (b21) -- (b20);
      
      \draw[dotted] (b12) -- (al);
      \draw[dotted] (b20) -- (al-1);
      \draw[dotted] (b10) -- (ak-1);
      \draw[dotted] (b22) -- (ak);
      
      \node at (-1.75, 0) {$M$};
      \node at ($(1.75, 0) + (shift)$) {$M$};
      \node[anchor=east] at (-3.0, -4.0) {$\to$};
    \end{tikzpicture}
    }
    \caption{The left graph is $\Gamma^{\protect\p{M}}$ with $\protect\p{c, c'} = R_k \cap R_\ell$, and the top and bottom graphs on the RHS are $\Gamma_c^{\protect\p{M}}$ and $\Gamma_{c'}^{\protect\p{M}}$, respectively. 
    The green edges represent the three $\Thetagen_t$ and $S_t\Thetagen_t$ edges written explicitly in (\ref{recursive-decomposition}). }\label{recursive-decomposition-figure}
  \end{figure}
\end{proof}

\begin{definition}[Core]\label{core-definition}
 For $\Gamma \in \TSP\p{\mathcal{P}_{\ba}}$, let $\bfb = \p{b_1, \ldots, b_\fm}$ be its internal indices, and let $\bm{d}=(d_1, \ldots, d_\fn)$ be the internal vertices connected to $a_1, \ldots, a_\fn$, respectively. Recall that $\delta_i$ denotes the degree of $b_i$ in $\Gamma$. 
Then, we define the \textbf{core} of $\Gamma^{\p{M}}$ as
  \[
    \Sigmagen^{\p{\Gamma; M}}_{t, \bm{\sigma}, \bm{d}}
      \coloneqq \sum_{i=1}^\fm \sum_{j=1}^{\delta_i}  \mathbf 1_{b_{i,j} \not\in \bm{d}} \sum_{b_{i,j} \in \qqq{N}} \frac{\Gamma^{\p{M; \bfb}}_{t,\bm{\sigma},\ba}}{\prod_{i=1}^\fn \Thetagen_{t,a_id_i}^{(\sigma_i,q(d_i))}},
  \]
  where we have slightly abused the notation: in the indicator function $\mathbf 1$, $b_{i,j}$ is treated as a vertex and $b_{i,j}\notin\bm{d}$ means that $b_{i,j}\notin\{d_1,\ldots, d_\fn\}$. 
  Simply speaking, the core is constructed graphically by removing all the external edges (i.e., those corresponding to \smash{$\Thetagen_t$}) from \smash{$\Gamma^{\p{M; \bfb}}_{t,\bm{\sigma},\ba}$}, and then summing over all the internal vertices, excluding the vertices $d_1, \ldots, d_\fn$.
\end{definition}


Next, we will present a corresponding decomposition for the cores of $\Kgen$-loops. This decomposition is similar to that in \Cref{recursive-decomposition-lemma}, but we must also keep track of all pairs of non-trivial neighbors. Informally, when ``splitting" each graph through an edge $R_k \cap R_\ell$ between non-trivial neighbors, every other non-trivial neighbor pair will belong to either the ``left" graph or the ``right" graph, as these pairs are not permitted to ``cross" each other.

\begin{definition} \label{k-sigma-pi}
Let $\ba = \p{a_1, \ldots, a_\fn}$. Given $\Gamma \in \TSP\p{\mathcal{P}_{\ba}}$ and $\bm{\sigma} \in \set{+, -}^{\fn}$, define the set of index pairs  corresponding to the ``long" internal edges (i.e., the {$S_t\Thetagen^{(\sig,\sig')}_t$} edges with $\sig\ne\sig'$) as 
\[
{\cal F}_{\text{long}}(\Gamma, \boldsymbol{\sigma}) := \left\{\{k,\ell\} \in \Z^{\mathrm{off}}_\fn: R_k\cap R_\ell\neq \emptyset ,\ \sigma_k\ne \sigma_\ell \right\}.
\]
Given any subset \(\pi \subset \mathbb{Z}_\fn^{\mathrm{off}}\), we denote by
\[
\TSP(\mathcal{P}_{\ba}, \boldsymbol{\sigma}, \pi) := \big\{\Gamma \in \TSP(\mathcal{P}_{\ba}) : {\cal F}_{\text{long}}(\Gamma, \boldsymbol{\sigma}) = \pi\big\}
\]
the subset of graphs in \(\TSP(\mathcal{P}_{\ba})\) with \(\pi\) labeling their long internal edges (note $\pi$ can be $\emptyset$). We define
  \[
    \Kgen^{\p{\pi}}_{t,\bm{\sigma},\ba}
      \coloneqq \sum_{\Gamma \in \TSP\p{\mathcal{P}_{\ba}, \bm{\sigma}, \pi}} \Gamma^{\p{M}}_{t,\bm{\sigma},\ba}.
  \]
  We analogously define the corresponding \emph{core} as
  \[
    \Sigmagen^{\p{\pi}}_{t,\bm{\sigma},\bm{d}}
      \coloneqq \sum_{\Gamma \in \TSP\p{\mathcal{P}_{\ba}, \bm{\sigma}, \pi}} \Sigmagen^{\p{\Gamma; M}}_{t, \bm{\sigma}, \bm{d}},
  \]
  where $\Sigmagen^{\p{\Gamma; M}}_{t, \bm{\sigma}, \bm{d}}$ is defined in Definition \ref{core-definition}. When $\pi = \emptyset$, we call $\Sigmagen^{\p{\emptyset}}$ as a \textbf{molecule}.
\end{definition}

\begin{lemma}[Core decomposition]\label{core-decomposition}
Without loss of generality, suppose $\set{1, r} \in \pi$ and $\p{c, c'} = R_1 \cap R_r$. Swapping $c, c'$ if necessary, we may ensure that $q\p{c} = \sigma_1$. Then, we have the \emph{core decomposition}
  \begin{equation}\label{recursive-decomposition-Sigma}
\Sigmagen^{\p{\pi}}_{t,\bm{\sigma},\bm{d}} = \sum_{c, c'} \Sigmagen^{(\pi^{\p{c}})}_{t,\bm{\sigma}^{\p{c}},\bm{d}^{\p{c}}} 
        \p{S_t\Thetagen_t^{\p{\sigma_1, \sigma_r}}}_{cc'}
\Sigmagen^{(\pi^{(c')})}_{t,\bm{\sigma}^{(c')},\bm{d}^{(c')}} ,
  \end{equation}
  where $\pi_0 \coloneqq \pi \setminus \set{\set{1, r}},$ and
  \begin{align}
&\pi^{(c)} \coloneqq \set{\set{k - r + 1, \ell - r + 1} \mid \set{k, \ell} \in \pi_0,\ r \leq k < \ell \leq \fn},\ \ \pi^{(c')} \coloneqq \set{\set{k, \ell} \in \pi_0 \mid 1 \leq k < \ell \leq r}, \label{eq:pic}\\
  &\big(\bm{\sigma}^{\p{c}}, \bm{d}^{\p{c}}\big)
        \coloneqq \left(\mathcal{G}_L\right)^{\p{c}}_{1r}\p{\bm{\sigma}, \bm{d}}, \ \
    \big(\bm{\sigma}^{(c')}, \bm{d}^{(c')}\big)
        \coloneqq \left(\mathcal{G}_R\right)^{(c')}_{1r}\p{\bm{\sigma}, \bm{d}}.\label{eq:sigmac}
  \end{align}
Note that we have the partition
  \(
    \pi =(\pi^{(c)} + r - 1) \cup \pi^{(c')} \cup  \set{\set{1, r}}.
  \)
\end{lemma}
\begin{proof}
By Lemma \ref{recursive-decomposition-lemma} and \Cref{core-definition}, we have
  \begin{equation}\label{recursive-decomposition-Sigma-pre}
    \Sigmagen^{\p{\pi}}_{t,\bm{\sigma},\bm{d}}
      = \sum_{c, c'} \sum_{\Gamma \in \TSP\p{\mathcal{P}_{\ba}, \bm{\sigma}, \pi}}
        \Sigmagen^{\p{\Gamma_c; M}}_{t, \bm{\sigma}^{\p{c}}, \bm{d}^{\p{c}}}
        \p{S_t\Thetagen_t^{\p{\sigma_1, \sigma_r}}}_{cc'}
        \Sigmagen^{\p{\Gamma_{c'}; M}}_{t, \bm{\sigma}^{(c')}, \bm{d}^{(c')}},
  \end{equation}
Let $\ba^{\p{c}}
        \coloneqq \left(\mathcal{G}_L\right)^{\p{c}}_{1r}(\ba)$ and $\ba^{(c')}
        \coloneqq \left(\mathcal{G}_R\right)^{(c')}_{1r}(\ba)$. Then, we further define
  \begin{align*}
    \TSP^{\p{c}}
      \coloneqq \TSP\p{\mathcal{P}_{\ba^{\p{c}}}, \bm{\sigma}^{\p{c}}, \pi^{\p{c}}}, \quad
    \TSP^{(c')}
      \coloneqq \TSP\p{\mathcal{P}_{\ba^{(c')}}, \bm{\sigma}^{(c')}, \pi^{(c')}}.
  \end{align*}
  By reindexing, equation (\ref{recursive-decomposition-Sigma-pre}) becomes
  \begin{align*}
    \Sigmagen^{\p{\pi}}_{t,\bm{\sigma},\bm{d}}
      &= \sum_{c, c'} 
        \bigg(\sum_{\Gamma \in \TSP^{\p{c}}} \Sigmagen^{\p{\Gamma; M}}_{t, \bm{\sigma}^{\p{c}}, \bm{d}^{\p{c}}}\bigg)
        \p{S_t\Thetagen_t^{\p{\sigma_1, \sigma_r}}}_{cc'}
        \bigg(\sum_{\Gamma' \in \TSP^{(c')}} \Sigmagen^{\p{\Gamma'; M}}_{t, \bm{\sigma}^{(c')}, \bm{d}^{(c')}}\bigg),
  \end{align*}
  which is exactly (\ref{recursive-decomposition-Sigma}),
\end{proof}

\subsection{Block reduction to the $\cK$-loops}\label{subsec:estimates}

For the remainder of this section, we return to the RBSOs defined in Definition \ref{def: BM}. First, we observe that under the flow in \Cref{def_flow}, the block Anderson and Wegner orbital models satisfy \Cref{assm:general} due to \eqref{eq:mt_stay}. Next, we can easily derive the $\cal{K}$-loops (as defined in \Cref{Def_Ktza}) from the \smash{$\Kgen$}-loops (as defined in \cref{general-primitive-equation}) as follows.


\begin{lemma}\label{lem:KgentoK}
Given $\fn\ge 2$, for each $t \in \br{0, 1}$, $\bm{\sigma} \in \set{+, -}^\fn$, and $\ba =([a_1],\ldots, [a_\fn]) \in (\Zn)^\fn$, we have
\begin{equation}\label{K-rescaling}
    \cK^{(\fn)}_{t,\bm{\sigma},\ba}
        = W^{-d\fn} \sum_{x_1\in[a_1],\ldots,x_\fn\in[a_\fn]}\Kgen^{(\fn)}_{t,\bm{\sigma},\bx}= \sum_{\bx}\Kgen^{(\fn)}_{t,\bm{\sigma},\bx}\cdot \prod_{k=1}^\fn E_{[a_k]}(x_k),
  \end{equation}
  where $\bx=(x_1,\ldots, x_\fn)$ and we abbreviate that $E_{[a_k]}(x_k):=(E_{[a_k]})_{x_kx_k}$. As a consequence, we can derive Ward's identity \eqref{WI_calK} for $\cal K$-loops from \Cref{ward-identity}.
\end{lemma}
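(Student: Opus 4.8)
The plan is to prove Lemma~\ref{lem:KgentoK} by showing that the right-hand side of \eqref{K-rescaling}, viewed as a function of $t$, satisfies exactly the same system of ODEs \eqref{pro_dyncalK} and the same initial condition \eqref{eq:initial_K} that uniquely characterize the $\cK$-loops; by uniqueness of the solution, the two must coincide. Concretely, for $\fn\ge 2$ define $\widetilde{\cK}^{(\fn)}_{t,\bsig,\ba}:=\sum_{\bx}\Kgen^{(\fn)}_{t,\bsig,\bx}\prod_{k=1}^\fn E_{[a_k]}(x_k)$, where $\bx=(x_1,\ldots,x_\fn)$ ranges over $(\ZL)^\fn$. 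We need to verify three things: (i) at $t=0$, $\widetilde{\cK}^{(\fn)}_{0,\bsig,\ba}=\cM^{(\fn)}_{\bsig,\ba}$; (ii) $\widetilde{\cK}^{(\fn)}$ satisfies \eqref{pro_dyncalK}; and (iii) for $\fn=1$, the definition is consistent with $\cK^{(1)}_{t,\sigma,[a]}=m(\sigma)$.

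For the initial condition, recall \eqref{initial-condition}: $\Kgen^{(\fn)}_{0,\bsig,\bx}=\MMgen^{(\fn)}_{\bsig,\bx}=\avg{\prod_{i=1}^\fn M(\sigma_i)F_{x_i}}$. Summing against $\prod_k E_{[a_k]}(x_k)$ and using $E_{[a]}=W^{-d}I_{[a]}=\sum_{x\in[a]}W^{-d}F_x$ (in the sense that $F_x$ contributes the single diagonal entry at $x$), one gets $\sum_{\bx}\avg{\prod_i M(\sigma_i)F_{x_i}}\prod_k E_{[a_k]}(x_k)=\avg{\prod_{i=1}^\fn(M(\sigma_i)E_{[a_i]})}=\cM^{(\fn)}_{\bsig,\ba}$, which is \eqref{eq:KMloop}; this matches \eqref{eq:initial_K}. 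For $\fn=1$, $\Kgen^{(1)}_{t,\sigma,x}$ would be $\avg{M(\sigma)F_x}=M(\sigma)_{xx}$, and summing against $E_{[a]}$ gives $W^{-d}\sum_{x\in[a]}M(\sigma)_{xx}=m(\sigma)$ by \eqref{eq:averm}, consistent with the length-$1$ primitive loop. The main work is step (ii): differentiate $\widetilde{\cK}^{(\fn)}_{t,\bsig,\ba}$ in $t$, apply the primitive equation \eqref{primitive-equation} for $\Kgen$ inside the sum over $\bx$, and then identify the resulting expression with the right-hand side of \eqref{pro_dyncalK}. The key algebraic point is that the entrywise variance matrix $S$ appearing in \eqref{primitive-equation} projects onto the block-level matrix $S^{\LK}$ once one sums the internal indices $c,d$ over blocks: since $S^{\BA}=I_n\otimes\bE$ with $\bE_{ij}=W^{-d}$, summing $S_{cd}$ against block-averaging operators $E_{[a]},E_{[b]}$ produces exactly $W^d S^{\LK}_{[a][b]}$ (because $\sum_{c\in[a'],d\in[b']}S_{cd}=W^{-d}\cdot W^{d}\cdot W^{d}\cdot\mathbf 1_{[a']=[b']}\cdots$; one tracks the normalization carefully to recover the $W^d$ prefactor in \eqref{pro_dyncalK}). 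The cut-and-glue operators $\mathcal G_L,\mathcal G_R$ reduce, after this block summation, to the block-level operators $\cutL,\cutR$ acting on $\cK$-loops as in \eqref{calGonIND}, because reindexing $x_k$'s by their blocks $[a_k]$ commutes with the reindexing operations $(\mathcal G_L)^{(c)}_{k\ell},(\mathcal G_R)^{(d)}_{k\ell}$.

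Finally, for the consequence about Ward's identity: with \eqref{K-rescaling} established, apply Corollary~\ref{ward-identity} at the entrywise level, i.e.\ $\tfrac{1}{2\ii\eta_t}(\Kgen^{(\fn)}_{t,(+,\bsig),\bx}-\Kgen^{(\fn)}_{t,(-,\bsig),\bx})=\sum_{x_{\fn+1}}\Kgen^{(\fn+1)}_{t,(+,\bsig,-),(\bx,x_{\fn+1})}$, then average both sides against $\prod_{k=1}^{\fn-1}E_{[a_k]}(x_k)$ and sum $x_{\fn+1}$ freely. On the left this yields $\tfrac{1}{2\ii\eta_t}(\cK^{(\fn-1)}\text{-type terms})$ once we re-express; more precisely we should sum over $a_\fn$ on the $\cK$-side, using that $\sum_{[a_\fn]}E_{[a_\fn]}=I_N$ so that $\sum_{[a_\fn]}\cK^{(\fn)}_{t,\bsig,\ba}=\sum_{x_\fn\in\ZL}\Kgen^{(\fn)}_{t,\bsig,\bx}\prod_{k<\fn}E_{[a_k]}(x_k)$, which is precisely the entrywise free-sum that Corollary~\ref{ward-identity} evaluates; matching indices gives \eqref{WI_calK}. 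The main obstacle is bookkeeping: carefully tracking the $W^{-d\fn}$ normalizations, the $W^d$ prefactors, and verifying that the block-summation of the entrywise $S$ produces exactly $S^{\LK}$ with the correct constant — this is routine but error-prone, and one must also confirm that the reindexing bijection \eqref{reindex} descends to the block level so that no spurious combinatorial factors appear.
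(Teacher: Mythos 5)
Your proposal is correct and follows essentially the same route as the paper: verify that the block-averaged $\Kgen$-loops match the $\cM$-loop initial condition via \eqref{initial-condition}, and then check that applying the primitive equation \eqref{primitive-equation} under the block average reproduces \eqref{pro_dyncalK} after identifying $S_{cd}=W^{-d}S^{\LK}_{[c][d]}$ (or equivalently, as you phrase it, $\sum_{c\in[a'],d\in[b']}S_{cd}=W^d S^{\LK}_{[a'][b']}$) and noting that the cut-and-glue reindexing commutes with passing to block indices. Your sketch of the Ward-identity consequence — applying \Cref{ward-identity} at the vertex level with $\fn\to\fn-1$, using $\sum_{[a_\fn]}E_{[a_\fn]}=W^{-d}I_N$ to absorb the free sum, and then re-averaging via \eqref{K-rescaling} to recover \eqref{WI_calK} with the $W^{-d}$ prefactor — is also correct.
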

\begin{proof}
First, with \eqref{initial-condition}, we can check that \eqref{K-rescaling} satisfies the initial condition \eqref{eq:initial_K}. Next, using equation \eqref{primitive-equation} and the formula for $S$ in \eqref{eq:SBA} or \eqref{eq:SWO}, we can check that \smash{$\cK^{(\fn)}_{t,\bm{\sigma},\ba}$} in \eqref{K-rescaling} satisfies
\begin{align*}
\partial_t \cK^{(\fn)}_{t,\bm{\sigma},\ba}&=\sum_{1 \leq k < l \leq \fn} W^{-d\fn} \sum_{x_1\in[a_1],\ldots,x_\fn\in[a_\fn]}\sum_{c, d}         \p{(\mathcal{G}_L)^{\p{c}}_{k,l} \circ \Kgen^{(\fn)}_{t,\bm{\sigma},\bx}}
        W^{-d}S^{\LK}_{[c][d]}
        \p{(\mathcal{G}_R)^{\p{d}}_{k,l} \circ \Kgen^{(\fn)}_{t,\bm{\sigma},\bx}} \\
&=W^d\sum_{1 \leq k < l \leq \fn} W^{-d(\fn+k-l+1)}\sum_{(\mathcal{G}_L)^{\p{c}}_{k,l}(\bx)\in \cutL^{[c]}_{k,l}(\ba)}         \p{(\mathcal{G}_L)^{\p{c}}_{k,l} \circ \Kgen^{(\fn)}_{t,\bm{\sigma},\bx}}
        \times  S^{\LK}_{[c][d]} \\
&\qquad\qquad \ \times W^{-d(l-k+1)}\sum_{(\mathcal{G}_R)^{(d)}_{k,l}(\bx)\in \cutR^{[d]}_{k,l}(\ba)}
        \p{(\mathcal{G}_R)^{(d)}_{k,l} \circ \Kgen^{(\fn)}_{t,\bm{\sigma},\bx}} \\
&=     W^d \sum_{1\le k < l \le \fn} \sum_{[c], [d]\in \Zn} \left( \cutL^{[c]}_{k, l} \circ \mathcal{K}^{(\fn)}_{t, \boldsymbol{\sigma}, \ba} \right) S^{\LK}_{[c][d]} \left( \cutR^{[d]}_{k, l} \circ \mathcal{K}^{(\fn)}_{t, \boldsymbol{\sigma}, \ba} \right) ,        
\end{align*}
which is \eqref{pro_dyncalK}. Above, \smash{$(\mathcal{G}_L)^{\p{c}}_{k,l}(\bx)\in \cutL^{[c]}_{k,l}(\ba)$} should be interpreted as the summation of the vertices in \smash{$(\mathcal{G}_L)^{\p{c}}_{k,l}(\bx)$} over the blocks in {$\cutL^{[c]}_{k,l}(\ba)$}, and a similar notation applies to \smash{$(\mathcal{G}_R)^{\p{d}}_{k,l}(\bx)\in \cutR^{[d]}_{k,l}(\ba)$}.
\end{proof}

Recall Definitions \ref{def_Theta} and \ref{def:propagator-entrywise} and the $M$-loops defined in \eqref{eq:KMloop} and \eqref{initial-condition}. In the setting of RBSOs, we can express the $M$-loops and $\Theta$-propagators as  
\begin{align}\label{eq:BRedK}
& {\cal M}^{(k)}_{\boldsymbol{\sigma}, \mathbf b}=\sum_{\bx} {\Mgen}^{(k)}_{\boldsymbol{\sigma}, \bx}\prod_{i=1}^k E_{[b_i]}(x_i),\quad \forall \bx=(x_1,\ldots, x_k),\  \mathbf b=([b_1],\ldots,[b_k]),\\
\label{eq:BRedK2}
&W^d \sum_x E_{[a]}(x)\Thetagen_{t,xy}^{(\sig_1,\sig_2)}= \Theta_{t,[a][b]}^{(\sig_1,\sig_2)},\  W^d\big(S_t\Thetagen_{t}^{(\sig_1,\sig_2)}\big)_{xy}= \big(S_t^{\LK}\Theta_{t}^{(\sig_1,\sig_2)}\big)_{[a][b]},\quad \forall x\in[a], y\in [b].
\end{align}
Inspired by these relations, we define the block reductions of  \smash{$\Kgen^{(\fn)}_{t, \bm{\sigma}, \ba}$, $\Kgen^{\p{\pi}}_{t, \bm{\sigma}, \ba}$}, and $\Sigmagen^{\p{\pi}}_{t, \bm{\sigma}, \bm{d}}$ (recall \Cref{k-sigma-pi}), denoted by \smash{$\cK^{(\fn;B)}_{t, \bm{\sigma}, \ba}$, $\cK^{\p{\pi;B}}_{t, \bm{\sigma}, \ba}$, and $\Sigma^{\p{\pi;B}}_{t, \bm{\sigma}, \bm{d}}$}, as follows: 
\begin{itemize}
    \item[(1)] Replace every $\Thetagen_{t,ab}^{(\sig,\sig')}$ edge (representing an external edge) in each graph with a $\Theta_{t,[a][b]}^{(\sig,\sig')}$ edge.
   \item[(2)] Replace every $(S_t\Thetagen_t^{(\sig,\sig')})_{ab}$ edge (representing an unlabeled internal edge) in each graph with an $(S_t^{\LK}\Theta_t^{(\sig,\sig')})_{[a][b]}$ edge.
   \item[(3)] Replace each  ${\Mgen}^{(k)}_{\boldsymbol{\sigma}, \bx}$-loop with a loop representing $W^{(k-1)d}{\cal M}^{(k)}_{\boldsymbol{\sigma}, \mathbf b}$ with $x_i\in[b_i]$ for all $i\in\qqq{k}$. Here, we have chosen the $W^{(k-1)d}$ scaling such that  
   \be\label{eq:sizeMloop}
\qquad  W^{(k-1)d}{\cal M}^{(k)}_{\boldsymbol{\sigma}, \mathbf b} = \frac{1}{W^d}\sum_{x_k\in[b_k]} \left[\left(\prod_{i=1}^{k-1}M(\sig_i)I_{[b_i]}\right)M(\sig_k)\right]_{x_k x_k}=\OO(1),
\ee
where we used the notation $I_{[a]}$ in \eqref{def:Ia} and the fact that $\|M(\sig)\|=\OO(1)$ by \eqref{eq:M-msc}. 
\end{itemize}
Then, we let 
$$\cal{K}^{\p{\pi}}_{t, \bm{\sigma}, \ba} \coloneqq \cK^{\p{\pi;B}}_{t, \bm{\sigma}, \ba},\quad \Sigma^{\p{\pi}}_{t, \bm{\sigma}, \bm{d}}:= \Sigma^{\p{\pi;B}}_{t, \bm{\sigma}, \bm{d}} \ .$$
With \eqref{K-rescaling}, \eqref{eq:BRedK}, and \eqref{eq:BRedK2}, we observe that: 
\be\label{K-pi-rescaling}
    \cal{K}^{(\fn)}_{t,\bm{\sigma},\ba} = W^{-d\p{\fn-1}} \cK^{(\fn;B)}_{t,\bm{\sigma},\ba}=W^{-d\p{\fn-1}}\sum_\pi \cK^{\p{\pi}}_{t, \bm{\sigma}, \ba}.
\ee
To see this, for \smash{$\cal{K}^{(\fn)}_{t,\bm{\sigma},\ba}$} defined in \eqref{K-rescaling}, we pick an arbitrary graph from the tree representation formula \eqref{eq:tree_rep} for \smash{$\Kgen^{(\fn)}_{t,\bm{\sigma},\bx}$}. 
In this graph, every unlabeled \smash{$\Thetagen_{t}$ or $S_t\Thetagen_{t}$} edge contributes an edge in (1) or (2), along with a $W^{-d}$ factor, while each \smash{$\Mgen$}-loop contributes a loop in (3) along with a $W^{d}$ factor. Due to the tree structure, we have 
$\#\{\text{unlabeled edges}\}-\#\{M\text{-loops}\}=\fn-1,$  
which leads to \eqref{K-pi-rescaling}.

With the block reduction representation in \eqref{K-pi-rescaling}, using the estimates (\ref{prop:ThfadC_short}) and \eqref{eq:sizeMloop}, we can immediately derive the following bound on \emph{pure loops} where all charges are identical.

\begin{claim}[Pure loop estimate]\label{claim:pure}
  Let $\bm{\sigma} \in \set{+, -}^\fn$ be such that $\sigma_1 = \sigma_2 = \cdots = \sigma_\fn$. Then, we have
  \[
    \cal{K}^{(\fn)}_{t,\bm{\sigma},\ba} = \OO\p{W^{-d(\fn-1)}}.
  \]    
\end{claim}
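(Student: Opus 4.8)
\textbf{Proof plan for Claim \ref{claim:pure} (pure loop estimate).}

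The plan is to use the block reduction identity \eqref{K-pi-rescaling}, which writes $\cal{K}^{(\fn)}_{t,\bm{\sigma},\ba} = W^{-d(\fn-1)}\sum_\pi \cK^{(\pi)}_{t,\bm\sigma,\ba}$, and then show that each $\cK^{(\pi)}_{t,\bm\sigma,\ba}$ is $\OO(1)$ when all charges $\sigma_i$ coincide. Since there are only boundedly many subsets $\pi\subset\Z^{\mathrm{off}}_\fn$ (their number depends only on $\fn$, not on $N$), the claim follows. To bound a single $\cK^{(\pi)}_{t,\bm\sigma,\ba}$, I would invoke the tree representation of Lemma \ref{tree-representation} together with the block-reduction rules (1)--(3): each such term is a sum over internal vertices of a product of factors $\Theta^{(\sigma,\sigma')}_{t,[a][b]}$ (external edges), $(tS^{\LK}\Theta^{(\sigma,\sigma')}_t)_{[a][b]}$ (internal edges), and rescaled $M$-loops $W^{(k-1)d}\cal M^{(k)}_{\bm\sigma,\mathbf b}$.

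The key point is that when $\sigma_1=\cdots=\sigma_\fn$, every $\Theta$-factor appearing is of the form $\Theta^{(\sigma,\sigma)}_t$ (same charge), so we are in the regime of the short-range bound \eqref{prop:ThfadC_short} rather than the long-range diffusive bound \eqref{prop:ThfadC}. First I would record that \eqref{prop:ThfadC_short} gives
\[
\sum_{[x]\in\Zn}\Theta^{(\sigma,\sigma)}_t(0,[x]) \lesssim 1 + \lambda^2\sum_{[x]\neq 0}(C\heta)^{\|[x]\|_1-1} + n^d W^{-D} \lesssim 1,
\]
using $\heta\le W^{-\e_A}\ll 1$ from \eqref{eq:cond_A12} and choosing $D$ large; the same bound holds after left-multiplication by $S^{\LK}$ since $\|S^{\LK}\|_{\infty\to\infty}=\OO(1)$ (recall \eqref{eq:SLton} and $\lambda\ll 1$), so $\sum_{[y]}|(tS^{\LK}\Theta^{(\sigma,\sigma)}_t)_{[x][y]}|\lesssim 1$ as well. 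Also, each rescaled $M$-loop factor is $\OO(1)$ by \eqref{eq:sizeMloop}. Now I would sum over the internal vertices of the tree one at a time, working from the leaves inward: at each step, the vertex being summed is attached to exactly one unlabeled ($\Theta$ or $S\Theta$) edge (by \Cref{unique-interior-edge}, after accounting for the $M$-loop structure), and summing that vertex against the $\ell^1$-in-one-index bound above costs only an $\OO(1)$ factor while the bounded $M$-loop factors are carried along. Since the tree has $\fn$ external (fixed) vertices and a bounded number of internal vertices and edges, the total is $\OO(1)$, uniformly in $t\in[0,1)$.

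The main obstacle is bookkeeping rather than analysis: I must make sure the summation order over internal vertices is legitimate, i.e. that the tree structure of $\Gamma^{(M)}_{\ba}$ (as established in Definition \ref{m-loop-tsp} and \Cref{unique-interior-edge}) genuinely allows a leaf-peeling argument in which each newly-summed vertex sees exactly one ``free'' unlabeled edge, with all $M$-loop edges among already-processed or still-fixed vertices. A clean way to handle this is to contract each $M$-loop to a single super-vertex (as in \Cref{recursive-decomposition-lemma}'s picture), recovering the underlying canonical tree on which leaf-peeling is transparent, and to note that the internal summations within each $M$-loop are exactly those producing the $\OO(1)$ factor \eqref{eq:sizeMloop}. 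One also has to confirm that the $W^{-D}$ error terms in \eqref{prop:ThfadC_short}, when propagated through the bounded tree, contribute at most $\OO(W^{-D+C})$ for some fixed $C=C(\fn,d)$, which is negligible upon choosing $D$ large. With these points in place, combining the per-$\pi$ bound $\cK^{(\pi)}_{t,\bm\sigma,\ba}=\OO(1)$ with \eqref{K-pi-rescaling} yields $\cal{K}^{(\fn)}_{t,\bm\sigma,\ba}=\OO(W^{-d(\fn-1)})$, as claimed.
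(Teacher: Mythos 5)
Your proposal is correct and follows essentially the same route as the paper: the paper's (terse) proof of Claim~\ref{claim:pure} is exactly the combination of the block reduction \eqref{K-pi-rescaling}, the short-range bound \eqref{prop:ThfadC_short} for same-charge $\Theta$-factors, and the $\OO(1)$ bound \eqref{eq:sizeMloop} for rescaled $M$-loops. The extra bookkeeping you supply (contracting $M$-loops to super-vertices of the canonical tree and peeling leaves, each summation costing $\OO(1)$ because every edge appearing is short-range when all charges coincide) is a faithful expansion of the paper's ``immediately derive.''
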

Next, by repeatedly applying Ward's identity, we obtain the following bound from the pure loop estimate.
\begin{corollary}\label{K-ward-bound}
  Let $\bm{\sigma} \in \set{+, -}^\fn$. Then, we have that 
  \[
    \sum_{\br{a_2},\ldots,\br{a_\fn}} \mathcal{K}^{(\fn)}_{t,\bm{\sigma},\ba}=W^{-d\p{\fn-1}}\sum_\pi \sum_{\br{a_2},\ldots,\br{a_\fn}} \mathcal{K}^{(\pi)}_{t,\bm{\sigma},\ba} = \OO\Big(\big(W^d\eta_t\big)^{-\fn+1}\Big).
  \]
\end{corollary}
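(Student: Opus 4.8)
\textbf{Proof plan for Corollary \ref{K-ward-bound}.}

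The plan is to prove the bound $\sum_{[a_2],\ldots,[a_\fn]}\mathcal K^{(\fn)}_{t,\bsig,\ba}=\OO((W^d\eta_t)^{-\fn+1})$ by induction on $\fn$, peeling off one external vertex at a time using the Ward's identity for $\cK$-loops \eqref{WI_calK}. The base case $\fn=1$ is immediate since $\cK^{(1)}_{t,\sigma,[a]}=m(\sigma)=\OO(1)=(W^d\eta_t)^0$. For the inductive step, the subtlety is that \eqref{WI_calK} only applies at a vertex $[a_\fn]$ whose two adjacent charges are opposite, i.e.\ when $\sigma_1=-\sigma_\fn$; so the argument must be organized so that, up to cyclic relabeling and possibly interchanging $+\leftrightarrow-$, we can always locate such a vertex — unless $\bsig$ is a pure loop, in which case \Cref{claim:pure} gives the bound directly. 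Concretely, given $\bsig\in\{+,-\}^\fn$ with $\fn\ge 2$: if all $\sigma_i$ are equal, \Cref{claim:pure} yields $\cK^{(\fn)}_{t,\bsig,\ba}=\OO(W^{-d(\fn-1)})$, and summing the trivial way over $[a_2],\ldots,[a_\fn]$ would be too lossy, so instead I note that since the bound on $\cK^{(\fn)}$ is already $\OO(W^{-d(\fn-1)})$ \emph{pointwise} and the $\Theta$-propagators in the pure-charge case decay like $\lambda^2(C\heta)^{\|[x]\|_1-1}$ (see \eqref{prop:ThfadC_short}), the sum over $[a_\fn]$ of $\cK^{(\fn)}$ is again $\OO(W^{-d(\fn-1)})\le\OO((W^d\eta_t)^{-\fn+1})$ since $\eta_t\le 1$; one then iterates. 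If not all charges are equal, cyclically relabel so that $\sigma_1=-\sigma_\fn$ (possible because a non-constant cyclic word has an adjacent opposite pair), apply \eqref{WI_calK} to get
\[
\sum_{[a_\fn]}\mathcal K^{(\fn)}_{t,\bsig,\ba}
=\frac{1}{2\ii W^d\eta_t}\Big(\mathcal K^{(\fn-1)}_{t,\wh\bsig^{(+,\fn)},\wh\ba^{(\fn)}}-\mathcal K^{(\fn-1)}_{t,\wh\bsig^{(-,\fn)},\wh\ba^{(\fn)}}\Big),
\]
and then sum over the remaining $[a_2],\ldots,[a_{\fn-1}]$ and invoke the induction hypothesis on the two $(\fn-1)$-$\cK$-loops, picking up one factor $(W^d\eta_t)^{-1}$ at each step.

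The main technical point is bookkeeping the cyclic structure: after applying Ward's identity at $[a_\fn]$, the resulting index vectors $\wh\bsig^{(\pm,\fn)}$ have length $\fn-1$ and $[a_\fn]$ has been removed, so the external vertex we sum over next must again be chosen to have opposite adjacent charges — and one must check this choice can always be made (or that the loop has become pure, triggering \Cref{claim:pure}). A clean way to handle this is to fix in advance an ordering of the vertices $[a_2],\ldots,[a_\fn]$ to be summed in a specific cyclic sweep, or alternatively to argue that at every stage either the current word is constant (use \Cref{claim:pure}) or it admits an adjacent opposite pair that can be cyclically moved to the last slot; in the latter case the induction hypothesis applies to both terms on the right of \eqref{WI_calK}. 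Each application of \eqref{WI_calK} contributes exactly one $(W^d\eta_t)^{-1}$, and reducing from $\fn$ down to $1$ via $\fn-1$ such steps accumulates $(W^d\eta_t)^{-(\fn-1)}$, with the terminal $\cK^{(1)}=m(\sigma)=\OO(1)$; the pure-loop branch is consistent because $(W^{-d})^{\fn-1}\le(W^d\eta_t)^{-(\fn-1)}$ for $\eta_t\le 1$. Tracking that the number of terms generated stays bounded (each Ward's step at most doubles the count, so we end with $\OO(2^\fn)=\OO(1)$ terms) completes the estimate.

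The hard part will be the purely combinatorial verification that the cyclic-sweep strategy never gets stuck — that is, that whenever the charge word is non-constant we can rotate it so that the vertex currently being summed sits between two opposite charges, and that this remains compatible with having already removed earlier vertices. I expect this to be routine but genuinely requires care with the cyclic conventions of \Cref{def:canpnical_part} and the precise form of $\wh\bsig^{(\pm,\fn)}$ in \Cref{lem_WI_K}; everything else (the arithmetic of the $(W^d\eta_t)^{-1}$ factors, the fallback to \Cref{claim:pure}, the boundedness of the number of terms) is bookkeeping.
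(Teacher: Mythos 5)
Your proposal is correct and follows exactly the same strategy as the paper (the paper's proof is a one‑liner: "apply Ward's identity repeatedly, gaining $(W^d\eta_t)^{-1}$ per step, until every resulting loop is pure, then invoke \Cref{claim:pure}"). The combinatorial worry you flag at the end is real but easy to dispose of: in a cyclic charge word $\sigma_1,\ldots,\sigma_\fn$ (with $\sigma_{\fn+1}=\sigma_1$), the number of indices $i$ with $\sigma_i\neq\sigma_{i+1}$ is always even, hence a non-constant word has at least two such "switching" vertices; since only $[a_1]$ is withheld from summation, at least one switching vertex is always among the summed ones, so the recursion never gets stuck. Your handling of the pure‑loop branch — pointwise bound $\OO(W^{-d(\fn-1)})$ from \Cref{claim:pure} combined with the short‑range exponential decay of all $M$‑ and $\Theta^{(\sigma,\sigma)}$‑edges, so that the $(\fn-1)$‑fold sum costs only an $\OO(1)$ factor, then $W^{-d(\fn-1)}\le(W^d\eta_t)^{-(\fn-1)}$ since $\eta_t\le1$ — is also the intended argument, and your bookkeeping of the $\OO(2^\fn)$ term count is right.
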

\begin{proof}
We apply Ward's identity \eqref{WI_calK} repeatedly. In each step, we gain a $(W^d\eta_t)^{-1}$ factor while reducing the number of external vertices by 1. We continue this process until every resulting loop is a pure loop. Then, applying \Cref{claim:pure} and \eqref{prop:ThfadC_short} concludes the proof. 
\end{proof}
We now state and prove a \emph{sum-zero property} for $\Sigma^{\p{\pi}}$. It will be the main tool for the proof of \Cref{ML:Kbound}.  

\begin{lemma}[Sum zero property]\label{sum-zero}
Let $\fn \in 2\N$, $\fn \geq 4$, and consider an alternating loop $\bm{\sigma}$ such that $\sig_k\ne \sig_{k+1}$ for all $k\in\qqq{\fn}$, where we adopt the convention that $\sig_{\fn+1}=\sig_1$. Then, the single-molecule graph with $\pi=\emptyset$ (recall \Cref{k-sigma-pi}) satisfies the sum-zero property:
  \begin{equation}\label{sum-zero-property}
    \frac{1}{n^d} \sum_{\bm{d}} \Sigma^{\p{\emptyset}}_{t, \bm{\sigma}, \bm{d}} = \OO\p{\eta_t},
  \end{equation}
where recall that $\bm{d}=([d_1], \ldots, [d_\fn])$ denote the internal vertices connected to $[a_1], \ldots,$ $ [a_\fn]$, respectively. 
\end{lemma}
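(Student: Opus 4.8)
The plan is to prove the sum-zero property by induction on $\fn$, using the core decomposition from \Cref{core-decomposition} together with Ward's identities for $M$ and for the $\Theta$-propagators (equations \eqref{ward-M}, \eqref{ward-Theta-1}, \eqref{ward-Theta-2}). The base case $\fn=4$ is a direct computation: for the single molecule $\Sigma^{(\emptyset)}_{t,(+,-,+,-),\bm d}$, there are two $M$-loops of length $3$ joined by one long internal $S_t\Thetagen_t$ edge, plus the star graph with one $M$-loop of length $4$. Summing over all internal vertices $\bm d$ and using the identity $\sum_{[d_1]}(S_t\Theta_t^{(\sigma,\sigma)})_{[d_1][d_2]} = t(1 - t(M(\sigma)M(\sigma))_{[0][0]})^{-1}$ together with the block translation invariance \eqref{symmetryM} and Ward's identity \eqref{ward-M}, one collapses the sum to a telescoping expression of the schematic form $\frac{1-t}{2\ii\,\im m}\big[\text{term}_+ - \text{term}_-\big]$, which is $\OO(1-t) = \OO(\eta_t)$ since $\eta_t\asymp 1-t$ by \eqref{eq:E0bulk}.

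For the inductive step, I would fix an alternating $\bm\sigma$ of length $\fn\ge 6$ and use the key algebraic fact that $\Sigma^{(\emptyset)}$ is a sum over canonical tree partitions with \emph{no} non-trivial neighbors. The idea is to reduce to shorter loops via the slicing/Ward mechanism built in \Cref{ward-identity-gamma} and \Cref{ward-identity}: since $\sigma_1 = -\sigma_2$ (alternating), replacing $\sigma_1$ by $-\sigma_1$ creates a length-$(\fn+1)$ $\cK$-loop, but more usefully, running this in reverse, the alternating structure means we can apply \eqref{ward-M} at a vertex of one of the $M$-loops to reduce the length of that $M$-loop by one, each such reduction producing a factor $1/\eta_t = (1-t)^{-1}\cdot\OO(1)$ but also removing one power of the ``large'' summed propagator, so that after summing over $\bm d$ the net effect is a gain of $\eta_t$. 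Concretely, one writes $\frac1{n^d}\sum_{\bm d}\Sigma^{(\emptyset)}_{t,\bm\sigma,\bm d}$, isolates the $M$-loop attached to $[d_\fn]$ (say it has length $\delta\ge 2$; if $\delta = \fn$ this is the star graph which must be handled as in the base case), applies Ward's identity \eqref{ward-M} to the pair of $M$-factors meeting at a chosen internal vertex, and identifies the result as a combination of $\Sigma^{(\emptyset)}$-type objects for a polygon with fewer external vertices, times a factor of order $\eta_t/\eta_t\cdot\eta_t = \eta_t$ after accounting for the $(2\ii\eta_t)^{-1}$ from \eqref{ward-M} against an extra summed $\Theta$-propagator contributing a factor $\OO(\eta_t)$ via the Ward identity \eqref{eq:WardM1} (which gives $\sum_{[b]}M^{(\sigma_1,\sigma_2)}_{[a][b]}=1$ when $\sigma_1\ne\sigma_2$) and the $(1-t)$ factors tracked through \eqref{eq:ellt}--\eqref{prop:ThfadC}.

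The main obstacle I anticipate is bookkeeping the charges and the precise power of $\eta_t$ through the reduction, especially ensuring that the ``alternating'' hypothesis is preserved (or controllably violated) under the induction. When one applies \eqref{ward-M} inside an $M$-loop, the two neighboring edges on the polygon boundary may carry equal or opposite charges depending on the parity of the position in the alternating sequence; the cleanest route is probably to organize the induction not on $\fn$ directly but on the total number of internal vertices, and to use \Cref{core-decomposition} to split off one ``leaf molecule'' at a time — but \Cref{core-decomposition} requires a non-trivial neighbor pair, which is exactly what $\pi=\emptyset$ excludes, so for the single-molecule case one genuinely needs the Ward-identity collapse rather than a decomposition, and making that collapse produce the right $\eta_t$ uniformly in all tree shapes is the delicate point. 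A secondary subtlety is that one must confirm the $\OO$ in \eqref{sum-zero-property} is uniform in $t\in[0,1)$ and in the (suppressed) block index $[a_1]$; this follows from the block translation invariance \eqref{symmetryM} of $M$ and \eqref{symmetry} of $\Theta_t$, so after translating we may fix $[a_1]=[0]$ and all bounds become $t$-uniform via the estimates in \Cref{lem:propM} and \Cref{lem_propTH}.
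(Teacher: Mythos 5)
You correctly identify the central obstruction---that \Cref{core-decomposition} requires a non-trivial neighbor pair and hence cannot be applied directly when $\pi=\emptyset$---but you do not find the paper's resolution, and the direct ``Ward-collapse inside an $M$-loop'' route you sketch is not carried through and, as stated, does not obviously close. The paper avoids the delicate bookkeeping you anticipate by proving the strengthened statement $\frac{1}{n^d}\sum_{\bm d}\Sigma^{(\pi)}_{t,\bm\sigma,\bm d}=\OO(\eta_t)$ simultaneously for \emph{all} $\pi\subset\Z^{\mathrm{off}}_\fn$ and then treating $\pi=\emptyset$ by a subtraction argument: since
\[
\frac{1}{(1-t)^\fn}\sum_{\bm d}\Sigma^{(\emptyset)}_{t,\bm\sigma,\bm d}
=\sum_{\ba}\cK^{(\emptyset)}_{t,\bm\sigma,\ba}
\]
and, by expanding $\cK^{(\fn)}=\sum_\pi\cK^{(\pi)}$ and factoring out the external $\Theta$-propagators,
\[
\frac{W^{d(\fn-1)}}{N}\sum_{\ba}\cK^{(\fn)}_{t,\bm\sigma,\ba}
=(1-t)^{-\fn}\Big(\tfrac1N\sum_{\bm d}\Sigma^{(\emptyset)}_{t,\bm\sigma,\bm d}+\tfrac1N\sum_{\pi\ne\emptyset}\sum_{\bm d}\Sigma^{(\pi)}_{t,\bm\sigma,\bm d}\Big),
\]
one bounds the left side by $\OO(\eta_t^{-\fn+1})$ via the Ward-identity chain of \Cref{K-ward-bound} and the pure-loop estimate, bounds every $\pi\ne\emptyset$ term by $\OO(\eta_t)$ via \Cref{core-decomposition} together with the inductive hypothesis on \emph{strictly shorter} alternating loops (note $r$ and $\fn-r+2$ are both even), and reads off $\frac1N\sum_{\bm d}\Sigma^{(\emptyset)}=\OO(\eta_t)$ as the difference. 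This sidesteps entirely the question of how a Ward collapse acts on a general single-molecule tree shape and whether the alternating hypothesis survives; the only place Ward's identity enters is through the already-established \Cref{K-ward-bound} at the level of the full $\cK$-loop. Your base case $\fn=4$ is fine in spirit (the paper also reduces it to the Ward-identity bound from \Cref{K-ward-bound}, since only $\pi=\emptyset$ occurs), but without the strengthened induction hypothesis and the subtraction step, the inductive step as you propose it is not a proof.
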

\begin{proof}
  We will prove a slightly more general sum zero property by induction: for all $\fn \in 2\N$ and $\pi\subset \Z^{\mathrm{off}}_\fn$,  
  \begin{equation}\label{sum-zero-inductive-hypothesis}
    \frac{1}{n^d} \sum_{\bm{d}} \Sigma^{\p{\pi}}_{t,\bm{\sigma},\bm{d}}
      = \OO\p{\eta_t}.
  \end{equation}   
If only the scenario $\pi = \emptyset$ is possible, then by \Cref{K-ward-bound}, we have that  
\be\label{sumzero_derv}\frac{1}{(1-t)^\fn} \sum_{\bm{d}} \Sigma^{\p{\emptyset}}_{t, \bm{\sigma}, \bm{d}} = \sum_{\ba} \mathcal{K}^{(\emptyset)}_{t,\bm{\sigma},\ba}   =\OO\p{n^d\eta_t^{-\fn+1}}.\ee
In the first step, we used that for any $\sig\ne \sig'$ and $[y]\in \Zn$, $\sum_{[x]}\Theta^{(\sig,\sig')}_{[x][y]}=(1-t)^{-1}$ by \eqref{eq:WardM1}. Together with the fact that $1-t\asymp \eta_t$, the equation \eqref{sumzero_derv} implies \eqref{sum-zero-inductive-hypothesis}. 
 As a special case, this concludes the base case with $\fn = 4$ where only $\pi = \emptyset$ is possible. 
 For the general case with $\fn > 4$, suppose we have proved \eqref{sum-zero-inductive-hypothesis} for all alternating loops of length $k<\fn$ and $\pi\subset \Z^{\mathrm{off}}_k$. 
  
We first consider the $\pi\ne\emptyset$ case. Without loss of generality, suppose $\set{1, r} \in \pi$, and let $\p{c, c'} = R_1 \cap R_r$ (note $c,c'$ now denote blocks in \smash{$\Zn$}). Swapping $c$ and $c'$ if necessary, we can assume that $\p{q\p{c}, q(c')} = \p{+, -} = \p{\sigma_1, \sigma_r}$. Applying \Cref{core-decomposition} and the block reduction, we get
  \[
    \sum_{\bm{d}} \Sigma^{\p{\pi}}_{t,\bm{\sigma},\bm{d}}
      = \sum_{c, c'} \sum_{\bm{d}^{\p{c}} \setminus \set{c}}\Sigma^{(\pi^{\p{c}})}_{t,\bm{\sigma}^{\p{c}},\bm{d}^{\p{c}}}   
        \cdot \p{S_t^{\LK}\Theta_t^{\p{+, -}}}_{cc'}
        \cdot \sum_{\bm{d}^{(c')} \setminus \set{c'}}\Sigma^{(\pi^{(c')})}_{t,\bm{\sigma}^{(c')},\bm{d}^{(c')}} .
  \]
Since $\bm{\sigma}$ is alternating, $r$ must be be even. Then, both $\bm{\sigma}^{\p{c}}$ and $\bm{\sigma}^{(c')}$ are alternating loops of lengths $\fn - r + 2 \in 2\N$ and $r \in 2\N$, respectively. Thus, we can apply the induction hypothesis and derive that 
  \begin{align*}
    \frac{1}{n^d} \sum_{\bm{d}} \Sigma^{\p{\pi}}_{t,\bm{\sigma},\bm{d}}
      &= \bigg(\frac{1}{n^d} \sum_{\bm{d}^{\p{c}}}\Sigma^{(\pi^{\p{c}})}_{t,\bm{\sigma}^{\p{c}},\bm{d}^{\p{c}}} \bigg)
        \cdot \frac{1}{n^d} \sum_{c, c'} \p{S_t^{\LK}\Theta_t^{\p{+, -}}}_{cc'}\cdot 
        \bigg(\frac{1}{n^d} \sum_{\bm{d}^{(c')}}\Sigma^{(\pi^{(c')})}_{t,\bm{\sigma}^{(c')},\bm{d}^{(c')}} \bigg) \\
      &= \OO\p{\eta_t \cdot \frac{t}{1 - t} \cdot \eta_t} = \OO\p{\eta_t},
  \end{align*}
  where, in the first step, we used that the first and third factors on the RHS do not depend on $c$ or $c'$ due to the block translation invariance. 

  To deal with the $\pi = \emptyset$ case, we adopt a similar argument as in \eqref{sumzero_derv} and decompose \smash{$\mathcal{K}^{(\fn)}_{t,\bm{\sigma},\ba}$} as
  \begin{align}
    \frac{W^{d(\fn-1)}}{n^d} \sum_{\ba} \mathcal{K}^{(\fn)}_{t,\bm{\sigma},\ba}
      &= \p{\sum_{\ba} \prod_{i=1}^\fn \Theta_{t,[a_i][d_i]}^{\p{\sigma_i,\sigma_{i+1}}}} \bigg(\frac{1}{n^d} \sum_{\bm{d}}\Sigma^{\p{\emptyset}}_{t,\bm{\sigma},\bm{d}}
        + \frac{1}{n^d}\sum_{\pi \neq \emptyset}  \sum_{\bm{d}}\Sigma^{\p{\pi}}_{t,\bm{\sigma},\bm{d}}\bigg) \nonumber\\
      &= \p{1 - t}^{-\fn} \bigg(\frac{1}{n^d} \sum_{\bm{d}}\Sigma^{\p{\emptyset}}_{t,\bm{\sigma},\bm{d}}
        + \frac{1}{n^d}\sum_{\pi \neq \emptyset}  \sum_{\bm{d}}\Sigma^{\p{\pi}}_{t,\bm{\sigma},\bm{d}}\bigg).\label{eq:piempty}
  \end{align}
  By \Cref{K-ward-bound}, we have that 
  $$\frac{W^{d(\fn-1)}}{n^d}\sum_{\ba} \mathcal{K}^{(\fn)}_{t,\bm{\sigma},\ba}   \lesssim \frac{W^{d(\fn-1)}}{n^d} \cdot n^d\p{W^d\eta_t}^{-\fn+1}=\eta_t^{-\fn+1}.$$
  Plugging this bound and the estimate (\ref{sum-zero-inductive-hypothesis}) established for the $\pi\ne \emptyset$ case into \eqref{eq:piempty}, we conclude that 
  \[
    \frac{1}{n^d} \sum_{\bm{d}}\Sigma^{\p{\emptyset}}_{t,\bm{\sigma},\bm{d}}
      = -\frac{1}{n^d} \sum_{\pi \neq \emptyset} \sum_{\bm{d}}\Sigma^{\p{\pi}}_{t,\bm{\sigma},\bm{d}} + \p{1 - t}^\fn \OO\p{\eta_t^{-\fn+1}}
      = \OO\p{\eta_t}.
  \]
  This completes the induction step and hence concludes the proof of \Cref{sum-zero}.
\end{proof}


We are now prepared to utilize the sum-zero property in \Cref{sum-zero} to complete the proof of \Cref{ML:Kbound}. In this proof, we will use the following simple estimate regarding the core.

\begin{claim}\label{core-decay}
  We have that
  \be\label{eq:core}
    \sum_{\br{d_2}, \ldots, \br{d_\fn}} \Sigma^{\p{\emptyset}}_{t, \bm{\sigma}, \bm{d}} =\OO\p{\eta_t},\quad \text{and}\quad 
    \sum_{\br{d_2}, \ldots, \br{d_\fn}} \abs{\Sigma^{\p{\emptyset}}_{t, \bm{\sigma}, \bm{d}}} = \OO\p{\eta_t + \lambda^2}.
  \ee
\end{claim}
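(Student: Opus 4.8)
\textbf{Proof proposal for Claim \ref{core-decay}.}

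The plan is to deduce both estimates in \eqref{eq:core} from the sum-zero property \eqref{sum-zero-property} together with the decay estimates on the $\Theta$-propagators from \Cref{lem_propTH}. The first bound is essentially a localized (un-averaged in $[d_1]$) version of \eqref{sum-zero-property}. The key structural observation is the core decomposition: by \Cref{recursive-decomposition-lemma} and \Cref{core-decomposition}, a single-molecule core $\Sigma^{(\emptyset)}_{t,\bm\sigma,\bm d}$ is built out of $M$-loops joined by $(tS^{\LK}\Theta_t)$-edges of \emph{equal charge} at the two endpoints (since $\pi=\emptyset$ means no long internal edges survive). Using \eqref{prop:ThfadC_short}, such equal-charge propagators satisfy $\Theta^{(\sigma,\sigma)}_t(0,[x]) = \OO(\mathbf 1_{[x]=0}+\lambda^2(C\heta)^{\|[x]\|_1-1})$, hence $\sum_{[x]}(tS^{\LK}\Theta^{(\sigma,\sigma)}_t)_{[y][x]} = \OO(1)$ with the off-diagonal contribution carrying an extra $\lambda^2$ factor (recall $\heta\le W^{-\e_A}$). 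Thus summing a core over the internal vertices $[d_2],\dots,[d_\fn]$ but keeping $[d_1]$ fixed is controlled: each $M$-loop contributes $\OO(1)$ by \eqref{eq:sizeMloop}, and each internal $S\Theta$-edge contributes $\OO(1)$ when summed over the far endpoint.

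For the first estimate in \eqref{eq:core}, I would argue that by block translation invariance (\Cref{lem:propM}(2) and \Cref{lem_propTH}(2)), the quantity $\sum_{[d_2],\dots,[d_\fn]}\Sigma^{(\emptyset)}_{t,\bm\sigma,\bm d}$ is independent of $[d_1]$, so it equals $n^{-d}\sum_{\bm d}\Sigma^{(\emptyset)}_{t,\bm\sigma,\bm d}=\OO(\eta_t)$ by \Cref{sum-zero}. This gives the first bound with no extra work. For the second (absolute-value) estimate, one cannot use translation invariance after taking absolute values, so instead I would separate the diagonal and off-diagonal parts of each internal $S\Theta$-edge. The ``all-diagonal'' term, in which every $(tS^{\LK}\Theta^{(\sigma_k,\sigma_k)}_t)$-edge is replaced by its diagonal entry, collapses the whole core to a product of $M$-loops evaluated at coincident internal vertices; by a direct computation (as in the $\fn=4$ case, using translation invariance of $M$ and the identity $\sum_{[d]}(S_t\Theta_t^{(\sigma,\sigma)})_{[d][d']}=t/(1-t(M(\sigma)M(\sigma))_{00})$) this term equals $-\frac1{n^d}\sum_{\bm d}\Sigma^{(\emptyset)}_{t,\bm\sigma,\bm d}$ up to terms already shown to be $\OO(\eta_t)$, hence is $\OO(\eta_t)$. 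Every remaining term has at least one off-diagonal $S\Theta$-edge, contributing an extra $\lambda^2$ factor from \eqref{prop:ThfadC_short}, while the rest of the graph is still summable to $\OO(1)$; this gives the $\OO(\lambda^2)$ contribution. Adding the two pieces yields $\OO(\eta_t+\lambda^2)$.

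The main obstacle I anticipate is bookkeeping in the off-diagonal expansion: one must verify that for \emph{every} non-trivial way of selecting which internal edges are taken off-diagonal, the resulting (sub)graph can still be summed over all free internal vertices with an $\OO(1)$ bound, and that at least one genuine $\lambda^2$ gain is always present. Because the core is a tree of $M$-loops, summing from the leaves inward keeps everything finite (each leaf summation produces an $\OO(1)$ or $\OO(\lambda^2)$ factor and never a divergent $\ell_t$ power, since equal-charge $\Theta$ does not have the $1/((1-t)\ell_t^d)$ heat-kernel profile), so the estimate closes; but making the induction on the tree structure clean — and correctly matching the ``all-diagonal'' term to the averaged sum-zero quantity — is where the care is needed. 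A secondary point is handling the external $\Theta^{(\sigma_1,q(d_1))}$-edge attached to $[a_1]$, which is \emph{not} part of the core by \Cref{core-definition}, so the claim as stated only concerns the internal sum and this subtlety does not actually arise; I would simply note this to avoid confusion.
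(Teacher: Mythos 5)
Your proof of the first bound in \eqref{eq:core} is correct and matches the paper exactly: by block translation invariance, $\sum_{[d_2],\ldots,[d_\fn]}\Sigma^{(\emptyset)}_{t,\bm\sigma,\bm d}$ is independent of $[d_1]$ and hence equals its own average over $[d_1]$, which is $\OO(\eta_t)$ by Lemma~\ref{sum-zero}.

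For the second bound, the paper takes a simpler and different route than you propose, and I think your route as written has a gap. The paper does \emph{not} expand the internal $S_t\Theta_t$-edges into diagonal and off-diagonal pieces. Instead, it partitions the summation over $\bm d$ directly: either $[d_1]=\cdots=[d_\fn]$ (coincident) or $\max_{i,j}|[d_i]-[d_j]|>0$ (non-coincident). Because every edge in $\Sigma^{(\emptyset)}$ is short-range (an $M$-edge controlled by \eqref{Mbound_AO2} or a same-charge $\Theta$-edge controlled by \eqref{prop:ThfadC_short}), the core satisfies the pointwise bound $|\Sigma^{(\emptyset)}_{t,\bm\sigma,\bm d}|\le C\lambda^2(C\heta)^{\max_{i,j}|[d_i]-[d_j]|-1}+W^{-D}$ whenever the $[d_i]$'s are not all equal, so the non-coincident contribution sums to $\OO(\lambda^2)$. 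For the coincident term, translation invariance lets one replace $\Sigma^{(\emptyset)}_{t,\bm\sigma,([d_1],\ldots,[d_1])}$ by its average $n^{-d}\sum_{[d_1]}\Sigma^{(\emptyset)}_{t,\bm\sigma,([d_1],\ldots,[d_1])}$, which one rewrites as $n^{-d}\sum_{\bm d}\Sigma^{(\emptyset)}-n^{-d}\sum_{[d_1]}\sum_{\text{non-coinc.}}\Sigma^{(\emptyset)}$, i.e.\ $\OO(\eta_t)+\OO(\lambda^2)$ using the sum-zero property and the already-established non-coincident bound.

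Your proposed multiplicative diag/off-diag expansion of each internal $S_t\Theta_t$-edge does not obviously close. In particular, the ``all-diagonal'' term — obtained by setting each internal connecting edge to its diagonal — is \emph{not} the same as the restriction $[d_1]=\cdots=[d_\fn]$: making the $S_t\Theta_t$-edges diagonal constrains only the gluing vertices of neighbouring $M$-loops, while the $[d_2],\ldots,[d_\fn]$ remain free and still get summed. The claimed identification of this all-diagonal term with $\pm n^{-d}\sum_{\bm d}\Sigma^{(\emptyset)}$ is therefore not justified (and the asserted identity $\sum_{[d]}(S_t\Theta_t^{(\sigma,\sigma)})_{[d][d']}=t/(1-t(M(\sigma)M(\sigma))_{00})$ concerns the full row sum, not the diagonal entry alone). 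You flag this step yourself as ``where the care is needed''; the paper sidesteps it entirely by partitioning the \emph{external} indices $\bm d$ rather than the internal edges, which makes the argument a few lines long. I'd recommend adopting that partition.
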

\begin{proof}
The first identity in \eqref{eq:core} is an immediate consequence of \eqref{sum-zero-property} by the block translation invariance. For the second identity, note that all edges (i.e., the $M$-edges and \smash{$\Theta_t^{(\sig,\sig)}$}-edges) in \smash{$\Sigma^{\p{\emptyset}}$} are short-range. Hence, by the estimates \eqref{Mbound_AO2} and \eqref{prop:ThfadC_short}, there exists a constant $C>0$ such that whenever $\max_{i, j} \abs{\br{d_i} - \br{d_j}}>0$,
  \[
    \abs{\Sigma^{\p{\emptyset}}_{t, \bm{\sigma}, \bm{d}}}
      \le  C\lambda^2 \p{C\heta}^{\max_{i, j} \abs{\br{d_i} - \br{d_j}}-1}+W^{-D},
  \]
  for any large constant $D>0$. This gives that
  \be\label{eq:offSigma}
    \sum_{\br{d_2}, \ldots, \br{d_\fn}}^{([d_1])}  \abs{\Sigma^{\p{\emptyset}}_{t, \bm{\sigma}, \bm{d}}} \lesssim \lambda^2,\quad \text{where}\ \ \sum_{(\br{d_2}, \ldots, \br{d_\fn})}^{([d_1])}:= \sum_{(\br{d_2}, \ldots, \br{d_\fn}) \in (\Zn)^{\fn-1}\setminus \{([d_1],\ldots, [d_1])\}}.
  \ee
  For the diagonal term with $\br{d_1} = \cdots = \br{d_\fn}$, using \Cref{sum-zero} along with the fact that \smash{$\Sigma^{\p{\emptyset}}_{t, \bm{\sigma}, \p{\br{d_1}, \ldots, \br{d_1}}}$} does not depend on $[d_1]$ due to block translation invariance, we obtain that 
  \begin{align*}
    \Sigma^{\p{\emptyset}}_{t, \bm{\sigma}, \p{\br{d_1}, \ldots, \br{d_1}}} &= \frac{1}{n^d}\sum_{[d_1]}\Sigma^{\p{\emptyset}}_{t, \bm{\sigma}, \p{\br{d_1}, \ldots, \br{d_1}}} =\OO\p{\eta_t} - \frac{1}{n^d}\sum_{[d_1]}\sum_{\br{d_2}, \ldots, \br{d_\fn}}^{([d_1])}\Sigma^{\p{\emptyset}}_{t, \bm{\sigma}, \bm{d}} =\OO(\eta_t+\lambda^2).  
  \end{align*}
  Combined with \eqref{eq:offSigma}, this concludes the second identity in \eqref{eq:core}. 
\end{proof}
\begin{proof}[\bf Proof of Lemma \ref{ML:Kbound}]
Using the definitions \eqref{Kn2sol} for $2$-$\cK$ loops and \eqref{Kn3sol} for $3$-$\cK$ loops (noting that an alternating loop is not possible when $\fn=3$), we can easily show that \eqref{eq:bcal_k} holds for $\fn\in\{2,3\}$ by applying the estimates \eqref{prop:ThfadC}, \eqref{prop:ThfadC_short}, and \eqref{Mbound_AO2}. For $\fn\ge 4$, if $\bsig$ is a pure loop, we can easily derive \eqref{eq:bcal_k} from the estimates \eqref{Mbound_AO2} and \eqref{prop:ThfadC_short}, as all edges in the $\fn$-$\cK$ loop are short-range. Otherwise, we can assume that $\sig_1\ne\sig_2$, up to the relabeling of vertices. By \eqref{K-pi-rescaling}, it suffices to prove the following estimate: 
  \begin{equation}\label{Kbound-inductive-hypothesis}
\sum_{\br{d_2}, \ldots, \br{d_\fn}} \p{\prod_{i=2}^\fn \Theta^{\p{\sigma_i, \sigma_{i+1}}}_{t,\br{a_i}\br{d_i}}}\Sigma^{\p{\pi}}_{t, \bm{\sigma}, \bm{d}}\prec \p{\ell_t^d\eta_t}^{-\fn+2} \Big({\min_{2 \leq i \leq \fn} \avg{\br{a_i} - \br{d_1}}}\Big)^{-d},\quad  \forall\pi \subset \mathbb{Z}_\fn^{\mathrm{off}}\, .
\end{equation}
Summing this estimate over $[d_1]$ and applying \eqref{prop:ThfadC} to {$\Theta_{t,[a_1][d_1]}^{(\sig_1,\sig_2)}$} concludes \eqref{eq:bcal_k}. 

We will prove \eqref{Kbound-inductive-hypothesis} by induction in $\fn$. Suppose that \eqref{Kbound-inductive-hypothesis} holds for all $\cK$-loops of length $k<\fn$ and $\pi\subset \Z^{\mathrm{off}}_k$. 
We begin with the case $\pi = \emptyset$ and further divide the problem into the following two cases: 
(i) $\bsig$ is an alternating loop; (ii) there exists $j \in \qqq{2,\fn}$ such that $\sigma_j = \sigma_{j+1}$ (with the convention $\sigma_{\fn+1} = \sigma_1$). 
In the second case, {$\Theta_{t,\br{a_j}\br{d_j}}^{\p{\sigma_j, \sigma_{j+1}}}$} is a short edge. Then, applying \eqref{Mbound_AO2} and \eqref{prop:ThfadC}, we obtain that 
  \[
    \prod_{i=2}^\fn \Theta_{t,\br{a_i}\br{d_i}}^{\p{\sigma_i, \sigma_{i+1}}} \prec \p{\ell_t^d\eta_t}^{-\fn+2} \avg{[a_j]-[d_j]}^{-d},
  \]
  which implies that
  \begin{align*}
    \sum_{\br{d_2}, \ldots, \br{d_n}} \p{\prod_{i=2}^\fn \Theta^{\p{\sigma_i, \sigma_{i+1}}}_{t,\br{a_i}\br{d_i}}}\Sigma^{\p{\emptyset}}_{t, \bm{\sigma}, \bm{d}}
      &\prec \p{\ell_t^d\eta_t}^{-\fn+2} \sum_{\br{d_2}, \ldots, \br{d_n}} \abs{\Sigma^{\p{\emptyset}}_{t, \bm{\sigma}, \bm{d}}}\avg{[a_j]-[d_j]}^{-d} \prec \p{\ell_t^d\eta_t}^{-\fn+2} \avg{[a_j]-[d_1]}^{-d}.
  \end{align*}
  Here, in the second step, we used that the core $\Sigma^{\p{\emptyset}}_{t, \bm{\sigma}, \bm{d}}$ consists of short edges only. 
  
  
It remains to consider the more challenging case (i), to deal with which we will need to use 
the estimates in \eqref{eq:core}. For brevity, we introduce the following notations:
  \[
    \br{s_i} \coloneqq \br{d_i} - \br{d_1},
    \quad f\p{\br{a_i}, \br{s_i}} \coloneqq \Theta^{\p{\sigma_i, \sigma_{i+1}}}_{t,\br{a_i}(\br{d_1}+\br{s_i})}.
  \]
  Then, the left-hand side (LHS) of (\ref{Kbound-inductive-hypothesis}) can be written as
  \[
    \sum_{\br{d_2}, \ldots, \br{d_n}} \p{\prod_{i=2}^\fn \Theta^{\p{\sigma_i, \sigma_{i+1}}}_{t,\br{a_i}\br{d_i}}}\Sigma^{\p{\emptyset}}_{t, \bm{\sigma}, \bm{d}}
      = \sum_{\br{s_2}, \ldots, \br{s_n}} \p{\prod_{i=2}^\fn f\p{\br{a_i}, \br{s_i}}} \Sigma^{\p{\emptyset}}_{t, \bm{\sigma}, \bm{d}}.
  \]
  We further decompose
  \[
    f\p{\br{a_i}, \br{s_i}}
      = f_0\p{\br{a_i}, \br{s_i}}
        + f_1\p{\br{a_i}, \br{s_i}}
        + f_2\p{\br{a_i}, \br{s_i}},
  \]
  where we have defined
  \begin{align*}
    f_0\p{\br{a_i}, \br{s_i}}
      &\coloneqq f\p{\br{a_i}, \br{0}}, \quad
    f_1\p{\br{a_i}, \br{s_i}}
      \coloneqq \frac{1}{2} \br{f\p{\br{a_i}, \br{s_i}} - f\p{\br{a_i}, -\br{s_i}}}, \\
    f_2\p{\br{a_i}, \br{s_i}}
      &\coloneqq \frac{1}{2} \br{f\p{\br{a_i}, \br{s_i}} + f\p{\br{a_i}, -\br{s_i}}}
        - f\p{\br{a_i}, \br{0}} .
  \end{align*}
  By (\ref{prop:ThfadC}), (\ref{prop:BD1}), and (\ref{prop:BD2}), they satisfy the following estimates:
  \be
  \begin{split}
    &f_0\p{\br{a_i}, \br{s_i}}
      \prec \p{\ell_t^d \eta_t}^{-1},\quad
    f_1\p{\br{a_i}, \br{s_i}}
      \prec \frac{\p{\eta_t + \lambda^2}^{-1} |[s_i]|}{\avg{\br{a_i} - \br{d_1}}^{d-1}}, \quad f_2\p{\br{a_i}, \br{s_i}}
      \prec \frac{\p{\eta_t + \lambda^2}^{-1}|[s_i]|^2}{\avg{\br{a_i} - \br{d_1}}^{d}}.\label{f012}
  \end{split}\ee
  Then, the LHS of (\ref{Kbound-inductive-hypothesis}) can be decomposed as
  \[
    \sum_{\br{d_2}, \ldots, \br{d_\fn}} \p{\prod_{i=2}^\fn \Theta^{\p{\sigma_i, \sigma_{i+1}}}_{t,\br{a_i}\br{d_i}}}\Sigma^{\p{\emptyset}}_{t, \bm{\sigma}, \bm{d}}
      = \sum_{ \xi_2, \ldots, \xi_\fn \in \qqq{0,2}} \sum_{\br{s_2}, \ldots, \br{s_\fn}} \p{\prod_{i=2}^\fn f_{\xi_i}\p{\br{a_i}, \br{s_i}}} \Sigma^{\p{\emptyset}}_{t, \bm{\sigma}, \bm{d}}.
  \]
  We will prove (\ref{Kbound-inductive-hypothesis}) for each fixed sequence $\p{\xi_2, \ldots, \xi_\fn}$. We divide the proof into the following cases. 
  \begin{enumerate}
 \item 
If $\xi_i = 2$ for at least one $i$, then using \eqref{f012}, we get that for any constant $D>0$,
      \begin{align*}
         \sum_{\br{s_2}, \ldots, \br{s_\fn}} \p{\prod_{i=2}^\fn f_{\xi_i}\p{\br{a_i}, \br{s_i}}} \Sigma^{\p{\emptyset}}_{t, \bm{\sigma}, \bm{d}}
          \prec &~\frac{\p{\ell_t^d \eta_t}^{-\fn+2}}{\eta_t + \lambda^2} \Big({\min_{2 \leq i \leq \fn} \avg{\br{a_i} - \br{d_1}}}\Big)^{-d}  \sum_{\br{d_2}, \ldots, \br{d_\fn}} \abs{\Sigma^{\p{\emptyset}}_{t, \bm{\sigma}, \bm{d}}}+ W^{-D}.  
      \end{align*}
      Here, we also used that the summation over $|[s_i]|\ge (\log W)^2$ is exponentially small since \smash{$\Sigma^{\p{\emptyset}}_{t, \bm{\sigma}, \bm{d}}$} consists of short edges only. Applying \eqref{eq:core} cancels the factor \smash{$\p{\eta_t + \lambda^2}^{-1}$}, thereby yielding (\ref{Kbound-inductive-hypothesis}).


\item Suppose $\xi_i = 1$ for at least two indices $i$. Without loss of generality, suppose $\xi_2=\xi_3=1$. Then, using \eqref{f012} and \eqref{eq:core}, we find that for any constant $D>0$,
      \begin{align}\label{eq:twoone}
     \sum_{\br{s_2}, \ldots, \br{s_\fn}} \p{\prod_{i=2}^\fn f_{\xi_i}\p{\br{a_i}, \br{s_i}}} \Sigma^{\p{\emptyset}}_{t, \bm{\sigma}, \bm{d}}\prec &~ \frac{\p{\ell_t^d \eta_t}^{-\fn+3}}{\eta_t + \lambda^2} \frac{\mathbf 1(|[a_2]-[d_1]|\le (\log W)^2\ell_t)}{\avg{[a_2]-[d_1]}^{d-1}\avg{[a_3]-[d_1]}^{d-1}}  + W^{-D},
      \end{align}
      where we also used the exponential decay of the $\Theta_t$-propagators beyond the scale $\ell_t$ due to \eqref{prop:ThfadC}. By using $\eta_t + \lambda^2\gtrsim \eta_t\ell_t^2$, we can derive \eqref{Kbound-inductive-hypothesis} from \eqref{eq:twoone} for both dimensions $d=1$ and $d=2$. 
      

\item If $\xi_k=1$ for some $2\le k\le \fn$ and all other $\xi_i$'s are equal to zero, then using $f_1([a_k],[s_k])=-f_1([a_k],-[s_k])$, we see that the corresponding term vanishes.


\item Finally, if $\xi_2 = \cdots = \xi_\fn = 0$, then using \eqref{f012}, the exponential decay of the $\Theta_t$-propagators, and the first identity in \eqref{eq:core}, we get that for any constant $D>0$,
      \begin{align*}
        &\sum_{\br{s_2}, \ldots, \br{s_n}} \p{\prod_{i=2}^\fn f_0\p{\br{a_i}, \br{s_i}}} \Sigma^{\p{\emptyset}}_{t, \bm{\sigma}, \bm{d}}
          = \p{\prod_{i=2}^\fn f\p{\br{a_i}, \br{0}}} \sum_{\br{d_2}, \ldots, \br{d_\fn}} \Sigma^{\p{\emptyset}}_{t, \bm{\sigma}, \bm{d}} \\
          &\prec \eta_t\p{\ell_t^d \eta_t}^{-\fn+1}\mathbf 1\p{|[a_2]-[d_1]|\le (\log W)^2\ell_t} +W^{-D}\prec  \p{\ell_t^d \eta_t}^{-\fn+2}\avg{[a_2]-[d_1]}^{-d},
      \end{align*}
      which implies (\ref{Kbound-inductive-hypothesis}).
\end{enumerate}

  \medskip  
  In sum, we have shown that (\ref{Kbound-inductive-hypothesis}) holds when $\pi = \emptyset$, which also establishes the base case $\fn=4$ for the induction argument.   
  For a general $\fn\ge 4$ and $\pi\ne \emptyset$, we apply \Cref{core-decomposition} and use the induction hypothesis. Specifically, suppose we have the decomposition \eqref{recursive-decomposition-Sigma}, whose block reduction takes the form
  \begin{equation}\label{recursive-decomposition-Sigma3}
\Sigma^{\p{\pi}}_{t,\bm{\sigma},\bm{d}} = \sum_{c, c'} \Sigma^{(\pi^{\p{c}})}_{t,\bm{\sigma}^{\p{c}},\bm{d}^{\p{c}}} \p{S_t^{\LK}\Theta_t^{\p{\sigma_1, \sigma_r}}}_{[c][c']}
\Sigma^{(\pi^{(c')})}_{t,\bm{\sigma}^{(c')},\bm{d}^{(c')}} ,
  \end{equation}
 where $\pi^{(c)}$ and $\pi^{(c')}$ are defined as in \eqref{eq:pic}, and $(\bm{\sigma}^{\p{c}}, \bm{d}^{\p{c}})$ and $(\bm{\sigma}^{(c')}, \bm{d}^{(c')})$ now take the form 
  \begin{align}
  &\big(\bm{\sigma}^{\p{c}}, \bm{d}^{\p{c}}\big)
        \coloneqq \cutL^{\p{[c]}}_{1r}\p{\bm{\sigma}, \bm{d}}, \ \
    \big(\bm{\sigma}^{(c')}, \bm{d}^{(c')}\big)
        \coloneqq \cutR^{([c'])}_{1r}\p{\bm{\sigma}, \bm{d}}.\label{eq:sigmac2}
  \end{align}
By the induction hypothesis \eqref{Kbound-inductive-hypothesis}, we have 
  \begin{align*}
      \sum_{\bm{d}^{(c)}\setminus \{[c]\}} \p{\prod_{i=r}^{\fn} \Theta^{\p{\sigma_i, \sigma_{i+1}}}_{t,\br{a_i}[d_{i}]}}\Sigma^{(\pi^{(c)})}_{t,\bm{\sigma}^{(c)},\bm{d}^{(c)}}
      &\prec \frac{\p{\ell_t^d\eta_t}^{-\fn+r}}{\big(\min_{r \leq i \leq \fn} \avg{\br{a_i} - \br{c}}\big)^{d}} \, ,\\
      \sum_{\bm{d}^{(c')}\setminus \{[d_{1}]\}} \p{\Theta^{\p{\sigma_r, \sigma_{1}}}_{t,[c][c']}\prod_{i=2}^{r-1} \Theta^{\p{\sigma_i, \sigma_{i+1}}}_{t,\br{a_i}[d_{i}]}}    \Sigma^{(\pi^{(c')})}_{t,\bm{\sigma}^{(c')},\bm{d}^{(c')}}
      &\prec \frac{\p{\ell_t^d\eta_t}^{-r+2}}{\big(\avg{[c] - [d_{1}]} \wedge\min_{2 \leq i \leq r-1} \avg{[a_i] - [d_{1}]}\big)^{d}} \, .
  \end{align*}
Then, with the decomposition \eqref{recursive-decomposition-Sigma3} and the above two estimates, 
we can write the LHS of \eqref{Kbound-inductive-hypothesis} as 
\begin{align*}
& \sum_{\bm{d}^{(c)}\setminus \{[c]\}} \sum_{\bm{d}^{(c')}\setminus \{[d_{1}]\}}  \p{\prod_{i=r}^{\fn} \Theta^{\p{\sigma_i, \sigma_{i+1}}}_{t,\br{a_i}[d_{i}]}}\Sigma^{(\pi^{(c)})}_{t,\bm{\sigma}^{(c)},\bm{d}^{(c)}} 
\p{t\Theta^{\p{\sigma_r, \sigma_{1}}}_{t,[c][c']}\prod_{i=2}^{r-1} \Theta^{\p{\sigma_i, \sigma_{i+1}}}_{t,\br{a_i}[d_{i}]}}    \Sigma^{(\pi^{(c')})}_{t,\bm{\sigma}^{(c')},\bm{d}^{(c')}}\\
&      \prec \p{\ell_t^d\eta_t}^{-\fn+2}\sum_{[c]}\Big(\min_{r \leq i \leq \fn} \avg{\br{a_i} - \br{c}}\Big)^{-d}\Big(\avg{[c] - [d_{1}]} \wedge\min_{2 \leq i \leq r-1} \avg{[a_i] - [d_{1}]}\Big)^{-d}\\
&\prec \p{\ell_t^d\eta_t}^{-\fn+2} \Big(\min_{2 \leq i \leq \fn} \avg{\br{a_i} - [d_{1}]}\Big)^{-d},
\end{align*}  
which gives the inductive assumption \eqref{Kbound-inductive-hypothesis}. This completes the proof of \Cref{ML:Kbound}. 
\end{proof}

\section{Proof of main results}\label{sec:proof}

The main results can be derived easily from the following key theorems about the $G$-loop estimates.

\begin{theorem}[$G$-loop estimates]\label{ML:GLoop}
For the block Anderson model in \Cref{def: BM}, suppose the assumptions of \Cref{zztE} hold with $t_0\le 1 - W^{\fd}\eta_*$, where $\fd$ is an arbitrarily small constant. Then, for each fixed $\fn\in \N$, the following estimates hold uniformly in $t\in [0,t_0]$: 
\be\label{Eq:L-KGt}
 \max_{\boldsymbol{\sigma}, \ba}\left|{\cL}^{(\fn)}_{t, \boldsymbol{\sigma}, \ba}-{\cal K}^{(\fn)}_{t, \boldsymbol{\sigma}, \ba}\right|\prec (W^d\ell_t^d\eta_t)^{-\fn} .  
 \ee
 Together with \eqref{eq:bcal_k}, it implies the following estimate:
\be
\max_{\boldsymbol{\sigma}, \ba}\left|{\cal L}^{(\fn)}_{t, \boldsymbol{\sigma}, \ba} \right|\prec (W^d\ell_t^d\eta_t)^{-\fn+1}. \label{Eq:L-KGt2} 
\ee
\end{theorem}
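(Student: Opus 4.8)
The plan is to prove Theorem \ref{ML:GLoop} by a continuity/bootstrap argument along the stochastic flow $t\mapsto G_{t,E}^{(\lambda)}$, starting from the mean-field regime near $t=0$ (where $\eta_t\sim 1$ and the $G$-loops are trivially close to their deterministic limits because the matrix is essentially a full GUE) and propagating the estimates up to $t_0\le 1-W^{\fd}\eta_*$. The two displays \eqref{Eq:L-KGt} and \eqref{Eq:L-KGt2} are not independent: \eqref{Eq:L-KGt2} follows from \eqref{Eq:L-KGt} combined with the deterministic bound \eqref{eq:bcal_k} on $\cK$-loops, so the whole content is in establishing \eqref{Eq:L-KGt}. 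The engine is the \emph{loop hierarchy} \eqref{eq:mainStoflow} together with the \emph{primitive equation} \eqref{pro_dyncalK}: subtracting them and using It\^o's formula plus Duhamel's principle gives an integral equation for the $(\cL-\cK)$-loops in which the leading deterministic self-interaction is resolved by the $\Theta$-propagators, and all remaining terms are either (a) martingale terms $\mathcal B^{(\fn)}$, controlled by the Burkholder--Davis--Gundy inequality after computing the quadratic variation in terms of higher $G$-loops, (b) the ``$\mathcal W$-term'' \eqref{def_EwtG} involving one centered resolvent $\Gc_t$, which is genuinely smaller, or (c) higher-order error terms coming from replacing $G$-loops by $\cK$-loops inside the nonlinearity, which are quadratic in $(\cL-\cK)$ and hence absorbed by the bootstrap. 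The induction is doubly indexed: one inducts on the loop length $\fn$ (longer loops feed into the hierarchy for shorter ones only through the cut operations, which lower or preserve length after the right bookkeeping) and, within a fixed $\fn$, one runs the continuity argument in $t$.

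\textbf{Key steps, in order.} First, I would establish the base case: for $t$ of order $1$ (say $t\le 1/2$, where $\eta_t\asymp 1$), prove \eqref{Eq:L-KGt} directly. Here the target scale $\eta_t$ is macroscopic, so a standard local-law/self-consistent-equation analysis for deformed Wigner matrices (the matrix $M_t$ of \Cref{def_flow} being the relevant deterministic approximation) gives $\|G_t-M_t\|_{\max}\prec (W^d\eta_t)^{-1/2}$, and the $G$-loop estimates then follow by expanding the loop as a product of $G$-entries and averaging, using Ward's identity \eqref{eq_Ward} to gain factors of $\eta_t^{-1}$ where needed; the matching $\cK$-loop side is handled by \Cref{ML:Kbound}. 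Second, the inductive step: assume \eqref{Eq:L-KGt} (and hence \eqref{Eq:L-KGt2}) holds for all loop lengths $\le \fn$ on $[0,s]$ and for length $\fn$ with a slightly weaker ``a priori'' exponent on $[0,s+\Delta]$; I would then write $(\cL-\cK)^{(\fn)}_{t}$ via the integrated hierarchy \eqref{int_K-LcalE}, i.e.
\[
(\cL-\cK)^{(\fn)}_{t,\bsig,\ba} = \sum_{\text{stuff}} \int_0^t (\text{evolution kernel from } s \text{ to } t)\,\big[\text{martingale} + \mathcal W\text{-term} + \text{quadratic error}\big]_s\,\dd s,
\]
where the evolution kernel is built from $\Theta$-propagators (this is where \Cref{lem_propTH}, especially the exponential-decay bound \eqref{prop:ThfadC} and the finite-difference bounds \eqref{prop:BD1}--\eqref{prop:BD2}, is used to get the $\ell_t^d\eta_t$ gains). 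Third, I would bound each of the three source terms: the martingale term by BDG after computing $\dd\langle\mathcal B^{(\fn)}\rangle_s$ as a sum over products of two $G$-loops of total length $\fn+2$, invoking \eqref{Eq:L-KGt2} at length $\le \fn+1$ together with a Ward-identity contraction to control the extra length; the $\mathcal W$-term using that $\langle\Gc_t(\sigma_k)E_{[a]}\rangle = \cL^{(1)}-\cK^{(1)}$-type object, which is small; and the quadratic error by the a priori bound being squared. Fourth, a Gr\"onwall/continuity argument in $t$ closes the bootstrap: the improvement from ``a priori'' to ``final'' exponent is by a fixed power of $W$, and one checks $W^{\fd}\eta_* $ is exactly the scale at which the accumulated errors remain subleading (this matches the definition \eqref{eq:defeta*} of $\eta_*$). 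Fifth, one must separately verify that Lemmas \ref{lem G<T} and \ref{lem_GbEXP} (the entrywise control of $G$ in terms of $2$-$G$-loops, proved in \Cref{appd:MDE}) are available, since the hierarchy only controls averaged quantities and converting back to pointwise $\max$-norm statements — needed both to start the argument rigorously and to control certain error terms — requires them.

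\textbf{Main obstacle.} I expect the hardest point to be the estimate of the martingale term and, in $d=2$, the insufficiency of the crude $L^\infty\to L^\infty$ bound \eqref{sum_res_2} on the evolution kernel: as the excerpt flags in \Cref{sec:add_d=2}, in two dimensions the naive bound on the kernel does not close the loop because the random walk is (marginally) recurrent, and one must extract a CLT-type cancellation — essentially an extra averaging over the intermediate block index that produces a logarithmic rather than polynomial loss — following the mechanism of \cite{Band2D}. Concretely, this means the quadratic-variation computation cannot be bounded term-by-term; one has to keep track of the oscillatory structure of $\partial_{ij}\cL^{(\fn)}$ and use the parity/symmetry condition \eqref{sym_cond} (via the refined bound \eqref{sum_res_2_sym}) to see the cancellation. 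A secondary difficulty is purely organizational: the induction on $\fn$ interacts with the continuity in $t$, so one has to set up the hypotheses (which exponents hold for which lengths on which time intervals) carefully enough that no step is circular — this is exactly the bookkeeping done in \Cref{lem:main_ind} and \Cref{Sec:Stoflo}, and I would follow that structure rather than try to shortcut it.
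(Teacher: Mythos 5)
Your high-level strategy is the paper's: subtract the primitive equation \eqref{pro_dyncalK} from the loop hierarchy \eqref{eq:mainStoflow}, integrate via Duhamel with the $\Theta$-propagator kernel $\mathcal U^{(\fn)}_{s,t,\bsig}$ of \Cref{DefTHUST}, bootstrap in $t$ through \Cref{lem:main_ind}, use Lemmas~\ref{lem G<T} and~\ref{lem_GbEXP} to pass between entrywise and loop control, and supply the CLT-type cancellation for the $d=2$ kernel estimate. On the cosmetic side, your base case at $t\le 1/2$ is unnecessary: since $V_0=0$, one has $G_0(\sigma)=M(\sigma)$ exactly, so $\cL^{(\fn)}_{0,\bsig,\ba}=\cK^{(\fn)}_{0,\bsig,\ba}=\cM^{(\fn)}_{\bsig,\ba}$ identically (and $H_0=\lambda\Psi$ is deterministic, not ``essentially a full GUE''). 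The iteration of \Cref{lem:main_ind} starts from this exact equality at $t=0$.

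There is, however, a structural gap in the induction you set up. Your parenthetical --- that ``longer loops feed into the hierarchy for shorter ones only through the cut operations, which lower or preserve length'' --- has the dependence backwards, and the induction on $\fn$ as you state it does not close. In \eqref{eq:mainStoflow} the $\cutL,\cutR$ terms do produce loops of length $\le\fn$, but $\mathcal W^{(\fn)}$ in \eqref{def_EwtG} contains an $(\fn+1)$-loop via $\cut^{[b]}_k$, and the quadratic variation of $\mathcal B^{(\fn)}$ is a single $(2\fn+2)$-loop (see \Cref{def:CALE}; not two loops of ``total length $\fn+2$'' as you write). Your hypothesis grants estimates only for lengths $\le\fn$ on $[0,s]$, yet in your third step you invoke \eqref{Eq:L-KGt2} at length $\fn+1$, which the hypothesis does not supply. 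The paper breaks this self-reference with two devices absent from your plan: (i) the continuity estimate of \Cref{lem_ConArg} yields a priori $G$-loop bounds \eqref{lRB1} \emph{for all lengths simultaneously}, with a deteriorating factor $(\ell_u^d/\ell_s^d)^{\fn-1}$ that is tolerable only because each time step is short (condition \eqref{con_st_ind}); and (ii) \Cref{lem:iterations} runs a doubly-indexed iteration in $(\fn,k)$, in which the $(2\fn+2)$-loop is handled via a Cauchy--Schwarz split into a $2l_1$- and a $2l_2$-loop with $l_1+l_2=\fn+1$, both of which are controlled by the previously-established weaker bound at precision level $k-1$ for lengths up to $\fn+2$. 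Without these two layers, appealing to length-$(\fn+1)$ and length-$(2\fn+2)$ estimates inside the length-$\fn$ argument is circular.
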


\begin{theorem}[$2$-$G$ loop estimates]\label{ML:GLoop_expec}
In the setting of \Cref{ML:GLoop}, the expectation of a $2$-$G$ loop satisfies a better bound uniformly in $t\in [0,t_0]$: 
\begin{equation}\label{Eq:Gtlp_exp}
 \max_{\boldsymbol{\sigma}, \ba}\left|\mathbb E{\cal L}^{(2)}_{t, \boldsymbol{\sigma}, \ba}-{\cal K}^{(2)}_{t, \boldsymbol{\sigma}, \ba}\right|\prec (W^d\ell_t^d\eta_t)^{-3}
 . 
\end{equation}
Moreover, for $\boldsymbol{\sigma}=(+,-)$ and $ \ba=([a_1], [a_2])$, we have the following decay estimate uniformly in $t\in [0,t_0]$: 
for any large constant $D>0$, 
\begin{equation}\label{Eq:Gdecay}
 \left| {\cal L}^{(2)}_{t, \boldsymbol{\sigma}, \ba}-{\cal K}^{(2)}_{t, \boldsymbol{\sigma}, \ba}\right|\prec (W^d\ell_t^d\eta_t)^{-2}\exp \left(-\left|\frac{ [a_1]-[a_2] }{\ell_t}\right|^{1/2}\right)+W^{-D}. 
\end{equation}
\end{theorem}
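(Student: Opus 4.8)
\textbf{Proof strategy for Theorems \ref{ML:GLoop}--\ref{ML:GLoop_expec}.}
The plan is to run a continuity (bootstrap) argument along the flow $t\in[0,t_0]$, propagating the $G$-loop estimates from small $t$ to large $t$ via the integrated loop hierarchy. The core is the induction result \Cref{lem:main_ind}: assuming \eqref{Eq:L-KGt}--\eqref{Eq:Gdecay} hold at all times up to some $s$ with room to spare, one shows they hold at $s+\Delta s$. The initial condition at $t=0$ is trivial because $H_0=\lambda\Psi$ is deterministic, so $\mathcal L^{(\fn)}_{0,\bsig,\ba}=\mathcal M^{(\fn)}_{\bsig,\ba}=\mathcal K^{(\fn)}_{0,\bsig,\ba}$ exactly. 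First I would use It\^o's formula on the SDE \eqref{eq:SDE_Gt} together with Duhamel's principle applied to the loop hierarchy \eqref{eq:mainStoflow}: subtracting the primitive equation \eqref{pro_dyncalK} satisfied by $\mathcal K^{(\fn)}$, the difference $(\mathcal L-\mathcal K)^{(\fn)}_t$ solves a linear inhomogeneous equation whose forcing consists of (a) the martingale term $\mathcal B^{(\fn)}$, (b) the term $\mathcal W^{(\fn)}$ involving the centered resolvent $\Gc_t$, and (c) higher-order ``error" contributions from the quadratic terms where both factors are replaced by $\mathcal L-\mathcal K$ or where one uses the $(\fn+1)$-loop. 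Integrating against the evolution kernel built from the $\Theta$-propagators (the ``evolution kernels" of \Cref{DefTHUST}) transfers bounds forward in time; the key estimates on these kernels are \eqref{sum_res_2} and, in $d=2$, its refinement \eqref{sum_res_2_sym} obtained through the CLT-type cancellation of \Cref{sec:add_d=2}.

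The bound \eqref{Eq:L-KGt} on the fluctuation is obtained by estimating each forcing term. For the martingale term $\mathcal B^{(\fn)}$ one computes its quadratic variation — a sum of products of $(2\fn+2)$ resolvent entries — bounds it using the a priori $G$-loop bounds \eqref{Eq:L-KGt2} together with the entrywise bounds from Lemmas \ref{lem G<T} and \ref{lem_GbEXP} (which convert $G$-loop control into $\max$-norm and entrywise decay control of $G$), and then applies the Burkholder--Davis--Gundy inequality. For the term $\mathcal W^{(\fn)}$ one uses that $\langle \Gc_t E_{[a]}\rangle$ is small — this is essentially the averaged local law, which must be carried as part of the same induction. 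The self-improving structure is standard: the quadratic terms in $\mathcal L-\mathcal K$ contribute, after integration, a factor strictly smaller than the target, so a Gr\"onwall/fixed-point step closes the bound. Then \eqref{Eq:L-KGt2} follows by combining \eqref{Eq:L-KGt} with the primitive loop bound \eqref{eq:bcal_k} of \Cref{ML:Kbound}.

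For \Cref{ML:GLoop_expec}, one takes expectations in the integrated equation for $(\mathcal L-\mathcal K)^{(2)}_t$: the martingale term drops out, so $\mathbb E(\mathcal L-\mathcal K)^{(2)}_t$ is driven only by expectations of the quadratic error terms and of $\mathcal W^{(2)}$, each of which already carries an extra smallness factor of order $(W^d\ell_t^d\eta_t)^{-1}$ relative to the pathwise estimate; integrating and using Ward's identity \eqref{WI_calL}--\eqref{WI_calK} to control the sums over external vertices yields the gain in \eqref{Eq:Gtlp_exp}. For the decay estimate \eqref{Eq:Gdecay}, one iterates \eqref{int_K-LcalE} exploiting that $\mathcal K^{(2)}$ itself decays like $\exp(-c|[a_1]-[a_2]|^{1/2}/\ell_t)$ (from \eqref{prop:ThfadC}) and that the evolution kernel is (quasi-)local on scale $\ell_t$, so the stretched-exponential decay is preserved through the Duhamel iteration up to the additive $W^{-D}$ from the tails; the entrywise decay of $G$ in \Cref{lem_GbEXP} feeds the decay of the forcing terms.

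\textbf{Main obstacle.} The hardest part is the $d=2$ case of the evolution-kernel bound: the naive $L^\infty\to L^\infty$ estimate \eqref{sum_res_2} is only marginally insufficient (reflecting marginal recurrence of the 2D random walk through the $\Theta$-propagator), so one must extract a logarithmic-type cancellation — a CLT/parity-symmetry mechanism as in \cite{Band2D}, which is exactly why the symmetry condition \eqref{sym_cond} is imposed — and propagate it through every step of the hierarchy without losing powers of $\log W$ at each of the $O(\fn)$ iterations. Keeping the hierarchy closed at all lengths $\fn$ simultaneously while respecting this delicate $d=2$ bookkeeping, and simultaneously maintaining the entrywise decay needed for \eqref{Eq:Gdecay}, is the technical crux; the detailed execution is carried out in \Cref{Sec:Stoflo} and \Cref{sec:add_d=2}.
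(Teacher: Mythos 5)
Your overall roadmap matches the paper's: an induction along the flow (Theorem \ref{lem:main_ind}), Duhamel with the $\Theta$-based evolution kernels, BDG for the martingale, and the observation that the martingale vanishes in expectation so one can iterate $\mathbb E(\mathcal L-\mathcal K)^{(2)}_t$. You also correctly flag that \eqref{Eq:Gdecay} is propagated by combining the exponential locality of the evolution kernel (Lemma \ref{TailtoTail}) with the quadratic self-improvement of the error terms.

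There is, however, a concrete gap in your account of \eqref{Eq:Gtlp_exp}. You claim that after taking expectations, the remaining forcing terms (in particular $\mathcal W^{(2)}$) ``already carry an extra smallness factor $(W^d\ell_t^d\eta_t)^{-1}$ relative to the pathwise estimate.'' This is true for the quadratic error $\mathcal E^{(2)}$ (both factors are $(\mathcal L-\mathcal K)$-loops, each $\prec (W^d\ell\eta)^{-2}$, yielding $\eta^{-1}(W^d\ell\eta)^{-3}$), but it is \emph{not} true for $\mathcal W^{(2)}$, whose pathwise size is only $\eta_u^{-1}(W^d\ell_u^d\eta_u)^{-2}$. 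The paper's gain in $\mathbb E\mathcal W^{(2)}$ hinges on first establishing a genuinely new estimate, \eqref{res_ELK_n=1}: $\max_{[a]}|\mathbb E\langle(G_u-M)E_{[a]}\rangle|\prec(W^d\ell_u^d\eta_u)^{-2}$, one power better than the high-probability bound \eqref{eq:res_ELK_n=1}. This is proved by writing the resolvent identity $G-M=-M(um+V)G$, performing a Gaussian integration by parts to derive a self-consistent (Dyson-type) equation, and inverting the kernel $1-uM^{(+,+)}$ via \eqref{prop:ThfadC_short}; it is not a consequence of Ward's identity or of the martingale vanishing. Without this separate step, the term $W^d\sum_{[a]}\mathbb E\langle\Gc_u E_{[a]}\rangle\mathcal K^{(3)}$ (with $\mathcal K^{(3)}$ deterministic) only contributes $\eta_u^{-1}(W^d\ell_u^d\eta_u)^{-2}$, which integrates to $(W^d\ell_t^d\eta_t)^{-2}$ — one power short of the target. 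You also attribute the gain partly to Ward's identity controlling ``sums over external vertices''; in the paper, Ward's identity enters the expectation step only through the $d=2$, $\sigma_1\neq\sigma_2$ case (via \eqref{eq:ELK2}) in tandem with the sum-zero operator $\mathcal Q_t$ and the parity-symmetry-improved bound \eqref{sum_res_2_sym}, and even there it feeds back into \eqref{res_ELK_n=1}. Finally, a minor inaccuracy: $\mathcal K^{(2)}$ decays \emph{exponentially} on scale $\ell_t$ by \eqref{prop:ThfadC}; the \emph{stretched} exponential in \eqref{Eq:Gdecay} is the slower decay rate of the fluctuation $(\mathcal L-\mathcal K)^{(2)}$, which is what Lemma \ref{TailtoTail} propagates through the Duhamel iteration.
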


\begin{theorem}[Local law for $G_t$]\label{ML:GtLocal}
In the setting of \Cref{ML:GLoop}, the following local law holds uniformly in $t\in [0,t_0]$ (recall that we abbreviate $\lambda\equiv \lambda_0$ and $E\equiv E_0$, as mentioned below \Cref{zztE}): 
\begin{equation}\label{Gt_bound}
 \|G_{t}-M(E,\lambda) \|_{\max} \prec (W^d\ell_t^d\eta_t)^{-1/2}.
   \end{equation} 
\end{theorem}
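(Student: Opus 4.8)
\textbf{Proof plan for \Cref{ML:GtLocal}.} The strategy is to bootstrap the entrywise local law \eqref{Gt_bound} from the $2$-$G$ loop estimate \eqref{Eq:L-KGt2} (with $\fn = 2$) together with the decay estimate \eqref{Eq:Gdecay}, using the Gaussian integration-by-parts machinery developed in \Cref{appd:MDE} (Lemmas \ref{lem G<T} and \ref{lem_GbEXP}). The point is that the averaged quantity $\cL^{(2)}_{t,(+,-),([a],[a])} = W^{-2d}\sum_{x,y\in[a]}|G_{xy}|^2$ controls the $L^2$-mass of a column of $G_t$ localized to a block, and \eqref{Eq:L-KGt2} gives $\cL^{(2)}_{t,(+,-),([a],[b])} \prec (W^d\ell_t^d\eta_t)^{-1}$. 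Summing over all blocks $[b]$ and using $n^d\le (W^d\ell_t^d\eta_t)^{-1}\cdot(\text{Ward-type gain})$ — more precisely, using Ward's identity \eqref{WI_calL} to relate $\sum_{[b]}\cL^{(2)}_{t,(+,-),([a],[b])}$ to a $1$-loop times $(W^d\eta_t)^{-1}$ — one obtains that $\sum_{y}|(G_t)_{xy}|^2 \prec (W^d\ell_t^d\eta_t)^{-1}\cdot(W^d\eta_t)^{-1}\cdot W^d$, i.e., an $\ell^2$-bound on rows of $G_t$. This is the input needed by the abstract Gaussian-IBP argument.

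\medskip

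First I would invoke \Cref{lem G<T}, which is designed precisely to convert an averaged $2$-$G$ loop bound into an entrywise bound on $G_t - m$; schematically, one writes the self-consistent (Schwinger–Dyson) equation for a single entry $(G_t)_{xy}$ obtained by Gaussian integration by parts in the Brownian increments of $H_t$, expresses the fluctuation terms as sums over pairs of $G$-entries tied together by the variance profile $S$, and bounds those sums by the already-controlled $\ell^2$-norms of rows/columns of $G_t$ plus the averaged loop quantity $\cL^{(2)}$. The leading deterministic term reproduces $M_t$, and since $\|M_t - m(E,\lambda_0)I_N\|_{\max} \le C\pheta \le W^{-\e_A}$ by \eqref{eq:M-msc}, replacing $M_t$ by the scalar $m(E,\lambda_0)$ only costs an acceptable error. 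The decay estimate \eqref{Eq:Gdecay}, fed into \Cref{lem_GbEXP}, is what guarantees that the off-diagonal contributions in the IBP expansion — those involving $(G_t)_{xy}$ with $[x]$ far from $[y]$ — are summable and do not destroy the bound; without entrywise decay of $G_t$ beyond the scale $\ell_t$ one would only get a bound weaker by a factor of $n^d$.

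\medskip

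The remaining step is to translate the output of Lemmas \ref{lem G<T} and \ref{lem_GbEXP}, which is naturally phrased at the flow time $t$ with spectral parameter $z_t(E,\lambda_0)$, back to the target statement \eqref{Gt_bound}. Since $\eta_t \asymp 1-t$ and $\ell_t \asymp \ell(\eta_t)$ by \eqref{eq:E0bulk} and the remark after \eqref{eq:ellt}, the right-hand side $(W^d\ell_t^d\eta_t)^{-1/2}$ is exactly the claimed order, so no further bookkeeping is needed; one just checks uniformity in $t\in[0,t_0]$, which follows because all the loop estimates \eqref{Eq:L-KGt2}, \eqref{Eq:Gdecay} are already uniform in $t$ on that range. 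Finally, combining the bulk case with the trivial deterministic bound $\|G_t\|\le \eta_t^{-1}$ near $t=0$ (where $\eta_t\sim 1$) handles the endpoint.

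\medskip

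The main obstacle I anticipate is not the soft structure above but the self-consistency of the Gaussian-IBP estimate: the bound on a single entry $(G_t)_{xy}$ is proved by an expansion whose error terms again involve entries of $G_t$, so one must run a standard continuity/bootstrap argument — assume \eqref{Gt_bound} holds with an extra $W^\tau$ slack, show the IBP expansion improves it, and close the loop — while simultaneously keeping track of the block-off-diagonal decay so that \Cref{lem_GbEXP} can be applied at each stage. Ensuring that the decay estimate \eqref{Eq:Gdecay} (which is only stated for the specific charge pattern $\bsig=(+,-)$ and only beyond scale $\ell_t$) suffices to control all the off-diagonal sums arising in the IBP of a general entry — in particular handling the regime $W \ll |[x]-[y]| \lesssim \ell_t$ where one must instead rely on the polynomial factor in $\ell(\eta)$ rather than exponential decay — is the delicate point, and it is precisely why the procedure in \Cref{appd:MDE} is carried out separately rather than quoted from \cite{Band1D}.
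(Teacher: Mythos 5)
Your proposal correctly identifies the key mechanism (\Cref{lem G<T}), and that is indeed all the paper uses: once the $2$-$G$ loop estimate \eqref{Eq:L-KGt2} with $\fn=2$ gives $\max_{[a],[b]}\cL^{(2)}_{t,(+,-),([a],[b])}\prec (W^d\ell_t^d\eta_t)^{-1}$, one takes $\Psi_t^2=(W^d\ell_t^d\eta_t)^{-1}$ in \eqref{eq:def_Psit}, verifies the weak-law hypothesis \eqref{initialGT} from \eqref{Gtmwc}, and \eqref{GiiGEX} immediately yields $\|G_t-M\|_{\max}\prec (W^d\ell_t^d\eta_t)^{-1/2}$. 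That is the entire proof; no further Ward-identity manipulation, no appeal to \Cref{lem_GbEXP}, and no additional bootstrap on top of the one internal to \Cref{lem G<T}, are needed.

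Beyond the extraneous machinery, two of your intermediate claims are wrong. First, the Ward-identity computation is off: $\sum_{[b]}\cL^{(2)}_{t,(+,-),([a],[b])}=W^{-2d}\sum_{x\in[a]}\sum_y|G_{xy}|^2$, and \eqref{WI_calL} plus $\gE{G_t E_{[a]}}=\OO(1)$ gives $\sum_{[b]}\cL^{(2)}_{t,(+,-),([a],[b])}\prec (W^d\eta_t)^{-1}$, hence on average $\sum_y|G_{xy}|^2\prec \eta_t^{-1}$; your stated bound $(W^d\ell_t^d\eta_t)^{-1}(W^d\eta_t)^{-1}W^d=W^{-d}\ell_t^{-d}\eta_t^{-2}$ is strictly smaller than $\eta_t^{-1}$ (since $W^d\ell_t^d\eta_t\gg 1$), which is impossible because \eqref{eq_Ward} gives $\sum_y|G_{xy}|^2=\im G_{xx}/\eta_t\asymp\eta_t^{-1}$. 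Second, and more seriously, the claim that $\|M-m(E,\lambda_0)I_N\|_{\max}\le C\pheta$ is an ``acceptable error'' is false: for $t$ near $0$ we have $\ell_t\asymp 1$, $\eta_t\asymp 1$, so the target $(W^d\ell_t^d\eta_t)^{-1/2}\asymp W^{-d/2}$, while $\pheta$ can be as large as $W^{-\e_A}$ with $\e_A<d/2$ (for the block Anderson model $M_{xy}$ with $[x]\sim[y]$ is of order $\heta\sim\lambda$, and $\lambda\ge W^{-d/2+\fd}$). So the replacement error $\pheta$ dominates $(W^d\ell_t^d\eta_t)^{-1/2}$, and a bound on $\|G_t-mI\|_{\max}$ at that scale would be false for off-diagonal entries. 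What is actually proved in Step~2 of \Cref{lem:main_ind} is $\|G_t-M\|_{\max}\prec(W^d\ell_t^d\eta_t)^{-1/2}$ (cf.~\eqref{Gt_bound_flow}); the scalar $m$ in \eqref{Gt_bound} should be read as the matrix $M$.
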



We now outline the strategy for the proofs of Theorems \ref{ML:GLoop}, \ref{ML:GLoop_expec} and \ref{ML:GtLocal}. At $t=0$, we have $G_{0}(\sigma)=M(\sigma)$. Together with Definitions \ref{Def:G_loop} and \ref{Def_Ktza}, it implies that for any fixed $\fn\in \N$:   
$${\cL}^{(\fn)}_{0, \boldsymbol{\sigma},\ba}= {\cK}^{(\fn)}_{0, \boldsymbol{\sigma},\ba} = \cM^{(\fn)}_{ \boldsymbol{\sigma},\ba},\quad \forall \; \boldsymbol{\sigma}\in \{+,-\}^\fn,\ \ \ba\in (\Zn)^\fn. 
$$
Now, for $t\in[0,t_0]$, we will establish the following theorem.

\begin{theorem}\label{lem:main_ind}For the block Anderson model in \Cref{def: BM}, suppose the assumptions of \Cref{zztE} hold with $t_0\le 1 - W^{\fd}\eta_*$.
Suppose that the estimates \eqref{Eq:L-KGt}, \eqref{Eq:Gtlp_exp}, \eqref{Eq:Gdecay}, and \eqref{Gt_bound} hold at some fixed $s\in [0,t_0]$: 
\begin{itemize}
\item[(a)] {\bf $G$-loop estimate}: for each fixed $\fn\in \N$, 
\be\label{Eq:L-KGt+IND}
 \max_{\boldsymbol{\sigma}, \ba}\left|{\cal L}^{(\fn)}_{s, \boldsymbol{\sigma}, \ba}-{\cal K}^{(\fn)}_{s, \boldsymbol{\sigma}, \ba}\right|\prec (W^d\ell_s^d\eta_s)^{-\fn};
\ee
\item[(b)] {\bf 2-$G$ loop estimate}: 
for $\boldsymbol{\sigma}\in\{(+,-),(-,+)\}$ and $ \ba=([a_1],[a_2]),$ and for any large constant $D>0$,
\be\label{Eq:Gdecay+IND}
\left| {\cal L}^{(2)}_{s, \boldsymbol{\sigma}, \ba}-{\cal K}^{(2)}_{s, \boldsymbol{\sigma}, \ba}\right|\prec (W^d\ell_s^d\eta_s)^{-2}\exp \left(- \left|\frac{ [a_1]-[a_2] }{\ell_s}\right|^{1/2}\right)+W^{-D}  \, ;
\ee
\item[(c)] {\bf Local law}: 
\be \label{Gt_bound+IND}
 \|G_{s}-M(E,\lambda)\|_{\max} \prec (W^d\ell_s^d\eta_s)^{-1/2};
\ee

\item[(d)] {\bf Expected $2$-$G$ loop estimate}:
 \be \label{Eq:Gtlp_exp+IND}
 \max_{\boldsymbol{\sigma}, \ba}\left|\mathbb E{\cal L}^{(2)}_{s, \boldsymbol{\sigma}, \ba}-{\cal K}^{(2)}_{s, \boldsymbol{\sigma}, \ba}\right|\prec (W^d\ell_s^d\eta_s)^{-3}. 
\ee
\end{itemize}
Then, for any $t\in [s,t_0]$ satisfying that
\begin{equation}\label{con_st_ind}
(W^d\ell_t^d\eta_t)^{-\frac{1}{100}} \le  \frac{1-t}{1-s} < 1, 
\end{equation}
the estimates \eqref{Eq:L-KGt}, \eqref{Eq:Gtlp_exp}, \eqref{Eq:Gdecay}, and \eqref{Gt_bound} hold. 
In addition, if we do not assume \eqref{Eq:Gtlp_exp+IND}, then the estimates \eqref{Eq:L-KGt}, \eqref{Eq:Gdecay}, and \eqref{Gt_bound} still hold at $t$. 
\end{theorem}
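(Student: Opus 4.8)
\textbf{Proof strategy for Theorem \ref{lem:main_ind}.}
The plan is to run a continuous-time bootstrap argument for the integrated loop hierarchy on the interval $[s,t]$, using the hypotheses (a)--(d) at time $s$ as the input and the deterministic estimates on $\cal K$-loops and $\Theta$-propagators (Lemmas \ref{ML:Kbound}, \ref{lem_propTH}, \ref{lem:propM}, together with Ward's identity \Cref{lem_WI_K}) as the structural backbone. The starting point is the SDE \eqref{eq:mainStoflow} for the $G$-loops and the ODE \eqref{pro_dyncalK} for the primitive loops; subtracting them and applying Duhamel's principle with respect to the ``leading'' linear operator (the $\cutL$/$\cutR$-driven part, whose resolvent is exactly the $\Theta$-propagator) produces the integrated hierarchy \eqref{int_K-LcalE} relating $(\cL-\cK)^{(\fn)}_{t}$ to the martingale term $\cal B^{(\fn)}$, the term $\cal W^{(\fn)}$, and the nonlinear ``loss'' terms quadratic in $(\cL-\cK)$, all transported forward in time by the evolution kernels. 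First I would set up a self-improving estimate: assume \eqref{Eq:L-KGt}, \eqref{Eq:Gdecay}, \eqref{Gt_bound} hold on $[s,t']$ for all $t'<t$ with slightly worse constants, then show they hold at $t$ with the stated constants. The condition \eqref{con_st_ind} is what makes the transport contractive: $(1-t)/(1-s)\le 1/100$ gives room to absorb the accumulated errors, while the lower bound $(W^d\ell_t^d\eta_t)^{-1/100}\le (1-t)/(1-s)$ ensures $\eta_t$ has not dropped so far that the a priori bounds degenerate.

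The key steps, in order, would be: (1) Martingale control --- estimate the quadratic variation of $\cal B^{(\fn)}$ using the entrywise local law \eqref{Gt_bound+IND} (and its propagation, via Lemmas \ref{lem G<T} and \ref{lem_GbEXP}, which convert $G$-loop bounds into entrywise decay of $G$), then apply the Burkholder--Davis--Gundy inequality together with a grid/union bound over $t$ to get a high-probability bound on $\sup_{t'\in[s,t]}$ of the transported martingale of the right size $(W^d\ell_{t'}^d\eta_{t'})^{-\fn}$. (2) Control of $\cal W^{(\fn)}$ --- here one uses that $\cal W^{(\fn)}$ contains the centered resolvent $\Gc_t(\sigma_k)=G_t(\sigma_k)-M(\sigma_k)$ and one $(\fn+1)$-loop, so it is of the same order as an $(\fn+1)$-loop error times $\eta_t^{-1}$ or better; after transport by the evolution kernel (whose $L^\infty\to L^\infty$ bound is \eqref{sum_res_2}, upgraded in 2D via the CLT cancellation of \Cref{sec:add_d=2} / \Cref{lem:STOeq_Qt}), it contributes at the target order. (3) Nonlinear/loss terms --- the quadratic terms $(\cL-\cK)\cdot S^{\LK}\cdot(\cL-\cK)$ and the mismatch between the $\cL$-hierarchy and the $\cK$-equation are bounded using the a priori hypothesis at $t'$, and the key gain comes from the sum-zero property (\Cref{sum-zero}, baked into the bound \eqref{eq:bcal_k}) which produces the extra $\lambda^2$ or $\ell_t^{-2}$ needed to close. (4) Entrywise decay \eqref{Eq:Gdecay} --- propagate the exponential off-diagonal decay of the $2$-$G$ loop through the hierarchy using the exponential decay of $\Theta_t$ beyond scale $\ell_t$ from \eqref{prop:ThfadC}, plus \eqref{Eq:Gdecay+IND} at time $s$. (5) Local law \eqref{Gt_bound} --- deduce from the $2$-loop bound \eqref{Eq:L-KGt} with $\fn=2$ at $t$ via \Cref{lem G<T} and \Cref{lem_GbEXP}. (6) Expected $2$-loop bound \eqref{Eq:Gtlp_exp} --- take expectations in the integrated hierarchy so the martingale term drops out entirely, and use that $\mathbb E\cal W^{(2)}$ and the nonlinear terms are one order smaller than their almost-sure counterparts (by the improved hypothesis \eqref{Eq:Gtlp_exp+IND} at $s$ and a second application of the fluctuation averaging encoded in the structure of $\cal W$), yielding the $(W^d\ell_t^d\eta_t)^{-3}$ rate.

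The main obstacle I expect is step (2)--(3) in dimension $d=2$: the naive $L^\infty\to L^\infty$ bound on the evolution kernel, $\|\cdots\|_{\infty\to\infty}\lesssim$ (something like $\log$-divergent in $n$), is not summable enough to close the hierarchy at the optimal rate, because the random walk governed by $\Theta_t$ is only marginally recurrent in 2D. Overcoming this requires the CLT-type cancellation mechanism of \Cref{lem:STOeq_Qt} (proved in \Cref{sec:add_d=2}): one must exploit the parity symmetry \eqref{sym_cond} --- which, via \Cref{lem:propM}(2) and \Cref{lem_propTH}(2), gives $\Theta_t^{(\sigma_1,\sigma_2)}(0,[x])=\Theta_t^{(\sigma_1,\sigma_2)}(0,-[x])$ --- to cancel the first-order finite differences (\eqref{prop:BD1}) and retain only the second-order differences (\eqref{prop:BD2}), which are summable. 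Threading this cancellation through the quadratic loss terms while keeping track of the correct power of $\ell_t^d\eta_t$ and the extra $\lambda^2$ from the sum-zero property is the technically delicate heart of the argument; the 1D case is comparatively routine since there \eqref{sum_res_2} and its symmetric refinement \eqref{sum_res_2_sym} coincide. A secondary subtlety is the careful choice of the time grid and the interplay of the exponents in \eqref{con_st_ind}, which must be tuned so that the accumulated $N^\tau$ and constant factors from iterating over $O(\log N)$ scales do not overwhelm the polynomial gains --- this is handled by the standard device of allowing the stochastic-domination exponent $\tau$ to shrink along the iteration.
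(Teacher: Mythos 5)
Your proposal captures most of the right ingredients---Duhamel's principle with the evolution kernel, the BDG estimate for the martingale term, the sum-zero/CLT cancellation needed in $d=2$, and taking expectations to get the improved $2$-loop bound---but it is underspecified on the crucial bootstrapping structure, and one logical dependency is out of order.

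The paper does not run a single-shot self-improving estimate with ``slightly worse constants.'' Instead it proceeds in a strict cascade of six steps. Step~1 (via the continuity estimate \Cref{lem_ConArg}) establishes an a~priori $\fn$-loop bound that is \emph{substantially} worse than the target: degraded by $(\ell_u^d/\ell_s^d)^{\fn-1}$, a factor that grows with $\fn$. This is not a ``slightly worse constant''; it is the only available handle on the quadratic loss terms $\mathcal{E}^{(\fn)}$ at the start, since those involve loops of all lengths and cannot yet be controlled at the sharp rate. Step~2 then deduces the sharp local law \eqref{Gt_bound_flow} via a stopping-time argument for the $2$-loop tail parameter ${\cal J}_{u,D}^*$ (Lemmas~\ref{lem_dec_calE}--\ref{lem_dec_calE3}), and this is where the local law is established---\emph{before}, not after, the sharp $\fn$-loop bounds. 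This ordering is forced: the sharp local law feeds the fast-decay property (\Cref{lem_decayLoop}), which in turn is a precondition for invoking the evolution-kernel gains of \Cref{lem:sum_decay} and \Cref{lem:STOeq_Qt} in Step~3. Your proposal derives the local law in step~(5), after the sharp $\fn$-loop bound, which would create a circular dependency.

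The second missing piece is the $(\fn,k)$-iteration (\Cref{lem:iterations}). The estimate \eqref{am;asoi333} does not close in one pass: it expresses $\Xi^{(\cL-\cK)}_{\cdot,\fn}$ in terms of $\Xi^{(\cL)}_{\cdot,\fn+1}$ and $\Xi^{(\cL)}_{\cdot,2\fn+2}$, i.e.\ \emph{longer} loops. The paper resolves this by a doubly-indexed iteration: a parameter $k$ measuring how much the a~priori $(\ell_t^d/\ell_s^d)^{\fn-1}$-degradation has been whittled down (the control parameter $\Psi_u(\fn,k;[s,t])$ of \eqref{adsyzz0s8d6}), with the Cauchy--Schwarz trick \eqref{suauwiioo1}--\eqref{suauwiioo2} used to bound $2\fn+2$-loops by pairs of $\le(\fn+2)$-loops and close the recursion. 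Without this two-parameter iteration one is left with a residual power of $\ell_t/\ell_s$ which, under \eqref{con_st_ind}, is not $\OO(1)$, so the argument as you sketch it does not close. Finally, you conflate two distinct sum-zero mechanisms: \Cref{sum-zero} (used to prove the $\cK$-loop bound \eqref{eq:bcal_k}, a deterministic statement) and the sum-zero \emph{operator} $\cal Q_t$ of \Cref{Def:QtPt}, which is applied to the random loop hierarchy in \eqref{int_K-L+Q} to split $(\cL-\cK)^{(\fn)}$ into a Ward-controlled partial sum and a sum-zero remainder amenable to \eqref{sum_res_2}/\eqref{sum_res_2_sym}. Both mechanisms are needed, and they sit at different places in the proof.
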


With \Cref{lem:main_ind}, we can prove Theorems \ref{ML:GLoop}, \ref{ML:GLoop_expec}, and \ref{ML:GtLocal} easily by induction in $t$. The proof of \Cref{lem:main_ind} will be divided into six steps, where the details for each step will be provided in Section \ref{Sec:Stoflo}. 

\medskip 
\noindent 
\textbf{Step 1} (A priori $G$-loop bound): We will show that $\fn$-$G$ loops satisfy the a priori bound 
 \begin{equation}\label{lRB1}
   {\cal L}^{(\fn)}_{u,\boldsymbol{\sigma}, \ba}\prec (\ell_u^d/\ell_s^d)^{(\fn-1)}\cdot 
   (W^d\ell_u^d\eta_u)^{-\fn+1},\quad  \forall s\le u\le t.
\end{equation}
Furthermore,  the following weak local law holds: 
\begin{equation}\label{Gtmwc}
    \|G_u-M\|_{\max}\prec  (W^d\ell_u^d\eta_u)^{-1/4},\quad \forall s\le u\le t .
\end{equation} 

\medskip
\noindent 
\textbf{Step 2} (Sharp local law and a priori $2$-$G$ loop estimate): 
The following sharp local law holds: 
\begin{equation}\label{Gt_bound_flow}
     \|G_u-M\|_{\max}\prec  (W^d \ell_u^d \eta_u)^{-1/2},\quad \forall s\le u\le t.
\end{equation} 
Hence, the local law  \eqref{Gt_bound} holds at time $t$.   
In addition, for all $\boldsymbol{\sigma}\in\{(+,-),(-,+)\}$, $ \ba=([a_1],[a_2])\in(\Zn)^2$, and $u\in[s,t]$, we have 
\begin{equation}\label{Eq:Gdecay_w}
\left| {\cal L}^{(2)}_{u, \boldsymbol{\sigma}, \ba}-{\cal K}^{(2)}_{u, \boldsymbol{\sigma}, \ba}\right| \prec \left(\eta_s/\eta_u\right)^4\cdot (W^d\ell_u^d\eta_u)^{-2}\exp \left(- \left|\frac{ [a_1]-[a_2] }{\ell_u}\right|^{1/2}\right)+W^{-D}
\end{equation}
for any large constant $D>0$.   

   \medskip
 \noindent 
\textbf{Step 3}   (Sharp $G$-loop bound): 
 The following sharp bound on $\fn$-$G$ loops holds for each fixed $\fn\in \N$: 
\begin{equation}\label{Eq:LGxb}
\max_{\boldsymbol{\sigma}, \ba}\left| {\cal L}^{(\fn)}_{u, \boldsymbol{\sigma}, \ba} \right|
\prec 
  (W^d\ell_u^d\eta_u)^{-\fn+1} ,\quad \forall s\le u\le t .
\end{equation}

\medskip
 \noindent 
\textbf{Step 4}  (Sharp $(\cL-\cK)$-loop estimate): 
The following sharp estimate on $ ({\cal L}-{\cal K})$-loops holds for each fixed $\fn\in \N$: 
\begin{equation}\label{Eq:L-KGt-flow}
 \max_{\boldsymbol{\sigma}, \ba}\left|{\cal L}^{(\fn)}_{u, \boldsymbol{\sigma}, \ba}-{\cal K}^{(\fn)}_{u, \boldsymbol{\sigma}, \ba}\right|\prec (W^d\ell_u^d\eta_u)^{-\fn},\quad \forall s\le u\le t .
\end{equation}
Hence, the $G$-loop estimate \eqref{Eq:L-KGt} holds at time $t$.

\medskip
 \noindent 
\textbf{Step 5}  (Sharp 2-$G$ loop estimate): For all $\boldsymbol{\sigma}\in\{(+,-),(-,+)\}$, $ \ba=([a_1],[a_2])\in(\Zn)^2,$ and $u\in[s,t]$, the following estimate holds for any large constant $D>0$:
\begin{equation}\label{Eq:Gdecay_flow}
\left| {\cal L}^{(2)}_{u, \boldsymbol{\sigma}, \ba}-{\cal K}^{(2)}_{u, \boldsymbol{\sigma}, \ba}\right| \prec   (W^d\ell_u^d\eta_u)^{-2}\exp \left(- \left|\frac{ [a_1]-[a_2] }{\ell_u}\right|^{1/2}\right)+W^{-D}. 
\end{equation}
It shows that the estimate \eqref{Eq:Gdecay} holds at time $t$.

\medskip
\noindent 
\textbf{Step 6} (Expected 2-$G$ loop estimate): 
The following estimate holds: 
\begin{equation}\label{Eq:Gtlp_exp_flow}
 \max_{\boldsymbol{\sigma}, \ba}\left|\mathbb E{\cal L}^{(2)}_{u, \boldsymbol{\sigma}, \ba}-{\cal K}^{(2)}_{u, \boldsymbol{\sigma}, \ba}\right|\prec (W^d\ell_u^d\eta_u)^{-3},\quad  
 \forall s \le u \le t .
\end{equation}
Hence, the estimate \eqref{Eq:Gtlp_exp} holds at time $t$. In Steps 1--5, the induction hypothesis \eqref{Eq:Gtlp_exp+IND} will not be used. In other words, the estimates \eqref{Eq:L-KGt}, \eqref{Eq:Gdecay}, and \eqref{Gt_bound} still hold at $t$ without assuming  \eqref{Eq:Gtlp_exp+IND}.

\medskip 

We remark that the estimates established in each step hold uniformly in $u\in[s,t]$ (recall \Cref{stoch_domination}) due to a standard $N^{-C}$-net argument. For simplicity of presentation, we will not emphasize this uniformity at every step of the proof.

Before concluding this section, we present the proof of our main results, \Cref{thm_locallaw} and \Cref{thm_diffu}.
\begin{proof}[\bf Proof of \Cref{thm_locallaw} and \Cref{thm_diffu}]
For a fixed $z=E+\ii \eta$ with $|E|\le 2-\kappa$ and $W^\e\eta_* \le \eta \le 1$, we can choose the deterministic flow as in \Cref{zztE} such that \eqref{eq:zztE} and \eqref{GtEGz} hold. Then, with \eqref{GtEGz} and the fact that $\ell_{t_0}\asymp \ell(\eta)$ by the second estimate in \eqref{eq:E0bulk}, we observe that \Cref{ML:GtLocal}, \Cref{ML:GLoop} (with $\fn=1$), and \Cref{ML:GLoop_expec} respectively give the entrywise local law \eqref{locallaw}, the averaged local law \eqref{locallaw_aver}, and the quantum diffusion estimates in \Cref{thm_diffu} (recall \eqref{eq:avg_GTheta}) at each \emph{fixed $z$}. To extend these estimates uniformly to all $z$, we can use a standard $N^{-C}$-net argument, whose detail we omit. 
\end{proof}

\section{Resolvent entry estimates}\label{appd:MDE}

Our proof of \Cref{lem:main_ind} depends crucially on the following lemma, which bounds the resolvent entries through 2-$G$ loops.  

\begin{lemma}\label{lem_GbEXP}
In the setting of \Cref{lem:main_ind}, fix any $t\in [0,t_0]$ and suppose we have the following estimates for any $\Psi$ satisfying the assumptions \eqref{eq:Psi1D}--\eqref{sym_cond}: there exists a small constant $\e_0>0$ such that 
\begin{equation}\label{def_asGMc}
\|G_t - M\|_{\max} \prec W^{-\e_0},
\end{equation}
and there exist deterministic control parameters $\Psi_t>0$ and $\Phi_t([a],[b])>0$ such that  
\be\label{eq:def_Psit}
\max_{[a],[b]\in \Zn} \cL^{(2)}_{t,(+,-),([a],[b])}+W^{-d}\prec \Psi_t^2,\quad {\cal L}^{(2)}_{t, (-,+), ([a], [b])}\prec \Phi_t([a],[b]),\quad \forall [a],[b]\in \Zn .
\ee
Then, the following estimates hold: 
\begin{itemize}
    \item[(a)] {\bf Entrywise local law}: We have 
    \begin{align}\label{GiiGEX} 
 \|G_t-M\|_{\max} \prec \Psi_t \, . 
\end{align}
\item[(b)] {\bf Averaged local law}: For any $[a],[b]\in \Zn$, we have that (recall $I_{[a]}$ defined in \eqref{def:Ia})
\begin{align}\label{GavLGEX}
\left\langle  \left(G_t - M\right)E_{[a]}\right\rangle \prec \Psi_t^2, \quad  W^{-d} \left\langle  \left(G_t - M\right)I_{[a]}MI_{[b]}\right\rangle  \prec \Psi_t^2\|I_{[a]}MI_{[b]}\| \, . 
\end{align}
\item[(c)] {\bf Entrywise decay estimate}:
There exists a constant $c>0$ depending on $\e_A$ (recall \eqref{eq:cond_A12}) such that for any large constant $D>0$, we have 
\begin{align}\label{GijGEX}
\max_{x\in [a], y \in [b]} |(G_t-M)_{xy}|^{2}  \prec & \sum_{[a'], [b'] \in \Zn}  \Phi_t([a'],[b']) W^{-c(|[a']-[a]|+|[b']-[b]|)} \nonumber\\
&+\Psi_t^2 W^{-c|[a]-[b]|}+W^{-D}, \quad  \forall \ [a],[b]\in \Zn \, .
\end{align}
\end{itemize}
\end{lemma}

Our primary focus is on the proof of \eqref{GijGEX}, while the proofs of \eqref{GiiGEX} and \eqref{GavLGEX} basically follow the argument developed in \cite{He2018} as we will explain later. We first introduce some notations. 

Since the time $t$ does not play any role in the following proof, we will abbreviate $G\equiv G_t$, $V\equiv V_t$, and $z\equiv z_t$ for clarity of presentation. 
Moreover, we will use the following simplified notation of generalized matrix entries: given a matrix $\cal A$ and any vectors $\mathbf u,\bv$, we denote $ \cal A_{\mathbf u\mathbf v}:= \bu^* \cal A\bv$ and $\cal A_{x\mathbf v}:= \mathbf e_x^* \cal A\bv$, where $\mathbf e_x$ is the standard basis unit vector along the $x$-th direction. For $k\ge 0$, we define a sequence of subsets $\cal V_{k}$ of vectors inductively as follows. First, let subset $\Omega_0:=\{\mathbf e_x: x\in \ZL\}$. Next, given \smash{$\Omega_k$}, we define \smash{$\Omega_{k+1}$} as 
\begin{align}\label{eq:Omegak+1}
\Omega_{k+1}:=\cup_{[a]} \Omega_{k+1}([a]) ,\quad \text{where}\quad \Omega_{k+1}([a]):=\left\{  I_{[a]}\Psi\bv:\bv\in \Omega_{k}\right\}.
\end{align}
Finally, we define 
\[\cal V_k:=\cup_{i=1}^k \Omega_i,\quad \cal V_k([a]):=\{\mathbf e_x:x\in[a]\}\cup \left(\cup_{i=1}^k \Omega_i([a])\right).\] 
By removing the zero vector, we can assume that all vectors in $\cal V_k$ are \emph{nonzero}.  
Similar to \cite[Section 5.1]{EKS_Forum}, we introduce the following notation of $\vertiii{\cdot}_{(k)}$-norms with respect to the vectors in $\cal V_k$: given an arbitrary $N\times N$ matrix $\cal A$, we define 
\be\label{eq:Aknorm}\vertiii{\cal A}_{(k)}:= \max_{\bv,\bw\in\cal V_{k}}\frac{  |\cal A_{\bv \bw}|}{\|\bv\|_2\|\bw\|_2}  .\ee
For our proof below, we define a class of deterministic control parameters $\xi_{k}\ge W^{-d/2}$ such that
\be\label{eq:controlxi0} 
\vertiii{G-M}_{(k)}\prec \xi_k,  \quad \forall k\ge 0. 
\ee
Without loss of generality, we may assume that these parameters $\xi_k$ are \emph{non-decreasing} in $k$. 
Moreover, due to the trivial bound $\|G\|\le (\im z_t)^{-1}\ll N$, we can always take $\xi_k\le N$.  
Using the assumption \eqref{eq:cond_A12}, the following expansion of $M$:
\be\label{eq:expandM}  M=-\sum_{k=1}^\infty \frac{(\lambda \Psi)^{k}}{(z+m)^{k+1}},
\ee
and the fact that $\Psi|_{[x][y]}=0$ for $|[x]-[y]|>1$, we can derive that for any $\bv,\bw\in \cal V_k$, small constant $c\in (0,\e_A)$, and large constant $D>0$,
\be\label{eq:vktovk+1}
\begin{split}
\frac{|\bv^*(G-M)M\bw|}{\|\bv\|_2\|\bw\|_2 } &\prec \sum_{l=0}^\infty (2dCW^{-\e_A})^l \xi_{k+l}  + W^{-D} \lesssim \sum_{l=0}^\infty W^{-cl} \xi_{k+l}+ W^{-D}.
\end{split}
\ee
This bound also applies to \(|\bv^*(G-M)I_{[a]}M\bw|\), \(|\bw^*M(G-M)\bv|\), and \(|\bw^*MI_{[a]}(G-M)\bv|\), as well as to all analogous terms obtained by replacing each occurrence of  $(G-M)$ or $M$ with $(G-M)^*$ or $M^*$, respectively.

Our proof will also use the $T$-variables defined as 
\be\label{eq:Tvariable}
T_{xy}:=\sum_{\al}S_{x\al}|G_{\al y}|^2=(G^*E_{[x]}G)_{yy},\quad \wt T_{xy}:=(GE_{[y]}G^*)_{xx},\ee
which has been a key object in previous studies of random band matrices (see, e.g., \cite{delocal,PartI,PartII,Band1D_III,BandI,BandII,BandIII,RBSO}). 
We also introduce \emph{generalized $T$-variables} for an arbitrary deterministic vector $\bv\in \C^N$, defined as
\be\label{eq:gen_T}
T_{x\bv}:=\sum_{\al}S_{x\al}|G_{\al \bv}|^2=(G^*E_{[x]}G)_{\bv\bv},\quad \wt T_{x\bv}:=\sum_{\al}S_{x\al}|G_{\al \bv}|^2=(GE_{[x]}G^*)_{\bv\bv}.
\ee
It is easy to check by definition and the bound \eqref{Mbound_AO} that 
\be\label{eq:T(G-M)}
T_{xy}\prec \|G-M\|_{\max}^2+W^{-d}\prec \xi_0^2,\quad T_{x\bv}\prec \max_{x}|(G-M)_{x\bv}|^2+W^{-d}\|\bv\|_2^2 \, ,
\ee 
and similar bounds for \smash{$\wt T_{xy}$ and $\wt T_{x\bv}$}. 
Throughout the proof, we will use the following key observation, which follows from the rotational invariance of the GUE blocks of $V$. Suppose we have established that $T_{xy} \prec \Psi^2$ for some deterministic parameter $\Psi > 0$, and let $\bv$ be an arbitrary deterministic unit vector in some $\cal V_k([y])$. 
Then, there exists a $W^d\times W^d$ deterministic unitary matrix $U$ such that $(I_n \otimes U)\bv = \mathbf{e}_y$, which implies 
\be\label{eq:Txv} T_{x\bv} =\mathbf e_y^* \left(\cal U G^*\cal U^*\right) E_{[x]}\left(\cal U  G \cal U^*\right) \mathbf e_y, \quad \text{where}\quad \cal U:=I_n\otimes U. \ee
Under the unitary transformation by $\cal U$, the matrix $\mathcal{U} V \mathcal{U}^*$ has the same distribution as $V$, due to the rotational invariance of the GUE blocks. Moreover, $\mathcal{U} \Psi \mathcal{U}^*$ satisfies the same assumptions \eqref{eq:Psi1D}--\eqref{eq:cond_A12} as the original $\Psi$. 
Regarding the symmetry condition \eqref{sym_cond}, we observe that the Hermitian properties of $A_1$ and $A_2$ are preserved under unitary transformation. Additionally, when $\Psi$ is symmetric, the matrix $U$ is (real) orthogonal, and thus $\mathcal{U} \Psi \mathcal{U}^*$ remains symmetric. 
As a consequence, the same bound applies to  $T_{x\bv}$. Furthermore, by the charge symmetry of $G$ under the transformation $z\mapsto \bar z$, an analogous bound also holds for \smash{$\wt T_{x\bv}$}. This yields that
\be\label{eq:Txy-Txv}
T_{xy}\prec \Psi^2 \implies T_{x\bv} + \wt T_{x\bv}\prec \Psi^2 \|\bv\|_2^2,\quad \forall \bv\in \cal V_k([y]), \ k\ge 0 .
\ee

\subsection{Proof of \eqref{GijGEX}}\label{appd:MDE1} 

Before proceeding to the proof of \eqref{GijGEX}, we first record the following a priori estimate.
\begin{lemma}\label{lem:apriori}
    In the setting of \Cref{lem_GbEXP}, suppose $\|G-M\|_{\max} \prec \xi_0$ for a deterministic control parameter $W^{-d/2}\le\xi_0\le W^{-\e_0}$. Then, for each fixed $k\ge 1$, we have that \smash{$\vertiii{G-M}_{(k)}\prec \xi_0$}.
\end{lemma}
\begin{proof}
Under Definitions \ref{def_flow} and  \ref{Def:stoch_flow}, we can write $G_t$ and $M$ as
$$ G_t = (\lambda\Psi + V_t - z_t)^{-1},\quad M=(\lambda \Psi - E - m)^{-1} = (\lambda \Psi - z_t - t m)^{-1},$$
which gives the following relation:
\be\label{G-M}
G_t - M = - M(tm + V_t) G_t.
\ee 
Fix any $\bv,\bw\in \cal V_k$. We bound the moments $\E |(G-M)_{\bv \bw}|^{2p}=\E |\Gc_{\bv \bw}|^{2p}$ for any fixed $p\in \N$, where we recall the notation \eqref{Eq:defwtG}. Using \eqref{G-M} and Gaussian integration by parts, we obtain that 
\begin{align*} \E |\Gc_{\bv \bw}|^{2p}&= -\E\left[(M(tm+V_t)G)_{\bv \bw}(\Gc_{\bv \bw})^{p-1} (\Gc^{-}_{\bv \bw})^{p}\right]\\
&= t\sum_{[a]}\sum_{\al\in [a]}M_{\bv \al}\E\left[\qq{(G-M)E_{[a]}} G_{\al \bw} (\Gc_{\bv \bw})^{p-1} (\Gc^{-}_{\bv \bw})^{p}\right] \\
& + (p-1)t\sum_{\al,\beta}M_{\bv \al}S_{\al\beta}\E\left[G_{\al \bw}G_{\bv \beta}G_{\beta \bw} (\Gc_{\bv \bw})^{p-2} (\Gc^{-}_{\bv \bw})^{p}\right]\\
&+ pt\sum_{\al,\beta}M_{\bv \al}S_{\al\beta}\E\left[|G_{\beta \bw}|^2 \overline G_{\bv \al} |\Gc_{\bv \bw}|^{2p-2} \right]=:I_1+I_2+I_3, \label{eq:Gcvy}\numberthis
\end{align*}
where $\Gc^{-}_{\bv \bw}:=\Gc^*_{\bw\bv}$ denotes the complex conjugate of $\Gc_{\bv y}$.
Using \eqref{Mbound_AO} and \eqref{eq:vktovk+1}, we obtain that 
\begin{align}\label{eq:Maddv}
    \sum_{\al\in [a]}M_{\bv\al} G_{\al \bw}= (MI_{[a]}M)_{\bv \bw} +\bv^*MI_{[a]}(G-M)  \bw \prec \p{1+ \zeta_{k,c}}\|\bv\|_2\|\bw\|_2, 
\end{align}
where we abbreviate \smash{\(\zeta_{k,c} :=\sum_{l=0}^\infty \xi_{k+l}e^{-cl}\)}. 
Combining \eqref{eq:Maddv} with the estimate $\qq{(G-M)E_{[a]}}\prec \xi_0$ gives the following bound on $I_1$:
\[I_1\prec \xi_0(1+ \zeta_{k,c})\|\bv\|_2\|\bw\|_2\cdot \E |\Gc_{\bv y}|^{2p-1} .\]

For $I_2$ and $I_3$, by using \eqref{eq:T(G-M)} and \eqref{eq:Txy-Txv}, we have that for any fixed $k\ge 0$,
\be\label{eq:controlxi1} 
\max_{x}\left(T_{x\bv}+\wt T_{x\bv}\right)\prec \xi_0^2\|\bv\|_2^2  ,  \quad \forall \bv \in \cal V_k. 
\ee
Using \eqref{eq:Maddv} and \eqref{eq:controlxi1}, we obtain that 
\begin{align*}
 \sum_{\al,\beta\in [a]}M_{\bv\al}S_{\al\beta} G_{\al \bw} G_{\bv \beta}G_{\beta \bw} \le \big|(MI_{[a]} G)_{\bv \bw}\big|(G^*E_{[a]}G)_{\bw\bw}^{1/2}(GE_{[a]}G^*)_{\bv\bv}^{1/2}\prec \xi_0^2(1 + \zeta_{k,c})\|\bv\|_2^2\|\bw\|_2^2. 
\end{align*}
Thus, we can bound $I_2$ by 
\[I_2\prec \xi_0^2(1+ \zeta_{k,c})\|\bv\|_2^2\|\bw\|_2^2\cdot \E |\Gc_{\bv y}|^{2p-2} .\]
A similar argument yields the same bound for $I_3$. In sum, we obtain that 
\begin{align*} \E |\Gc_{\bv \bw}|^{2p}\prec \xi_0(1+ \zeta_{k,c})\|\bv\|_2\|\bw\|_2 \E |\Gc_{\bv \bw}|^{2p-1} + \xi_0^2(1+\zeta_{k,c})\|\bv\|_2^2\|\bw\|_2^2  \E |\Gc_{\bv \bw}|^{2p-2}.
\end{align*}
Applying H{\"o}lder's, Young's, and Markov's inequalities, we derive from the above bound that 
\begin{align} \label{eq:Maddv2}
\left(\E |\Gc_{\bv \bw}|^{2p}\right)^{\frac{1}{2p}}\prec \xi_0(1+ \zeta_{k,c})\|\bv\|_2\|\bw\|_2 \implies  \frac{|\Gc_{\bv \bw}|}{\|\bv\|_2\|\bw\|_2}\prec \xi_0(1+\zeta_{k,c}).
\end{align}
Then, taking a union bound over $\bv,\bw\in \cal V_{k}$, we get a self-improving estimate for any fixed $k\ge 0$:
$$\vertiii{G-M}_{(l)}\prec \xi_{l} \ \ \forall l\ge k \Rightarrow \ \vertiii{G-M}_{(k)}\prec \xi_0 (1+\zeta_{k,c})  \lesssim \xi_0\Big(1+\sum_{l=0}^K \xi_{k+l}W^{-cl}\Big) ,$$
where we have taken $K\in \N$ sufficiently large such that $W^{-c(K+1)}N\le 1$. Iterating this estimate for $r$ many steps yields that 
$$\vertiii{G-M}_{(k)}\prec \xi_0 + W^{-(c\wedge\e_0)r}\sum_{l=0}^{rK} \xi_l W^{-(c\wedge\e_0)l}.  $$
This concludes the proof if we take $r>(c\wedge\e_0)^{-1}\log_W N$.     
\end{proof}

We now define another class of deterministic control parameters $0< \xi_k([a],[b])\le \xi_0$ such that the following bounds hold: 
\be\label{eq:controlxi} 
\max_{\bv\in \cal V_k([a])}\max_{\bw\in \cal V_k([b])} \frac{|(G-M)_{\bv \bw}|\vee |(G-M)_{\bw\bv}|}{\|\bv\|_2\|\bw\|_2}\prec \xi_k([a],[b]).\ee
Again, without loss of generality, we may assume that the control parameters $\xi_k([a],[b])$ are \emph{non-decreasing} in $k$.  We will also use a similar estimate as in \eqref{eq:vktovk+1}: for any $\bv\in \cal V_k([x])$, $\bw\in \cal V_k([y])$, and \smash{$[a]\in\Zn$}, 
\be\label{eq:vktovk+1 decay}
\begin{split}
	\frac{|\bv^*MI_{[a]}(G-M)\bw|}{\|\bv\|_2\|\bw\|_2 } &\prec  \mathbf \sum_{l=0}^\infty W^{-cl} \xi_{k+l}([a],[y])1(|[x]-[a]|\le l)+ W^{-D}  \\
	&\lesssim W^{-c|[x]-[a]|} \xi_{k+K}([a],[y])+ W^{-D},
\end{split}
\ee
where we have taken the constant $K\in \N$ sufficiently large depending on $c$, $D$, and $\log_W N$. 
This bound also applies to \(|\bw^*(G-M)I_{[a]}M\bv|\), as well as to all analogous terms obtained by replacing each occurrence of  $(G-M)$ or $M$ with $(G-M)^*$ or $M^*$, respectively.

Given $\bv\in \cal V_k([x])$ and $\bw\in \cal V_k([y])$, we now bound the RHS of equation \eqref{eq:Gcvy}. 
Using \eqref{eq:controlxi} and \eqref{eq:vktovk+1 decay}, we can bound $I_1$ as follows for any small constant $c\in (0,\e_A)$ and large constant $D>0$: 
\begin{align}
I_1&=t\sum_{[a]}(MI_{[a]}M)_{\bv \bw}\E\left[\qq{\Gc E_{[a]}}  (\Gc_{\bv \bw})^{p-1} (\Gc^{-}_{\bv \bw})^{p}\right] + t\sum_{[a]}\E\left[\qq{\Gc E_{[a]}} (MI_{[a]}\Gc)_{\bv \bw} (\Gc_{\bv \bw})^{p-1} (\Gc^{-}_{\bv \bw})^{p}\right] \nonumber \\
& \prec \xi_0 \Big( W^{-c|[x]-[y]|}+ \sum_{[a]}W^{-c|[x]-[a]|}\xi_{k+K}\p{[a],[y]}+W^{-D}\Big)\|\bv\|_2\|\bw\|_2\cdot \E |\Gc_{\bv y}|^{2p-1} \, .\label{eq;I1}
\end{align}
Next, using \eqref{Mbound_AO} and \eqref{eq:vktovk+1 decay}, we can bound the term $I_2$ as 
\begin{align*}
I_2&=(p-1)t \sum_{[a]}\E\left\{\left[(MI_{[a]}M)_{\bv \bw} + (MI_{[a]}\Gc)_{\bv \bw}\right](GE_{[a]}G)_{\bv \bw} (\Gc_{\bv \bw})^{p-2} (\Gc^{-}_{\bv \bw})^{p}\right\} \\
& \prec \|\bv\|_2^2\|\bw\|_2^2 \cdot  \E |\Gc_{\bv \bw}|^{2p-2} \cdot \sum_{[a]}\Big( W^{-c|[x]-[a]|-c|[a]-[y]|}+ W^{-c|[x]-[a]|}\xi_{k+K}\p{[a],[y]}\Big)\\
&\qquad \times \left(\xi_0 W^{-d/2} W^{-c|[a]-[y]|} + W^{-d}W^{-c|[x]-[a]|} \xi_{k+K}\p{[a],[y]}+\xi_0 \cdot \xi_{k}\p{[a],[y]}+W^{-D}\right)  \\
&\prec \left[\xi_0^2 W^{-c|[x]-[y]|}+\xi_0\cal E_{k+K,c/2}^2([x], [y])+W^{-D}\right]\|\bv\|_2^2\|\bw\|_2^2 \cdot \E |\Gc_{\bv \bw}|^{2p-2} , \numberthis \label{eq;I2}
\end{align*}
where in the second step, we use the following estimate by \eqref{Mbound_AO}, \eqref{eq:T(G-M)}, \Cref{lem:apriori}, and \eqref{eq:vktovk+1 decay}:
\begin{align*}
&(GE_{[a]}G)_{\bv \bw}= (GE_{[a]}M)_{\bv \bw}+(ME_{[a]}\Gc)_{\bv \bw} +(\Gc E_{[a]}\Gc)_{\bv \bw}\prec  \|E_{[a]}M\mathbf w\|_2 (GE_{[a]}G^*)_{\bv\bv}^{1/2}\\
&\qquad + W^{-d}\|\bv\|_2\|\bw\|_2 \cdot W^{-c|[x]-[a]|}\xi_{k+K}\p{[a],[y]}+ \|\bv\|_2\|\bw\|_2\cdot \xi_k\p{[x],[a]}\xi_{k}\p{[a],[y]} + W^{-D}\|\bv\|_2\|\bw\|_2 \\
&\prec \|\bv\|_2\|\bw\|_2\left(\xi_0 W^{-\frac d2} W^{-c|[a]-[y]|} + W^{-d}W^{-c|[x]-[a]|}\xi_{k+K}\p{[a],[y]}+\xi_k\p{[x],[a]}\xi_{k}\p{[a],[y]}+W^{-D}\right),\numberthis \label{eq:GVG}
\end{align*}
and in the third step, we introduce the function $\cal E_{k,c}$ as 
\be\label{eq:defEkc}\cal E_{k,c}([x], [y]):=\sum_{[\al],[\beta]}W^{-c|[x]-[\al]|}\xi_k\p{[\al],[\beta]}W^{-c|[\beta]-[y]|}\ee
for any $k\ge 0$ and positive constant $c$.  
With a similar argument, we can check that 
\begin{align}
I_3- I_0&\prec \left[\xi_0^2 W^{-c|[x]-[y]|}+\xi_0 \cal E_{k+K,c/2}^2([x], [y])+W^{-D}\right]\|\bv\|_2^2\|\bw\|_2^2\cdot \E |\Gc_{\bv\bw}|^{2p-2},\label{eq;I3}
\end{align}
where $I_0$ is the ``leading term" defined as
\be\label{def;I0}I_0:= pt\sum_{\al,\beta}|M_{\bv\al}|^2 S_{\al\beta}\E\left[|G_{\beta \bw}|^2 |\Gc_{\bv \bw}|^{2p-2}\right].\ee

We continue bounding $I_0$ using \eqref{G-M} and Gaussian integration by parts: 
\begin{align*}
I_0=&~ - pt\sum_{\al,\beta}|M_{\bv \al}|^2 S_{\al\beta}\E\left[(G(tm+V)M)_{\beta \bw} \overline G_{\beta \bw} |\Gc_{\bv \bw}|^{2p-2}\right] \\
= &~pt^2\sum_{\al,\beta}\sum_{[b]}\sum_{\mu\in[b]}|M_{\bv \al}|^2 M_{\mu\bw} S_{\al\beta}\E\left[\qq{(G-M) E_{[b]}} G_{\beta \mu}\overline G_{\beta \bw} |\Gc_{\bv \bw}|^{2p-2}\right]\\
&+ pt^2\sum_{\al,\beta,\gamma, \mu}|M_{\bv \al}|^2 M_{\mu\bw} S_{\al\beta}S_{\gamma \mu}\E\left[ |G_{\beta \gamma}|^2\overline G_{\mu \bw} |\Gc_{\bv \bw}|^{2p-2}\right]\\
&+p(p-1)t^2 \sum_{\al,\beta,\gamma,\mu}|M_{\bv \al}|^2 M_{\mu\bw} S_{\al\beta}S_{\gamma \mu}\E\left[G_{\beta \gamma} \overline G_{\beta \bw} G_{\bv \mu}G_{\gamma \bw}(\Gc_{\bv \bw})^{p-2}(\Gc_{\bv \bw}^-)^{p-1}\right] \\
&+p(p-1)t^2 \sum_{\al,\beta,\gamma,\mu}|M_{\bv \al}|^2 M_{\mu\bw} S_{\al\beta}S_{\gamma \mu}\E\left[ G_{\beta \gamma}\overline G_{\beta \bw} \overline G_{\bv \gamma }\overline G_{\mu \bw}(\Gc_{\bv \bw})^{p-1}(\Gc_{x\bw}^-)^{p-2}\right]\\
=:&~I_{01} + I_{02} + I_{03} + I_{04} \, . 
\end{align*}
Again, by applying \eqref{Mbound_AO} and \eqref{eq:vktovk+1 decay}, and using a similar argument as above for the term $I_2$, we deduce that 
\begin{align}
I_{01}&\prec \left[\xi_0^2W^{-c|[x]-[y]|}+\xi_0 \cal E^2_{k+K,c}([x], [y])+W^{-D} \right]\|\bv\|_2^2\|\bw\|_2^2\cdot \E |\Gc_{\bv\bw}|^{2p-2} , \label{eq;I01}\\
I_{02}-J_0&\prec \left[\xi_0^3 W^{-c|[x]-[y]|}+ \xi_0 \cal E^2_{k+K,c/2}([x], [y])+W^{-D}\right]\|\bv\|_2^2\|\bw\|_2^2\cdot \E |\Gc_{\bv\bw}|^{2p-2} , \label{eq;I02}\\
|I_{03}|+|I_{04}| &\prec \left[\xi_0^3 W^{-c|[x]-[y]|}+\xi_0 \cal E^3_{k+K,c/2}([x], [y])+W^{-D}\right]\|\bv\|_2^3\|\bw\|_2^3\cdot \E |\Gc_{\bv\bw}|^{2p-3} , \label{eq;I0304}
\end{align}
where $J_0$ is the ``leading term" of $I_{02}$ defined as 
$$J_0:=pt^2\sum_{\al,\beta,\gamma, \mu}|M_{\bv\al}|^2 |M_{\mu\bw}|^2 S_{\al\beta}S_{\gamma \mu}\E\left[ |G_{\beta \gamma}|^2 |\Gc_{\bv\bw}|^{2p-2}\right].$$
To illustrate how to get the bounds \eqref{eq;I01}--\eqref{eq;I0304}, we take $I_{03}$ as an example---the same argument applies to $I_{04}$, and the proofs of \eqref{eq;I01} and \eqref{eq;I02} are simpler. 
We need to bound 
\begin{align*}
&~\sum_{[a],[b]}\sum_{\al,\beta\in[a]}\sum_{\gamma,\mu\in[b]}|M_{\bv \al}|^2 M_{\mu\bw} S_{\al\beta}S_{\gamma \mu}G_{\beta \gamma} \overline G_{\beta \bw} G_{\bv \mu}G_{\gamma \bw} \\
=&~\sum_{[a],[b]}\sum_{\al,\beta\in[a]}|M_{\bv \al}|^2  S_{\al\beta}  \overline G_{\beta \bw} (GI_{[b]}M)_{\bv\bw}(GE_{[b]}G)_{\beta \bw}\\
=&~\sum_{[a],[b]}\sum_{\al,\beta\in[a]}|M_{\bv \al}|^2  S_{\al\beta}  \overline G_{\beta \bw} \left[(MI_{[b]}M)_{\bv\bw}+(\Gc I_{[b]}M)_{\bv\bw}\right](GE_{[b]}G)_{\beta \bw}=:J_{1}+J_2.
\end{align*}
For the term $J_1$ with the factor $(MI_{[b]}M)_{\bv\bw}$, using \eqref{Mbound_AO}, \eqref{eq:T(G-M)}, \Cref{lem:apriori}, and the estimate \eqref{eq:GVG} for $(GE_{[b]}G)_{\beta \bw}$, we can bound it by 
\begin{align*}
J_1&\prec \|\bv\|_2^3\|\bw\|_2^2\sum_{[a],[b]} \xi_0^2 W^{-d/2}(G^*E_{[a]}G)_{\bw\bw}^{1/2}\cdot W^{-2c|[x]-[a]|-c|[x]-[b]|-c|[b]-[y]|}\\
&\prec \|\bv\|_2^3\|\bw\|_2^3\cdot \xi_0^3 W^{-d/2} W^{-c|[x]-[y]|}.\numberthis \label{eq:J1}
\end{align*}
For the term $J_2$, we decompose it as
\begin{align*}
J_{21}+J_{22}:=&~\sum_{[a],[b]}\sum_{\al,\beta\in[a]}|M_{\bv \al}|^2  S_{\al\beta}  \left(\overline M_{\beta \bw}+\Gc^-_{\beta \bw}\right) (\Gc I_{[b]}M)_{\bv\bw} (GE_{[b]}G)_{\beta \bw} .
\end{align*}
Again, the term $J_{12}$ can be bounded using \eqref{Mbound_AO}, \Cref{lem:apriori}, and \eqref{eq:GVG}:
\begin{align*}
J_{21}=&~W^{-d}\sum_{[a],[b]}\sum_{\al\in[a]}|M_{\bv \al}|^2  (\Gc I_{[b]}M)_{\bv\bw} (M^* I_{[a]}GE_{[b]}G)_{\bw \bw} \\
\prec &~\|\bv\|_2^3\|\bw\|_2^3 \sum_{[a],[b]} W^{-2c|[x]-[a]|-c|[b]-[y]|-c|[a]-[y]|} \xi_0^3\prec \|\bv\|_2^3\|\bw\|_2^3\cdot \xi_0^3 W^{-c|[x]-[y]|}.\numberthis \label{eq:J12}
\end{align*}
Next, using \eqref{Mbound_AO}, \eqref{eq:vktovk+1 decay}, and \eqref{eq:GVG}, we can control $J_{22}$ by 
\begin{align*}
J_{22}\prec &~\|\bv\|_2^3\|\bw\|_2^3 \sum_{[a],[b]} W^{-2c|[x]-[a]|} \xi_{k}([a],[y])\br{W^{-c|[b]-[y]|}\xi_{k+K}([x],[b])+W^{-D}}\\
&~ \quad  \times\left(\xi_0 W^{-d/2} W^{-c|[b]-[y]|} + W^{-d}W^{-c|[a]-[b]|}\xi_{k+K}\p{[b],[y]}+\xi_k\p{[a],[b]} \xi_{k}\p{[b],[y]}+W^{-D}\right)\\
\prec &~ \|\bv\|_2^3\|\bw\|_2^3 \left(\xi_0^4 W^{-c|[x]-[y]|}+\xi_0\cal E_{k+K,c/2}^3([x],[y])+n^dW^{-D}\right).\numberthis \label{eq:J22}
\end{align*}
Combining \eqref{eq:J1}--\eqref{eq:J22}, we obtain \eqref{eq;I0304} for $|I_{03}|$ (by choosing $D$ sufficiently large). 
Finally, for the term $J_0$, note that \smash{$\sum_{\beta,\gamma}S_{\al\beta}S_{\gamma \mu}|G_{\beta \gamma}|^2=\cL^{(2)}_{t,(-,+),([a],[b])}$} for $\al\in [a],\mu\in [b]$. Then, with \eqref{eq:def_Psit} and \eqref{Mbound_AO}, we get 
\begin{align}\label{eq:J0}
J_0&\prec \sum_{[a],[b]}\sum_{\al\in[a],\mu\in[b]}|M_{\bv\al}|^2|M_{\mu\bw}|^2\Phi_{t}([a],[b]) \cdot \E |\Gc_{\bv\bw}|^{2p-2} \nonumber\\
&\prec \sum_{[a],[b]}W^{-2c|[x]-[a]|-2c|[b]-[y]|} \Phi_{t}([a],[b])\cdot \E |\Gc_{\bv\bw}|^{2p-2}
\end{align}

Combining the estimates \eqref{eq;I1}, \eqref{eq;I2}, \eqref{eq;I3}, \eqref{eq;I01}--\eqref{eq;I0304}, and \eqref{eq:J0}, we conclude that  
\begin{align*}
&\E |\Gc_{xy}|^{2p} \prec \left(\xi_0 W^{-c|[x]-[y]|}  + \xi_0 \cal E_{k+K,c}([x], [y])+W^{-D}\right)\|\bv\|_2\|\bw\|_2 \cdot \E |\Gc_{xy}|^{2p-1} \\
&+\Big(\sum_{[a],[b]}W^{-2c|[x]-[a]|-2c|[b]-[y]|} \Phi_{t}([a],[b])+ \xi_0^2  W^{-c|[x]-[y]|}+\xi_0 \cal E_{k+K,c/2}^2([x], [y])+W^{-D}\Big)\|\bv\|_2^2\|\bw\|_2^2\cdot \E |\Gc_{xy}|^{2p-2}\\
&+ \left( \xi_0^3 W^{-c|[x]-[y]|}+\xi_0 \cal E^3_{k+K,c/2}([x], [y]) +W^{-D}\right)\|\bv\|_2^3\|\bw\|_2^3\cdot \E |\Gc_{xy}|^{2p-3}  .
\end{align*}
Applying H{\"o}lder's and Young's inequalities, we derive from the above bound that 
\begin{align*}
\frac{\big(\E |\Gc_{\bv\bw}|^{2p}\big)^{\frac{1}{2p}}}{\|\bv\|_2\|\bw\|_2}  &\prec  \sum_{[a],[b]}W^{-c|[x]-[a]|-c|[b]-[y]|} [\Phi_{t}([a],[b])]^{\frac12} + \xi_0 W^{-\frac{c}{3}|[x]-[y]|} + \xi_0^{\frac 1 3}\cal E_{k+K,c/2}([x], [y]) +W^{-D} .
\end{align*}
A similar bound holds for $\E |\Gc_{\bw\bv}|^{2p}$ by symmetry. Then, applying Markov's inequality, we obtain a self-improving estimate for the control parameters $\xi_k\p{[a],[b]}$. That is, if $G-M$ satisfies the bounds in \eqref{eq:controlxi} with \smash{$\xi_k\p{[a],[b]}=\xi_k^{(0)}\p{[a],[b]}\le \xi_0$}, then we can derive a better bound for any fixed $k\ge 0$: 
\begin{align}
\max_{\bv\in \cal V_k([a])}\max_{\bw\in \cal V_k([b])} \frac{|\Gc_{\bv \bw}|\vee |\Gc_{\bw\bv}|}{\|\bv\|_2\|\bw\|_2}\prec  \xi_k^{(1)}\p{[a],[b]}:= &~\sum_{[a'],[b']}W^{-c|[a]-[a']|-c|[b]-[b']|} \left[\Phi_{t}([a'],[b'])\right]^{1/2}   \nonumber\\
&~+ \xi_0 W^{-\frac{c}{3}|[a]-[b]|} + \xi_0^{\frac 1 3}\cal E_{k+K,c/2}([a], [b]) +W^{-D} ,\label{eq:selfimprove_xi}
\end{align}
where $\cal E_{k+K,c/2}([a], [b])$ is defined in terms of $\xi^{(0)}_{k+K}([a],[b])$ as in \eqref{eq:defEkc}. Then, taking \smash{$\xi_k\p{[a],[b]}=\xi^{(1)}_k\p{[a],[b]}$} as the input in the previous argument, we get an even better bound: 
\begin{align*}
\max_{\bv\in \cal V_k([a])}\max_{\bw\in \cal V_k([b])} \frac{|\Gc_{\bv \bw}|\vee |\Gc_{\bw\bv}|}{\|\bv\|_2\|\bw\|_2}\prec \xi^{(2)}_k\p{[a],[b]}:= &~\sum_{[a'],[b']}W^{-\frac{c}{2}|[a]-[a']|-\frac{c}{2}|[b]-[b']|} \left[\Phi_{t}([a'],[b'])\right]^{1/2}   \nonumber\\
&~+ \xi_0 W^{-\frac{c}{3}|[a]-[b]|} + \xi_0^{\frac 2 3}\cal E_{k+2K,c/2}([a], [b])+W^{-D} ,
\end{align*}
where $\cal E_{k+2K,c/2}$ is defined in terms of $\xi_{k+2K}^{(0)}\p{[a],[b]}$. Iterating this process for $r$ many times, we get that
\begin{align*}
\max_{x\in[a],y\in[b]}|\Gc_{xy}|\prec \xi^{(r)}_0\p{[a],[b]}:= &~\sum_{[a'],[b']}W^{-\frac{c}{2}|[a]-[a']|-\frac{c}{2}|[b]-[b']|} \left[\Phi_{t}([a'],[b'])\right]^{1/2}   \nonumber\\
&~+ \xi_0 W^{-\frac{c}{3}|[a]-[b]|} + \xi_0^{\frac r 3}\cal E_{rK,c/2}([a], [b]) + W^{-D}.
\end{align*}
Given any constant $D>0$, after $r=\lceil 3D/\e_0\rceil$ many iterations, we obtain that 
\begin{align*}
\max_{x\in[a],y\in[b]}|\Gc_{xy}|\prec \sum_{[a'],[b']}W^{-\frac{c}{2}|[a]-[a']|-\frac{c}{2}|[b]-[b']|} \left[\Phi_{t}([a'],[b'])\right]^{1/2} + \xi_0 W^{-\frac{c}{3}|[a]-[b]|}   + W^{-D}.
\end{align*}
Finally, using \eqref{GiiGEX}, which allows us to take $\xi_0=\Psi_t$, we conclude \eqref{GijGEX}.

\subsection{Proof of \eqref{GiiGEX} and \eqref{GavLGEX}}
If there is no $\xi_0 W^{-\frac{c}{3}|[x]-[y]|}$ term in \eqref{eq:selfimprove_xi}, then taking the maximum over $\bv,\bw\in \cal V_0$, we would get a self-improving estimate for $\xi_0$, which allows us to bound $\xi_0$ by $\Psi_t$ via an iteration argument as in \Cref{appd:MDE1}. 
We see that one source of this troublesome term is the $I_1$ term in \eqref{eq;I1}, which contains a light weight \smash{$\qq{\Gc E_{[\al]}}$} that can only be bounded by $\xi_0$ at this stage.
To deal with this issue, we adopt the argument developed in \cite{He2018}, that is, instead of bounding \smash{$\Gc$} directly, we first control the expression
$$ \Pi(G):=I+z_t G + \cal S_t(G) G- (\lambda\Psi)G.$$
Here, $\cal S_t(\cdot):=t\cal S(\cdot)$ with the operator $\cal S$ defined in \eqref{eq:opS}; in other words, $\cal S_t$ is the linear operator defined in terms of the variance matrix $S_t$ of $V_t$. 
With \eqref{def_G0t} and the definition of $\Pi(G)$, we can write that 
\be\label{eq:G-M}  
\Gc=G - M= - M\left[ \Pi(G) -(\cal S_t(G)-t m)G\right] . 
\ee
Then, the key to the proof is to control $\Pi(G)$ using the bounds on the $2$-$G$ loops given in \eqref{eq:def_Psit}. 
For our later proof of \eqref{Gtmwc} in Step 1, we first establish the following lemma, which does not assume \eqref{def_asGMc} or \Cref{lem:apriori}.

\begin{lemma}\label{lem G<T new}
Let $\xi_k,\phi_k,\Phi_k \ge W^{-d/2} $ be a sequence of deterministic control parameters such that
\be\label{eq:controPi} 
\vertiii{G-M}_{(k)}\prec \xi_k, \quad  \vertiii{\Pi(G)}_{(k)}\prec \phi_k, \quad \max_{x\in\ZL}\max_{\bv\in \cal V_k}\left(T_{x\bv}+ \wt T_{x\bv}\right)/\|\bv\|_2^2 \prec \Phi_k^2 \, .  
\ee
Then, for each fixed $k\ge 0$, we have 
\be\label{eq:G-Mk} 
\vertiii{\Pi(G)}_{(k)} \prec \phi_k^{1/2}\left(\Phi_{k}+\sqrt{(1+\xi_k)\Phi_k}\right)+\Phi_{k}\p{1+\sqrt{(1+\xi_k)\Phi_k}}  \, .  
\ee
\end{lemma}

\begin{proof}
Given $\bw\in \cal V_k$ and $\bv\in \cal V_k([a])$ for some \smash{$[a]\in \Zn$}, we bound the moments $\E |\Pi(G)_{\bv\bw}|^{2p}$ for any fixed $p\in \N$. With the identity $ I_N +z_t G-(\lambda\Psi)G=VG$ and applying Gaussian integration by parts, we obtain  
\begin{align} 
\E |\Pi(G)_{\bv \bw}|^{2p}&= \E\left[(VG)_{\bv\bw}\Pi(G)_{\bv\bw}^{p-1} \overline \Pi(G)_{\bv\bw}^{p}\right]+ \sum_{x\in[a]}\bar\bv(x)\E  \left[\cal S_t(G)_{xx} G_{x\bw}\Pi(G)_{\bv\bw}^{p-1} \overline \Pi(G)_{\bv\bw}^{p}\right] \nonumber\\
&= t\sum_{x,\al\in[a]}\bar\bv(x)\E\left[S_{x\al}G_{\al \bw} \partial_{\al x}\left(\Pi(G)_{\bv\bw}^{p-1} \overline \Pi(G)_{\bv\bw}^{p}\right)\right].\label{eq:simple_cumu}
\end{align}
Direct calculations yield that 
\begin{align}
&\partial_{\al x}\Pi(G)_{\bv\bw}=-\Pi(G)_{\bv \al} G_{x\bw} + \bar\bv(\al) G_{x\bw} - t G_{\bv\bw} (GE_{[a]}G)_{x\al} ,\label{eq:decomp_PiG}\\
&\partial_{\al x}\overline \Pi(G)_{\bv\bw}=-\overline \Pi(G)_{\bv x} \overline G_{\al \bw} + \bar\bv(x) \overline G_{\al \bw} - t \overline G_{\bv\bw} (G^* E_{[a]}G^*)_{x\al}  .\label{eq:decomp_PiG2}
\end{align}
Plugging them into \eqref{eq:simple_cumu} and using the bounds in \eqref{eq:controPi}, we can obtain that
\begin{align*}
\E |\Pi(G)_{\bv\bw}|^{2p} \prec \|\bv\|_2^2\|\bw\|_2^2 \br{\phi_k\Phi_k^2+\phi_k(1+\xi_{k})\Phi_k+ \Phi_k^2 + (1+\xi_k)\Phi_k^3}  
\cdot \E |\Pi(G)_{xy}|^{2p-2}.
\end{align*}
Applying H{\"o}lder's and Young's inequalities, we derive from the above bound that 
\begin{align*}
 \p{\E  |\Pi(G)_{\bv\bw}|^{2p}}^{\frac{1}{2p}}  &\prec \|\bv\|_2\|\bw\|_2\br{\phi_k\Phi_k^2+\phi_k(1+\xi_{k})\Phi_k+ \Phi_k^2 + (1+\xi_k)\Phi_k^3}^{\frac 12} .
\end{align*}
Then, using Markov's inequality and taking a union bound over $\bv,\bw\in \cal V_k$, we get \eqref{eq:G-Mk}.
\end{proof}

Under the assumption \eqref{def_asGMc}, we can choose $\xi_k=\xi_0\le W^{-\e_0}$ and $\Phi_k\le \xi_0\le W^{-\e_0}$ by \Cref{lem:apriori} and \eqref{eq:T(G-M)}. 
We also have the trivial bound \(\vertiii{\Pi(G)}_{(k)}\le N^2.\)
From \eqref{eq:G-Mk}, we get a self-improving estimate:
$$\vertiii{\Pi(G)}_{(k)}\prec \phi_k \ \Rightarrow \ \|\Pi(G)\|_{\max} \prec \left(\phi_k \Phi_k\right)^{1/2} + \Phi_k.$$
Then, starting from $\phi_k=N^2$, iterating this estimate for $\OO(1)$ many times, we get that $\vertiii{\Pi(G)}_{(k)} \prec \Phi_k$ for any fixed $k\ge 0$. 
With a similar argument as that in \eqref{eq:vktovk+1 decay}, we can get that for any $\bw\in \cal V_k$ and $\bv\in \cal V_k([x])$:
\be\label{eq:max_Pi}
[M\Pi(G)]_{\bv\bw} \prec \sum_{[a]}W^{-c|[x]-[a]|}\Phi_{k+K}\|\bv\|_2\|\bw\|_2 + W^{-cK}\|\bv\|_2\|\bw\|_2 \lesssim  \Phi_{k+K}\|\bv\|_2\|\bw\|_2,
\ee
where we take $K$ sufficiently large in the second step such that \smash{$W^{-cK}\le W^{-d/2}\le \Phi_{k+K}$}. 
Applying \eqref{eq:max_Pi} to \eqref{eq:G-M}, we obtain that for all \smash{$x,y\in\Z_L^d$}:
\begin{align}
    G_{xy} - M_{xy}&= \OO_\prec(\Phi_K) + t[M  (\cal S(G)-m)(G-M)]_{xy}+  t[M(\cal S(G)-m)M]_{xy} \nonumber\\
    &= \OO_\prec(\Phi_K + W^{-\e_0} \|\cal S(G)- m\|_{\max})+  t\sum_{\al,\beta} M_{x\al}M_{\al y}S_{\al\beta}(G-M)_{\beta\beta}\, .\label{eq:MDE}
\end{align}
Solving the above equation for the vector of the diagonal entries of \smash{$\Gc$}, \smash{${\Lambda}:=(\Gc_{xx}: x\in \ZL)\in \C^N$}, we get
\be\label{eq:linearM} 
\|\Lambda\|_{\max} \prec \|(1-t M^+S)^{-1}\|_{\infty\to\infty}\left(\Phi_K + W^{-\e_0} \|\Lambda\|_{\max}\right),\ee
where we recall the matrix $M^+$ defined in \eqref{def:Theta}. We claim that 
\be\label{eq:S+infinf}
\|(1-tM^+S)^{-1}\|_{\infty\to\infty} \lesssim 1. 
\ee
In fact, if we replace $M^+$ by $m_{sc}^2$, then it is known that $\|(1-tm_{sc}^2 S)^{-1}\|_{\infty\to\infty}\lesssim 1$ (see e.g., Lemma 4.2 of \cite{PartII} for a proof of this fact). On the other hand, \eqref{eq:M-msc} tells that $M^+$ is a perturbation of $m_{sc}^2I_N$ in the sense of $L^\infty\to L^\infty$ norm. This leads to the bound \eqref{eq:S+infinf}. Applying \eqref{eq:S+infinf} to \eqref{eq:linearM} gives that 
\be\label{eq:G-Mmax} \|\Lambda\|_{\max} \prec \Phi_K + W^{-\delta_0} \|\Lambda\|_{\max} \ \implies \ \|\Lambda\|_{\max} \prec \Phi_K \ \implies \ \|G-M\|_{\max}\prec \Phi_K.\ee
Here, in the second step, we have inserted $\|\Lambda\|_{\max} \prec \Phi_K$ back to \eqref{eq:MDE} to get the conclusion.

To conclude \eqref{GiiGEX}, it remains to control the $T$-variables with $2$-$G$ loops. In other words, we need to bound $ T_{x\bv}$ and \smash{$\wt T_{x\bv}$} for $\bv\in \cal V_k$. By \eqref{eq:Txy-Txv}, it suffices to control the moments $ \E T_{xy}^p$ for each fixed $p\in \N$: 
$$ \E T_{xy}^p=-\sum_\al S_{x\al} \E\left[(G(tm+V)M)_{\al y}\overline G_{\al y} \cdot T_{xy}^{p-1}\right].$$
We can estimate the RHS using Gaussian integration by parts, following a similar approach to the estimation of $I_0$ in \eqref{def;I0}. However, the derivation here is significantly simpler than that below \eqref{def;I0}, as it only requires the max-norms and does not involve tracking decay factors. Therefore, we omit the details. This argument gives a self-improving estimate for $\Phi_0$ in terms of $\Psi_t$, iterating which for $\OO(1)$ many times yields 
\be\label{eq:boundT}
\max_{x,y}T_{xy}\prec \Psi_t^2 \implies  \max_{x}\max_{\bv\in \cal V_k}\p{T_{x\bv}+\wt T_{x\bv}}/\|\bv\|_2^2\prec \Psi_t^2,
\ee
showing that we can take $\Phi_k=\Psi_t$ for any $k\ge 0$. Combining this with \eqref{eq:G-Mmax} completes the proof of \eqref{GiiGEX}.
More generally, by applying \eqref{GiiGEX}, \eqref{eq:max_Pi}, and \eqref{eq:boundT} to \eqref{eq:G-M}, we obtain that for any $\bv, \bw \in \mathcal{V}_k$, 
\be\label{eq:G-Mmax2} \Gc_{\bv\bw}= - \left[ M\Pi(G)\right]_{\bv\bw} +\left[M(\cal S_t(G)-t m)G\right]_{\bv\bw} \prec \Psi_t \implies 
\vertiii{G-M}_{(k)}\prec \Psi_t .
\ee


For the proof of \eqref{GavLGEX}, with the estimates in \eqref{eq:boundT} and \eqref{eq:G-Mmax2} as inputs and following the arguments in the proof of \cite[Proposition 3.2]{He2018}, we can establish the following result: 
\be\label{eq:aver}
W^{-d}\tr\left[ M\Pi(G)I_{[a]}\right]\prec \Psi_t^2,\quad W^{-d}\tr\left[ M\Pi(G)I_{[a]}MI_{[b]}\right]\prec \Psi_t^2\|I_{[a]}MI_{[b]}\|,\quad \forall [a],[b]\in \Zn \, .
\ee
More precisely, whenever an estimate on $\Gc_{\bv\bw}$ is required, we apply the bound \eqref{eq:G-Mmax2}. Additionally, in the proof of \cite[Proposition 3.2]{He2018}, Ward's identity is used in several instances; instead, we can bound the relevant terms using $T$-variables, which can further be controlled by \eqref{eq:boundT}. 
In fact, our argument is significantly simpler than that in \cite{He2018}, as we only need to handle terms arising from Gaussian integration by parts, whereas \cite{He2018} also deals with terms from more general cumulant expansions. Therefore, we omit the details.

Given \eqref{eq:aver}, we first prove the first estimate in \eqref{GavLGEX}. We introduce another control parameter $\theta\ge W^{-d}$ such that 
\(\max_{[a]}|\qq{(G-M)E_{[a]}}|\prec \theta.\) 
Then, using \eqref{eq:G-M}, \eqref{eq:G-Mmax2}, and \eqref{eq:aver} we get that 
\begin{align}\label{eq:aver_pf1}
\qq{(G-M)E_{[a]}}&= - W^{-d}\tr\br{M\Pi(G)I_{[a]}} +tW^{-d} \sum_{[b]}\qq{(G-M)E_{[b]}}\tr\left(MI_{[b]}GI_{[a]}\right)  \nonumber\\
&=W^{-d} \sum_{[b]}\qq{(G-M)E_{[b]}}\tr \left(MI_{[b]}MI_{[a]}\right)+\OO_\prec(\Psi_t^2+\theta \Psi_t) \nonumber\\
&= m_{sc}^2\qq{(G-M)E_{[a]}} + \OO_\prec\left(\Psi_t^2+\theta \Psi_t + \theta \heta \right).
\end{align}
In the third step, we have used \eqref{eq:M-msc} and \eqref{Mbound_AO} to get that 
$$W^{-d}\sum_{x\in[a]}\sum_{[b]\ne [a]}|\tr(MI_{[b]}MI_{[a]})|\lesssim \heta^2,\quad  W^{-d} \tr(MI_{[a]}MI_{[a]})=m_{sc}^2 + \OO(\heta).$$ 
From \eqref{eq:aver_pf1}, we get $\qq{(G-M)E_{[a]}}\prec \Psi_t^2+\theta \p{\Psi_t + \heta }$. Then, taking a union bound in $[a]$, we derive the following self-improving estimate for $\theta$:
$$\max_{[a]}|\qq{(G-M)E_{[a]}}|\prec \theta \ \implies \ \max_{[a]}|\qq{(G-M)E_{[a]}}|\prec\Psi_t^2+\theta \p{\Psi_t + \heta} .$$
Iterating it for $\OO(1)$ many times yields the first estimate in \eqref{GavLGEX}, 
which further implies that $\|\cal S_t(G)-t m\|_{\max}\prec \Psi_t^2.$ 
Plugging this estimate into \eqref{eq:G-M} and using \eqref{eq:G-Mmax2}, we obtain that 
\begin{align}\label{eq:aver_pf} W^{-d} \left\langle \left(G_t - M\right)I_{[a]}MI_{[b]} \right\rangle = -W^{-d}\tr\br{M\Pi(G)I_{[a]}MI_{[b]}}
+\OO_\prec\p{\Psi_t^2\|I_{[a]}MI_{[b]}\|}\prec \Psi_t^2\|I_{[a]}MI_{[b]}\|,
\end{align}
where we used \eqref{eq:aver} in the second step. This concludes the second estimate in \eqref{GavLGEX}.

\section{Analysis of the loop hierarchy}\label{Sec:Stoflo}

This section is devoted to the proof of \Cref{lem:main_ind}, following the six-step strategy summarized in \Cref{sec:proof}. Our proof closely parallels the approach used in \cite[Section 5]{Band1D}, based on the tools developed in the preceding sections. Specifically, we rely on the flow (\Cref{zztE}), the loop hierarchy (\Cref{lem:SE_basic}), the deterministic estimates in \Cref{lem:propM,lem_propTH}, various properties of $\cK$-loops---including the upper bound (\Cref{ML:Kbound}), Ward's identity (\Cref{lem_WI_K}), and the tree representation (\Cref{tree-representation})---as well as the resolvent entry estimate (\Cref{lem_GbEXP}).
The main difference from the argument in \cite[Section 5]{Band1D} occurs in Step 1, for which we will provide a more detailed explanation. For Steps 2--5, we outline the main ideas in the body of the text, while deferring the technical details and proofs of some key results to \Cref{sec:main_appd} for the reader’s convenience. Finally, the proof of Step 6 is largely independent of the previous steps and is therefore presented separately in \Cref{sec:pf_step6} in the appendix.


\subsection{Step 1: A priori $G$-loop bound}

Our proof crucially relies on the following continuity estimate, \Cref{lem_ConArg}. To make the dependence on the spectral parameter $z$ and the coupling parameter $\lambda$ explicit, we will, for the remainder of this subsection, denote the resolvent by \(G_t(z, \lambda)=(V_t+\lambda\Psi-z)^{-1}\) and the corresponding $G$-loop by \smash{${\cal L}^{(\fn)}_{t, \boldsymbol{\sigma}, \ba}(z, \lambda)$}. Moreover, we define the $T$-variables $(T_t)_{x\bv}(z,\lambda)$ and \smash{$(\wt T_t)_{x\bv}(z,\lambda)$} in terms of the resolvent $G_t(z,\lambda)$ as in \eqref{eq:gen_T}.

\begin{lemma}[Continuity estimate]\label{lem_ConArg}
Fix any \( c\le s \leq t \leq 1\) for a constant \(c > 0\). Given any $\lambda$ satisfying the condition \eqref{eq:cond_A12}, let $\lambda_s:=\lambda\cdot \sqrt{s/t}$. 
Then, we have the following continuity estimates for the $G$-loops and generalized resolvent entries. 
\begin{enumerate}

\item Assume that the following bound holds at time \(s\) for each fixed \(\fn \in \mathbb{N}\):  
\begin{equation}\label{55}
 \max_{\boldsymbol{\sigma}, \ba} \left|{\cal L}^{(\fn)}_{s, \boldsymbol{\sigma}, \ba}\p{z_s,\lambda_s}\right| \prec \left( W^d \ell_{s}^d \eta_{s} \right)^{-\fn+1}. 
\end{equation}
Then, for each fixed \(\fn \in \mathbb{N}\) with $\fn\ge 2$, we have 
\begin{align}\label{res_lo_bo_eta}
\max_{\boldsymbol{\sigma}, \ba} 
\left|{\cal L}_{t, \boldsymbol{\sigma}, \ba}^{(\fn)}\p{z_t,\lambda}\right| \prec 
\left(\frac{\ell_{t}^d}{\ell_{s}^d}\right)^{\fn-1}\frac{\max_{[a]}\langle \im G_t(z_t,\lambda) E_{[a]}\rangle}{\left( W^d \ell_{t}^d \eta_{t} \right)^{\fn-1}}.
\end{align}
Moreover, for an arbitrary deterministic unit vector $\bv$, we have 
\begin{align}\label{eq:cont_T}
(T_t)_{x\bv}(z_t,\lambda)+(\wt T_t)_{x\bv}(z_t,\lambda) \prec (T_s)_{x\bv}(z_t,\lambda_s)+(\wt T_s)_{x\bv}(z_t,\lambda_s)+\frac{\ell_{t}^d}{\ell_{s}^d}\frac{\im (G_t)_{\bv\bv}(z_t,\lambda)}{W^d \ell_{t}^d \eta_{t} }.
\end{align}

\item Given any deterministic unit vectors $\bv,\bw\in \C^N$, suppose 
\be\label{eq:ImGs}\im (G_s)_{\bv\bv}(z_s,\lambda_s)\lesssim 1,\quad \im (G_s)_{\bw\bw}(z_s,\lambda_s)\lesssim 1, \quad \left|(G_s)_{\bv\bw}(z_s,\lambda_s)\right|\lesssim 1,
\ee
with high probability. Then, the following estimates hold with high probability at time $t$: 
\be\label{eq:ImGt}
\im (G_t)_{\bv\bv}(z_t,\lambda)\lesssim {\eta_s}/{\eta_t},\quad (G_t)_{\bv\bw}(z_t,\lambda)\lesssim {\eta_s}/{\eta_t}.
\ee
 
\end{enumerate}

\end{lemma}
\begin{proof}
The continuity estimate \eqref{res_lo_bo_eta} has been established as Lemma 5.1 in \cite{Band1D}. Notably, the proof there does not rely on the specifics of the underlying random matrix model and thus carries over directly to our setting. 
The only difference lies in the choice of $\lambda_s$ at time $s$, which arises from the following simple observation:
\be\label{eq:Hslambdas} H_s(\lambda_s)=V_s+\lambda_s\Psi \stackrel{d}{=} \sqrt{{s}/{t}}\cdot \left( V_t + \lambda \Psi\right)= \sqrt{{s}/{t}}\cdot H_t(\lambda).\ee
The estimate \eqref{eq:cont_T} appears as an intermediate step in the proof of \eqref{res_lo_bo_eta} in the case $\fn=2$.

To show \eqref{eq:ImGt}, we choose the spectral parameter
\be\label{eq:wtzzz}\wt z:=\re  z_t+\ii \sqrt{{t}/{s}}\cdot \im z_s ,\quad \wt z_s:=\sqrt{{s}/{t}}\cdot \wt z . \ee
First, we claim that with high probability,
\be\label{eq:ImGs2}\im (G_s)_{\bv\bv}(\wt z_s,\lambda_s)\lesssim 1,\quad \im (G_s)_{\bw\bw}(\wt z_s,\lambda_s)\lesssim 1.
\ee
Under the flow \eqref{eq:zt}, we can check that $|\wt z_s-z_s| \lesssim\eta_s$.
Then, applying the Cauchy-Schwarz inequality to the resolvent expansion for $G_s(\wt z_s)-G_s(z_s)$ (where we omit the parameter $\lambda_s$) together with Ward's identity, we obtain that 
\be\label{eq:ward_app}
|(G_s)_{\bv\bv}(\wt z_s)-(G_s)_{\bv\bv}(z_s)| \le |\wt z_s-z_s|\sum_ x|(G_s)_{\bv x}(\wt z_s)||(G_s)_{x\bv}(z_s)| \lesssim \left[\im (G_s)_{\bv\bv}(\wt z_s)\cdot \im (G_s)_{\bv\bv}(z_s)\right]^{1/2},
\ee
which in turn implies that
\be\nonumber\im (G_s)_{\bv\bv}(\wt z_s) \lesssim \im (G_s)_{\bv\bv}(z_s) + \left[\im (G_s)_{\bv\bv}(\wt z_s)\cdot \im (G_s)_{\bv\bv}(z_s)\right]^{1/2}\implies \im (G_s)_{\bv\bv}(\wt z_s)\lesssim\im (G_s)_{\bv\bv}(z_s),\ee
thereby completing the proof of \eqref{eq:ImGs2} under \eqref{eq:ImGs}. 
Next, using \eqref{eq:Hslambdas} and \eqref{eq:wtzzz}, we can check that \[G_s(\wt z_s,\lambda_s)\stackrel{d}{=}\sqrt{t/s}\cdot G_t(\wt z,\lambda_s).\]
Noting that $\re \wt z= \re z_t$, the first estimate in \eqref{eq:ImGt} follows immediately from \eqref{eq:ImGs2} and the fact that the function $\eta\mapsto\eta \im (G_{t})_{\bv\bv}(\re z_t + \ii \eta)$ is monotonically increasing in $\eta$. More generally, this gives that the following bound holds uniformly in \(\eta\in [\eta_t,\im \wt z]\) with high probability:
\be\label{eq:ImGt222}
\im (G_t)_{\bv\bv}(\re z_t + \ii \eta,\lambda)\lesssim {\eta_s}/{\eta} .
\ee

For the second estimate in \eqref{eq:ImGt}, applying the Cauchy-Schwarz inequality to $\partial_\eta (G_t)_{\bv\bv}(\re z_t + \ii \eta)$ (where we omit the parameter $\lambda$) along with Ward's identity, we obtain that 
\[\left|\frac{\dd}{\dd\eta}(G_t)_{\bv\bw}(\re z_t + \ii \eta) \right|
\lesssim \frac{1}{\eta}\left[\im (G_t)_{\bv \bv}(\re z_t + \ii \eta)\cdot \im (G_t)_{\bw\bw}(\re z_t + \ii \eta)|\right]^{1/2}\lesssim \frac{\eta_s}{\eta^2},
\]
uniformly in \(\eta\in [\eta_t,\im \wt z]\) with high probability, where we applied \eqref{eq:ImGt222} in the second step. Integrating this bound over $\eta$, we conclude the second bound in \eqref{eq:ImGt}.  
\end{proof}

In \Cref{lem_ConArg}, we have obtained a priori bound \eqref{res_lo_bo_eta} on the $G$-loops. To complete the proof of Step 1, we still need to establish the weak local law, which in turn yields an estimate on $\max_{[a]}\langle \im G_t(z_t,\lambda) E_{[a]}\rangle$. 
For this purpose, we use a standard \smash{$N^{-C}$}-net argument, based on the following \Cref{lem G<T}. 


\begin{lemma}\label{lem G<T}
In the setting of \Cref{lem:main_ind}, let $\e_0$ be a fixed constant. Suppose that, for a fixed integer  $K\ge\lceil 2d/\e_A\rceil$, the following estimate holds for all $k\in\qqq{0,K}$:
\be\label{initialGT}
\max_{\bv,\bw\in \cal V_k}\frac{|G_{\bv\bw}|}{\|\bv\|_2\|\bw\|_2}\prec \frac{\eta_s}{\eta_t},\quad \max_{x\in\ZL,\bv\in \cal V_k} \frac{(T_t)_{x\bv}+ (\wt T_t)_{x\bv}}{\|\bv\|_2^2} \prec \Phi^2,\quad \text{where}\quad \Phi^2:=\frac{\eta_s\ell_{t}^d}{\eta_t\ell_{s}^d}\frac{1}{W^d \ell_{t}^d \eta_{t} } \, . 
\ee
Define the event $\Xi=\left\{\|G_t-M\|_{\max} \le W^{-\e_0} \right\}$ for a constant $\e_0>0$. 
Then, under the condition \eqref{con_st_ind}, for any constants $\e,D>0$, we have that
\be\label{eq:boundG*}
\P\p{\mathbf 1(\Xi)\|G_t-M\|_{\max} \ge W^{\e}\sqrt{\eta_s/\eta_t}\cdot \Phi }\le W^{-D}.
\ee
\end{lemma}
\begin{proof}
By \Cref{lem G<T new}, under the condition \eqref{initialGT}, we obtain the self-improving estimate:
\[\vertiii{\Pi(G)}_{(k)}\prec \phi_k 
\implies \vertiii{\Pi(G)}_{(k)} \prec \phi_k^{1/2}\sqrt{\frac{\eta_s}{\eta_t}\Phi} + \Phi+\sqrt{\frac{\eta_s}{\eta_t}}\Phi^{3/2}, \quad \forall k \in\qqq{0,K}. \]
Note that under the conditions $1-t\ge W^\fd\eta_*$ and \eqref{con_st_ind}, we have $\Phi':=\sqrt{{\eta_s}/{\eta_t}}\cdot \Phi\le W^{-\fd/4}$. Thus, iterating the above estimate for $\OO(1)$ many times, we derive that $\vertiii{\Pi(G)}_{(k)}\prec\Phi'$ for each $k \in \qqq{0,K}$. Then, with a similar argument as in \eqref{eq:max_Pi}, we obtain the following bound for any constant $c\in (\e_A/2,\e_A)$:
\be\label{eq:controlPi2}  \|M\Pi(G)\|_{\max}=\vertiii{M\Pi(G)}_{(0)}\prec \Phi_{K} + W^{-cK}\le \Phi' .  \ee
With \eqref{eq:controlPi2}, applying a similar argument as that below \eqref{eq:MDE}, we obtain that 
\(\mathbf 1(\Xi)\|G-M\|_{\max} \prec \Phi'\), which concludes the proof. 
\end{proof}

Given \Cref{lem G<T,lem_ConArg}, the proof of Step 1 is similar to that in \cite[Section 5.1]{Band1D}. 
\begin{proof}[\bf Step 1: Proof of \eqref{lRB1} and \eqref{Gtmwc}] 
In the first case with $t> s \ge 1/2$, in addition to \eqref{Eq:L-KGt+IND} and \eqref{Gt_bound+IND}, in the proof of \Cref{lem_GbEXP}, we have also established the following bound at time $s$ (recall \eqref{eq:boundT}):
\[\max_{x}\max_{\bv\in \cal V_k}\p{(T_s)_{x\bv}+(\wt T_s)_{x\bv}}/\|\bv\|_2^2\prec (W^d\ell_s^d\eta_s)^{-1}.\] 
Then, applying \Cref{lem_ConArg}, we can verify the condition \eqref{initialGT}, and hence obtain \eqref{eq:boundG*} by \Cref{lem G<T}. It is obvious that the same estimate holds with $t$ replaced by $u$. Using a standard $N^{-C}$-net argument (cf.~\cite[Section 5.3]{Semicircle}), we can readily derive the weak local law \eqref{Gtmwc}. Applying \eqref{res_lo_bo_eta} then concludes \eqref{lRB1}. 

In case II with $0\le s<t \le 1/2$, we have $\sup_{u\in[s,t]}\|G_{u}\|\lesssim 1$, which  immediately implies that \smash{\(
{\cal L}^{(\fn)}_{u,\boldsymbol{\sigma}, \ba} \lesssim W^{-d(\fn-1)}\)} for all $u\in [s,t]$ (so \eqref{lRB1} follows) and verifies the condition \eqref{initialGT}. 
Then, the weak local law \eqref{Gtmwc} follows from \Cref{lem G<T} along with an $N^{-C}$-net argument as in the first case.

Finally, if $0\le s <1/2 < t$, we can first show that \eqref{lRB1} and \eqref{Gtmwc} hold for $u\in [s,1/2]$ as in the second case. Then, applying the argument in the first case with $s$ replaced by $1/2$, we can further show that \eqref{lRB1} and \eqref{Gtmwc} hold for $u\in [1/2,t]$. 
\end{proof}

\subsection{Dynamics of the $G$-loops}

For any fixed $\fn\in \N$, combining the loop hierarchy \eqref{eq:mainStoflow} and the equation \eqref{pro_dyncalK} for primitive loops, we obtain (recall that $S^{\LK}=I$):
\begin{align}\label{eq_L-K-1}
&\dd(\mathcal{L} - \mathcal{K})^{(\fn)}_{t, \boldsymbol{\sigma}, \ba} 
    = W^d \sum_{1 \leq k < l \leq \fn} \sum_{[a]} 
   (\mathcal{L} - \mathcal{K})^{(\fn+k-l+1)}_{t, \cutL^{[a]}_{k, l}\left(\boldsymbol{\sigma},\, \ba\right)} 
   \mathcal{K}^{(l-k+1)}_{t,\cutR^{[a]}_{k,l}\left(\boldsymbol{\sigma} ,\ba\right)}\, \dd t \nonumber\\
&+ W^d \sum_{1 \leq k < l \leq \fn} \sum_{[a]} \mathcal{K}^{(\fn+k-l+1)}_{t, \cutL^{[a]}_{k, \,l}\left(\boldsymbol{\sigma},\ba\right)} 
   \left(\cL-\cK\right)^{(l-k+1)}_{t,\cutR^{[a]}_{k, l}\left(\boldsymbol{\sigma},\ba\right)} \, \dd t + \mathcal{E}^{(\fn)}_{t, \boldsymbol{\sigma}, \ba}\dd t + 
    \dd\mathcal{B}^{(\fn)}_{t, \boldsymbol{\sigma}, \ba} 
    +\mathcal{W}^{(\fn)}_{t, \boldsymbol{\sigma}, \ba}
    \dd t,
\end{align}
where $\mathcal{E}^{(\fn)}_{t, \boldsymbol{\sigma}, \ba}$ is defined by 
\begin{equation}\label{def_ELKLK}
\mathcal{E}^{(\fn)}_{t, \boldsymbol{\sigma}, \ba} :=
    W^d \sum_{1 \leq k < l \leq \fn} \sum_{[a]} 
   (\mathcal{L} - \mathcal{K})^{(\fn+k-l+1)}_{t, \cutL^{[a]}_{k, l}\left(\boldsymbol{\sigma},\, \ba\right)} 
   (\cL-\mathcal{K})^{(l-k+1)}_{t,\cutR^{[a]}_{k,l}\left(\boldsymbol{\sigma} ,\ba\right)}\,  .
\end{equation}
We can rearrange the first two terms on the RHS of \eqref{eq_L-K-1} according to the lengths of $\cK$-loops and rewrite them as
\(\sum_{\lenk=2}^\fn [\OK^{(\lenk)} (\mathcal{L} - \mathcal{K})]^{(\fn)}_{t, \boldsymbol{\sigma}, \ba}\dd t,\) 
where $\OK^{(\lenk)}$ is a linear operator defined as 
\begin{align}\label{DefKsimLK}
\left[\OK^{(\lenk)} (\mathcal{L} - \mathcal{K})\right]_{t, \boldsymbol{\sigma}, \ba}^{(\fn)}:= &~W^d \sum_{1\le k < l \leq \fn : l-k=\lenk-1} \sum_{[a]} 
   (\mathcal{L} - \mathcal{K})^{(\fn-\lenk+2)}_{t, \cutL^{[a]}_{k, l}\left(\boldsymbol{\sigma},\, \ba\right)} 
   \mathcal{K}^{(\lenk)}_{t,\cutR^{[a]}_{k,l}\left(\boldsymbol{\sigma} ,\ba\right)} \nonumber\\
+&~  W^d \sum_{1 \leq k < l \leq \fn:l-k=\fn-\lenk+1} \sum_{[a]} \mathcal{K}^{(\lenk)}_{t, \cutL^{[a]}_{k, \,l}\left(\boldsymbol{\sigma},\ba\right)} 
   \left(\cL-\cK\right)^{(\fn-\lenk+2)}_{t,\cutR^{[a]}_{k, l}\left(\boldsymbol{\sigma},\ba\right)}  .
\end{align}
Taking out the leading term with $\lenk =2$, we can rewrite \eqref{eq_L-K-1} as
\begin{align}\label{eq_L-Keee}
    \dd(\mathcal{L} - \mathcal{K})^{(\fn)}_{t, \boldsymbol{\sigma}, \ba} =&~ \left[\OK^{(2)} (\mathcal{L} - \mathcal{K})\right]^{(\fn)}_{t, \boldsymbol{\sigma}, \ba} \, \dd t+\sum_{\lenk=3}^\fn \left[\OK^{(\lenk)} (\mathcal{L} - \mathcal{K})\right]^{(\fn)}_{t, \boldsymbol{\sigma}, \ba}\, \dd t  + \mathcal{E}^{(\fn)}_{t, \boldsymbol{\sigma}, \ba}\dd t + 
    \dd\mathcal{B}^{(\fn)}_{t, \boldsymbol{\sigma}, \ba} 
    +
    \mathcal{W}^{(\fn)}_{t, \boldsymbol{\sigma}, \ba}\dd t.
\end{align}

\begin{definition}[Evolution kernel]\label{DefTHUST}
For $t\in [0,1]$ and $\bsig=(\sigma_1,\ldots,\sig_\fn)\in \{+,-\}^\fn$, define the linear operator ${\thn}^{(\fn)}_{t, \boldsymbol{\sigma}}$ acting on $\fn$-dimensional tensors ${\cal A}: (\Zn)^{\fn}\to \mathbb C$ as: 
\begin{align}\label{def:op_thn}
    \left({{\thn}}^{(\fn)}_{t, \boldsymbol{\sigma}} \circ \mathcal{A}\right)_{\ba} & = \sum_{i=1}^\fn \sum_{[b_i]\in\Zn} \left(\frac{M^{(\sig_i,\sig_{i+1})}}{1 - t M^{(\sig_i,\sig_{i+1})}}\right)_{[a_i] [b_i]}  \mathcal{A}_{\ba^{(i)}([b_i])}, 
\end{align} 
where $\ba=([a_1],\ldots, [a_\fn])\in (\Zn)^\fn$, $\ba^{(i)}([b_i])$ is defined as 
\begin{align*}    
    \ba^{(i)}([b_i]):= ([a_1], \ldots, [a_{i-1}], [b_i], [a_{i+1}], \ldots, [a_\fn]),
\end{align*}
and we recall the convention that $\sigma_{\fn+1}=\sig_1$. 
Then, the evolution kernel corresponding to ${{\thn}}^{(\fn)}_{t, \boldsymbol{\sigma}}$ is given by  
    \begin{align}\label{def_Ustz}
        \left(\mathcal{U}_{s, t, \boldsymbol{\sigma}}^{(\fn)} \circ \mathcal{A}\right)_{\ba} = \sum_{\mathbf b = ([b_1], \ldots, [b_{\fn}])\in (\Zn)^\fn} \prod_{i=1}^\fn \left(\frac{1 - s M^{(\sig_i,\sig_{i+1})}}{1 - t M^{(\sig_i,\sig_{i+1})}}\right)_{[a_i] [b_i]} \cdot \mathcal{A}_{\mathbf{b}}. 
    \end{align}
\end{definition}

By the definition of the 2-$\cK$ loop in \eqref{Kn2sol}, we observe that 
\begin{align}
\frac{\dd }{\dd t}\left(\mathcal{U}_{s, t, \boldsymbol{\sigma}}^{(\fn)} \circ \mathcal{A}\right)_{\ba} =     \left({\thn}_{t, \boldsymbol{\sigma}}^{(\fn)} \circ \mathcal{U}_{s, t, \boldsymbol{\sigma}}^{(\fn)} \circ \mathcal{A}\right)_{\ba} = \left[\OK^{(2)} (\mathcal{U}_{s, t, \boldsymbol{\sigma}}^{(\fn)} \circ \mathcal{A})\right]^{(\fn)}_{t, \boldsymbol{\sigma}, \ba}.
\end{align}
With Duhamel's principle, we can derive from \eqref{eq_L-Keee} the following expression for any $s\le t$:
\begin{align}\label{int_K-LcalE}
    &(\mathcal{L} - \mathcal{K})^{(\fn)}_{t, \boldsymbol{\sigma}, \ba}  =
    \left(\mathcal{U}^{(\fn)}_{s, t, \boldsymbol{\sigma}} \circ (\mathcal{L} - \mathcal{K})^{(\fn)}_{s, \boldsymbol{\sigma}}\right)_{\ba} + \sum_{l_\mathcal{K} =3}^\fn \int_{s}^t \left(\mathcal{U}^{(\fn)}_{u, t, \boldsymbol{\sigma}} \circ \Big[\OK^{(\lenk)} (\mathcal{L} - \mathcal{K})\Big]^{(\fn)}_{u, \boldsymbol{\sigma}}\right)_{\ba} \dd u \nonumber \\
    &+ \int_{s}^t \left(\mathcal{U}^{(\fn)}_{u, t, \boldsymbol{\sigma}} \circ \mathcal{E}^{(\fn)}_{u, \boldsymbol{\sigma}}\right)_{\ba} \dd u  + \int_{s}^t \left(\mathcal{U}^{(\fn)}_{u, t, \boldsymbol{\sigma}} \circ \cW^{(\fn)}_{u, \boldsymbol{\sigma}}\right)_{\ba} \dd u + \int_{s}^t \left(\mathcal{U}^{(\fn)}_{u, t, \boldsymbol{\sigma}} \circ \dd \cB^{(\fn)}_{u, \boldsymbol{\sigma}}\right)_{\ba} .
\end{align}
Furthermore, let $T$ be a stopping time with respect to the matrix Brownian motion $\{H_t\}$ in \eqref{MBM}, and denote $\tau:= T\wedge t$. Then, we have a stopped version of \eqref{int_K-LcalE}:  
\begin{align}\label{int_K-L_ST}
& (\mathcal{L} - \mathcal{K})^{(\fn)}_{\tau, \boldsymbol{\sigma}, \ba}  =
    \left(\mathcal{U}^{(\fn)}_{s, \tau, \boldsymbol{\sigma}} \circ (\mathcal{L} - \mathcal{K})^{(\fn)}_{s, \boldsymbol{\sigma}}\right)_{\ba} + \sum_{l_\mathcal{K} =3}^\fn \int_{s}^\tau \left(\mathcal{U}^{(\fn)}_{u, \tau, \boldsymbol{\sigma}} \circ \Big[\OK^{(\lenk)} (\mathcal{L} - \mathcal{K})\Big]^{(\fn)}_{u, \boldsymbol{\sigma}}\right)_{\ba} \dd u \nonumber \\
    &+ \int_{s}^\tau \left(\mathcal{U}^{(\fn)}_{u, \tau, \boldsymbol{\sigma}} \circ \mathcal{E}^{(\fn)}_{u, \boldsymbol{\sigma}}\right)_{\ba} \dd u  + \int_{s}^\tau \left(\mathcal{U}^{(\fn)}_{u, \tau, \boldsymbol{\sigma}} \circ \cW^{(\fn)}_{u, \boldsymbol{\sigma}}\right)_{\ba} \dd u + \int_{s}^\tau \left(\mathcal{U}^{(\fn)}_{u, \tau, \boldsymbol{\sigma}} \circ \dd \cB^{(\fn)}_{u, \boldsymbol{\sigma}}\right)_{\ba}.  
\end{align}
We will utilize the above two equations to estimate $(\mathcal{L} - \mathcal{K})$-loops. For the analysis, we also introduce the following notation that corresponds to the quadratic variation of the martingale term (recall \eqref{def_Edif}).

\begin{definition}
\label{def:CALE} 
For $t\in [0,1]$ and $\bsig=(\sigma_1,\ldots,\sig_\fn)\in \{+,-\}^\fn$,
we introduce the $(2\fn)$-dimensional tensor
  \begin{align*}
 \left( \cB\otimes  \cB \right)^{(2\fn)}_{t, \boldsymbol{\sigma}, \ba, \ba'} := &
 \sum_{k=1}^\fn \left( \cB\times  \cB \right)^{(k)}_{t, \,\boldsymbol{\sigma}, \ba, \ba'},\ \ \forall \ba=([a_1],\ldots, [a_\fn]) ,\ \ba'=([a_1'],\ldots, [a_\fn']), 
 \end{align*}
 where $\left( \cB\times  \cB \right)^{(k)}_{t, \,\boldsymbol{\sigma}, \ba, \ba'}$ is defined as 
\be\label{defEOTE}
 \left( \cB\times  \cB \right)^{(k)}_{t, \boldsymbol{\sigma}, \ba, \ba'} :
 =   W^d \sum_{[b],[b']} S^{\LK}_{[b][b']}{\cal L}^{(2\fn+2)}_{t, (\boldsymbol{\sigma}\times\overline\bsig)^{(k) },(\ba\times \ba')^{(k )}([b],[b'])}.
\ee
Here, ${\cal L}^{(2\fn+2)}$ denotes a $(2\fn+2)$-loop obtained by cutting the $k$-th edge of \smash{${\cal L}^{(\fn)}_{t,\boldsymbol{\sigma},\ba}$} and then gluing it (with indices $\ba$) with its conjugate loop (with indices $\ba'$) along the new vertices $[b]$ and $[b']$. Formally, its expression is written as: 
\begin{align*}
    &{\cal L}^{(2\fn+2)}_{t, (\boldsymbol{\sigma}\times\overline\bsig)^{(k) },(\ba\times \ba')^{(k )}([b],[b'])}:=  \bigg \langle \prod_{i=k}^\fn \left(G_{t}(\sigma_i) E_{[a_i]}\right) \cdot \prod_{i=1}^{k-1} \left(G_{t}(\sigma_i) E_{[a_i]}\right) \cdot G_t(\sigma_k) E_{[b]} G_t(-\sigma_k) \\
    &\qquad \qquad \times \prod_{i={1}}^{k-1} \left(E_{[a_{k-i}']} G_{t}(-\sigma_{k-i}) \right)\cdot \prod_{i=k}^{\fn} \left(E_{[a_{\fn+k-i}']} G_{t}(-\sigma_{\fn+k-i}) \right) E_{[b']}\bigg\rangle ,
\end{align*}
where the notations $(\boldsymbol{\sigma}\times\overline \bsig)^{(k) }$ and $(\ba\times \ba')^{(k )}$ represent (with $\overline\bsig$ denoting $(-\sig_1,\ldots,-\sig_\fn)$)
\begin{align}\label{def_diffakn_k}
   & (\ba\times \ba')^{(k )}([b],[b'])=( [a_k],\ldots, [a_\fn], [a_1],\ldots [a_{k-1}], [b], [a'_{k-1}],\ldots [a_1'], [a_\fn']\cdots [a'_{k}],[b']),\nonumber \\
  &  (\boldsymbol{\sigma}\times \overline\bsig)^{(k )}=(  \sigma_k, \ldots \sigma_\fn,   \sigma_1,\ldots ,\sigma_{k}, -\sigma_{k}, \ldots, -\sigma_1 ,   -\sigma_\fn, \ldots, -\sigma_{k }).
\end{align}
Here, we remark that the symbols ``$\otimes$" and ``$\times$" in the above notations were introduced to emphasize the symmetric structures of the notations---they do not represent any kind of ``product". 
Note that for the block Anderson model, we must have $[b]=[b']$ since $S^{\LK}=I_n$.  
\end{definition}

Under the above notation, the following lemma is an immediate consequence of the Burkholder-Davis-Gundy inequality. 
\begin{lemma}[Estimation of the martingale term]\label{lem:DIfREP}
Let $T$ be a stopping time with respect to the matrix Brownian motion $\{H_t\}$, and denote $\tau:= T\wedge t$. Then, for any fixed $p\in \N$, we have that 
 \begin{equation}\label{alu9_STime}
   \mathbb{E} \left [    \int_{s}^\tau\left(\mathcal{U}^{(\fn)}_{u, \tau, \boldsymbol{\sigma}} \circ \dd\cB^{(\fn)}_{u, \boldsymbol{\sigma}}\right)_{\ba} \right ]^{2p} 
   \lesssim 
   \mathbb{E} \left(\int_{s}^\tau 
   \left(\left(
   \mathcal{U}^{(\fn)}_{u, \tau,  \boldsymbol{\sigma}}
   \otimes 
   \mathcal{U}^{(\fn)}_{u, \tau,  \overline{\boldsymbol{\sigma}}}
   \right) \;\circ  \;
   \left( \cB \otimes  \cB \right)^{(2\fn)}
   _{u, \boldsymbol{\sigma}  }
   \right)_{\ba, \ba}\dd u 
   \right)^{p}
  \end{equation} 
  where $\mathcal{U}^{(\fn)}_{u, \tau,  \boldsymbol{\sigma}} \otimes    \mathcal{U}^{(\fn)}_{u, \tau,  \overline{\boldsymbol{\sigma}}}$ denotes the tensor product of the evolution kernel in \eqref{def_Ustz}: 
  \begin{align*}
  &\left[\left(
   \mathcal{U}^{(\fn)}_{u, \tau,  \boldsymbol{\sigma}}
   \otimes 
   \mathcal{U}^{(\fn)}_{u, \tau,  \overline{\boldsymbol{\sigma}}}
   \right)\circ \cal A\right]_{\ba,\ba'} =
   \sum_{\mathbf{b},\mathbf{b}'\in(\Zn)^{\fn}} \; \prod_{i=1}^\fn \left(\frac{1 - u M^{(\sig_i,\sig_{i+1})}}{1 - \tau M^{(\sig_i,\sig_{i+1})}}\right)_{[a_i] [b_i]} \prod_{i=1}^\fn \left(\frac{1 - u M^{(-\sig_{i},-\sig_{i+1})}}{1 - \tau M^{(-\sig_{i},-\sig_{i+1})}}\right)_{[a'_i] [b'_i]} \mathcal{A}_{\mathbf{b},\mathbf{b}'}
   \end{align*}
  for any $(2\fn)$-dimensional tensor ${\cal A}: (\Zn)^{2\fn}\to \mathbb C$ and $\mathbf b=([b_1],\ldots, [b_\fn])$, $\mathbf b'=([b_1'],\ldots, [b_\fn'])$.
\end{lemma}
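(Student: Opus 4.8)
\textbf{Proof plan for Lemma~\ref{lem:DIfREP}.}
The statement is the standard passage from the $2p$-th moment of a martingale increment to the $p$-th moment of its quadratic variation, so the plan is to unpack the definitions of the objects involved and then invoke the Burkholder--Davis--Gundy (BDG) inequality. First I would recall that, by \eqref{def_Edif}, the martingale term is
\[
\dd\mathcal{B}^{(\fn)}_{u,\boldsymbol{\sigma},\ba}
=\sum_{i,j\in\ZL}\bigl(\partial_{ij}\,\mathcal{L}^{(\fn)}_{u,\boldsymbol{\sigma},\ba}\bigr)\sqrt{S_{ij}}\,(\dd B_u)_{ij},
\]
so that the process $u\mapsto \int_s^{u}(\mathcal{U}^{(\fn)}_{v,\tau,\boldsymbol{\sigma}}\circ\dd\mathcal{B}^{(\fn)}_{v,\boldsymbol{\sigma}})_{\ba}$, with the kernel $\mathcal{U}^{(\fn)}_{v,\tau,\boldsymbol{\sigma}}$ frozen at its final-time value $\tau$ (which is legitimate since $\mathcal{U}$ is deterministic), is a continuous $L^2$ martingale in $u$ up to the stopping time $\tau$. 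Here one uses that $\tau=T\wedge t$ is a bounded stopping time and that the integrand is adapted, so the stopped stochastic integral is a genuine martingale; the BDG inequality then gives
\[
\mathbb{E}\Bigl[\int_s^\tau\bigl(\mathcal{U}^{(\fn)}_{u,\tau,\boldsymbol{\sigma}}\circ\dd\mathcal{B}^{(\fn)}_{u,\boldsymbol{\sigma}}\bigr)_{\ba}\Bigr]^{2p}
\lesssim_p \mathbb{E}\bigl[\langle\,\cdot\,\rangle_\tau\bigr]^{p},
\]
where $\langle\,\cdot\,\rangle$ denotes the predictable quadratic variation of that scalar martingale.

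The core of the argument is therefore the identification of this quadratic variation with the tensor-contracted object on the right-hand side of \eqref{alu9_STime}. Using the independence structure of the complex Brownian motions $(B_u)_{ij}$ (up to the Hermitian constraint) and It\^o's isometry, the quadratic variation density at time $u$ is
\[
\sum_{i,j\in\ZL} S_{ij}\,\Bigl(\mathcal{U}^{(\fn)}_{u,\tau,\boldsymbol{\sigma}}\circ \partial_{ij}\mathcal{L}^{(\fn)}_{u,\boldsymbol{\sigma}}\Bigr)_{\ba}\;\overline{\Bigl(\mathcal{U}^{(\fn)}_{u,\tau,\boldsymbol{\sigma}}\circ \partial_{ij}\mathcal{L}^{(\fn)}_{u,\boldsymbol{\sigma}}\Bigr)_{\ba}}.
\]
The key combinatorial fact is that $\partial_{ij}$ acting on the $\fn$-$G$ loop $\mathcal{L}^{(\fn)}_{u,\boldsymbol{\sigma},\ba}$ opens each of the $\fn$ edges $G_u(\sigma_k)$ into $G_u(\sigma_k)E_iG_u(\sigma_k)$-type pieces; when one multiplies such a derivative of $\mathcal{L}^{(\fn)}$ by the conjugate derivative of $\overline{\mathcal{L}^{(\fn)}}$ (which has reversed charges $\overline{\boldsymbol{\sigma}}$) and contracts the two open index pairs against $S_{ij}$ at block level, one obtains precisely the $(2\fn+2)$-loop described in \Cref{def:CALE}, namely
\[
(\cB\times\cB)^{(k)}_{u,\boldsymbol{\sigma},\ba,\ba'}
=W^d\sum_{[b],[b']}S^{\LK}_{[b][b']}\,\mathcal{L}^{(2\fn+2)}_{u,(\boldsymbol{\sigma}\times\overline{\boldsymbol{\sigma}})^{(k)},(\ba\times\ba')^{(k)}([b],[b'])},
\]
summed over the cut location $k\in\qqq{\fn}$, which is the definition of $(\cB\otimes\cB)^{(2\fn)}$. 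Since the evolution kernel $\mathcal{U}^{(\fn)}_{u,\tau,\boldsymbol{\sigma}}$ is linear, applying it to $\partial_{ij}\mathcal{L}$ on the left factor and $\mathcal{U}^{(\fn)}_{u,\tau,\overline{\boldsymbol{\sigma}}}$ to the right factor and then summing over $i,j$ (equivalently, over blocks $[b],[b']$ with weight $S^{\LK}$) is exactly the tensor product $\mathcal{U}^{(\fn)}_{u,\tau,\boldsymbol{\sigma}}\otimes\mathcal{U}^{(\fn)}_{u,\tau,\overline{\boldsymbol{\sigma}}}$ acting on $(\cB\otimes\cB)^{(2\fn)}_{u,\boldsymbol{\sigma}}$ and then evaluated on the diagonal $(\ba,\ba)$. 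Integrating in $u$ from $s$ to $\tau$ yields the quadratic variation, and substituting into the BDG bound gives \eqref{alu9_STime}.

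The main technical point to get right---and the one I would be most careful about---is the bookkeeping of charges and index gluing in the derivative-squared expression: one must verify that $\partial_{ij}\mathcal{L}^{(\fn)}$ expanded over the $\fn$ edges, multiplied by its conjugate and contracted via $S$, reproduces the precise index ordering $(\ba\times\ba')^{(k)}$ and charge string $(\boldsymbol{\sigma}\times\overline{\boldsymbol{\sigma}})^{(k)}$ recorded in \eqref{def_diffakn_k}, including the placement of the new vertices $[b],[b']$ and the reversal of orientation in the conjugate half of the loop. This is a direct but somewhat delicate diagrammatic computation; once it is in place, the stochastic-calculus part (BDG plus It\^o isometry for the complex Hermitian Brownian motion, with the factor $S_{ij}=\mathrm{Var}(H_{ij})$ coming from \eqref{MBM}) is routine. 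I would also note that, strictly speaking, $\mathcal{U}^{(\fn)}_{u,\tau,\boldsymbol{\sigma}}$ depends on $\tau$ which is random; however, since $\tau$ is measurable and the kernel is deterministic given $\tau$, one can condition on $\tau$ (or equivalently work on the event $\{\tau=t\}$ after a limiting/monotonicity argument, or simply absorb the final-time kernel as a fixed linear map when applying BDG to the stopped integral), so this causes no difficulty.
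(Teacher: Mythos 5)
Your sketch matches the paper's approach, which is simply to invoke Burkholder--Davis--Gundy (the paper states the lemma is ``an immediate consequence of the Burkholder-Davis-Gundy inequality'' and gives no further detail); the computation of the quadratic variation density via It\^o isometry and the combinatorial identification of $\sum_{i,j}S_{ij}\,\partial_{ij}\mathcal{L}^{(\fn)}\cdot\overline{\partial_{ij}\mathcal{L}^{(\fn)}}$ with the $(2\fn+2)$-loop structure of $(\cB\otimes\cB)^{(2\fn)}$ are exactly the steps BDG requires, so you have filled in precisely what the paper treats as obvious. The subtlety you flag about $\mathcal{U}^{(\fn)}_{u,\tau,\boldsymbol{\sigma}}$ depending on the random terminal time $\tau$ is genuine and the paper is silent on it; your ``absorb the final-time kernel as a fixed linear map'' remedy is best made precise via the composition rule $\mathcal{U}^{(\fn)}_{u,\tau,\boldsymbol{\sigma}}=\mathcal{U}^{(\fn)}_{t,\tau,\boldsymbol{\sigma}}\circ\mathcal{U}^{(\fn)}_{u,t,\boldsymbol{\sigma}}$ (which follows directly from the formula \eqref{def_Ustz}), so that BDG is applied to the genuine martingale $u\mapsto\int_s^u\mathcal{U}^{(\fn)}_{v,t,\boldsymbol{\sigma}}\circ\dd\cB^{(\fn)}_{v,\boldsymbol{\sigma}}$ stopped at $\tau$, after which the $\mathcal{F}_\tau$-measurable prefactor $\mathcal{U}^{(\fn)}_{t,\tau,\boldsymbol{\sigma}}$ appears symmetrically on both sides of \eqref{alu9_STime}.
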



For the proofs in subsequent steps, we will need to control the terms in equation \eqref{int_K-L_ST}, which requires the following estimates regarding the evolution kernel in \Cref{DefTHUST}. 
We first have a simple bound on the $({\infty\to \infty})$-norm of \smash{$\mathcal U_{t,\bsig,\ba}^{(\fn)}$}, whose proof will be given in \Cref{sec:pf_sum_Ndecay}. 

\begin{lemma}
\label{lem:sum_Ndecay}
Let \smash{${\cal A}: (\Zn)^{\fn}\to \mathbb C$} be an $\fn$-dimensional tensor for a fixed $\fn\in \N$ with $\fn\ge 2$. Then, for each $0\le s \le t < 1$, we have that  
\begin{align}\label{sum_res_Ndecay}
   \| {\cal U}^{(\fn)}_{s,t,\boldsymbol{\sigma}}\;\circ {\cal A}\|_{\infty} \lesssim \left(  \eta_s/ \eta_t\right)^{\fn }\cdot \|{\cal A}\|_{\infty} , 
\end{align}
where the $L^\infty$-norm of ${\cal A}$ is defined as $\|{\cal A}\|_{\infty}=\max_{\ba\in (\Zn)^\fn}|\cal A_{\ba}|$.
\end{lemma}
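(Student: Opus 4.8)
The plan is to exploit the product structure of the evolution kernel \eqref{def_Ustz}. Recall that along the deterministic flow of \Cref{zztE} we have $m_t(z_t(E))=m(E)$ by \eqref{eq:mt_stay}, so every matrix $M^{(\sigma_i,\sigma_{i+1})}$ appearing in \eqref{def_Ustz} is independent of $t$; moreover $S^{\LK}=I_n$ for the block Anderson model, so $\Theta_t^{(\sigma_i,\sigma_{i+1})}=(1-tM^{(\sigma_i,\sigma_{i+1})})^{-1}$ and the $i$-th factor of the kernel is the $n^d\times n^d$ matrix
\[
  \mathsf{K}_i:=\frac{1-sM^{(\sigma_i,\sigma_{i+1})}}{1-tM^{(\sigma_i,\sigma_{i+1})}}=\big(1-sM^{(\sigma_i,\sigma_{i+1})}\big)\,\Theta_t^{(\sigma_i,\sigma_{i+1})}.
\]
Since $\big(\mathcal U^{(\fn)}_{s,t,\bsig}\circ\mathcal A\big)_{\ba}=\sum_{\mathbf b}\prod_{i=1}^\fn (\mathsf K_i)_{[a_i][b_i]}\,\mathcal A_{\mathbf b}$, the triangle inequality followed by factorizing the sum over $\mathbf b$ gives $\|\mathcal U^{(\fn)}_{s,t,\bsig}\circ\mathcal A\|_\infty\le \|\mathcal A\|_\infty\prod_{i=1}^\fn\|\mathsf K_i\|_{\infty\to\infty}$, where $\|\cdot\|_{\infty\to\infty}$ denotes the maximal absolute row sum. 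Hence it suffices to prove $\|\mathsf K_i\|_{\infty\to\infty}\lesssim \eta_s/\eta_t$ for each $i$; multiplying the $\fn$ (fixed) factors then yields \eqref{sum_res_Ndecay}, the product of the implicit constants contributing only an $\fn$-dependent constant, which is harmless for $\prec$.

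To bound $\|\mathsf K_i\|_{\infty\to\infty}$ I would split into two cases. If $\sigma_i\ne\sigma_{i+1}$, the explicit formula from \eqref{eq:Msig}, namely $M^{(+,-)}_{[x][y]}=W^{-d}\sum_{u\in[y],\,v\in[x]}|M_{uv}|^2$ (and \eqref{eq:Msym} for $M^{(-,+)}$), shows that $M^{(\sigma_i,\sigma_{i+1})}$ has non-negative entries, while \eqref{eq:WardM1} shows that all its row sums equal $1$; thus $M^{(\sigma_i,\sigma_{i+1})}$ is row-stochastic and $M^{(\sigma_i,\sigma_{i+1})}\mathbf 1=\mathbf 1$. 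Expanding $\Theta_t^{(\sigma_i,\sigma_{i+1})}=\sum_{k\ge0}t^k(M^{(\sigma_i,\sigma_{i+1})})^k$ (a convergent Neumann series since $t<1$ and $\|M^{(\sigma_i,\sigma_{i+1})}\|_{\infty\to\infty}=1$) and using that $M^{(\sigma_i,\sigma_{i+1})}$ commutes with $\Theta_t^{(\sigma_i,\sigma_{i+1})}$, one obtains
\[
  \mathsf K_i=I+(t-s)\sum_{k\ge1}t^{k-1}\big(M^{(\sigma_i,\sigma_{i+1})}\big)^k,
\]
which is a non-negative matrix because $t\ge s$; applying it to $\mathbf 1$ gives row sums $1+(t-s)/(1-t)=(1-s)/(1-t)$, so $\|\mathsf K_i\|_{\infty\to\infty}=(1-s)/(1-t)=\eta_s/\eta_t$ by \eqref{eta}. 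If instead $\sigma_i=\sigma_{i+1}$, I would use the cruder bounds $\|M^{(\sigma,\sigma)}\|_{\infty\to\infty}\le1$ from \eqref{eq:WardM2} and $\|\Theta_t^{(\sigma,\sigma)}\|_{\infty\to\infty}=\OO(1)$; the latter follows by summing \eqref{prop:ThfadC_short} over $[x]$, where the geometric series $\sum_{[x]\ne0}\lambda^2(C\heta)^{\|[x]\|_1-1}$ converges (and is $\OO(\lambda^2)$) because $\heta\le W^{-\e_A}$, while the $W^{-D}$ tail is negligible. Then $\|\mathsf K_i\|_{\infty\to\infty}\le(1+s)\,\|\Theta_t^{(\sigma,\sigma)}\|_{\infty\to\infty}=\OO(1)\le\OO(\eta_s/\eta_t)$, using that $\eta_s/\eta_t=(1-s)/(1-t)\ge1$ for $s\le t<1$.

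Combining the two cases, every factor satisfies $\|\mathsf K_i\|_{\infty\to\infty}\lesssim\eta_s/\eta_t$, and taking the product over $i$ concludes \eqref{sum_res_Ndecay}. The only point requiring a little care is the $\sigma_i=\sigma_{i+1}$ factor: the Neumann-series bound alone yields only $\|\mathsf K_i\|_{\infty\to\infty}\lesssim(1-t)^{-1}$, which is too weak when $1-s\ll1$, so one genuinely needs the $\OO(1)$ bound on $\|\Theta_t^{(\sigma,\sigma)}\|_{\infty\to\infty}$ coming from the short-range decay \eqref{prop:ThfadC_short} rather than from the resolvent expansion. For the Wegner orbital model the same argument applies verbatim with $M^{(\sigma_i,\sigma_{i+1})}$ replaced by $m(\sigma_i)m(\sigma_{i+1})S^{\LK}$ as in \eqref{eq:Theta_WO}, using the corresponding statements of \Cref{lem_propTH}.
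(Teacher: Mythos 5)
Your proof is correct and arrives at the right bound, and the overall strategy — reduce to an $\infty\to\infty$ estimate on each factor $\mathsf K_i$ of the product kernel — is exactly the paper's. However, the case split on whether $\sigma_i=\sigma_{i+1}$ and the invocation of \eqref{prop:ThfadC_short} are unnecessary detours, and the closing remark that "the Neumann-series bound alone yields only $\|\mathsf K_i\|_{\infty\to\infty}\lesssim(1-t)^{-1}$" is a misconception. Starting from the identity you already derived,
\[
\mathsf K_i=I+(t-s)\sum_{k\ge1}t^{k-1}\bigl(M^{(\sigma_i,\sigma_{i+1})}\bigr)^k,
\]
and using only $\|M^{(\sigma_i,\sigma_{i+1})}\|_{\infty\to\infty}\le 1$ from \eqref{eq:WardM2} (valid for \emph{all} choices of $\sigma_i,\sigma_{i+1}$), one gets
\[
\|\mathsf K_i\|_{\infty\to\infty}\le 1+(t-s)\sum_{k\ge1}t^{k-1}=1+\frac{t-s}{1-t}=\frac{1-s}{1-t}=\frac{\eta_s}{\eta_t},
\]
which is precisely the desired bound. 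The weaker $\lesssim (1-t)^{-1}$ only results if one first separates $\mathsf K_i=(1-sM)\Theta_t$ and applies submultiplicativity of the $\infty\to\infty$ norm, which discards the exact cancellation in $1+(t-s)/(1-t)$. This is the calculation the paper makes (via the decomposition $\mathsf K_i=I+(t-s)M^{(\sigma_i,\sigma_{i+1})}\Theta_t^{(\sigma_i,\sigma_{i+1})}$ and $\|M\Theta_t\|_{\infty\to\infty}\lesssim(1-t)^{-1}$), uniformly in $\sigma_i,\sigma_{i+1}$ and without any case distinction. Your stochastic-matrix observation for $\sigma_i\ne\sigma_{i+1}$ is a nice exact identity, but it buys nothing over the universal bound, and the $\mathcal O(1)$ estimate on $\|\Theta_t^{(\sigma,\sigma)}\|_{\infty\to\infty}$ is genuine extra input (valid only for $t\le 1-N^{-1}$ per \Cref{lem_propTH}) that the lemma does not require.
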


If the tensor $\cal A$ exhibits faster-than-polynomial decay at scales larger than $\ell_s$, then we obtain a stronger bound than the $({\infty\to \infty})$-norm bound given by \eqref{sum_res_Ndecay}. 
This bound can be further improved if $\cal A$ satisfies certain sum zero property or parity symmetry.    
We postpone the proof of this lemma to \Cref{sec:pf_sum-decay}. 
 
\begin{lemma}\label{lem:sum_decay}
Let ${\cal A}: (\Zn)^{\fn}\to \mathbb C$ be an $\fn$-dimensional tensor for a fixed $\fn\in \N$ with $\fn\ge 2$. Suppose it satisfies the following decay property for some small constant $\e\in(0,1)$ and large constant $D>1$:  
\begin{equation}\label{deccA0}
\max_{i,j\in \Zn}|[a_i]-[a_j]|\ge W^{\e}\ell_s \ \ \text{for} \ \ \ba=([a_1],\ldots, [a_\fn])\in (\Zn)^{\fn}  \implies  |\cal A_{\ba}|\le W^{-D} .
\end{equation}
Fix any $0\le s \le t < 1$ such that $(1-t)/(1-s)\ge W^{-d}$. 
There exists a constant $C_\fn>0$ that does not depend on  $\e$ or $D$ such that the following bound holds  (note  $\eta_t\ell_t^d\lesssim\eta_s\ell_s^d$ by the definition \eqref{eq:ellt}):
\begin{align}\label{sum_res_1}
    \left\|{\cal U}^{(\fn)}_{s,t,\boldsymbol{\sigma}} \circ {\cal A}\right\|_\infty \le W^{C_\fn\e}\frac{\ell_t^d }{\ell_s^d }\left(\frac{\ell_s^d \eta_s}{\ell_t^d \eta_t}\right)^{\fn} \|{\cal A}\|_\infty
    +W^{-D+C_\fn}.
\end{align}
In addition, stronger bounds hold in the following cases: 
\begin{itemize}
\item[(I)] If we have $\sigma_1=\sigma_2$ for $\boldsymbol{\sigma}=(\sigma_1,\cdots, \sigma_\fn),$ then
\begin{align}\label{sum_res_2_NAL}
    \left\|{\cal U}^{(\fn)}_{s,t,\boldsymbol{\sigma}} \circ {\cal A}\right\|_\infty \le W^{C_\fn\e}  \left(\frac{\ell_s^d\eta_s}{\ell_t^d\eta_t}\right)^{\fn-1}  \|{\cal A}\|_{\infty} 
   +W^{-D+C_\fn}.
\end{align} 

\item[(II)] If ${\cal A}$ satisfies the following sum zero property: 
\begin{align}\label{sumAzero}
 \sum_{[a_2],\ldots,[a_\fn]\in \Zn}{\cal A}_{\ba}=0 \quad \forall [a_1]\in \Zn,
\end{align}
then we have that 
\begin{align}\label{sum_res_2}
    \left\|{\cal U}^{(\fn)}_{s,t,\boldsymbol{\sigma}} \circ {\cal A}\right\|_\infty \le W^{C_\fn\e} \frac{\ell_t^{d-1}}{\ell_s^{d-1}} \left(\frac{\ell_s^d\eta_s}{\ell_t^d\eta_t}\right)^{\fn}  \|{\cal A}\|_{\infty} 
   +W^{-D+C_\fn}.
\end{align} 
If ${\cal A}$ further satisfies the following parity symmetry:
\be\label{eq:A_zero_sym}
\quad \cal A_{([a],[a]+[b_2],\ldots, [a]+[b_\fn])} = \cal A_{([a],[a]-[b_2],\ldots, [a]-[b_\fn])},\quad \forall [a],[b_2],\ldots, [b_\fn]\in \Zn, 
\ee
then the bound \eqref{sum_res_2} can be improved as follows:
\begin{align}\label{sum_res_2_sym}
    \left\|{\cal U}^{(\fn)}_{s,t,\boldsymbol{\sigma}} \circ {\cal A}\right\|_\infty \le W^{C_\fn\e} \left(\frac{\ell_s^d\eta_s}{\ell_t^d\eta_t}\right)^{\fn}  \|{\cal A}\|_{\infty} 
   +W^{-D+C_\fn}.
\end{align}
\end{itemize}

\end{lemma}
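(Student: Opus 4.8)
\textbf{Proof strategy for Lemma \ref{lem:sum_decay}.}

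The plan is to decompose the evolution kernel $\mathcal{U}^{(\fn)}_{s,t,\boldsymbol{\sigma}}$ using the identity \eqref{eq:decompU}, namely
\[
\frac{1 - s M^{(\sigma_i,\sigma_{i+1})}}{1 - t M^{(\sigma_i,\sigma_{i+1})}} = 1 + (t-s)\Theta_t^{(\sigma_i,\sigma_{i+1})},
\]
so that $\mathcal{U}^{(\fn)}_{s,t,\boldsymbol{\sigma}}$ becomes a sum over subsets $I\subseteq \qqq{\fn}$ of products of $(t-s)\Theta_t^{(\sigma_i,\sigma_{i+1})}$-factors at coordinates $i\in I$ and identity factors elsewhere. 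Each term moves the coordinates in $I$ away from their original positions via the $\Theta_t$-propagators, which by \Cref{lem_propTH} decay exponentially on scale $\ell_t$ (when $\sigma_i\ne\sigma_{i+1}$) or are short-range with an extra $\lambda^2$ factor (when $\sigma_i=\sigma_{i+1}$); the coordinates outside $I$ stay pinned near $\ba$. The hypothesis \eqref{deccA0} forces $\mathcal{A}$ to be supported on a polynomially small ``diagonal neighborhood'' of width $W^\e\ell_s$, so after applying a $\Theta_t$-propagator to one coordinate and summing, the effective volume of the summation region is $\OO((W^\e\ell_s)^d)$ up to the error $W^{-D+C_\fn}$ coming from the tails beyond scale $\ell_t$. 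First I would carry out this bookkeeping carefully: for the generic term where $|I|=\fn$ (all coordinates moved), one gets $(t-s)^\fn$ times $\fn$ propagator norms $\|\Theta_t\|_\infty\lesssim (1-t)^{-1}$, but this naive bound is too weak; the gain comes from noting that $\mathcal{A}$ couples the moved coordinates together, so only one free summation of volume $(W^\e\ell_s)^d$ survives per ``cluster'', while the $\Theta_t$ mass integrates to $(1-t)^{-1}$ per factor and the $\ell_t^d\eta_t\asymp (1-t)\ell_t^d$ normalization is tracked via $\|\Theta_t\|_1$-type bounds. Matching powers of $\ell_s,\ell_t,\eta_s,\eta_t$ and collecting $W^{C_\fn\e}$ losses from the volume factors yields \eqref{sum_res_1}.

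For the improvement \eqref{sum_res_2_NAL} in case (I), the point is that $\sigma_1=\sigma_2$ makes the first propagator $\Theta_t^{(\sigma_1,\sigma_2)}$ short-range by \eqref{prop:ThfadC_short} (it carries an extra $\lambda^2$, equivalently loses a factor $\ell_t^d$ relative to the long-range propagator), which removes one power of $\ell_t^d\eta_t/\ell_s^d\eta_s$ from the count; I would run the same decomposition but treat the $i=1$ slot separately, using that a short-range kernel convolved against the $W^\e\ell_s$-supported $\mathcal{A}$ contributes $\OO(1)$ rather than $\OO(\ell_s^d\eta_s/\ell_t^d\eta_t)$. For case (II), the sum-zero property \eqref{sumAzero} is exploited through a discrete ``integration by parts'': when the coordinate $[a_1]$ is left unmoved and $[a_2],\ldots,[a_\fn]$ are summed, $\sum \mathcal{A}=0$ means one may replace the $\Theta_t$-propagator attached at $[a_1]$ by a \emph{finite difference} $\Theta_t^{(\sigma_1,\sigma_2)}(0,[x])-\Theta_t^{(\sigma_1,\sigma_2)}(0,[x']) $ over a displacement of size $\OO(W^\e\ell_s)$; by the first-order difference bound \eqref{prop:BD1} this gains a factor $W^\e\ell_s/\ell_t \cdot (\lambda^2+|1-t|)^{-1}\cdot(\lambda^2+|1-t|) \sim W^\e \ell_s/\langle [x]\rangle^{d-1}$, effectively trading one factor $\ell_t$ for $\ell_s$, producing the prefactor $\ell_t^{d-1}/\ell_s^{d-1}$ in \eqref{sum_res_2}. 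If in addition the symmetry \eqref{eq:A_zero_sym} holds, the first-order difference averages out by oddness (the contributions from $[b_i]$ and $-[b_i]$ cancel at leading order), so one passes to the \emph{second-order} difference governed by \eqref{prop:BD2}, which gains $(W^\e\ell_s)^2/\langle[x]\rangle^d$ and recovers the full extra factor $\ell_t^d/\ell_s^d$, yielding \eqref{sum_res_2_sym}. Throughout, the exponential tails of $\Theta_t$ beyond scale $\ell_t$ together with \eqref{deccA0} produce only the harmless additive error $W^{-D+C_\fn}$, where $C_\fn$ absorbs the finitely many volume factors $n^d\le W^{-d}N$ that appear when crudely bounding leftover sums.

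The main obstacle I anticipate is the careful combinatorial accounting in the general decomposition: one must show that no matter how the subset $I$ of ``moved'' coordinates is chosen and how the cancellation/symmetry is applied to the fixed coordinate $[a_1]$, the total power of $\ell_s^d\eta_s/\ell_t^d\eta_t$ never exceeds $\fn$ (or $\fn-1$, or is reduced by the claimed prefactors), and that each ``cluster'' of coordinates linked through $\mathcal{A}$'s support contributes exactly one free volume summation rather than several. In dimension $d=2$ this is especially delicate because the $\Theta_t$-propagator is only marginally summable — the bound \eqref{prop:ThfadC} carries the borderline $\ell_t^{-2}|1-t|^{-1}$ decay — so one must verify that the logarithmic factors implicit in the 2D random-walk Green's function are all absorbed into the $W^{C_\fn\e}$ slack and do not accumulate across the $\fn$ propagator factors. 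Once these two points are handled, the four estimates \eqref{sum_res_1}, \eqref{sum_res_2_NAL}, \eqref{sum_res_2}, and \eqref{sum_res_2_sym} follow by specializing the decomposition and invoking, respectively, no extra structure, \eqref{prop:ThfadC_short}, \eqref{prop:BD1}, and \eqref{prop:BD2}. This parallels Lemma 7.3 and its proof in \cite{Band1D}, with the modifications being exactly the replacement of the RBM propagator estimates by those in \Cref{lem_propTH} that carry the small parameter $\lambda$.
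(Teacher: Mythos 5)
Your proposal follows the same route as the paper: expand the evolution kernel via $\frac{1-sM}{1-tM}=1+(t-s)\Theta_t$, split into moved/unmoved coordinates, use the $W^\e\ell_s$-support of $\mathcal A$ to restrict the free summation volume, invoke \eqref{prop:ThfadC_short} for case~(I), and the finite-difference estimates \eqref{prop:BD1}, \eqref{prop:BD2} for case~(II) and the symmetric variant. This is exactly the paper's strategy (which it explicitly models on Lemma~7.3 of \cite{Band1D}, as you note), so the high-level plan is sound.

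However, your mechanism for exploiting the sum-zero property in case~(II) is misdirected, and this is the one step that actually needs care. You propose to gain when ``$[a_1]$ is left unmoved and $[a_2],\ldots,[a_\fn]$ are summed,'' applying the cancellation by differencing ``the $\Theta_t$-propagator attached at $[a_1]$.'' But if $[a_1]$ is unmoved there is no $\Theta_t$-factor at slot $1$ (it is an identity), and more importantly that configuration ($|A|\ge 1$ in the paper's notation) already satisfies the required bound without the sum-zero hypothesis, by the crude count \eqref{sum_res_1_red0}. The problematic configuration is the opposite one, $A=\emptyset$, where \emph{every} coordinate carries a propagator $\Xi^{(i)}_{[a_i][b_i]}$; there the sum-zero property cannot act directly because $[b_1]$ is also a live summation index. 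The paper's resolution is to pin the propagators at the \emph{intermediate summed vertex} $[b_1]$: write $\Xi^{(i)}_{[a_i][b_i]}=\Xi^{(i)}_{[a_i][b_1]}+\wh\Xi^{(i)}_{[a_i];[b_1][b_i]}$ for $i\ge 2$, so that the all-undifferenced term is proportional to $\sum_{[b_2],\ldots,[b_\fn]}\mathcal A_{\mathbf b}=0$, and each surviving term carries at least one finite difference of size $\OO(W^\e\ell_s)$ controlled by \eqref{prop:BD1} (or, under \eqref{eq:A_zero_sym}, a second difference controlled by \eqref{prop:BD2} after symmetrizing $[b_i]\mapsto 2[b_1]-[b_i]$). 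You reach for the right tools and the right reference, so this reads more like a description slip than a conceptual misunderstanding, but as written your case~(II) argument would not locate the cancellation where it is needed.
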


\subsection{Step 2: Sharp local law and a priori 2-$G$ loop estimate}

In this section, we focus on the $2$-$G$-loop with $\fn=2$ and $\boldsymbol{\sigma}=(+,-)$, while the case $\boldsymbol{\sigma}=(-,+)$ also follows by taking the matrix transposition. 

Given $u\in[s,t]$, $0\le \ell\le n$, and a sufficiently large constant $D>0$, we introduce the following functions 
\begin{align}
      {\cal T}^{(\cal L-K)}_u (\ell)&:= \max_{[a],[b] : \; |[a]-[b]|\ge \ell} \left| ({\cal L-\cK})^{(2)}_{u,  (+,-),  ( [a], [b])}\right|, 
      \label{def_WTu} \\
{\cal T} _{u, D} (\ell) &:=  (W^d\ell_u^d\eta_u)^{-2}
\exp \Big(- \big( \ell /\ell_u \big)^{1/2} \Big)+W^{-D},\label{def_WTuD}
\end{align} 
that control the tail behavior of $\cL-\cK$. Note ${\cal T}^{(\cal L-K)}_u$ and ${\cal T} _{u, D}$ are non-increasing functions in $\ell$:
 $$ 
 {\cal T}^{(\cal L-K)}_u (\ell_1)\ge {\cal T}^{(\cal L-K)}_u (\ell_2),\quad {\cal T} _{u,D} (\ell_1)\ge{\cal T} _{u,D}(\ell_2),\quad \text{for}\quad 0\le \ell_1\le \ell_2.
 $$
Furthermore, denote the ratio between them by  
\begin{equation}\label{def_Ju}
     {\cal J} _{u,D} (\ell):=   {\cal T}^{(\cal L-K)}_u (\ell) \big/{\cal T}_{u,D} (\ell) +1\, .
\end{equation} 
Then, to show \eqref{Eq:Gdecay_w}, it suffices to prove that for all $u\in [s,t]$ and any large constant $D>0$: 
 \begin{align}\label{shoellJJ}
\max_{0\le \ell \le n}   {\cal J} _{u,D} (\ell) \prec (\eta_s/\eta_u)^4 .
  \end{align}
For the proof of \eqref{shoellJJ}, we let ${\cal J}^* _{u,D}\ge 1$ be a \emph{deterministic parameter} such that 
\be\label{eq:def_new_J*}
\max_{0\le \ell \le n}   {\cal J} _{u,D} (\ell) \prec {\cal J}^* _{u,D}.
\ee
Moreover, we introduce an intermediate 
scale parameter
\be\label{eq:ell*u}
  \ell^{*}_u := (\log W)^{3/2}\ell_u, \quad u\in[s,t] .
\ee
Note that on the scale $\ell_u^*$, the propagators $\Theta_u^{\bsig}$ is exponentially small (i.e., faster than any polynomial decay) by \Cref{lem_propTH}, whereas ${\cal T}_{u,D}$ is not (i.e., slower than any polynomial decay).

We now estimate the 2-$G$ loops by bounding the terms in \eqref{int_K-L_ST} with $\fn=2$.

\begin{lemma}\label{lem_dec_calE} In the setting of  \Cref{lem:main_ind}, suppose the estimates \eqref{lRB1} and \eqref{Gtmwc} 
hold. Fix any $u\in [s,t]$, $D\ge 10$, $\bsig=(+,-)$, and $\ba=([a_1],[a_2])$, we have  
\begin{align}\label{res_deccalE_lk}
{\cal E}^{(2)}_{u, \boldsymbol{\sigma},\ba} / 
  {\cal T}_{t,D}(|[a_1]-[a_2]|) \prec  \eta_u^{-1}  \left(W^d\ell_u^d \eta_u\right)^{-1}
 \left({\cal J}^{*}_{u,D} \right)^{2}.
\end{align} 
\end{lemma}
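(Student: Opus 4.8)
The goal is to bound the error term $\mathcal E^{(2)}_{u,\boldsymbol\sigma,\ba}$, which by the definition \eqref{def_ELKLK} with $\fn=2$ is a sum over the single admissible pair $k=1<l=2$ of the form
\be\nonumber
\mathcal E^{(2)}_{u,\boldsymbol\sigma,\ba} = W^d\sum_{[a]} (\mathcal L-\mathcal K)^{(2)}_{u,\cutL^{[a]}_{1,2}(\bsig,\ba)}\,(\mathcal L-\mathcal K)^{(2)}_{u,\cutR^{[a]}_{1,2}(\bsig,\ba)}.
\ee
Here $\cutR^{[a]}_{1,2}(\bsig,\ba)$ produces a $2$-loop on the vertices $([a_1],[a])$ with charges $(\sigma_1,\sigma_2)=(+,-)$, while $\cutL^{[a]}_{1,2}(\bsig,\ba)$ produces a $2$-loop on the vertices $([a],[a_2])$ with charges $(\sigma_1,\sigma_2)=(+,-)$ as well (one should check the charge bookkeeping carefully against Definition \ref{Def:oper_loop}, but both resulting loops have alternating charges, so the tail function $\mathcal T^{(\mathcal L-\cK)}_u$ and the bound \eqref{def_WTu}--\eqref{def_Ju} apply to each factor). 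The plan is therefore to: (i) replace each $(\mathcal L-\mathcal K)^{(2)}$ factor by $\mathcal J^*_{u,D}\,\mathcal T_{u,D}$ of the appropriate argument using the definition of $\mathcal J^*$ in \eqref{eq:def_new_J*} and \Cref{lem_dec_calE_0}; (ii) carry out the convolution sum over $[a]$ of the product of two tail functions; and (iii) convert the resulting bound, which will naturally come out in terms of $\mathcal T_{u,D}(|[a_1]-[a_2]|)$ and a prefactor, into the claimed form by comparing $\mathcal T_{u,D}$ at time $u$ with $\mathcal T_{t,D}$ at the target time $t$, using that $\eta_u\ge\eta_t$ and the monotonicity/continuity properties of $\mathcal T$ recorded above (in particular \eqref{Tell*} and the relation $\ell_u\gtrsim\ell_t$).

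The core computation is step (ii): the convolution estimate
\be\nonumber
W^d\sum_{[a]} \mathcal T_{u,D}(|[a_1]-[a]|)\,\mathcal T_{u,D}(|[a]-[a_2]|) \lesssim W^d (W^d\ell_u^d\eta_u)^{-2}\,(\eta_u\ell_u^d)\cdot\mathcal T_{u,D}(|[a_1]-[a_2]|)+W^{-D'},
\ee
which is the standard ``two stretched-exponential bumps convolve to one bump times the volume of the overlap region'' estimate; the volume factor is $\OO(\ell_u^d)$ from the region where both arguments are $\lesssim\ell_u$, and the triangle inequality $|[a_1]-[a]|+|[a]-[a_2]|\ge|[a_1]-[a_2]|$ together with subadditivity of $x\mapsto\sqrt x$ gives the single exponential $\exp(-\frac12(|[a_1]-[a_2]|/\ell_u)^{1/2})$, which is absorbed into $\mathcal T_{u,D}$ after using \eqref{Tell*}-type slack (the factor $\tfrac12$ in the exponent only costs a harmless polynomial, or one redefines $\mathcal T$ with a slightly smaller constant — this is the same bookkeeping already used in \cite{Band1D}). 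Collecting the prefactors: $W^d\cdot(W^d\ell_u^d\eta_u)^{-2}\cdot\eta_u\ell_u^d = \eta_u^{-1}(W^d\ell_u^d\eta_u)^{-1}$, which exactly matches the RHS of \eqref{res_deccalE_lk} after multiplying by $(\mathcal J^*_{u,D})^2$. Then one passes from $\mathcal T_{u,D}$ and $\ell_u,\eta_u$ to $\mathcal T_{t,D}$ and the target-time quantities; since $u\le t$ we have $\eta_u\ge\eta_t$ and $\ell_u\ge\ell_t$, so $(W^d\ell_u^d\eta_u)^{-1}\le(W^d\ell_t^d\eta_t)^{-1}$ is \emph{not} immediate in the right direction — one instead keeps the bound at scale $u$ as written in \eqref{res_deccalE_lk} (the statement is in terms of $\ell_u,\eta_u$ on the RHS), and only the tail function is compared across times, for which \Cref{lem_dec_calE_0} (specifically \eqref{auiwii} and \eqref{Tell*}) and the exponential smallness \eqref{Kell*} at scale $\ell^*_t$ are exactly what is needed to replace $\mathcal T_{u,D}(|[a_1]-[a_2]|)$ by $\mathcal T_{t,D}(|[a_1]-[a_2]|)$ up to $W^{-D}$ errors.

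I expect the main obstacle to be the careful tracking of the charge labels through the $\cutL$ and $\cutR$ operations to confirm that \emph{both} resulting $2$-loops are of alternating type $(+,-)$ (and hence controlled by $\mathcal T^{(\mathcal L-\cK)}_u$ rather than by the pure-loop bound of \Cref{claim:pure}), together with handling the case $[a_1]=[a_2]$ and the diagonal/short-range regime $|[a_1]-[a_2]|\lesssim\ell_u$ where the stretched-exponential gives no decay and one simply uses the volume bound directly. A secondary technical point is the a priori control needed to even make sense of the factors: we are working on the event where \eqref{lRB1} and \eqref{Gtmwc} hold, so all $2$-loops are a priori bounded, and the decomposition $\mathcal L=(\mathcal L-\cK)+\cK$ together with \eqref{auiwii} and the $\cK$-loop bound \eqref{eq:bcal_k} keeps every term under control; but one must be slightly careful that $\mathcal J^*_{u,D}$ as used here is the deterministic majorant from \eqref{eq:def_new_J*} and not the random quantity, so that the stochastic-domination bookkeeping in \eqref{res_deccalE_lk} is legitimate. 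None of these points is deep; the argument is a direct computation once the definitions are unwound, and it mirrors the corresponding step in \cite{Band1D}.
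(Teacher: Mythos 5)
Your approach is essentially the paper's, which gives no detail beyond the remark that \eqref{res_deccalE_lk} follows from the definition of $\mathcal J^*_{u,D}$ and is ``the same as equation (5.34) in [Band1D]''; unwinding that reference produces exactly the decomposition of $\mathcal E^{(2)}_u$ into the $W^d\sum_{[a]}$-convolution of two alternating $(+,-)$-type $(\mathcal L-\mathcal K)^{(2)}$ factors and the stretched-exponential convolution estimate that you carry out, so the proposal is sound. Two bookkeeping slips are worth fixing. First, for $u\le t$ one has $\eta_u\ge\eta_t$ but $\ell_u\le\ell_t$ (not $\ell_u\ge\ell_t$): $\ell_t=\min(\lambda|1-t|^{-1/2}+1,n)$ is increasing in $t$ because $\eta_t$ is decreasing. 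Consequently $\ell_u^d\eta_u\ge\ell_t^d\eta_t$, so $(W^d\ell_u^d\eta_u)^{-1}\le(W^d\ell_t^d\eta_t)^{-1}$ \emph{is} immediate and in the right direction; your remark that it is ``not in the right direction'' is an artifact of the sign slip, and $\ell_u\le\ell_t$ is also exactly what makes $\exp(-c(\ell/\ell_u)^{1/2})\le\exp(-c(\ell/\ell_t)^{1/2})$, so passing from $\mathcal T_{u,D}$ to $\mathcal T_{t,D}$ after the convolution is harmless (the constant loss in the exponent is absorbed into the $W^{-D}$ floor once $|[a_1]-[a_2]|\gtrsim(\log W)^3\ell_u$, and is irrelevant below that scale). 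Second, your displayed convolution bound carries a stray $\eta_u$: the correct prefactor after summing the two tails is $W^d\cdot(W^d\ell_u^d\eta_u)^{-2}\cdot\ell_u^d$, not $W^d\cdot(W^d\ell_u^d\eta_u)^{-2}\cdot\eta_u\ell_u^d$, and the former equals $\eta_u^{-1}(W^d\ell_u^d\eta_u)^{-1}$ as claimed; as written you inserted a spurious $\eta_u$ into the display and then made a compensating arithmetic error in the ``collecting prefactors'' line (that line, evaluated honestly, gives $(W^d\ell_u^d\eta_u)^{-1}$, off by $\eta_u$), so the two mistakes cancelled and accidentally produced the right final answer. With these corrections the argument is the intended one.
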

\begin{proof}
    The proof of this lemma follows directly from the definition of ${\cal J}^{*}_{u,D}$ and is the same as that of equation (5.34) in \cite{Band1D}. Hence, we omit the details. 
\end{proof}
  \begin{lemma}\label{lem_dec_calE2}  Under the assumptions of \Cref{lem_dec_calE}, we have that  
\begin{align}
      \cW_{u, \boldsymbol{\sigma},\ba}^{(2)} 
    /  
 {\cal T}_{t,D}(|[a_1]-[a_2]|)
 & \prec \eta_u^{-1} \left(\ell^d_u/\ell^d_s\right)^2{\bf 1}\left(|[a_1]-[a_2]|\le \ell_u^*\right)  +   \eta_u^{-1} \left(W^d\ell^d_u \eta_u \right)^{-1/3} 
       \left({\cal J}^{*}_{u,D} \right)^{3}.
 \label{res_deccalE_wG}
\end{align} 
  \end{lemma}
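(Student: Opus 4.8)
\textbf{Proof plan for \Cref{lem_dec_calE2}.}
The plan is to estimate the term $\cW^{(2)}_{u,\boldsymbol{\sigma},\ba}$ defined in \eqref{def_EwtG} for $\fn=2$, which reads
\[
\cW^{(2)}_{u,\boldsymbol{\sigma},\ba} = W^d\sum_{k=1}^2\sum_{[a],[b]} \big\langle \Gc_u(\sigma_k) E_{[a]}\big\rangle\, \delta_{[a][b]}\, \big({\cut}^{[b]}_k\circ \cL^{(2)}_{u,\boldsymbol{\sigma},\ba}\big),
\]
using that $S^{\LK}=I$ for the block Anderson model. The factor $\langle \Gc_u(\sigma_k)E_{[a]}\rangle = W^{-d}\langle I_{[a]}(G_u-M)(\sigma_k)\rangle$ is exactly a quantity of the type controlled by the improved local-law estimate \eqref{GavLGEX} in \Cref{lem G<T}, once we choose the control parameter $\Psi_u^2$ in \eqref{eq:def_Psit} appropriately; by \eqref{lRB1} and the definition of ${\cal J}^*_{u,D}$, we may take $\Psi_u^2 \asymp \cL^{(2)}_{u,(+,-),([a],[a])}+W^{-d}$, which is $\prec {\cal J}^*_{u,D}\,{\cal T}_{u,D}(0)+W^{-d}\asymp {\cal J}^*_{u,D}(W^d\ell_u^d\eta_u)^{-2}$. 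The ${\cut}^{[b]}_k$ term is a $3$-$G$ loop; we bound it either by the a priori bound \eqref{lRB1} (giving the $(\ell_u^d/\ell_s^d)^2(W^d\ell_u^d\eta_u)^{-2}$ scaling) or, when $[b]$ is far from both $[a_1]$ and $[a_2]$, by the decay estimate \eqref{auiwii} together with \eqref{Kell*}/\eqref{Tell*} from \Cref{lem_dec_calE_0}.

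The key steps, in order, are: (1) split the sum over $[a]=[b]$ according to whether the new vertex $[b]$ lies within distance $\OO(\ell_u^*)$ of the chain $\{[a_1],[a_2]\}$ or not; (2) in the near region, use $|\langle \Gc_u(\sigma_k)E_{[a]}\rangle|\prec \Psi_u^2$ from \eqref{GavLGEX}, use the a priori $3$-loop bound \eqref{lRB1} for ${\cut}^{[b]}_k\circ\cL^{(2)}$, and count that there are $\OO((\ell_u^*)^d)$ such blocks; after dividing by ${\cal T}_{t,D}(|[a_1]-[a_2]|)$ this produces the first term $\eta_u^{-1}(\ell_u^d/\ell_s^d)^2\mathbf 1(|[a_1]-[a_2]|\le \ell_u^*)$, where the indicator appears because ${\cal T}_{t,D}$ is essentially flat on scale $\ell_t^*$ by \eqref{Tell*} while the summand is supported near the chain; (3) in the far region, one of $[a_1],[a_2]$ is separated from $[b]$ by $\gtrsim \ell_u^*$, so the relevant $G$-edge in ${\cut}^{[b]}_k\circ\cL^{(2)}$ decays faster than any power by the exponential decay \eqref{auiwii}, and summing over such $[b]$ with the extra ${\cal J}^*_{u,D}$ factors from \eqref{GavLGEX} and \eqref{auiwii} yields the second term $\eta_u^{-1}(W^d\ell_u^d\eta_u)^{-1/3}({\cal J}^*_{u,D})^3$; (4) combine and absorb the $W^{-D}$ remainders, noting that one power of $\eta_u^{-1}$ comes from the $W^d$ prefactor against the $(W^d\ell_u^d\eta_u)^{-2}$ in $\Psi_u^2$ times the sum-over-$[b]$, using Ward-type identities as in the $\fn=2$ analysis of \cite[Section 5]{Band1D}.

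I expect the main obstacle to be the careful bookkeeping in step (3): one must track how the $({\cal J}^*_{u,D})^k$ powers accumulate (one from $\langle \Gc_u E_{[a]}\rangle$ via \eqref{GavLGEX}, one or two from the $3$-loop via \eqref{auiwii}) while simultaneously ensuring that the exponential decay of the far $G$-edge beats the polynomial volume factor $n^d$ in the sum over $[b]$, so that the net exponent of $(W^d\ell_u^d\eta_u)$ is at worst $-1/3$ rather than $0$. This is the same mechanism as in the proof of equation (5.35) of \cite{Band1D}, and the argument there transfers essentially verbatim once the inputs \eqref{GavLGEX}, \eqref{auiwii}, \eqref{Kell*}, and \eqref{lRB1} are in place; the only genuinely new point relative to \cite{Band1D} is the appearance of the $\lambda$-dependent length scale $\ell_u$ through \Cref{lem_propTH}, which is already incorporated in all the displayed bounds. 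For this reason I would present the near/far split in detail and then refer to \cite[Section 5]{Band1D} for the remaining routine estimates, exactly as \Cref{lem_dec_calE} does for \eqref{res_deccalE_lk}.
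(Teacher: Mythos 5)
Your overall decomposition matches the paper's proof (write $\cW^{(2)}$ as a sum over the cut vertex $[a]$, bound $\langle\Gc_u E_{[a]}\rangle$ with \eqref{GavLGEX}, split the sum into a near and far region, use \eqref{lRB1} in the near region and the decay inputs in the far region), but there are two genuine gaps. First, your step bounding $\Psi_u^2$ invokes \eqref{auiwii} at $[a]=[b]$, i.e.\ at distance zero, whereas \Cref{lem_dec_calE_0} establishes \eqref{auiwii} only when $|[a]-[b]|\ge\delta\ell_t^*$. On the diagonal the deterministic part $\cK^{(2)}_{u,(+,-),([a],[a])}$ is of order $(W^d\ell_u^d\eta_u)^{-1}$, which dominates ${\cal J}^*_{u,D}{\cal T}_{u,D}(0)$ unless ${\cal J}^*_{u,D}\gtrsim W^d\ell_u^d\eta_u$; dropping it makes your control on $\langle\Gc_u E_{[a]}\rangle$ an underestimate. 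The correct bound, as in \eqref{eq:GcEa}, is $\langle\Gc_u E_{[a]}\rangle\prec(W^d\ell_u^d\eta_u)^{-1}+{\cal J}^*_{u,D}(W^d\ell_u^d\eta_u)^{-2}$, and it is precisely the first (deterministic) piece that produces the $\eta_u^{-1}(\ell_u^d/\ell_s^d)^2$ term after summing over $[a]$ in the near region.

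Second, the near-region argument in your step (2) does not close when $|[a_1]-[a_2]|\ge\ell_u^*$. In that regime ${\cal T}_{t,D}(|[a_1]-[a_2]|)$ is already of order $W^{-D}$, while the a priori $3$-loop bound \eqref{lRB1} for $\cL^{(3)}_{u,(+,+,-),([a],[a_1],[a_2])}$ does not see the long edge between $[a_1]$ and $[a_2]$; dividing the near-region contribution by this tiny denominator blows up, and the indicator $\mathbf 1(|[a_1]-[a_2]|\le\ell_u^*)$ does not come for free from \eqref{Tell*}. Your step (3) only covers the configuration where $[b]$ is far from \emph{both} $[a_1]$ and $[a_2]$; the remaining configuration (\,$|[a_1]-[a_2]|\ge\ell_u^*$ but $[b]$ close to one of $[a_1],[a_2]$\,) is exactly the paper's Case (1), which requires the Cauchy--Schwarz split in \eqref{kskjw} together with \eqref{GiiGEX} to extract the subexponential decay from the long edge $[a_1]\!\leftrightarrow\![a_2]$ and absorb the contribution into the second target term. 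Without that additional step, the proposal does not yield \eqref{res_deccalE_wG}.
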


  \begin{lemma}\label{lem_dec_calE3}  Under the assumptions of \Cref{lem_dec_calE}, assume further that $\ba'=([a_1'],[a_2'])$ satisfies
\begin{align} 
\max_{i=1}^2 |[a_i]-[a_i']|\le \ell_t^*. \label{57}
\end{align}
Then, we have that 
\begin{align}
\label{res_deccalE_dif}
    \left(
   \mathcal{B}\otimes  \mathcal{B} \right)^{(4)}
   _{u, \bsig, \ba ,\ba'} /  
 {\cal T}_{t,D}^2(|[a_1]-[a_2]|)
 & \prec  \eta_u^{-1} \left(\frac{\ell^d_u}{\ell^d_s}\right)^5 {\bf 1}(|[a_1]-[a_2]|\le 4\ell_t^*)  +  \eta_u^{-1}{\left(W^d \ell_u^d\eta_u\right)^{-1/3} } {\left({\cal J}^{*}_{u,D} \right)^{3}}.   
\end{align} 
  \end{lemma}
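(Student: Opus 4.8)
The plan is to adapt the argument used for the analogous quadratic‑variation estimate in \cite{Band1D}, feeding in the block‑Anderson inputs: the $\Theta$‑propagator bounds of \Cref{lem_propTH}, the a priori loop bounds \eqref{lRB1}–\eqref{Gtmwc}, the decay estimate \eqref{auiwii}, and \Cref{lem G<T}. First I would unfold \Cref{def:CALE}: since $\fn=2$, $\bsig=(+,-)$, and $S^{\LK}_{[b][b']}=\delta_{[b][b']}$ for the block Anderson model,
\[
\left(\mathcal{B}\otimes\mathcal{B}\right)^{(4)}_{u,\bsig,\ba,\ba'}
=\sum_{k=1}^{2}W^d\sum_{[b]\in\Zn}\mathcal{L}^{(6)}_{u,\,(\bsig\times\overline{\bsig})^{(k)},\,(\ba\times\ba')^{(k)}([b])},
\]
and a direct inspection of \eqref{def_diffakn_k} shows that $(\bsig\times\overline{\bsig})^{(k)}$ is the alternating string of length $6$ for both $k=1,2$, while $(\ba\times\ba')^{(k)}([b])$ is the cyclic tuple visiting, in order, the slots $[a_1],[a_2]$, the free block $[b]$, and $[a'_2],[a'_1]$. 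Thus each summand is an \emph{alternating} $6$‑$G$ loop in which the edge joining the $[a_1]$‑ and $[a_2]$‑slots and the edge joining the $[a'_1]$‑ and $[a'_2]$‑slots are single resolvent entries of length $\ell:=|[a_1]-[a_2]|$ (note $|[a'_1]-[a'_2]|=\ell+O(\ell_t^{*})$ by \eqref{57}); these are the two ``long'' edges responsible for the two factors of ${\cal T}_{t,D}(\ell)$ on the left of \eqref{res_deccalE_dif}. I would then split according to whether $\ell\le 4\ell_t^{*}$ or $\ell>4\ell_t^{*}$, with $\ell_t^{*}=(\log W)^{3/2}\ell_t$.

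In the short‑range regime $\ell\le 4\ell_t^{*}$, by \eqref{57} all of $[a_1],[a_2],[a'_1],[a'_2]$ lie within $O(\ell_t^{*})$ of one another, so the only free label is $[b]$. Here I would use the a priori $6$‑loop bound $\mathcal{L}^{(6)}_{u,\ldots}\prec(\ell_u^d/\ell_s^d)^{5}(W^d\ell_u^d\eta_u)^{-5}$ from \eqref{lRB1}, together with the fact that $\mathcal{L}^{(6)}$ decays faster than any power in $|[b]-[a_1]|/\ell_u$ (obtained by a Cauchy–Schwarz/Ward decomposition of the loop into $2$‑$G$ loops and applying \eqref{auiwii}, \eqref{Kell*}, and \Cref{lem G<T} to the sub‑loops and entries touching $[b]$), so that $W^d\sum_{[b]}$ contributes only $W^d\cdot O(\ell_u^d)$ effective terms. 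Since $\eta_t\ell_t^d\lesssim\eta_u\ell_u^d$ and $\ell_u\le\ell_t$, in this range ${\cal T}_{t,D}(\ell)^2\gtrsim(W^d\ell_t^d\eta_t)^{-4}\gtrsim(W^d\ell_u^d\eta_u)^{-4}$; dividing the resulting bound $W^d\ell_u^d(\ell_u^d/\ell_s^d)^{5}(W^d\ell_u^d\eta_u)^{-5}$ by ${\cal T}_{t,D}(\ell)^2$ yields exactly $\eta_u^{-1}(\ell_u^d/\ell_s^d)^{5}$, the first term on the right of \eqref{res_deccalE_dif}.

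In the long‑range regime $\ell>4\ell_t^{*}$ the two long edges force genuine decay, while $[b]$ is now linked (through the four remaining edges) to both the $[a_1]$‑ and the $[a_2]$‑cluster. I would bound $|\mathcal{L}^{(6)}|$ by a product of $2$‑$G$‑loop (and entrywise) factors via iterated Cauchy–Schwarz, organized so that each long edge is isolated into its own $2$‑$G$ sub‑loop and estimated by \eqref{auiwii} (giving a factor ${\cal J}^{*}_{u,D}\,{\cal T}_{u,D}(\ell)$ each, and ${\cal T}_{u,D}(\cdot)\lesssim{\cal T}_{t,D}(\cdot)$ by the monotonicities noted above), the remaining near‑diagonal factors are controlled by \eqref{lRB1} and \Cref{lem G<T}, and one application of Ward's identity \eqref{WI_calL} at a slot adjacent to $[b]$ extracts the factor $\eta_u^{-1}$; the edges incident to $[b]$ decay off scale $\ell_u$, so $W^d\sum_{[b]}$ again costs only $W^d\cdot O(\ell_u^d)$ (using $\ell\gtrsim\ell_u$, so the far‑$[b]$ tail is summable). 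Collecting powers, the two isolated long edges produce ${\cal T}_{t,D}(\ell)^{2}$ and at most $({\cal J}^{*}_{u,D})^{2}$; tracking the leftover powers of $(W^d\ell_u^d\eta_u)^{-1}$ from the a priori inputs against $W^d\ell_u^d$ and the denominator ${\cal T}_{t,D}(\ell)^2$ leaves at most one further ${\cal J}^{*}_{u,D}$ and the gain $(W^d\ell_u^d\eta_u)^{-1/3}$, i.e.\ the second term on the right of \eqref{res_deccalE_dif}.

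The main obstacle, exactly as in \cite{Band1D}, will be the long‑range bookkeeping: one must (i) extract \emph{two} independent copies of ${\cal T}_{t,D}(\ell)$ from a single $6$‑loop whose two long edges are ``parallel'' but not adjacent, which forces a specific choice of Cauchy–Schwarz pairing; (ii) recover the correct power of $(W^d\ell_u^d\eta_u)^{-1}$ — a naive Cauchy–Schwarz split is too lossy relative to the sharp loop‑hierarchy bound \eqref{lRB1}, so the right number of Ward reductions at $[b]$ must be performed to turn the discrepancy into the $\eta_u^{-1}$ factor and the $(W^d\ell_u^d\eta_u)^{-1/3}$ gain rather than a loss; and (iii) confirm that the $[b]$‑summation is localized to scale $\ell_u$ despite $[b]$ touching both distant clusters. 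A secondary, model‑specific point is that, unlike the band‑matrix case of \cite{Band1D}, the entrywise inputs needed for the Cauchy–Schwarz step are not available from resolvent expansions; they must come from \Cref{lem G<T} and the Gaussian integration‑by‑parts argument of \Cref{appd:MDE}, but these are precisely the a priori facts already granted in the hypotheses of \Cref{lem_dec_calE}.
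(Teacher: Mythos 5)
Your overall architecture matches the paper's proof---unfolding \Cref{def:CALE} into two alternating $6$-$G$ loops, a dichotomy between $|[a_1]-[a_2]|\le 4\ell_t^*$ (where the a priori $6$-loop bound \eqref{lRB1} gives $(\ell_u^d/\ell_s^d)^5$) and $|[a_1]-[a_2]|>4\ell_t^*$ (where the $\mathcal{T}$-factors must be extracted from entrywise decay)---but the mechanism you describe for the long-range case is not quite what the paper does and, as stated, would not go through.

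You propose to ``isolate each long edge into its own $2$-$G$ sub-loop via iterated Cauchy--Schwarz and estimate by \eqref{auiwii}''. That cannot supply entrywise decay: a Cauchy--Schwarz of a $6$-loop produces higher-order loops, not products of $2$-$G$ loops, and in any case $\max_{x_1\in[a_1],x_2\in[a_2]}|G_{x_1x_2}|^2$ dominates, rather than is dominated by, the block average $\mathcal{L}^{(2)}_{u,(+,-),([a_1],[a_2])}$. The paper instead bounds the four long single-entry factors $|G_{x_1x_2}||G_{x_2y}||G_{yx_2'}||G_{x_2'x_1'}|$ directly via \eqref{GijGEX} of \Cref{lem_GbEXP} (not \Cref{lem G<T}, which only gives max-norm information), producing the pointwise estimate \eqref{GGTLJ4G} $\prec(\mathcal{J}^*_{u,D})^2\mathcal{T}_{t,D}(|[a_1]-[a_2]|)\mathcal{T}_{t,D}(|[b]-[a_2]|)$; only the remaining bilinear piece $W^{-2d}\sum_{x_1,x_1'}|(G^\dagger E_{[b]}G)_{x_1'x_1}|$ is fed to Cauchy--Schwarz against a $4$-$G$ loop. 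You do mention \Cref{appd:MDE} at the end, so you have the right source in mind, but the leading description of the step is wrong.

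Two further points. First, the $\eta_u^{-1}$ in \eqref{res_deccalE_dif} does not come from an application of Ward's identity at a $[b]$-slot; it is the trivial rewriting $W^d\ell_u^d=(W^d\ell_u^d\eta_u)/\eta_u$ after the sum over $[b]$ is carried out. Second, your item (iii) about the $[b]$-sum being localized to scale $\ell_u$ is not right in the long-range regime: the paper performs a further split on $|[b]-[a_1]|\wedge|[b]-[a_1']|$ vs.\ $\ell_u^*/2$, and in the genuinely far case the $[b]$-sum runs over the full torus and is controlled by the exponential decay of the two $\mathcal{T}_{t,D}$ factors on scale $\ell_t$ (via \eqref{eq:Calculus2}), giving the contribution $\eta_t^{-1}(\mathcal{J}^*_{u,D})^3(W^d\ell_t^d\eta_t)^{-1}\mathcal{T}_{t,D}^2$ which is then converted to the stated form using \eqref{con_st_ind}; the near case gives $\eta_u^{-1}(\ell_u^d/\ell_s^d)^{3/2}(W^d\ell_u^d\eta_u)^{-1/2}(\mathcal{J}^*_{u,D})^2\mathcal{T}_{t,D}^2$. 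You would need to discover this additional case split to make the bookkeeping close.
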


The proof of \Cref{lem_dec_calE2,lem_dec_calE3} depends on \Cref{lem_GbEXP} and will be given in \Cref{sec:pf_dec_calE2,sec:pf_dec_calE3}, respectively.  
Using Lemmas \ref{lem_dec_calE}--\ref{lem_dec_calE3}, along with the evolution kernel estimate in \Cref{lem:sum_Ndecay}, we can complete Step 2 in the proof of \Cref{lem:main_ind}. Define the stopping time
\begin{align}\label{eq:def_TTT}
T:=\inf \Big\{u\ge s: \max_{0\le \ell \le n}   {\cal J} _{u,D} (\ell) \ge W^\e\left(\eta_s / \eta_t\right)^4\Big\} .
\end{align}
Then, the estimates \eqref{Gt_bound_flow} and \eqref{Eq:Gdecay_w} are immediate consequences of the following lemma, whose proof will be deferred to \Cref{sec:pf_step2}.

\begin{lemma}\label{lem:pf_step2} 
In the setting of \Cref{lem:main_ind}, suppose \Cref{lem_dec_calE,lem_dec_calE2,lem_dec_calE3} holds. Then, for sufficiently small constant $\e>0$, we have that $T\ge t$ with high probability. Furthermore, we have a slightly stronger bound that will also be used in Step 5: 
\begin{align}\label{53}
\left({\cal L}-{\cal K}\right)^{(2)}_{t, \bsig, \ba} \prec  {\cal T}_{t, D}(|[a_1]-[a_2]|)\cdot  \big [ (\eta_s/\eta_t)^{5/2}  \cdot {\bf 1}(|a_1-a_2|\le 6\ell_t^*)+1 \big ].
\end{align}
\end{lemma}  

\begin{proof}[\bf Step 2: Proof of \eqref{Gt_bound_flow} and \eqref{Eq:Gdecay_w}] 
By the definitions \eqref{shoellJJ} and \eqref{Eq:Gdecay_w}, the estimate \eqref{Eq:Gdecay_w} follows readily from the fact that $T\ge t$ with high probability. 
Combining \eqref{Eq:Gdecay_w} with \Cref{ML:Kbound}, we obtain that
\begin{equation}
\label{lk2safyas}
 \max_{[a], [b]}{\cal L}^{(2)}_{u, (+,-), ([a],[b])}\prec \left(\eta_s/\eta_u\right)^4\cdot (W^d\ell_u^d\eta_u)^{-2}+(W^d\ell_u^d\eta_u)^{-1}\le 2(W^d\ell_u^d\eta_u)^{-1}
\end{equation}
for all $u\in [s,t]$, where we used the condition  \eqref{con_st_ind} in the second step. 
Then, applying \Cref{lem_GbEXP} (with 
the weak local law in \eqref{Gtmwc} verifying the assumption \eqref{def_asGMc}), we conclude the proof of \eqref{Gt_bound_flow}. 
\end{proof}

\subsection{Step 3: Sharp $G$-loop bound}\label{sec_sumzero}

In Step 2 of the proof of \Cref{lem:main_ind}, we have established an exponential decay of the 2-$G$ loops beyond the scale $\ell_u$ as shown in \eqref{Eq:Gdecay_w}. 
With \Cref{lem_GbEXP}, we can easily extend this decay to general $G$-loops.

\begin{definition}[Fast decay property]\label{Def_decay}
Let ${\cal A}: (\Zn)^{\fn}\to \mathbb C$ be an $\fn$-dimensional tensor for a fixed $\fn\in \N$ with $\fn\ge 2$. Given $u\in[s,t]$ and constants $\e,D>0$, we say $\cal A$ satisfies the $(u, \e, D)$-decay property if 
\begin{equation}\label{deccA}
    \max_{i,j\in \qqq{\fn}}|[a_i]-[a_j]|\ge \ell_u W^{\e}\ \implies \ {\cal A}_{\ba}=\OO(W^{-D})\quad \text{for}\quad \ba=([a_1],[a_2],\ldots, [a_\fn]).
\end{equation}
\end{definition}

It is easy to see that the $G$-loops satisfy the $(u, \e, D)$-decay property under the estimates \eqref{Gt_bound_flow} and \eqref{Eq:Gdecay_w} for any constants $\e,D>0$.
The following claim is an immediate consequence of \Cref{lem_propTH,lem_GbEXP}.

\begin{claim}
\label{lem_decayLoop} 
Suppose \eqref{Gt_bound_flow} and \eqref{Eq:Gdecay_w} hold. For any $\fn\ge 2$, $\bsig\in \{+,-\}^\fn$, $u\in[s,t]$, and constants $\e,D>0$, the $G$-loops \smash{${\cal L}^{(\fn)}_{u,\bsig,\ba}$} and primitive loops \smash{${\cal K}^{(\fn)}_{u,\bsig,\ba}$} satisfy the $(u, \e, D)$-decay property with probability $1-\OO(W^{-D'})$ for any large constant $D'>0$. In other words, we have that
\begin{align}\label{res_decayLK}
\mathbb P\left( \max_{\boldsymbol{\sigma}}  \Big(\left|{\cal L}^{(\fn)}_{u,\boldsymbol{\sigma},\ba}\right|+\left|{\cal K}^{(\fn)}_{u,\boldsymbol{\sigma},\ba}\right|\Big)\cdot{\bf 1}\left(\max_{  i, j\in \qqq{\fn} } |[a_i]-[a_j]|\ge W^{\e}\ell_u \right) \ge W^{-D}\right)\le W^{-D'}. 
\end{align}
 \end{claim}

Due to the fast decay property of the $G$-loops and $\cK$-loops, when the evolution kernels act on them, we can apply \Cref{lem:sum_decay} to bound their $(\infty\to \infty)$-norms, which leads to a crucial improvement over \Cref{lem:sum_Ndecay}. 

For any $\fn\ge 1$, let $\Xi^{({\cal L})}_{t, \fn}\ge 1$ and $\Xi^{({\cal L}-{\cal K})}_{t, \fn}\ge 1$ be \emph{deterministic} control parameters for $G$-loops and $(\cL-\cK)$-loops of length $\fn$ such that the following bounds hold: 
\begin{align}\label{def:XiL}
	\wh\Xi^{({\cal L})}_{t, \fn} &:= \max_{\boldsymbol{\sigma}\in \{+, -\}^\fn}\max_{\ba\in (\Zn)^\fn} \left|{\cal L}^{(\fn)}_{t, \boldsymbol{\sigma}, \ba} \right|\cdot \left(W^d\ell_t^d \eta_t\right)^{\fn-1}\prec \Xi^{({\cal L})}_{t, \fn},\\
	\wh\Xi^{({\cal L}-{\cal K})}_{t, \fn} &:=\max_{\boldsymbol{\sigma}\in \{+, -\}^\fn}\max_{\ba\in (\Zn)^\fn} \left|({\cal L}-{\cal K})^{(\fn)}_{t, \boldsymbol{\sigma}, \ba} \right|\cdot \left(W^d\ell_t^d \eta_t\right)^{\fn} \prec \Xi^{({\cal L}-{\cal K})}_{t, \fn}.\label{def:XIL-K}
\end{align}
We can control the terms on the RHS of equation \eqref{int_K-LcalE} using these parameters, the initial estimate \eqref{Eq:L-KGt+IND},  \Cref{lem:DIfREP} for the martingale term, and the evolution kernel estimates in \Cref{lem:sum_decay} due to the fast decay property shown in \Cref{lem_decayLoop}. 
In particular, for non-alternating loops $\bsig$, using Case I of \Cref{lem:sum_decay}, we can readily establish the following lemma, whose proof will be postponed to \Cref{sec:pf_STOeq_NQ}. 

\begin{lemma}\label{lem:STOeq_NQ} 
Under the assumptions of \Cref{lem:main_ind}, suppose the estimates \eqref{Gt_bound_flow} and \eqref{Eq:Gdecay_w} hold uniformly in $u\in[s,t]$. Fix any $\fn\ge 2$ and $\bsig\in \{+,-\}^\fn$ satisfying 
\begin{equation}\label{NALsigm}
\sigma_k=\sigma_{k+1} \quad \text{for some}\quad  k\in\qqq{\fn}.
\end{equation} 
(Recall that $\sig_{\fn+1}=\sig_1$ as a convention.) 
Then, we have the following estimate: 
\begin{align}\label{am;asoiuw}
&\max_{\ba\in (\Zn)^\fn} \left|({\cal L}-{\cal K})^{(\fn)}_{t, \boldsymbol{\sigma}, \ba} \right|\cdot \left(W^d\ell_t^d \eta_t\right)^{\fn} \\
&\prec \sup_{u\in [s,t]}
  \left(\max_{k\in \qqq{2,\fn-1} }\Xi^{({\cal L-\cal K})}_{u,k}+\max_{k\in \qqq{2,\fn} }\frac{\Xi^{({\cal L-\cal K})}_{u,k} \Xi^{({\cal L-\cal K})}_{u,\fn-k+2}}{W^d\ell_u^d\eta_u}+\Xi^{({\cal L})}_{u,\fn+1}+(\Xi^{(\cal L)}_{u, {2\fn+2}})^{1/2}\right).\nonumber
\end{align} 
\end{lemma}

To deal with the case with alternating signs, where \eqref{NALsigm} does not occur: 
\begin{equation}\label{NALsig_diff}
\sigma_k=- \sigma_{k+1}, \quad \forall \ k\in\qqq{\fn}\, ,
\end{equation}
we introduce another key tool---a sum zero operator ${\cal Q}_t$.

\begin{definition}[Sum zero operator]\label{Def:QtPt} 
Let ${\cal A}: (\Zn)^{\fn}\to \mathbb C$ be an $\fn$-dimensional tensor for a fixed $\fn\in \N$ with $\fn\ge 2$. Define the partial sum operator ${\cal P}$ as 
$$
\left( {\cal P} \circ {\cal A}\right)_{[a_1]}= \sum_{[a_i]: i\in\qqq{2,\fn}}  {\cal A}_{\ba}. 
$$
Note a tensor $\cal A$ satisfies the sum zero property in \eqref{sumAzero} if and only if $ {\cal P} \circ {\cal A}= 0$. Given $t\in[0,1)$, we define 
\be\label{eq:sumzero_op}
 \left({\cal Q}_{t}\circ {\cal A}\right)_{\ba}={\cal A}_{\ba}-\left({\cal P} \circ {\cal A}\right)_{[a_1]}\dthn^{(\fn)}_{t, \ba},\quad 
 \dthn^{(\fn)}_{t, \ba}:=(1-t)^{\fn-1}\prod_{i=2}^\fn \Theta^{(+,-)}_{t,[a_1][a_i]}.\ee
Since $\sum_{[a_i]}\Theta^{(+,-)}_{t,[a_1][a_i]}=(1-t)^{-1}$ by \eqref{eq:WardM1}, we can check that $ {\cal P} \circ  \dthn^{(\fn)}_{t, \ba}=1$, ${\cal P} \circ {\cal Q}_t =0$, and 
\be\label{eq:sum0PA}
{\cal P} \circ {\cal A} = 0\ \ \implies\ \ {\cal P} \circ \left(\thn^{(\fn)}_{t, \boldsymbol{\sigma}}  \circ {\cal A}\right) = 0,
\ee
where we recall the operator \(\thn^{(\fn)}_{t, \boldsymbol{\sigma}} \) defined in \Cref{DefTHUST}. In other words, if \({\cal A}\) satisfies the sum zero property \eqref{sumAzero}, then so does \smash{$\thn^{(\fn)}_{t, \boldsymbol{\sigma}}  \circ {\cal A}$}. 
\end{definition}

We will use the sum zero operator to get improved estimates on the terms on the RHS of \eqref{int_K-LcalE} when \eqref{NALsig_diff} holds. 
Roughly speaking, we will decompose a tensor $\cal A$ as \smash{${\cal A}_{\ba}= ({\cal Q}_{t}\circ {\cal A})_{\ba} + \left({\cal P} \circ {\cal A}\right)_{[a_1]}\dthn^{(\fn)}_{t, \ba}$} using \eqref{eq:sumzero_op}. For the first part, we can get an improvement by using \eqref{sum_res_2}, while for the second part, we can apply Ward's identity to ${\cal P} \circ {\cal A}$. 
This leads to the following lemma, whose proof is postponed to \Cref{sec:pf_STOeq_Qt_weak}. 

\begin{lemma}\label{lem:STOeq_Qt_weak} 
Under the assumptions of \Cref{lem:main_ind}, suppose the estimates \eqref{Gt_bound_flow} and \eqref{Eq:Gdecay_w} hold uniformly in $u\in[s,t]$. 
For any fixed $\fn\ge 2$, we have that: 
\begin{align}\label{am;asoi222_weak}
 \wh\Xi^{({\cal L-\cal K})}_{t,\fn}
& \prec \frac{\ell_t^{d-1}}{\ell_s^{d-1}}\sup_{u\in [s,t]}
  \left(\max_{k\in \qqq{1,\fn-1} }\Xi^{({\cal L-\cal K})}_{u,k}+\max_{k\in \qqq{2, \fn} }\frac{ \Xi^{({\cal L-\cal K})}_{u,k}\Xi^{({\cal L-\cal K})}_{u,\fn-k+2}}{W^d\ell_u^d\eta_u} +\Xi^{({\cal L})}_{u,\fn+1}+\left(\Xi^{(\cal L)}_{u, {2\fn+2}}\right)^{1/2}\right).
\end{align}
\end{lemma}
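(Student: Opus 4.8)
\textbf{Proof proposal for Lemma \ref{lem:STOeq_Qt_weak}.}

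The plan is to run the Duhamel expansion \eqref{int_K-L+Q} for ${\cal Q}_t\circ(\mathcal{L}-\mathcal{K})^{(\fn)}_{t,\bsig,\ba}$, bound each of its seven terms using the sum-zero evolution-kernel estimate \eqref{sum_res_2} from Case II of \Cref{lem:sum_decay}, and then recover a bound on $\wh\Xi^{(\cal L-\cal K)}_{t,\fn}$ itself from \eqref{eq:Ward_typeP}. The crucial structural input, already assembled in the excerpt, is that each of the five ``dynamical'' terms feeding the integral in \eqref{int_K-L+Q} (the $\mathcal{O}_{\cK}^{(\lenk)}$ terms with $\lenk\ge 3$, the $\mathcal{E}$, $\mathcal{W}$, $\dd\mathcal{B}$ terms, and the commutator $[{\cal Q}_u,\thn^{(\fn)}_{u,\bsig}]$ term) satisfies the sum-zero property ${\cal P}\circ(\cdot)=0$ and, by \Cref{lem_decayLoop} together with \Cref{lem_+Q}, the $(u,\e,D)$-fast-decay property. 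Hence $\mathcal{U}^{(\fn)}_{u,t,\bsig}$ acting on them gains the factor $\ell_t^{d-1}/\ell_s^{d-1}\cdot(\ell_s^d\eta_s/\ell_t^d\eta_t)^{\fn}$ from \eqref{sum_res_2} rather than the weaker $(\eta_s/\eta_t)^{\fn}$ from \Cref{lem:sum_Ndecay}; it is precisely this gain that produces the prefactor $\ell_t^{d-1}/\ell_s^{d-1}$ on the right of \eqref{am;asoi222_weak} and removes one power of $\ell$ relative to \Cref{lem:STOeq_NQ}. The martingale term is handled exactly as in \eqref{sahwNQ2} via \Cref{lem:DIfREP}, now with \eqref{sum_res_2} in place of \eqref{sum_res_2_NAL}.

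Concretely, the steps in order are: (1) Plug the a priori bounds \eqref{CalEbwXi1}--\eqref{CalEbwXi} for $\mathcal{O}_{\cK}^{(\lenk)}(\mathcal{L}-\mathcal{K})$, $\mathcal{E}^{(\fn)}$, $\mathcal{W}^{(\fn)}$, and $(\mathcal{B}\otimes\mathcal{B})^{(2\fn)}$ — which, as noted after \eqref{CalEbwXi}, do not use the sign pattern — into \eqref{int_K-L+Q}; each is of the schematic size $\eta_u^{-1}(W^d\ell_u^d\eta_u)^{-\fn}$ times the relevant $\Xi$-quantity. (2) Apply \Cref{lem_+Q} to each of these (they have the decay property and are polynomially bounded, so ${\cal Q}_u$ preserves decay and costs only $W^{C\e}$), then apply \eqref{sum_res_2} to $\mathcal{U}^{(\fn)}_{u,t,\bsig}\circ{\cal Q}_u\circ(\cdot)$; multiplying by the normalization $(W^d\ell_t^d\eta_t)^{\fn}$ converts the $(\ell_s^d\eta_s/\ell_t^d\eta_t)^{\fn}$ into $1$ and leaves the $\ell_t^{d-1}/\ell_s^{d-1}$ factor. (3) Bound the martingale contribution by Burkholder--Davis--Gundy (\Cref{lem:DIfREP}) combined with the sum-zero kernel estimate, yielding the $(\Xi^{(\cal L)}_{u,2\fn+2})^{1/2}$ term. (4) For the last, genuinely new, term $\int_s^t\mathcal{U}^{(\fn)}_{u,t,\bsig}\circ\{[{\cal P}\circ(\mathcal{L}-\mathcal{K})^{(\fn)}_{u,\bsig}]\,\partial_u\dthn^{(\fn)}_{u}\}\dd u$, use \eqref{eq:sumzero666} to see the integrand has sum zero, control ${\cal P}\circ(\mathcal{L}-\mathcal{K})^{(\fn)}_{u,\bsig}$ by the Ward-identity bound \eqref{jywiiwsoks} (which introduces $\Xi^{(\cal L-\cal K)}_{u,\fn-1}$, the $k=\fn-1$ endpoint of the first max in \eqref{am;asoi222_weak}), and bound $\partial_u\dthn^{(\fn)}_{u,\ba}$ using \eqref{eq:diff_Theta} and the $\Theta$-estimates \eqref{prop:ThfadC}, \eqref{prop:BD1}. (5) Handle the boundary term $\mathcal{U}^{(\fn)}_{s,t,\bsig}\circ{\cal Q}_s\circ(\mathcal{L}-\mathcal{K})^{(\fn)}_{s,\bsig}$ using the induction hypothesis \eqref{Eq:L-KGt+IND} at time $s$ together with \eqref{sum_res_2}. (6) Recombine via \eqref{eq:sumzero_op}: $(\mathcal{L}-\mathcal{K})^{(\fn)}_{t,\bsig,\ba}={\cal Q}_t\circ(\mathcal{L}-\mathcal{K})^{(\fn)}_{t,\bsig,\ba}+[{\cal P}\circ(\mathcal{L}-\mathcal{K})^{(\fn)}_{t,\bsig}]_{[a_1]}\dthn^{(\fn)}_{t,\ba}$, where the second piece is already bounded by \eqref{eq:Ward_typeP}; integrate over $u$ and perform the elementary $\int_s^t \eta_u^{-1}\dd u$-type computations (using $1-t\asymp\eta_t$ and the time-monotonicity of $\ell_u,\eta_u^{-1}$) to land on \eqref{am;asoi222_weak}. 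A final $N^{-C}$-net argument upgrades the pointwise-in-$u$ estimates to uniformity in $u\in[s,t]$, exactly as at the end of \Cref{lem:STOeq_NQ}'s proof.

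The main obstacle I anticipate is controlling the commutator term $[{\cal Q}_u,\thn^{(\fn)}_{u,\bsig}]\circ(\mathcal{L}-\mathcal{K})^{(\fn)}_{u,\bsig,\ba}$: one must show this commutator is small in $\infty\to\infty$ norm and, via \eqref{pqthlk}, still has the sum-zero property so that \eqref{sum_res_2} applies with the $\ell_t^{d-1}/\ell_s^{d-1}$ gain. Writing out the commutator from \eqref{def:op_thn} and \eqref{eq:sumzero_op}, the ${\cal Q}_u$ and $\thn^{(\fn)}_{u,\bsig}$ pieces nearly cancel because $\thn^{(\fn)}_{u,\bsig}$ acting on the rank-one tensor $[{\cal P}\circ(\cdot)]_{[a_1]}\dthn^{(\fn)}_{u,\ba}$ produces something close to $[{\cal P}\circ\thn^{(\fn)}_{u,\bsig}\circ(\cdot)]_{[a_1]}\dthn^{(\fn)}_{u,\ba}$ modulo finite-difference remainders of the $\Theta$-propagators; these remainders are estimated by \eqref{prop:BD1}--\eqref{prop:BD2}, which is exactly where the parity-symmetry improvement \eqref{sym_cond} and \eqref{eq:A_zero_sym} would (in $d=2$) be needed if one wanted \eqref{sum_res_2_sym}, but for the present weaker statement \eqref{am;asoi222_weak} only \eqref{sum_res_2} is required. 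This is the same computation carried out in \cite{Band1D} for the $\lambda=1$ band-matrix case, and the argument transfers verbatim using the $\Theta$-estimates of \Cref{lem_propTH}; I expect no genuinely new difficulty beyond bookkeeping the extra $\lambda$-dependent factors in $\ell_t$.
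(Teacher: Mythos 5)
Your proposal is correct and follows essentially the same strategy the paper indicates (the paper itself omits this proof, saying only that it ``follows the same reasoning as in \cite{Band1D}, utilizing equation \eqref{int_K-L+Q} and the bound \eqref{sum_res_2}''): expand via the sum-zero Duhamel formula \eqref{int_K-L+Q}, apply \Cref{lem_+Q} and the Case II estimate \eqref{sum_res_2} to each term, recombine with \eqref{eq:Ward_typeP}, and integrate. Your identification of the $\ell_t^{d-1}/\ell_s^{d-1}$ prefactor as coming precisely from the ratio between \eqref{sum_res_2} and \eqref{sum_res_Ndecay}/\eqref{sum_res_2_NAL} is exactly right, as is the role of \eqref{jywiiwsoks} producing the $k=\fn-1$ endpoint.

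One small inaccuracy worth flagging: for the commutator term $[{\cal Q}_u,\thn^{(\fn)}_{u,\bsig}]\circ(\mathcal L-\mathcal K)$, you anticipate needing finite-difference estimates \eqref{prop:BD1}--\eqref{prop:BD2} on $\Theta$ to capture a near-cancellation. That is not the route the paper takes in the companion proof of the stronger \Cref{lem:STOeq_Qt}: there the commutator is written exactly as
\[
\big[{\cal Q}_u,\thn^{(\fn)}_{u,\bsig}\big]\circ \mathcal A
=\thn^{(\fn)}_{u,\bsig}\circ\big([{\cal P}\circ\mathcal A]\,\dthn^{(\fn)}_u\big)
-\big[{\cal P}\circ\thn^{(\fn)}_{u,\bsig}\circ\mathcal A\big]\,\dthn^{(\fn)}_u,
\]
and each piece is bounded directly in $\infty\to\infty$ norm using $\|\thn^{(\fn)}_{u,\bsig}\|\le\eta_u^{-1}$ and the two bounds in \eqref{eq:derv_Theta}, giving \eqref{eq:boundcommutator} and then \eqref{A4} without any Taylor/finite-difference expansion of the propagator. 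Your finite-difference route may also work (and is closer in spirit to the $2$D refinement \eqref{sum_res_2_sym}), but it is more work than needed for this weaker statement, and the remark that the commutator is the main obstacle somewhat overstates its difficulty relative to the direct operator-norm bound. Also a cosmetic slip: equation \eqref{int_K-L+Q} has six, not five, non-boundary terms on its right-hand side (the $\lenk\ge3$ term, $\mathcal E$, $\mathcal W$, the martingale, the commutator, and the $[{\cal P}\circ(\mathcal L-\mathcal K)]\,\partial_u\dthn^{(\fn)}_u$ term).
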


Compared to \eqref{am;asoiuw}, the estimate \eqref{am;asoi222_weak} is weaker by a factor $\ell_t/\ell_s$ when $d=2$, due to the additional \smash{$\ell_t^{d-1}/\ell_s^{d-2}$} factor arising in \eqref{sum_res_2}. To complete Step 3, it is necessary to eliminate this factor. 
As mentioned in the introduction, addressing this issue requires the exploration of a CLT-type cancellation mechanism. 
We formalize this in the following lemma. Owing to its similarity to the argument in \cite[Section 7]{Band2D}, we choose to defer the proof to \Cref{sec:add_d=2}. 
Nonetheless, for the reader’s convenience, the proof is presented in a self-contained manner.


\begin{lemma}\label{lem:STOeq_Qt} 
In the setting of \Cref{lem:STOeq_Qt_weak}, we have the following estimate: 
\begin{align}\label{am;asoi222}
 \wh\Xi^{({\cal L-\cal K})}_{t,\fn}
 \prec \sup_{u\in [s,t]}
  \left(\; \max_{k\in \qqq{1,\fn-1} }\Xi^{({\cal L-\cal K})}_{u,k}+\max_{k\in \qqq{2, \fn} }\frac{\Xi^{({\cal L-\cal K})}_{u,k}\Xi^{({\cal L-\cal K})}_{u,\fn-k+2}}{W^d\ell_u^d\eta_u} +\Xi^{({\cal L})}_{u,\fn+1}+\left(\Xi^{(\cal L)}_{u, {2\fn+2}}\right)^{1/2}\right).
\end{align}
\end{lemma}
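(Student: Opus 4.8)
The plan is to upgrade \Cref{lem:STOeq_Qt_weak} by removing the spurious factor $\ell_t^{d-1}/\ell_s^{d-1}$ that appears in dimension $d=2$. The source of that loss is the bound \eqref{sum_res_2} in Case II of \Cref{lem:sum_decay}, which for $d=2$ carries an extra $\ell_t/\ell_s$ compared with the sharp $(\ell_s^d\eta_s/\ell_t^d\eta_t)^\fn$ scaling; this deficit is exactly what \eqref{sum_res_2_sym} recovers when the tensor additionally satisfies the parity symmetry \eqref{eq:A_zero_sym}. So the core of the argument is to show that the relevant sum-zero tensors appearing on the right-hand side of the Duhamel equation \eqref{int_K-L+Q} --- namely $\cal Q_u \circ \mathcal E^{(\fn)}$, $\cal Q_u\circ \mathcal W^{(\fn)}$, $\cal Q_u\circ [\mathcal O_{\cK}^{(\lenk)}(\mathcal L-\mathcal K)]$, the commutator term $[\cal Q_u,\thn^{(\fn)}_{u,\bsig}]\circ(\mathcal L-\mathcal K)$, and the genuine sum-zero term $[\cal P\circ(\mathcal L-\mathcal K)]\partial_u\dthn_u^{(\fn)}$ --- either already satisfy, or can be decomposed into pieces that satisfy, the parity symmetry \eqref{eq:A_zero_sym}, up to errors controllable by the weaker bound \eqref{am;asoi222_weak}. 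This is where the parity symmetry hypothesis \eqref{sym_cond} on $\Psi$ (and the corresponding symmetries of $M$, $M^{(\sig_1,\sig_2)}$, and $\Theta_t^{(\sig_1,\sig_2)}$ in \Cref{lem:propM}(2) and \Cref{lem_propTH}(2)) enters decisively: under the symmetry condition, each of the building-block tensors inherits an even/odd structure under $[a_i]\mapsto 2[a_1]-[a_i]$ relative to the anchor $[a_1]$.

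Concretely, I would proceed as follows. First, record that under \eqref{sym_cond} the $G$-loops themselves enjoy a distributional parity symmetry at the block level, hence the deterministic limits $\cK^{(\fn)}$ and therefore the $(\cal L-\cK)$-loops split into an ``even'' part (satisfying \eqref{eq:A_zero_sym}) plus a higher-order ``odd'' remainder whose size is controlled by one extra power of $(W^d\ell_u^d\eta_u)^{-1/2}$ or a first-finite-difference gain. Second, verify that the kernel-type operators ($\thn^{(\fn)}_{u,\bsig}$, hence $\mathcal U^{(\fn)}_{u,t,\bsig}$, and the $\cut/\cutL/\cutR$-based operators defining $\mathcal O_{\cK}^{(\lenk)}$, $\mathcal E^{(\fn)}$, $\mathcal W^{(\fn)}$) preserve the parity structure about the anchor vertex, using the block translation and parity invariance of $M^{(\sig_i,\sig_{i+1})}$, $S^{\LK}$, and $\Theta_t$. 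Third, confirm that $\cal Q_u$ preserves the symmetry (it subtracts $\cal P\circ\cal A$ times $\dthn^{(\fn)}_{u,\ba}$, and $\dthn^{(\fn)}_{u,\ba}$ is parity-even about $[a_1]$ by the parity symmetry of $\Theta_t^{(+,-)}$), so $\cal Q_u\circ(\text{even tensor})$ is again even with sum zero. Fourth, for the commutator term $[\cal Q_u,\thn^{(\fn)}_{u,\bsig}]\circ(\mathcal L-\mathcal K)$ and the $\partial_u\dthn_u^{(\fn)}$ term, which are the subtle ones, use the Ward-identity reduction \eqref{eq:Ward_typeP} and \eqref{jywiiwsoks} to show they are already of lower order $\Xi^{(\cal L-\cal K)}_{u,\fn-1}$ (so that a possible $\ell_t/\ell_s$ loss is harmless) or decompose them using \eqref{eq:A_zero_sym}. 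Then apply \eqref{sum_res_2_sym} to the even, sum-zero pieces and \eqref{sum_res_2} to the lower-order odd pieces, assemble via Duhamel as in the proof of \eqref{am;asoi222_weak}, integrate over $u$ using \eqref{con_st_ind}, and conclude \eqref{am;asoi222}. The martingale term is handled as in \eqref{sahwNQ2} via \Cref{lem:DIfREP}, now invoking \eqref{sum_res_2_sym} on the symmetrized quadratic-variation tensor $(\cB\otimes\cB)^{(2\fn)}$, which also inherits parity symmetry about the anchor because it is built from $G$-loops with matched conjugate structure.

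The main obstacle I anticipate is the rigorous bookkeeping for the commutator $[\cal Q_u,\thn^{(\fn)}_{u,\bsig}]$: the operators $\cal Q_u$ and $\thn^{(\fn)}_{u,\bsig}$ both reference the anchor vertex $[a_1]$, but $\thn^{(\fn)}_{u,\bsig}$ also sums over $[a_1]$ in its $i=1$ term, so the commutator picks up precisely the terms where the ``subtract the sum'' operation fails to commute past the $i=1$ convolution. One must show this residual term has a sum-zero and parity-even structure about the (new) summation variable, which requires carefully tracking how $\cal P\circ(\mathcal L-\mathcal K)$ transforms under the convolution with $M^{(\sig_1,\sig_2)}/(1-uM^{(\sig_1,\sig_2)})$ and using \eqref{eq:WardM1} together with the parity symmetry of $\Theta_u^{(+,-)}$. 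A secondary difficulty is making the ``even part plus lower-order odd remainder'' decomposition of $(\mathcal L-\mathcal K)$ precise enough that the odd remainder genuinely comes with the extra smallness needed — this hinges on the first-order finite-difference estimate \eqref{prop:BD1} combined with the fact that, in the alternating-sign case \eqref{NALsig_diff}, the parity asymmetry of a $G$-loop about its first vertex is itself a ``derivative-type'' object of one higher order. Everything else (the $L^\infty\to L^\infty$ bounds, the fast-decay truncations, the integration in $u$) is a routine repetition of the arguments already used for \eqref{am;asoiuw} and \eqref{am;asoi222_weak}.

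\begin{proof}[\bf Proof of \Cref{lem:STOeq_Qt}]
See \Cref{sec:add_d=2}.
\end{proof}
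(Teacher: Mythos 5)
There is a genuine gap, and it concerns the fluctuation part. Your strategy is built entirely on the parity symmetry \eqref{sym_cond} and the improved kernel bound \eqref{sum_res_2_sym}, which requires the tensor to satisfy the pathwise identity \eqref{eq:A_zero_sym}. Parity symmetry of the model gives this identity only at the level of \emph{distributions} and hence of \emph{expectations}: $\E(\mathcal L-\mathcal K)^{(\fn)}_{u,\bsig,\ba}$ is exactly even in each $[a_i]-[a_1]$, so \eqref{sum_res_2_sym} applies to it directly, no ``odd remainder'' needed. But a fixed realization of $(\mathcal L-\mathcal K)$ has no such symmetry, and the fluctuation $\Q(\mathcal L-\mathcal K)$ is the \emph{dominant} part of the loop (of full size $(W^d\ell_u^d\eta_u)^{-\fn}$; compare \Cref{ML:GLoop} with \Cref{ML:GLoop_expec}). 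Your claim that the odd part is controlled by an extra power of $(W^d\ell_u^d\eta_u)^{-1/2}$, or by the deterministic first-difference estimate \eqref{prop:BD1}, would make the fluctuation smaller than it actually is; there is no pathwise cancellation to exploit that way, and \eqref{prop:BD1} is a bound on the deterministic $\Theta$-propagator, not on the random loops.

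What actually closes this gap in the paper is a different mechanism altogether. After splitting each term $\cal A_i(u)$ on the RHS of \eqref{int_K-L+Q} as $\E\cal A_i + \Q\cal A_i$ and treating the expectation via \eqref{sum_res_2_sym}, the fluctuation is handled by a CLT-type concentration argument: one bounds the $(2p)$-th moment of $\mathcal U^{(\fn)}_{s,t,\bsig}\circ\Q\cal A_i$ and shows that terms with an ``isolated'' summation vertex --- one at distance $\gg\ell_s$ from all others --- are $O(W^{-D})$, via a telescoping replacement of the Gaussian entries of $V_s$ by an independent copy, Taylor expansion, resolvent expansion for the modified resolvent $R_u^\gamma(y)$, and Gaussian integration by parts together with the fast-decay property of $G$-loops. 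The surviving pairing terms then produce exactly the extra factor $(\ell_s/\ell_t)^{2p}$ that removes the $\ell_t/\ell_s$ loss. The martingale term is handled separately by a \emph{double} sum-zero property of $(\mathcal Q_u\otimes\mathcal Q_u)\circ(\cB\otimes\cB)^{(2\fn)}$ together with a dedicated kernel estimate, not by parity symmetry of the quadratic variation (which, being random, again has no pathwise symmetry). None of this concentration/decoupling machinery appears in your proposal, and it cannot be recovered from the parity-symmetry route.
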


We are now ready to complete the proof of Step 3.  
In this step, besides the assumptions in \Cref{lem:main_ind}, we have also established the a priori $G$-loop bound \eqref{lRB1} in Step 1, as well as the sharp local law \eqref{Gt_bound_flow} and the weak $2$-$G$ loop estimate \eqref{Eq:Gdecay_w} in Step 2. Furthermore, by \eqref{Gt_bound_flow}, \eqref{Eq:Gdecay_w}, and \eqref{GavLGEX}, we have that 
\begin{align}\label{eq:res_ELK_n=1} 
  \max_{[a]}  \left|\langle (G_u-M) E_{[a]}\rangle\right|
  \prec (W^d\ell_u^d\eta_u)^{-1}  ,
  \end{align} 
which implies that $\wh\Xi_{u,1}^{(\cL-\cK)}\prec 1$ uniformly in $u\in[s,t]$. 
With these inputs, in Lemma \ref{lem:STOeq_Qt}, we have established the following estimate uniformly in $u\in [s,t]$: 
\begin{align}\label{am;asoi333}
\sup_{v\in[s,u]} \wh \Xi^{({\cal L-\cal K})}_{v,\fn}
\prec \sup_{v\in [s,u]}
  \left(\; \max_{r\in \qqq{2,\fn-1} }\Xi^{({\cal L-\cal K})}_{v,r}+\max_{r\in \qqq{2, \fn} } \frac{\Xi^{({\cal L-\cal K})}_{v,r}\Xi^{({\cal L-\cal K})}_{v,\fn-r+2}}{W^d\ell_v^d\eta_v }+\Xi^{({\cal L})}_{v,\fn+1}+\left(\Xi^{(\cal L)}_{v, {2\fn+2}}\right)^{1/2}\right).
\end{align}
(By Lemma \ref{lem:STOeq_Qt}, the estimate \eqref{am;asoi333} holds for each fixed $u\in[s,t]$. Again, applying an $N^{-C}$-net argument extends it uniformly to all $u\in [s,t]$.)
Similar to the argument in Section 5.6 of \cite{Band1D}, we will iterate the estimate \eqref{am;asoi333} to derive the sharp bound \eqref{Eq:LGxb} on $G$-loops, that is, for each fixed $\fn\in \N$, 
\be\label{eq:Xiiter}
\sup_{u\in [s,t]} \wh\Xi^{(\cL)}_{u,\fn} \prec 1.
\ee

By the $\cK$-loop bound \eqref{eq:bcal_k}, $\wh\Xi^{(\cal L)}_{u,\fn}$ and $\wh\Xi^{(\cal L-\cal K)}_{u,\fn}$ bound each other as follows:
\begin{align}\label{rela_XILXILK}
    \wh\Xi^{(\cal L)}_{u,\fn}\prec 1+\left(W^d\ell_u^d\eta_u\right)^{-1}\cdot \wh\Xi^{({\cal L-\cal K})}_{u, \fn},\quad \quad \wh\Xi^{({\cal L-\cal K})}_{u, \fn}\prec \left(W^d\ell_u^d\eta_u\right) \left(\wh\Xi^{(\cal L)}_{u,\fn}+1\right).
\end{align}
Moreover, by the a priori $G$-loop bound in \eqref{lRB1}, we have the following initial bound uniformly in $u\in[s,t]$:
\begin{align}\label{sef8w483r324}
  \wh\Xi^{(\cal L)}_{u,\fn}\prec (\ell_u^d/\ell_s^d)^{\fn-1}, \quad 
 \wh\Xi^{({\cal L-K})}_{u, \fn}\prec (\ell_u^d/\ell_s^d)^{\fn-1}\cdot (W^d\ell_u^d\eta_u) .
\end{align}
Then, for $u\in [s,t]$, we define the control parameter
\begin{equation}\label{adsyzz0s8d6}
      \Psi_{u}(\fn,k;[s,t]):=  (W^d\ell_s^d\eta_s)^{1/2}+(\ell_t^d/\ell_s^d)^{\fn-1}\times
    \begin{cases}
        W^d\ell_u^d\eta_u , &  \ \text{if}\ k=0\\
        (W^d\ell_s^d\eta_s)^{1-k/4}, &   \ \text{if}\  k\ge 1 
    \end{cases}.
\end{equation} 
Note that when $k\ge 1$, $\Psi_{u}(\fn,k;[s,t])$ does not depend on $u$. 
The iterations will be performed in both $\fn$ and $k$ at the same time. We summarize the result of each iteration in the next lemma, whose proof will be deferred to \Cref{sec:pf_iterations}. 

\begin{lemma}\label{lem:iterations}
    In the setting of \Cref{lem:main_ind}, suppose \eqref{Eq:Gdecay_w} and \eqref{am;asoi333} hold uniformly in $u\in[s,t]$. Fix any $\fn,k\in \N$ with $\fn\ge 2$ and $k\ge 1$. Suppose the following estimate holds uniformly in $u\in[s,t]$: 
    \be\label{eq:iteration_induc}
    \sup_{v\in[s,u]}\wh \Xi^{({\cal L-K})}_{v,r}\prec   \sup_{v\in[s,u]}\Psi_v(r,l;[s,u])\ee
    for all $(r,l)$ satisfying one of the following conditions: (1) $l=k$ and $2\le r \le \fn-1$; (2) $l=k-1$ and $2\le r\le \fn+2$. Then, we have the following estimate uniformly in $u\in[s,t]$:
    \be\label{eq:iteration_improve}
    \sup_{v\in[s,u]}\wh \Xi^{({\cal L-K})}_{v,\fn}\prec   \Psi_u(\fn,k;[s,u]).\ee
\end{lemma}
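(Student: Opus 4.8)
The plan is to feed the two families of induction hypotheses directly into the master recursion \eqref{am;asoi333} (established in Step~3 by combining the non-alternating bound of \Cref{lem:STOeq_NQ} with the alternating bound of \Cref{lem:STOeq_Qt}) and to bound each of its four constituent terms by $\Psi_u(\fn,k;[s,u])$. Throughout, I would use three structural facts: (i) $\Psi_v(r,l;[s,v])$ is non-decreasing in the length $r$ (because $\ell_v\ge\ell_s$) and in $v$, and non-increasing in the level $l$ (because $W^d\ell_s^d\eta_s\gg1$), so that the $\sup_{v\in[s,u]}$ of these parameters is attained at $v=u$ and is monotone in $r$; (ii) the relation \eqref{rela_XILXILK} converting $\wh\Xi^{(\cL)}$-bounds into $\wh\Xi^{(\cL-\cK)}$-bounds and vice versa; and (iii) the a priori bound \eqref{sef8w483r324}. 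I would also record at the outset that, under the short-interval constraint \eqref{con_st_ind}, the ratios $\ell_v^d/\ell_s^d$ and $(W^d\ell_s^d\eta_s)/(W^d\ell_v^d\eta_v)$ for $v\in[s,u]$ are both bounded by a fixed (small) power of $W^d\ell_t^d\eta_t$, hence can be absorbed into the $N^\e$ tolerance implicit in $\prec$; this is where the hypothesis \eqref{con_st_ind} enters. The leading ``$1+$'' in \eqref{am;asoi333} is harmless since $\Psi_u(\fn,k;[s,u])\ge(W^d\ell_s^d\eta_s)^{1/2}\ge1$.

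For the first term $\max_{r\in\qqq{2,\fn-1}}\Xi^{(\cL-\cK)}_{v,r}$, each length $r\le\fn-1$ is covered by hypothesis~(1) at level $k$, and $\Psi_u(r,k;[s,u])\le\Psi_u(\fn,k;[s,u])$, so this term is acceptable. For the quadratic term $\max_{r\in\qqq{2,\fn}}\Xi^{(\cL-\cK)}_{v,r}\Xi^{(\cL-\cK)}_{v,\fn-r+2}/(W^d\ell_v^d\eta_v)$: when $3\le r\le\fn-1$ both factors have length at most $\fn-1$ and are controlled at level $k$ by hypothesis~(1); in the extremal cases $r\in\{2,\fn\}$ one factor has the full length $\fn$, for which I would invoke hypothesis~(2) at level $k-1$ (note $\fn\le\fn+2$), the other factor having length $2$ (controlled at level $k$ by hypothesis~(1), or at level $k-1$ when $\fn=2$). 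In every case, dividing by $W^d\ell_v^d\eta_v$ produces precisely the gain that turns a product of $\Psi(\cdot,k-1)$- and $\Psi(\cdot,k)$-type factors into a $\Psi(\fn,k)$-type bound — this exponent count is the arithmetic origin of the $1-k/4$ in \eqref{adsyzz0s8d6}, and I would verify it by tracking the powers of $W^d\ell_s^d\eta_s$ and of $\ell_u^d/\ell_s^d$ separately. The term $\Xi^{(\cL)}_{v,\fn+1}$ is handled via \eqref{rela_XILXILK}: $\Xi^{(\cL)}_{v,\fn+1}\lesssim1+(W^d\ell_v^d\eta_v)^{-1}\Xi^{(\cL-\cK)}_{v,\fn+1}$, and since $\fn+1\le\fn+2$ the second factor is bounded by $\Psi_v(\fn+1,k-1;[s,v])$ via hypothesis~(2), the $(W^d\ell_v^d\eta_v)^{-1}$ again supplying the improvement from level $k-1$ to level $k$.

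The genuinely delicate term is the last one, $(\Xi^{(\cL)}_{v,2\fn+2})^{1/2}$, whose loop length $2\fn+2$ exceeds the range $r\le\fn+2$ covered by the hypotheses; it arises from the quadratic variation of the martingale term $(\cB\otimes\cB)^{(2\fn)}$. My plan is to estimate it using \eqref{rela_XILXILK} together with the unconditional a priori bound \eqref{sef8w483r324}, which gives $\Xi^{(\cL)}_{v,2\fn+2}\prec(\ell_v^d/\ell_s^d)^{2\fn+1}$ and hence $(\Xi^{(\cL)}_{v,2\fn+2})^{1/2}\prec(\ell_v^d/\ell_s^d)^{\fn+1/2}$; by the short-interval constraint \eqref{con_st_ind} this ratio is at most a fixed small power of $W^d\ell_t^d\eta_t$, which — for the (finitely many, and sufficiently finely subdivided) lengths that enter the argument — is dominated by the floor term $(W^d\ell_s^d\eta_s)^{1/2}$ in $\Psi_u(\fn,k;[s,u])$. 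This is exactly the purpose of that constant floor in the definition \eqref{adsyzz0s8d6}: it absorbs the non-improvable contribution of the doubled loop. I expect this bookkeeping — making sure that the short interval renders $\ell_t/\ell_s$ close enough to $1$ relative to the size of $W^d\ell_s^d\eta_s$, uniformly over the relevant loop lengths, and that the decrements in the exponent $1-k/4$ across terms one through three are all consistent — to be the main obstacle; the remaining ingredients (monotonicity of $\Psi$, the conversions \eqref{rela_XILXILK}, and the final $N^{-C}$-net argument that promotes the fixed-$u$ estimates to uniformity in $u\in[s,t]$) are routine.
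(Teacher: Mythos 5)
Your treatment of the first three terms in the recursion \eqref{am;asoi333}---the linear term, the quadratic term, and $\Xi^{(\cL)}_{v,\fn+1}$---follows the paper's argument and is fine. But your handling of the fourth term $(\Xi^{(\cL)}_{v,2\fn+2})^{1/2}$ has a genuine gap, and this is precisely the place where the paper introduces a new idea you have not supplied.

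Concretely, you propose to bound $\wh\Xi^{(\cL)}_{v,2\fn+2}$ solely by the a priori estimate \eqref{sef8w483r324}, giving $(\Xi^{(\cL)}_{v,2\fn+2})^{1/2}\prec(\ell_v^d/\ell_s^d)^{\fn+1/2}$, and then claim this is absorbed by the floor $(W^d\ell_s^d\eta_s)^{1/2}$. Under \eqref{con_st_ind}, one only has $\ell_v^d/\ell_s^d\lesssim(W^d\ell_t^d\eta_t)^{1/100}$ (up to constants), so the bound you get is $\sim(W^d\ell_t^d\eta_t)^{(\fn+1/2)/100}$. The exponent here grows \emph{linearly in $\fn$}, whereas the floor in $\Psi_u(\fn,k;[s,u])$ has the fixed exponent $1/2$. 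Hence for $\fn$ of order $50$ or larger your bound strictly exceeds the target, and the argument collapses. The phrase ``fixed small power'' in your write-up is the culprit: the power is not fixed, it is $\fn$-dependent. The lemma is stated for \emph{every} $\fn\ge2$, so this is not a removable edge case.

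The paper resolves this by not bounding the $(2\fn+2)$-loop crudely but by \emph{cutting it in half} with two applications of Cauchy--Schwarz (cf.\ \eqref{suauwiioo1}--\eqref{auskoppw2}): write $\cL^{(2\fn+2)}$ as a block sum of $|\cC^{(\fn+1)}_{xx'}|^2$ over a chain of length $\fn+1$, then apply Cauchy--Schwarz again on the chain to obtain $\cL^{(2\fn+2)}\le\cL^{(2l_1)}\cL^{(2l_2)}$ with $l_1=\lfloor(\fn+1)/2\rfloor$, $l_2=\fn+1-l_1$. The key point is that $\max(2l_1,2l_2)\le\fn+2$, so both factors fall within the range of hypothesis (2) at level $k-1$. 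This yields $\wh\Xi^{(\cL)}_{v,2\fn+2}\prec(W^d\ell_v^d\eta_v)\,\Xi^{(\cL)}_{v,2l_1}\Xi^{(\cL)}_{v,2l_2}$ and, after converting via \eqref{rela_XILXILK} and inserting the level-$(k-1)$ hypothesis, gives the needed $\Psi_u(\fn,k;[s,u])^2$ on $\wh\Xi^{(\cL)}_{v,2\fn+2}$, valid for all $\fn$. Without this splitting step the martingale contribution cannot be controlled, and the iteration in $k$ never gets off the ground. I would encourage you to re-derive this Cauchy--Schwarz reduction and then re-run the exponent bookkeeping; the rest of your proof (monotonicity of $\Psi$ in $r,v,l$, the conversions \eqref{rela_XILXILK}, absorption of $\ell_u/\ell_s$-ratios via \eqref{con_st_ind}, and the $N^{-C}$-net step) is sound.
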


Roughly speaking, this lemma states if we have already established a ``good" bound for every shorter $G$-loop of length $r\le \fn-1$ (case (1)) or a ``weaker" bound for $G$-loops of length $r\le \fn+2$ (case (2)), then we can derive the ``good" bound for all $G$-loops of length $\fn$. 
Using \Cref{lem:iterations}, the iterative argument leading to the proof of \eqref{Eq:LGxb} in Step 3 proceeds as follows:


\begin{proof}[\bf Step 3: Proof of \eqref{Eq:LGxb}]
By \eqref{sef8w483r324}, we initially have a weak bound for $G$-loops of arbitrarily large lengths, meaning that \eqref{eq:iteration_induc} holds with $l=0$ for every $r\in \N$. Applying \Cref{lem:iterations} once, we obtain a slightly improved bound \eqref{eq:iteration_improve} with $k=1$ and $\fn=2$. Then, continuing the iteration in $\fn$ while keeping $k=1$ fixed, we establish the bound \eqref{eq:iteration_improve} for each fixed $\fn\in \N$ with $k=1$. 
Next, applying the iteration in \Cref{lem:iterations} again yields an even stronger bound \eqref{eq:iteration_improve} with $\fn=2$ and $k=2$. Repeating the iteration in $\fn$ while keeping $k=2$ fixed, we can establish the bound \eqref{eq:iteration_improve} for every fixed $\fn\in \N$ with $k=2$. This process continues, progressively improving the bound to \eqref{eq:iteration_improve} for each fixed $\fn\in \N$ with $k=3$, and so forth.

For any given $(\fn,k)\in \N^2$, by repeating the above argument for $\OO(1)$ times, we conclude that the estimate \eqref{eq:iteration_improve} holds. As a special case, it gives that  
\smash{\( \sup_{u\in[s,t]}\wh \Xi^{({\cal L-K})}_{u,\fn}\prec   \Psi_t(\fn,k;[s,t]). \)}
In particular, by condition \eqref{con_st_ind}, if we choose $k>2+\fn/25$, then $(\ell_t^d/\ell_s^d)^{\fn-1}\cdot  (W^d\ell_s^d\eta_s)^{1-k/4} \ll (W^d\ell_s^d\eta_s)^{1/2}$, which leads to 
$$\sup_{u\in[s,t]}\wh\Xi^{({\cal L-K})}_{u,\fn}\prec \Psi_t(\fn,k;[s,t]) \lesssim (W^d\ell_s^d\eta_s)^{1/2}.$$
Together with \eqref{rela_XILXILK}, it implies the estimate \eqref{eq:Xiiter} under the condition \eqref{con_st_ind}, thereby completing the proof of the $G$-loop bound in  \eqref{Eq:LGxb}.
\end{proof}

\subsection{Step 4: Sharp $(\cL-\cK)$-loop estimate}
 
For the proof of \eqref{Eq:L-KGt-flow}, we recall the estimate \eqref{am;asoi333} established in Step 3. 
In this step, using the sharp $G$-loop bound \eqref{Eq:LGxb}, we can choose the parameters \smash{$\Xi^{(\cal L)}_{v,\fn+1}=1$ and $\Xi^{(\cal L)}_{v,2\fn+2}=1$}. 
As a consequence, we obtain that for any $\fn\ge 2$: 
 \begin{align} \label{saww02}
 \sup_{v\in[s,u]}\wh\Xi_{v,\fn}^{(\mathcal{L}-\mathcal{K})}  \prec  \sup_{v\in [s,u]}
  \left(\max_{r\in\qqq{2,\fn-1}}\Xi^{({\cal L-\cal K})}_{v,r}+(W^d\ell_v^d\eta_v)^{-1}\max_{r\in\qqq{2,\fn} }\Xi^{({\cal L-\cal K})}_{v,r}\Xi^{({\cal L-\cal K})}_{v,\fn-r+2}\right).
\end{align}
On the other hand, by the rough bound \eqref{Eq:Gdecay_w} on $2$-$G$ loops, we have 
\be \label{saww02_ini}
\sup_{v\in [s,u]}\wh\Xi_{v,2 }^{(\mathcal{L}-\mathcal{K})}\prec (\eta_s/\eta_u)^{4}.
\ee
Hence, we can choose \smash{$\Xi_{v,2 }^{(\mathcal{L}-\mathcal{K})}\equiv (\eta_s/\eta_u)^{4}$} for $v\in[s,u]$. 
Then, by taking $\fn=2$ in \eqref{saww02} and using this parameter along with the condition \eqref{con_st_ind}, we derive the following self-improving estimate for $2$-$G$ loops:  
$$\sup_{v\in[s,u]}\wh\Xi_{v,2}^{(\mathcal{L}-\mathcal{K})}  \prec  1+ (W^d\ell_u^d\eta_u)^{-1} \sup_{v\in [s,u]}
  \left(\Xi^{({\cal L-\cal K})}_{v,2}\right)^2 \prec 1+ (W^d\ell_u^d\eta_u)^{-1/2} \sup_{v\in [s,u]}
\Xi^{({\cal L-\cal K})}_{v,2}.$$ 
Iterating this estimate for $\OO(1)$ many times gives
\(\sup_{v\in[s,u]}\wh\Xi_{v,2}^{(\mathcal{L}-\mathcal{K})}\prec 1=:\Xi_{v,2}^{(\mathcal{L}-\mathcal{K})}.\) 
Starting from this initial bound, by applying \eqref{saww02} inductively in $\fn$, we derive that 
\smash{\(\sup_{v\in[s,u]}\wh\Xi_{v,\fn}^{(\mathcal{L}-\mathcal{K})}\prec 1\)} for any fixed $\fn\in\N$, which concludes the estimate \eqref{Eq:L-KGt-flow}.

\subsection{Step 5: Sharp 2-$G$ loop estimate}
 
To show that \eqref{Eq:Gdecay_flow} holds uniformly in $u\in[s,t]$, note we have established in Step 4 that 
$$
\max_{\bsig\in\{+,-\}^2}\max_{\ba\in(\Zn)^2}\left({\cal L}-{\cal K}\right)^{(2)}_{u,\bsig,\ba}\prec (W^d\ell_u^d\eta_u)^{-2} 
$$
uniformly in $u\in[s,t]$. It already implies that 
$$
\left({\cal L}-{\cal K}\right)^{(2)}_{u,\bsig,\ba} \prec {\cal T}_{u, D}(|[a_1]-[a_2]|) \quad \text{for}\quad |[a_1]-[a_2]|=\OO(\ell_t^*).
$$
On the other hand, when $|[a_1]-[a_2]|\ge 6 \ell_t^*$, \eqref{Eq:Gdecay_flow} is a direct consequence of \eqref{53}.


\appendix

\section{Proof of some results in \Cref{Sec:Stoflo}}\label{sec:main_appd}

\subsection{Proof of \Cref{lem:sum_Ndecay}} \label{sec:pf_sum_Ndecay}
Notice the following identity:
\be\label{eq:decompU}\frac{1 - s M^{(\sig_i,\sig_{i+1})}}{1 - t M^{(\sig_i,\sig_{i+1})}} = 1 + \Xi^{(i)},\quad \text{where}\quad \Xi^{(i)}:=(t-s)\Theta_t^{(\sig_i,\sig_{i+1})} M^{(\sig_i,\sig_{i+1})} .\ee
From \eqref{eq:WardM2}, we have $\|\Theta_t^{(\sig_i,\sig_{i+1})}\|_{\infty\to \infty}\lesssim (1-t)^{-1}$, and from \eqref{Mbound_AO2}, it follows that $\|M^{(\sig_i,\sig_{i+1})}\|_{\infty\to \infty}\lesssim 1$. Combining these two bounds yields that 
\be\label{Xi_infint}
\|\Xi^{(i)}\|_{\infty\to \infty}\lesssim (t-s)/(1-t).
\ee
Together with the  identity \eqref{eq:decompU}, this implies that 
$$\left\|\frac{1 - s M^{(\sig_i,\sig_{i+1})}}{1 - t M^{(\sig_i,\sig_{i+1})}}\right\|_{\infty\to \infty}\lesssim \frac{1-s}{1-t}\asymp \frac{\eta_s}{\eta_t}.$$
With this estimate, we easily conclude the proof using the definition \eqref{def_Ustz}.

\subsection{Proof of \Cref{lem:sum_decay}}\label{sec:pf_sum-decay}
The corresponding lemma for the setting of 1D random band matrices has been proved as Lemma 7.3 in \cite{Band1D}. In our setting, the proof for the $d=1$ case is exactly the same by using the estimates in \Cref{lem_propTH}. Hence, we will focus on the $d=2$ case below. 

With the decomposition  \eqref{eq:decompU}, we can express \smash{$\mathcal{U}^{(\fn)}_{s, t, \boldsymbol{\sigma}} \circ \mathcal{A}$} as
\begin{align}\label{eq:decomp_U2}
      \left(\mathcal{U}^{(\fn)}_{s, t, \boldsymbol{\sigma}} \circ \mathcal{A}\right)_{\ba}&=\sum_{\mathbf b\in (\Zn)^\fn} \prod_{i=1}^\fn \left( \delta_{[a_i][b_i]}+\Xi^{(i)}_{[a_i][b_i]} \right)  \cdot \mathcal{A}_{\mathbf b} =\sum_{\mathbf b\in (\Zn)^\fn}\sum_{A\subset \qqq{\fn}} \prod_{i\in A} \delta_{[a_i][b_i]} \cdot \prod_{i\in A^c} \Xi^{(i)}_{[a_i][b_i]}  \cdot \mathcal{A}_{\mathbf{b}}\, .
\end{align}
Using the estimates \eqref{Mbound_AO2} and \eqref{prop:ThfadC}, we can show that 
\be\label{eq:decayXi}
\Xi^{(i)}_{[a_i][b_i]}\prec \frac{\eta_s}{\eta_t  {\ell}_t^d} e^{-c |[a_i]-[b_i]|/ {\ell}_t}+ W^{-D}
\ee
for any large constant $D>0$. 
We claim that for any subset $A$ with $|A|=k\in \qqq{1,\fn}$, 
\be\label{sum_res_1_red0}
\sum_{\mathbf b\in (\Zn)^\fn} \prod_{i\in A} \delta_{[a_i][b_i]} \cdot \prod_{i\in A^c} \Xi^{(i)}_{[a_i][b_i]}  \cdot \mathcal{A}_{\boldsymbol{b}} \le W^{C\e}\left(\frac{\ell_s^d \eta_s}{\ell_t^d \eta_t}\right)^{\fn-k} \|{\cal A}\|_{\infty}+ W^{-D+C },
\ee
for a constant $C$ that does not depend on $\e$ or $D$. On the other hand, for $A= \emptyset$, we claim that 
\be\label{sum_res_1_red}
\sum_{\mathbf b\in (\Zn)^\fn} \prod_{i=1}^\fn \Xi^{(i)}_{[a_i][b_i]}  \cdot \mathcal{A}_{\boldsymbol{b}} \le W^{C\e} \frac{\ell_t^d }{\ell_s^d }\left(\frac{\ell_s^d \eta_s}{\ell_t^d \eta_t}\right)^{\fn} \|{\cal A}\|_{\infty}+ W^{-D+C}.
\ee
For \eqref{sum_res_1_red0}, we assume that $A=\qqq{1,k}$ without loss of generality, and let $\mathbf a'$ and $\mathbf b'$ denote $\mathbf a'=([a_{1}],\ldots,[a_{k}])$ and $\mathbf b'=([b_{k+1}],\ldots,[b_{\fn}])$, respectively. Then, we can bound the LHS of \eqref{sum_res_1_red0} as 
\begin{align*}
\sum_{\mathbf b'\in (\Zn)^{\fn-k}} \prod_{i=k+1}^\fn  \Xi^{(i)}_{[a_i][b_i]}  \cdot \mathcal{A}_{\ba',\mathbf{b}'} &\prec  \left(\frac{\eta_s}{\eta_t\ell_t^d }\right)^{\fn-k} \|\cal A\|_\infty \sum_{\mathbf b'} \mathbf 1\left(\max_{i=k+1}^{\fn}|[b_i]-[a_1]|\le W^\e\ell_s \right) +  W^{-D+d(\fn-k)}\\
    &\lesssim (W^{d\e})^{\fn-k}\left(\frac{\ell_s^d \eta_s}{\ell_t^d \eta_t}\right)^{\fn-k} \|\cal A\|_\infty  + W^{-D+d(\fn-k)} , 
\end{align*}
where we used \eqref{eq:decayXi}, the decay property \eqref{deccA0} for $\cal A_{\mathbf b}$, and the estimate \eqref{Xi_infint}, together with the condition $(1-t)/(1-s)\ge W^{-d}$. 
This concludes \eqref{sum_res_1_red0} for any constant $C>d(\fn-k)$.  
For \eqref{sum_res_1_red}, we have that 
\begin{align}
   \sum_{\mathbf b}\prod_{i=1}^{\fn} \Xi^{(i)}_{[a_i][b_i]} \cdot \mathcal{A}_{\mathbf b} &= \sum_{\mathbf b}\prod_{i=1}^{\fn} \Xi^{(i)}_{[a_i][b_i]} \cdot \mathcal{A}_{\mathbf b}\cdot  \mathbf 1\left(\max_{i\ne j}|[b_i]-[b_j]|\le W^\e\ell_s \right) + \OO\left( W^{-D+d\fn}\right) \nonumber\\
   &\prec  \left(\frac{\eta_s}{\eta_t\ell_t^d }\right)^{\fn-1} \|\cal A\|_\infty \sum_{[b_1]} \left|\Xi^{(1)}_{[a_1][b_1]}\right| \sum_{\mathbf b'} \mathbf 1\left(\max_{i=2}^{\fn}|[b_i]-[b_1]|\le W^\e\ell_s \right) +  W^{-D+d\fn} \nonumber\\
    &\lesssim W^{d (\fn-1) \e }\left(\frac{\eta_s\ell_s^d}{\eta_t\ell_t^d}\right)^{\fn-1} \|\cal A\|_\infty \sum_{[b_1]} \left|\Xi^{(1)}_{[a_1][b_1]}\right| + W^{-D+d\fn} \nonumber\\
    &\le W^{d\fn \e}\frac{\eta_s}{\eta_t}\left(\frac{\eta_s\ell_s^d}{\eta_t\ell_t^d}\right)^{\fn-1}\|\cal A\|_\infty + W^{-D+d\fn} , \label{sum_res_deriv_red}
\end{align} 
where $\mathbf b'$ denotes $\mathbf b'=([b_{2}],\ldots,[b_{\fn}])$. Here, in the first step, we used the decay property \eqref{deccA0} for $\cal A_{\mathbf b}$ and the bound \eqref{Xi_infint}; in the second step, we applied \eqref{eq:decayXi} for $i\in\qqq{2,\fn}$; in the last step, we applied \eqref{Xi_infint} again to bound {$\sum_{[b_1]} |\Xi^{(1)}_{[a_1][b_1]}|$} by $\OO( \eta_s/\eta_t)$. This gives \eqref{sum_res_1_red} for any constant $C>d\fn$.  Combining \eqref{sum_res_1_red0} and \eqref{sum_res_1_red} concludes the proof of \eqref{sum_res_1}.   


For the estimates \eqref{sum_res_2_NAL}, \eqref{sum_res_2}, and \eqref{sum_res_2_sym}, we notice that when $|A| \ge 1$, \eqref{sum_res_1_red0} already gives a good enough bound. 
Hence, we only need to focus on the case where $A=\emptyset$. 
For Case I, due to \eqref{prop:ThfadC_short}, we can use the better bound {$\sum_{[b_1]}|\Xi^{(1)}_{[a_1][b_1]}|=\OO(1)$} in the last step of \eqref{sum_res_deriv_red}, which gives \eqref{sum_res_2_NAL}. For the estimate \eqref{sum_res_2} in Case II, we need to show that
\be\label{sum_res_2_red}
\sum_{\mathbf b\in (\Zn)^\fn} \prod_{i=1}^\fn \Xi^{(i)}_{[a_i][b_i]}  \cdot \mathcal{A}_{\boldsymbol{b}} \le W^{C_\fn\e} \frac{\ell_t^{d-1}}{\ell_s^{d-1}}\left(\frac{\ell_s^d\eta_s}{\ell_t^d\eta_t}\right)^{\fn} \|{\cal A}\|_{\infty} 
   +W^{-D+C_\fn}.
\ee
It suffices to assume that $\sigma_i\ne \sigma_{i+1}$ for all $i\in \qqq{\fn}$. We adopt a similar argument as that in the proof of \cite[Lemma 7.3]{Band1D}, that is, we decompose $\Xi^{(i)}$ as 
\be\nonumber \Xi^{(i)}_{[a_i][b_i]}=\Xi^{(i)}_{[a_i][b_1]} + \wh \Xi^{(i)}_{[a_i];[b_1][b_i]},\quad\text{with}\quad \wh \Xi^{(i)}_{[a_i];[b_1][b_i]}=\Xi^{(i)}_{[a_i][b_i]}-\Xi^{(i)}_{[a_i][b_1]}. \ee
Then, we expand the LHS of \eqref{sum_res_2_red} as 
\begin{align} \label{eq:decompXii}
\sum_{A: A^c\subset \qqq{2,\fn}}f(A),\quad \text{with}\quad 
f(A):=\sum_{\mathbf b\in (\Zn)^\fn}\prod_{i\in A}  \Xi^{(i)}_{[a_i][b_1]} \cdot \prod_{i\in A^c} \wh \Xi^{(i)}_{[a_i];[b_1][b_i]}  \cdot \mathcal{A}_{\mathbf{b}}. 
\end{align}
By the sum zero property \eqref{sumAzero}, the leading term with $A^c=\emptyset$ vanishes. For the remaining terms, we will use  \eqref{eq:decayXi} to control the factors \smash{$\Xi^{(i)}_{[a_i][b_1]}$} and apply \eqref{prop:BD1} and \eqref{Mbound_AO2} to control the factors \smash{$\wh \Xi^{(i)}_{[a_i];[b_1][b_i]}$}:
\be\label{eq:Xibb}
\Xi^{(i)}_{[a_i][b_1]} \le \frac{W^c \eta_s }{\eta_t\ell_t^d}, \quad   \wh \Xi^{(i)}_{[a_i];[b_1][b_i]} 
\le  \frac{W^{c}\eta_s}{ \eta_t\ell_t^2}\frac{|[b_i]-[b_1]|}{\qq{[a_i]-[b_1]}+\qq{[a_i]-[b_i]}},
\ee
for any small constant $c>0$.
Without loss of generality, suppose $2\notin A$. Then, using the estimates in \eqref{eq:Xibb} and the decay property \eqref{deccA0} for $\cal A$, by a similar argument as in \eqref{sum_res_deriv_red}, we can bound $f(A)$ by
\begin{align}
f(A)\lesssim  \left(W^{d\e+c}\frac{\ell_s^d\eta_s}{\ell_t^d\eta_t}\right)^{\fn-1}\|\cal A\|_\infty 
\sum_{[b_1]} \left|\Xi^{(1)}_{[a_1][b_1]}\right| \frac{W^\e\ell_s}{\qq{[a_2]-[b_1]}} + W^{-D}L^{d\fn} .\label{eq:bddfA}
\end{align}
In the derivation, we have also used that
$$ \sum_{[b_i]}\left|\wh \Xi^{(i)}_{[a_i];[b_1][b_i]}\right| 
\lesssim  \frac{n^dW^c\eta_s  }{\lambda^2+\eta_t} \le L^d,$$
by applying \eqref{eq:Xibb} in the first step and using $\lambda^2 \ge W^{-d+2\fd}$ in the second step due to \eqref{eq:cond-lambda2}.
Then, with \eqref{eq:decayXi}, we can further bound \eqref{eq:bddfA} as
\begin{align*}
f(A)&\lesssim  \left(W^{d\e+c}\frac{\ell_s^d\eta_s}{\ell_t^d\eta_t}\right)^{\fn} \|\cal A\|_\infty \cdot \frac{1}{\ell_s} 
\sum_{[b_1]} \frac{\exp(-c|[a_1]-[b_1]]|/\ell_t)}{\qq{[a_2]-[b_1]}} + W^{-D}L^{d\fn} \\
&\lesssim \frac{\ell_t}{\ell_s} \left(W^{d\e+c}\frac{\ell_s^d\eta_s}{\ell_t^d\eta_t}\right)^{\fn} \|\cal A\|_\infty + W^{-D}L^{d\fn}.
\end{align*}
This gives the estimate \eqref{sum_res_2_red} when $d=2$, which further concludes \eqref{sum_res_2}. 

For the proof of \eqref{sum_res_2_sym}, we notice that if $|A^c|\ge 2$, then the above derivation already gives a good enough bound. For example, if $2,3\notin A$, then using \eqref{eq:Xibb} and using the decay property \eqref{deccA0} for $\cal A$, we obtain that 
\begin{align}
f(A)&\lesssim  \left(W^{d\e+c}\frac{\ell_s^d\eta_s}{\ell_t^d\eta_t}\right)^{\fn} \|\cal A\|_\infty   
\sum_{[b_1]} \frac{\exp(-c|[a_1]-[b_1]]|/\ell_t)}{\qq{[a_2]-[b_1]}\qq{[a_3]-[b_1]}} + W^{-D}L^{d\fn} \nonumber\\
&\prec \left(W^{d\e+c}\frac{\ell_s^d\eta_s}{\ell_t^d\eta_t}\right)^{\fn} \|\cal A\|_\infty + W^{-D}L^{d\fn}.\label{eq:fA_A>2}
\end{align}
It remains to deal with the case $A^c=\{i\}$ for some $i\in\qqq{2,\fn}$. Using the symmetry in \eqref{eq:A_zero_sym}, we obtain that    
$$\sum_{[b_i]}\wh \Xi^{(i)}_{[a_i];[b_1][b_i]}\cal A_{\mathbf b}=\sum_{[b_i]}\frac{1}{2}\left(\Xi^{(i)}([a_i],[b_i])+\Xi^{(i)}([a_i],2[b_1]-[b_i])-2\Xi^{(i)}([a_i],[b_1])\right)\cal A_{\mathbf b}. $$
Using \eqref{prop:BD2} (with $d=2$) and \eqref{Mbound_AO2}, we can bound the coefficient by
$$\Xi^{(i)}([a_i],[b_i])+\Xi^{(i)}([a_i],2[b_1]-[b_i])-2\Xi^{(i)}([a_i],[b_1]) \prec \frac{\eta_s}{\eta_t\ell_t^2}\frac{|[b_i]-[b_1]|^2}{\qq{[a_i]-[b_1]}^2}.$$
Combining this with \eqref{eq:Xibb} and the decay property \eqref{deccA0} for $\cal A$, we can bound $f(A)$ in a similar way as in \eqref{eq:fA_A>2}, with the denominator $\qq{[a_2]-[b_1]}\qq{[a_3]-[b_1]}$ replaced by $\qq{[a_i]-[b_1]}^2$. This concludes \eqref{sum_res_2_sym}.

\subsection{Proof of \Cref{lem_dec_calE2}} \label{sec:pf_dec_calE2}

Recall the scale parameter defined in \eqref{eq:ell*u}. In the following proof, we will tacitly use the following claim, whose proof is identical to that of \cite[Lemma 5.6]{Band1D}, relying on \Cref{lem_propTH}.

\begin{claim}[Lemma 5.6 of \cite{Band1D}]\label{lem_dec_calE_0}
In the setting of \Cref{lem:main_ind}, for any large constant $D>0$ and small constant $\delta>0$, the following estimates hold if $|[b] -[a]|\ge \delta \ell_t^*$:
 \be \label{Kell*}
\left({\Theta}_t^{\bsig}\right)_{[a][b]} \le W^{-D} ,\quad    \left(\frac{1 - u M^{\bsig}}{1 - t M^{\bsig}}\right)_{[a] [b]} \le W^{-D},\quad \forall \bsig\in \{+,-\}^2,\ u\in [0,t];
\ee
\be
{\cal L}^{(2)}_{t
, \bsig,([a], [b])} \prec {\cal J}^*_{t
,D}\cdot{\cal T}_{t,D}(|[a]-[b]|) ,\quad \text{for}\quad  \bsig=(+,-). \label{auiwii}
\ee
Furthermore, for any constant $C>0$, we have 
\begin{equation}\label{Tell*}
    {\cal T}_{t,D} \left(\ell-C\ell_t^*\right)\prec   {\cal T}_{t,D} \left(\ell \right),\quad \forall \ell\ge 0.
\end{equation} 

\end{claim}

By the definition \eqref{def_EwtG}, we have that 
\begin{align}\label{eq:calW}
\mathcal{W}^{(2)}_{t, \boldsymbol{\sigma}, \ba} = W^d \sum_{[a]} \langle \Gc_u E_{[a]}\rangle {\cal L}^{(3)}_{u, (+,+,-),([a],[a_1], [a_2])}+c.c., 
 \end{align}
where $c.c.$ denotes the complex conjugate of the preceding term. By \eqref{GavLGEX}, we have  
\begin{align}
    \langle \Gc_u E_{[a]}\rangle 
    &\prec (W^{d}\ell_u^d\eta_u)^{-1} + {\cal J}^*_{u,D}\cdot (W^d\ell_u^d\eta_u)^{-2},\label{eq:GcEa}
\end{align}
where we have used \Cref{ML:Kbound} and the definition of ${\cal J}^* _{u,D}$ to get the control parameter $\Psi_u$ as follows: 
\begin{align}
    \max_{[a],[b]\in \Zn} \cL^{(2)}_{u,(+,-),([a],[b])}+W^{-d} &\prec \max_{[a],[b]\in \Zn} (\cL-\cK)^{(2)}_{u,(+,-),([a],[b])}+(W^{d}\ell_u^d\eta_u)^{-1} \nonumber\\
    &\prec (W^{d}\ell_u^d\eta_u)^{-1} + {\cal J}^*_{u,D}\cdot (W^d\ell_u^d\eta_u)^{-2}=:\Psi_u^2 .\label{eq:Psiu}
\end{align}
Plugging \eqref{eq:GcEa} into \eqref{eq:calW}, we obtain that  
 \begin{align}\label{jiizziy}
\mathcal{W}^{(2)}_{t, \boldsymbol{\sigma}, \ba} \prec (\ell_u^d\eta_u)^{-1} \sum_{[a]}   \left|{\cal L}^{(3)}_{u, (+,+,-),([a],[a_1],[a_2])}\right|\left(1+{\cal J}^*_{u,D}\cdot (W^d\ell_u^d\eta_u)^{-1}\right).
\end{align}

To bound \eqref{jiizziy}, we introduce another scale parameter
\be\label{eq:elldagger}
\ell_u^{\dagger}:=(\log W)^{3/2}\ell^*_u  = (\log W)^3 \ell_u.
\ee
By the definition \eqref{def_WTuD}, ${\cal T}_{u,D}(\ell_u^{\dagger})\prec W^{-D}$ for any large constant $D>0$. 
If $|[a]-[a_1]|\le  \ell_u^{\dagger}$, then the bound \eqref{lRB1} established in the Step 1 already implies that 
\begin{align}\label{alkkj}
 \sum_{[a]:|[a]-[a_1]|\le  \ell_u^{\dag}} \left|{\cal L}^{(3)}_{u, (+,+,-),([a],[a_1],[a_2])}\right|\prec (W^d\ell_s^d\eta_u)^{-2} \cdot (\ell_u^\dag)^d \prec  \ell_u^d (W^d\ell_s^d\eta_u)^{-2}  .
\end{align}
If $|[a]-[a_1]|>\ell_u^{\dagger}$, with \eqref{Gtmwc}, we can get the trivial bound
\begin{align}\label{eq:G3loop}
    {\cal L}^{(3)}_{u, (+,+,-),([a],[a_1],[a_2])}&\prec W^{-2d}\sum_{x_1\in [a_1],y\in [a]}|(G_u)_{yx_1}|  \le  W^{-2d}\sum_{x_1\in [a_1],y\in [a]}\left(|(G_u-M)_{yx_1}| + |M_{yx_1}|\right).
\end{align}
We use the estimates \eqref{GijGEX} and \eqref{Mbound_AO} to bound the first and second terms on the RHS of \eqref{eq:G3loop}, respectively, 
which gives that when $|[a]-[a_1]|\ge \ell_u^{\dag}$,
\be\label{eq:bdd-G3loop}
{\cal L}^{(3)}_{u, (+,+,-),([a],[a_1],[a_2])}\prec \bigg\{\sum_{[a'], [b'] \in \Zn}  \Phi_u([a'], [b']) W^{-c(|[a']-[a]|+|[b']-[a_1]|)}\bigg\}^{1/2} + W^{-D}
\ee
for any constant $D>0$. Here, we can take \be\label{eq:choosePhi}\Phi_u([a'],[b'])=\cal J^*_{u,D}\cdot \cal T_{u,D}(|[a']-[b']|) ,\quad \forall [a'],[b']\in \Zn \ \text{with}\ |[a']-[b']|\gtrsim \ell_u^*, \ee 
by equation \eqref{auiwii}. Then, using ${\cal T}_{u,D}(|[a']-[b']|)\prec W^{-2D}$ for $|[a']-[b']|\gtrsim \ell_u^{\dag}$, we can bound \eqref{eq:bdd-G3loop} by 
$$
{\cal L}^{(3)}_{u, (+,+,-),([a],[a_1],[a_2])}\prec ({\cal J}^*_{u,D})^{1/2}\cdot W^{-D}
$$
for any large constant $D>0$. Taking $D$ sufficiently large, we obtain that 
\begin{align}\label{alkkj2}
 \sum_{[a]: |[a]-[a_1]| > \ell_u^{\dag}}\left|{\cal L}^{(3)}_{u, (+,+,-),([a],[a_1],[a_2])}\right|\prec  ({\cal J}^*_{u,D})^{1/2}\cdot W^{-10d}. 
\end{align}

Now, assume $|[a_1]-[a_2]|\le \ell_u^*$ for \eqref{jiizziy}. In this case, plugging \eqref{alkkj} and \eqref{alkkj2} into \eqref{jiizziy}, we get  
\begin{align}\label{82jjdopasj}
\cW^{(2)}_{u, \boldsymbol{\sigma}, \mathbf{a}} /{\cal T}_{u,D}(|[a_1]-[a_2]|)\prec 
 \eta_u^{-1}\left(\ell_u^d / \ell_s^d\right)^2 \left(1+  (\mathcal{J}_{u, D}^*)^2 \cdot \left(W^d \ell_u^d \eta_u\right)^{-1 }\right).
 \end{align}
Together with  condition \eqref{con_st_ind}, it implies the estimate \eqref{res_deccalE_wG} for $|[a_1]-[a_2]|\le \ell_u^*$. 

For the case $|[a_1]-[a_2]|\ge \ell_u^*$, if $|[a]-[a_1]|> \ell_u^{\dag}$, then \eqref{alkkj2} already gives a good enough bound. Similarly, the summation of ${\cal L}^{(3)}_{u, (+,+,-),([a],[a_1], [a_2])}$ over the region \smash{$\{[a]:|[a]-[a_2]|> \ell_u^{\dag}\}$} satisfies the same bound as in \eqref{alkkj2}. Hence, in the remaining proof, it suffices to assume that \be\label{eq:consa1a2}|[a]-[a_1]|\vee |[a]-[a_2]| \le \ell_u^\dag.\ee 
Then, we divide the proof according to the following two cases:  
 $$
 (1): |[a]-[a_1]|\wedge |[a]-[a_2]| \le \ell_u^*/2,\quad \text{and}\quad 
 (2): |[a]-[a_1]|\wedge |[a]-[a_2]|\ge \ell_u^*/2.
 $$
In Case (1), we assume without loss of generality that $|[a]-[a_1]|\le \ell_u^*/2$. Then, by the triangle inequality, we have $\ell_u^*/2 \le |[a]-[a_2]|\le \ell_u^{\dag}$. Applying the Cauchy-Schwarz inequality, we can bound that 
\begin{align}
&\cal L^{(3)}_{u, (+,+,-),([a],[a_1],[a_2])} \le  W^{-3d}\sum_{x_1\in[a_1],x_2\in[a_2],y\in[a]}|G_{yx_1}| \left(|G_{x_2y}|^2 + |G_{x_1 x_2}|^2\right) \nonumber\\
&\lesssim \cL^{(2)}_{u,(+,-),([a],[a_2])} \cdot \max_{y\in [a]} W^{-d}\sum_{x_1\in[a_1]}|G_{yx_1}|+ \cL^{(2)}_{u,(+,-),([a_2],[a_1])} \cdot \max_{x_1\in [a_1]} W^{-d}\sum_{y\in[a]}|G_{y x_1}| ,\label{kskjw}
\end{align}
where we have abbreviated that $G_u\equiv G$.  
Using \eqref{GiiGEX}, we can bound that 
\begin{align}\label{jaysw2002}
    W^{-d}\sum_{x_1\in[a_1]}|G_{yx_1}| + W^{-d}&\sum_{y\in[a]}|G_{y x_1}| \prec \Psi_u + W^{-d}\sum_{x_1\in[a_1]}|M_{yx_1}|+ W^{-d}\sum_{y\in[a]}|M_{yx_1}| \nonumber\\
    &\prec (W^d\ell_u^d\eta_u)^{-1/2} + ({\cal J}^*_{u,D})^{1/2}(W^d\ell_u^d\eta_u)^{-1} ,
\end{align}
where in the second step we have used $\Psi_u$ from \eqref{eq:Psiu} and applied 
\eqref{Mbound_AO} to bound \smash{$W^{-d}\sum_{x_1\in[a_1]}|M_{yx_1}|$} and \smash{$W^{-d}\sum_{y\in[a]}|M_{yx_1}|$} by $\OO(W^{-d})$.  
Inserting the bound \eqref{jaysw2002} into \eqref{kskjw}, we obtain that
\begin{align*}
    \cal L^{(3)}_{u, (+,+,-),([a],[a_1],[a_2])} \prec \left( \cL^{(2)}_{u,(+,-),([a],[a_2])} + \cL^{(2)}_{u,(+,-),([a_2],[a_1])}\right)\left( (W^d\ell_u^d\eta_u)^{-1/2} + ({\cal J}^*_{u,D})^{1/2}(W^d\ell_u^d\eta_u)^{-1} \right).
\end{align*}
Then, applying \eqref{auiwii} to the two $2$-$G$ loops gives that   
\begin{align*}
    \cal L^{(3)}_{u, (+,+,-),([a],[a_1],[a_2])} \prec&~ {\cal J}^*_{u,D}\left[ \cal T_{u,D}(|[a]-[a_2]|) + \cal T_{u,D}(|[a_1]-[a_2]|) \right]  \left( (W^d\ell_u^d\eta_u)^{-1/2} + ({\cal J}^*_{u,D})^{1/2}(W^d\ell_u^d\eta_u)^{-1} \right). 
\end{align*}
Furthermore, since $|[a]-[a_2]|\ge |[a_1]-[a_2]|-\ell_u^*/2$, we have that ${\cal T}_{u,D}(|[a]-[a_2]|) \prec  {\cal T}_{u,D}(|[a_1]-[a_2]|)$ by \eqref{Tell*}. Thus, we get from the above equation that 
\begin{align}    \label{lwjufw}
 \sum_{[a]:|[a]-[a_1]|\le \ell_u^*/2,|[a]-[a_2]|\le \ell_u^\dag}  & \cal L^{(3)}_{u, (+,+,-),([a],[a_1],[a_2])}  \prec ({\cal J}^*_{u,D})^{3/2} 
 {\cal T}_{u,D}(|[a_1]-[a_2]|) \cdot \ell_u^d (W^d\ell_u^d\eta_u)^{-1/2}. 
\end{align}

For Case (2) with $|[a_1]-[a_2]|\ge \ell_u^*$ and $ |[a]-[a_1]|\wedge |[a]-[a_2]|\ge \ell_u^*/2$, we need to bound 
\begin{align}\label{LGGGXXX}
\left|\cal L^{(3)}_{u, (+,+,-),([a],[a_1],[a_2])} \right| \le W^{-3d} \sum_{x_1\in[a_1],x_2\in[a_2],y\in[a]} |G_{x_1x_2}||G_{x_2 y}||G_{yx_1}| .
\end{align}
Applying the estimate \eqref{GijGEX} with  $\Phi_u$ given by \eqref{eq:choosePhi}, we can bound $|G_{x_1x_2}|$ as 
\begin{align}
    |G_{x_1x_2}|^2 &\lesssim |(G-M)_{x_1x_2}|^2 + |M_{x_1x_2}|^2 
     \prec \cal J_{u,D}^* \sum_{[a'], [b'] \in \Zn}\cal T_{u,D}(|[a']-[b']|)W^{-c(|[a']-[a_1]|+|[b']-[a_2]|)}  + W^{-2D} \nonumber\\
    &\prec \cal J_{u,D}^* \cal T_{u,D}(|[a_1]-[a_2]|) \, , 
    \label{GGTLJ}
\end{align} 
where in the last step, we used \eqref{Tell*} to control the sum over $[a']$ and $[b']$, and applied \eqref{Mbound_AO} to bound $|M_{x_2x_1}|^2$.  Using \eqref{GGTLJ} and similar bounds for $|G_{x_2 y}|$ and $|G_{yx_1}|$, we can bound the summation of \eqref{LGGGXXX} over $[a]$ by 
\begin{align}\label{GGTLJ2}
\sum_{[a]}^\star   \cal L^{(3)}_{u, (+,+,-),([a],[a_1],[a_2])} \prec &~ ({\cal J}^*_{u,D})^{3/2}{\cal T}_{u,D}^{1/2}(|[a_1]-[a_2]|)  \sum_{[a]}^\star {\cal T}_{u,D}^{1/2}(|[a_1]-[a]|){\cal T}_{u,D}^{1/2}(|[a_2]-[a]|),
\end{align}
where $\sum^\star$ refers to the summation of $[a]$ over the region $\{|[a]-[a_1]|\ge \ell_u^*/2, |[a]-[a_2]|\ge \ell_u^*/2\}$. With basic calculus, it is straightforward to check that 
\be\label{eq:Calculus1} \sum_{[a]} \exp\left( -\frac{|[a_1]-[a]|^{1/2}+|[a_2]-[a]|^{1/2}}{2\ell_u^{1/2}}\right) \lesssim \ell_u^d \exp\left( -\frac{|[a_1]-[a_2]|^{1/2}}{2\ell_u^{1/2}}\right),\ee
with which we can bound \eqref{GGTLJ2} by 
 \begin{align}\label{lwjufw2}
\sum_{[a]}^\star   \cal L^{(3)}_{u, (+,+,-),([a],[a_1],[a_2])} &\prec ({\cal J}^*_{u,D})^{3/2}{\cal T}_{u,D}(|[a_1]-[a_2]|)   \cdot  \ell_u^d (W^d\ell_u^d\eta_u)^{-1}.
 \end{align}

Finally, plugging the estimates \eqref{lwjufw} and \eqref{lwjufw2} into \eqref{jiizziy} completes the proof of  \eqref{res_deccalE_wG} when $|[a_1]-[a_2]|\ge \ell_u^*$.

\subsection{Proof of \Cref{lem_dec_calE3}}\label{sec:pf_dec_calE3} 
By \Cref{def:CALE}, we can write $\left( \cB\otimes  \cB \right)^{(4)}_{t, \boldsymbol{\sigma}, \ba, \ba'}$ as 
\begin{align*}
 \left( \cB\otimes  \cB \right)^{(4)}_{t, \boldsymbol{\sigma}, \ba, \ba'} =  &
 W^d \sum_{k=1}^2 \sum_{[b]}\cal L_k([b]),\quad \text{with}\quad \cal L_k([b]):={\cal L}^{(6)}_{t, (\boldsymbol{\sigma}\times\overline\bsig)^{(k) },(\ba\times \ba')^{(k )}([b],[b])},
 \end{align*}
where $(\boldsymbol{\sigma}\times\overline\bsig)^{(k) }$ and $(\ba\times \ba')^{(k )}([b],[b])$, $k\in\{1,2\}$, are defined by 
\begin{align*}    &(\boldsymbol{\sigma}\times\overline\bsig)^{(1)}=(+,-,+,-,+,-),\quad  (\ba\times \ba')^{(1)}([b],[b])=([a_1],[a_2],[b],[a'_2],[a'_1],[b]),\\
&(\boldsymbol{\sigma}\times\overline\bsig)^{(2)}=(-,+,-,+,-,+),\quad  (\ba\times \ba')^{(1)}([b],[b])=([a_2],[a_1],[b],[a'_1],[a'_2],[b]).
\end{align*}
Without loss of generality, by symmetry, we only need to bound the loop $\cal L_1$. Moreover, for simplicity of presentation, we will abbreviate $G_u\equiv G$.  
We divide the proof into two cases.

\medskip

\noindent 
{\bf Case 1: $|a_1-a_2|\le 4\ell_t^*$.} 
When $|[b]-[a_1]|\ge \ell_u^{\dag}$, using that ${\cal T}_{u,D}(\ell)$ is exponentially small when $\ell\gtrsim \ell_u^\dag$, we can easily obtain that 
\begin{align}\label{alkkj3_0}
\sum_{[b]:|[b]-[a_1]|\ge \ell_u^\dag} {\cal L}_1([b])\prec ({\cal J}^*_{u,D})^{1/2}\cdot W^{-10d},
\end{align}
using an argument similar to that leading to \eqref{alkkj2}. When $|[b]-[a_1]|\le \ell_u^{\dag}$, we use the $6$-$G$ loop bound \eqref{lRB1} established in Step 1 to get that  
\begin{align}\label{alkkj3}
\sum_{[b]:|[b]-[a_1]|\le \ell_u^{\dag}} {\cal L}_1([b]) \prec (\ell_u^d/\ell_s^d)^5 (W^d\ell_u^d\eta_u)^{-5} \cdot \ell_u^d   .
\end{align}
Combining \eqref{alkkj3_0} and \eqref{alkkj3}, we conclude \eqref{res_deccalE_dif} when $|[a_1]-[a_2]|\le 4\ell_t^*$.  

\medskip
\noindent {\bf Case 2: $|a_1-a_2|\ge 4\ell_t^*$.} 
Without loss of generality, by symmetry, it suffices to assume that  $|[b] -[a_1]|\le |[b]-[a_2]|$. Recall that $|[a'_i]-[a_i]|\le \ell_t^*$ by the assumption \eqref{57}. By the triangle inequality, we have that 
$$|[a_2']-[a_1']| \ge 2\ell_t^*, \quad |[a_2]-[b]| \ge 2\ell_t^*,\quad |[a_2']-[b]| \ge \ell_t^* . $$
Hence, ${\cal L}_1$ contains at least four ``long edges", and we correspondingly bound ${\cal L}_1$ as:  
\begin{align}\label{L6G6X}
{\cal L}^{(1)} \le &~ \max_{\substack{x_1\in[a_1],x_2\in[a_2], y\in[b]\\ x_1'\in[a_1'],x_2'\in[a_2']}} \left|G_{x_1x_2}\right|
\left|G_{x_2y}\right| 
|G_{yx_2'}|
|G_{x_2'x_1'}| \cdot W^{-2d} \sum_{x_1\in[a_1],x_1'\in [a_1']}\Big|\left(G^\dagger E_{[b]} G\right)_{ x_1' x_1}\Big| . 
\end{align}
We claim that 
\begin{align} \label{GGTLJ4G}
\left|G_{x_1x_2}\right|
\left|G_{x_2y}\right| 
|G_{yx_2'}| |G_{x_2'x_1'}| \prec 
\left({\cal J}^*_{u,D}\right)^2 
{\cal T} _{t,D}(|[a_1]-[a_2]|){\cal T} _{t,D}( |[b]-[a_2]|).  
\end{align}
The proof of this bound is based on the estimates \eqref{GijGEX} and \eqref{Tell*}. Since the argument is very similar to that of \eqref{GGTLJ} and almost the same as that for equation (5.64) in \cite{Band1D}, we omit the details.


It remains to bound the summation in \eqref{L6G6X}. We divide it into two cases:
\begin{align}
\text{(i)}: |[b]-[a_1]|\wedge |[b]-[a_1']| \le \ell_u^*, \quad \text{and}\quad \text{(ii)}: |[b]-[a_1]|\wedge |[b]-[a_1']|\ge \ell_u^* .
\end{align} 
Applying the Cauchy-Schwarz inequality and the 4-$G$ loop bound established in \eqref{lRB1}, we can bound that 
\begin{align}\label{eq:CS4loop}
     W^{-2d} \sum_{x_1\in[a_1],x_1'\in [a_1']}\Big|\left(G^\dagger E_{[b]} G\right)_{x_1' x_1}\Big| &\le 
     \left(\cL^{(4)}_{u, (-,+,-,+),([b],[a_1],[b],[a_1'])}\right)^{1/2} \prec (\ell_u^d/\ell_s^d)^{3/2}\cdot (W^d\ell_u^d\eta_u)^{-3/2}.  
\end{align} 
In case (i), if $|[b]-[a_1]|\le \ell_u^*$, then we have $|[b]-[a_2]|\ge  |[a_1]-[a_2]|-\ell_u^*$; if $|[b]-[a_1']|\le \ell_u^*$, then 
$$|[b]-[a_2]|\ge |[a_1']-[a_2']|-|[b]-[a_1']|-|[a_2]-[a_2']|\ge |[a_1]-[a_2]| - 4\ell_u^*,$$
where we also used $|[a_1']-[a_2']| \ge |[a_1]-[a_2]|-|[a_1']-[a_1]|-|[a_2']-[a_2]|\ge |[a_1]-[a_2]|-2\ell_u^*$ in the second step. In any case, by \eqref{Tell*}, we have that ${\cal T} _{t,D}\left( |[b]-[a_2]| \right) \prec {\cal T} _{t,D}\left(  |[a_1]-[a_2]| \right)$. 
Hence, plugging \eqref{eq:CS4loop} and \eqref{GGTLJ4G} into \eqref{L6G6X}, we obtain that 
\begin{align}\label{eq:boundL1_i}
 W^d \sum_{[b]}^* {\cal L}_1([b])
 \prec &\eta_u^{-1}(\ell_u^d/\ell_s^d)^{3/2} \cdot\left(W^d \ell_u^d \eta_u\right)^{-1/2}
\cdot\left({\cal J}^*_{u,D}\right)^2 {\cal T}^2_{t,D}(|[a_1]-[a_2]|).
\end{align}
where $\sum^*$ refers to the summation of $[b]$ over the region $\{|[b]-[a_1]|\le |[b]-[a_2]|,\ |[b]-[a_1]|\wedge |[b]-[a_1']| \le \ell_u^*\}$. 

In case (ii), we bound  $(G^\dagger E_{[b]}G)_{x_1'x_1}$ by 
$$
\left|(G^\dagger E_{[b]}G)_{x_1'x_1}\right|\le \max_{y\in [b]}|G_{yx_1}||G_{yx'_1}|.
$$
Since $|[b]-[a_1]|\wedge |[b]-[a_1']|\ge \ell_u^*$, using again a similar argument as that for \eqref{GGTLJ} (based on the estimates \eqref{GijGEX}, \eqref{auiwii}, and \eqref{Tell*}), we can derive that  
$$
\left|(G^\dagger E_bG)_{x_1'x_1}\right|
\prec {\cal J}^*_{u,D} {\cal T}_{t,D}(|[b]-[a_1]|) .
$$
Plugging this bound and \eqref{GGTLJ4G} into \eqref{L6G6X}, we obtain that   
\begin{align}
W^d\sum_{[b]}^{\star} {\cal L}_1
 &\prec W^d \left( {\cal J}^*_{u,D}\right)^3 {\cal T}_{t,D}(|[a_1]-[a_2]|)\sum_{[b]} {\cal T}_{t,D}(|[b]-[a_1]|) {\cal T}_{t,D}(|[b]-[a_2]|) \nonumber \\
&\prec \eta_t^{-1} \left( {\cal J}^*_{u,D}\right)^3 \cdot (W^d\ell_t^d\eta_t)^{-1} 
\cdot {\cal T}_{t,D}^{2}(|[a_1]-[a_2]|),\label{eq:boundL1_ii}
 \end{align}
 where $\sum^\star$ refers to the summation of $[b]$ over the region $\{|[b]-[a_1]|\le |[b]-[a_2]|,\ |[b]-[a_1]|\wedge |[b]-[a_1']| \ge \ell_u^*\}$, and in the second step we used a similar fact as in \eqref{eq:Calculus1}:
 \be\label{eq:Calculus2} \sum_{[a]} \exp\left( -\frac{|[a_1]-[a]|^{1/2}+|[a_2]-[a_1]|^{1/2}}{\ell_t^{1/2}}\right) \lesssim \ell_t^d \exp\left( -\frac{|[a_1]-[a_2]|^{1/2}}{\ell_t^{1/2}}\right).\ee

Combining the bounds \eqref{eq:boundL1_i} and \eqref{eq:boundL1_ii}, we obtain \eqref{res_deccalE_dif} for the $|a_1-a_2|\ge 4\ell_t^*$ case by using the condition \eqref{con_st_ind}. 

\subsection{Proof of \Cref{lem:pf_step2}}\label{sec:pf_step2}
The proof of \Cref{lem:pf_step2} depends on the following  evolution kernel estimate. It shows that given $0\le s\le t< 1$, if a two-dimensional tensor $\cal A$ decays exponentially on the scale $\ell_s$, then \smash{${\cal U}^{(2)}_{s,t,\bsig}\circ \cal A$} decays on the scale $\ell_t$ for $\bsig\in\{(+,-),(-,+)\}$. 

\begin{lemma}
\label{TailtoTail}
For any $t\in[0,1)$ and $\ell\ge 0$, we introduce the notation  
 $${\cal T}_{t}(\ell) := (W^d\ell_t^d\eta_t)^{-2} \exp \big(- \left| \ell /\ell_t\right|^{1/2} \big).
$$
For $\bsig\in\{(+,-),(-,+)\}$ and $s\in [0,1)$, suppose ${\cal A}_{\ba}$ satisfies that 
$$
|{\cal A}_{\ba}|\le {\cal T}_{s}(|[a_1]-[a_2]|)+W^{-D},\quad \forall \ba=([a_1],[a_2])\in(\Zn)^2,
$$
for some constant $D>0$. 
Then, for any $t\in[s,1)$, we have that 
\begin{align}
\label{neiwuj} 
\left({\cal U}^{(2)}_{s,t,\boldsymbol{\sigma}} \circ 
{\cal A}\right)_{\ba} & \prec
 {\cal T}_{t}( |[a_1]-[a_2]|)+W^{-D} (\eta_s/\eta_t)^2 \quad \text{if}\ \ |[a_1]-[a_2]|\ge (\log W)^{3/2}\ell_t.
\end{align}
 \end{lemma}
\begin{proof}
A similar lemma for the setting of 1D RBM has been proved as Lemma 7.2 in \cite{Band1D}, and exactly the same arguments apply also to our setting by using the estimates in \Cref{lem_propTH}. So we omit the details. 
\end{proof}

Note that when $\fn=2$, the second term on the RHS of \eqref{int_K-LcalE} vanishes and we can write that 
\begin{align}
(\mathcal{L} - \mathcal{K})^{(2)}_{t, \boldsymbol{\sigma}, \ba} & =
    \left(\mathcal{U}^{(2)}_{s, t, \boldsymbol{\sigma}} \circ (\mathcal{L} - \mathcal{K})^{(2)}_{s, \boldsymbol{\sigma}}\right)_{\ba} + \int_{s}^t \left(\mathcal{U}^{(2)}_{u, t, \boldsymbol{\sigma}} \circ \mathcal{E}^{(2)}_{u, \boldsymbol{\sigma}}\right)_{\ba} \dd u   \nonumber\\
    &+ \int_{s}^t \left(\mathcal{U}^{(2)}_{u, t, \boldsymbol{\sigma}} \circ \cW^{(2)}_{u, \boldsymbol{\sigma}}\right)_{\ba} \dd u + \int_{s}^t \left(\mathcal{U}^{(2)}_{u, t, \boldsymbol{\sigma}} \circ \dd \cB^{(2)}_{u, \boldsymbol{\sigma}}\right)_{\ba} . \label{int_K-L_ST_n2}
\end{align}
By the induction hypothesis \eqref{Eq:Gdecay+IND} for $({ \cal L-\cal K})_{s,\bsig,\ba}^{(2)}$, using the bounds on the evolution kernel in \Cref{lem:sum_Ndecay,TailtoTail}, we can bound the first term on the RHS of \eqref{int_K-L_ST_n2} by 
\begin{equation} \label{res_deccalE_0}
\left(\mathcal{U}^{(2)}_{s, t, \boldsymbol{\sigma}} \circ (\mathcal{L} - \mathcal{K})^{(2)}_{s, \boldsymbol{\sigma}}\right)_{\ba}\Big/ 
  {\cal T}_{t, D}(|[a_1]-[a_2]|) 
  \prec (\ell_t^d/\ell_s^d)^2\cdot {\bf 1}(|[a_1]-[a_2]|\le \ell_t^*) + 1.
\end{equation}
Here, the second term arises from applying \eqref{neiwuj}, while the first term results from applying \Cref{lem:sum_Ndecay}, where the factor $\left(\eta_u / \eta_t\right)^2$ transforms into $ (\ell_t^d/\ell_s^d)^2$ due to the difference in prefactors between $ {\cal T}_{s, D}$ and  ${\cal T}_{t, D}$. 

For the rest of the three terms on the RHS of \eqref{int_K-L_ST_n2}, we will use tacitly the following simple fact: for any fixed $\fn\ge 2$, \smash{$\ba\in (\Zn)^\fn$}, and function \smash{$f:(\Zn)^\fn\to \C$} of order $\OO(W^C)$ for a constant $C>0$, there is 
$$ \left| \left({\cal U}^{(\fn)}_{u,t, \boldsymbol{\sigma}} \circ f' \right)_\ba\right| \le W^{-D'}$$
for any large constant $D'>D$, where $f'$ is defined as $f'(\mathbf b)=f(\mathbf b)\mathbf 1(\|\mathbf b-\mathbf a\|_\infty\ge \ell_t^*)$. This follows from the exponential decay of \smash{${\cal U}^{(\fn)}_{u,t, \boldsymbol{\sigma}}$} when $\|\mathbf b-\mathbf a\|_\infty = \max_i |[a_i]-[b_i]|\ge \ell_t^*$, by the expression \eqref{eq:decompU} and the estimate \eqref{eq:decayXi}. 
Using \Cref{lem:sum_Ndecay} and the estimate \eqref{res_deccalE_lk}, we can bound the second term on the RHS of \eqref{int_K-L_ST_n2} as 
\begin{align}
 \left({\cal U}^{(2)}_{u,t, \boldsymbol{\sigma}}\circ  {\cal E}^{(2)}_{u, \boldsymbol{\sigma}}\right)_\ba
 &\prec (\eta_u/\eta_t)^2 \max_{\| \textbf{b}-\ba\|_\infty\le \ell_t^*}{\cal E}^{(2)}_{u, \boldsymbol{\sigma}, \mathbf{b}}+W^{-D'}\nonumber\\
 &\prec \frac{\eta_u}{\eta_t^2}  \left(W^d\ell_u^d \eta_u\right)^{-1}\left({\cal J}^{*}_{u,D} \right)^{2} \max_{\| \textbf{b}-\ba\|_\infty\le \ell_t^*} 
  \cal T_{t,D}(|[b_1]-[b_2]|)+W^{-D'}\nonumber\\
  &\prec \frac{\eta_u}{\eta_t^2}  \left(W^d\ell_u^d \eta_u\right)^{-1}\left({\cal J}^{*}_{u,D} \right)^{2}  
  \cal T_{t,D}(|[a_1]-[a_2]|),\label{res_deccalE_1}
\end{align}
where in the third step, we used \eqref{Tell*} under the condition $\| \textbf{b}-\ba\|_\infty\le \ell_t^*$, and chose $D'$ sufficiently large depending on $D$ (recall that $\cal T_{t,D}(|[a_1]-[a_2]|)\ge W^{-D}$ by definition \eqref{def_WTuD}). 
Similarly, combining \Cref{lem:sum_Ndecay} with the estimates \eqref{res_deccalE_wG} and \eqref{res_deccalE_dif} yields that 
\begin{align}  \label{res_deccalE_2}
 &    \Big(\mathcal{U}^{(2)}_{u,t, \boldsymbol{\sigma}} \circ  \cW^{(2)}_{u, \boldsymbol{\sigma}}\Big)_{\mathbf{a}} 
    \Big/ 
{\cal T}_{t, D}(|[a_1]-[a_2]|)\nonumber\\
 &\qquad \prec   \frac{\eta_u}{\eta_t^2} \left[\left(\frac{\ell_u^d}{\ell_s^d}\right)^2 {\bf1}(|[a_1]-[a_2]|\le 3\ell_t^*) + \frac{({\cal J}^{*}_{u,D})^{3 }}{\left(W^d \ell_u^d \eta_u\right)^{1/3}}   \right],\\
&  \left(\left(
   \mathcal{U}^{(2)}_{u, t,  \boldsymbol{\sigma}}
   \otimes 
   \mathcal{U}^{(2)}_{u, t,  \overline{\boldsymbol{\sigma}}}
   \right) \;\circ  \;
   \left( \mathcal{B} \otimes  \mathcal{B} \right)^{(4)}
   _{u, \,\boldsymbol{\sigma}  }
   \right)_{{\textbf a}, {\textbf a}}
    \Big/ {\cal T}_{t, D}^2(|[a_1]-[a_2]|) \nonumber\\
 &\qquad \prec \frac{\eta_u^3}{\eta_t^4}\left[\left(\frac{\ell^d_u}{\ell^d_s}\right)^5{\bf1}(|[a_1]-[a_2]|\le 6\ell_t^*) +\frac{({\cal J}^{*}_{u,D})^{3 }}{\left(W^d \ell_u^d \eta_u\right)^{1/3}}  \right]. \label{res_deccalE_3}
\end{align}

By the monotonically increasing property of $\ell_t$, $\eta_t^{-1}$, and $\cal T_{t,D}$ with respect to $t$, it is clear that the bounds in \eqref{res_deccalE_0}--\eqref{res_deccalE_3} hold uniformly in $t'\in [u,t]$ if we replace the evolution operator \smash{$\cal U_{u,t,\bsig}^{(2)}$} and the function ${\cal T}_{t, D}(|[a_1]-[a_2]|)$ with \smash{$\cal U_{u,t',\bsig}^{(2)}$} and ${\cal T}_{t', D}(|[a_1]-[a_2]|)$, respectively, while keeping all other $t$-dependent quantities (such as $\eta_t$, $\ell_t$, and $\ell_t^*$) unchanged. In particular, if we define the stopping time $\tau = T \wedge t$, where $T$ is defined in \eqref{eq:def_TTT} for an arbitrarily small constant $\e>0$, then the bounds \eqref{res_deccalE_0}--\eqref{res_deccalE_3} remain valid with \smash{$\cal U_{u,t,\bsig}^{(2)}$} and ${\cal T}_{t, D}(|[a_1]-[a_2]|)$ replaced by \smash{$\cal U_{u,\tau,\bsig}^{(2)}$} and ${\cal T}_{\tau, D}(|[a_1]-[a_2]|)$, respectively. 

First, using \eqref{res_deccalE_3}, we can bound the quadratic variation of the martingale term in \eqref{int_K-L_ST} as:
\begin{align*}
& \int_{s}^\tau 
   \left(\left(
   \mathcal{U}^{(2)}_{u, \tau,  \boldsymbol{\sigma}}
   \otimes 
   \mathcal{U}^{(2)}_{u, \tau,  \overline{\boldsymbol{\sigma}}}
   \right) \;\circ  \;
   \left( \cB \otimes  \cB \right)^{(4)}
   _{u, \boldsymbol{\sigma}  }
   \right)_{\ba, \ba}\dd u    
  \nonumber \\
\prec~ &  {\cal T}_{\tau, D}^2(|[a_1]-[a_2]|)\cdot  \int_s^\tau   \frac{\eta_u^3}{\eta_t^4}\left[\left(\frac{\ell^d_u}{\ell^d_s}\right)^5{\bf1}(|[a_1]-[a_2]|\le 6\ell_t^*) +\frac{({\cal J}^{*}_{u,D})^{3 }}{\left(W^d \ell_u^d \eta_u\right)^{1/3}}  \right]\dd u   \nonumber \\
\lesssim ~ &  {\cal T}_{\tau, D}^2(|[a_1]-[a_2]|)\cdot   \big [ (\eta_s/\eta_t)^{5}{\bf 1}(|[a_1]-[a_2]|\le 6\ell_t^*)+1 \big ],
 \end{align*}
 where we also used the condition \eqref{con_st_ind} in the second step. Combining this bound with \Cref{lem:DIfREP}, we get 
 \begin{align}\label{51}
& \int_{s}^\tau \left(\mathcal{U}^{(2)}_{u, \tau, \boldsymbol{\sigma}} \circ \dd \cB^{(2)}_{u, \boldsymbol{\sigma}}\right)_{\ba}  \prec  {\cal T}_{\tau, D}(|[a_1]-[a_2]|) \big [ (\eta_s/\eta_t)^{5/2}{\bf 1}(|[a_1]-[a_2]|\le 6\ell_t^*)+1 \big ].
 \end{align}
Next, using \eqref{res_deccalE_1} and \eqref{res_deccalE_2}, we can bound the third and fourth terms on the RHS of \eqref{int_K-L_ST} as  
\begin{align}
    ~&\int_{s}^\tau \left(\mathcal{U}^{(2)}_{u, \tau, \boldsymbol{\sigma}} \circ \mathcal{E}^{(2)}_{u, \boldsymbol{\sigma}}\right)_{\ba} \dd u  + \int_{s}^\tau \left(\mathcal{U}^{(2)}_{u, \tau, \boldsymbol{\sigma}} \circ \cW^{(2)}_{u, \boldsymbol{\sigma}}\right)_{\ba} \dd u \nonumber\\
 \prec~& \cal T_{\tau,D}(|[a_1]-[a_2]|)\cdot \int_s^\tau \frac{\eta_u}{\eta_t^2} \left[\left(\frac{\ell_u^d}{\ell_s^d}\right)^2 {\bf1}(|[a_1]-[a_2]|\le 3\ell_t^*) + \frac{({\cal J}^{*}_{u,D})^{3 }}{\left(W^d \ell_u^d \eta_u\right)^{1/3}}   \right]\dd u  
  \nonumber\\
 \prec~ &\cal T_{\tau,D}(|[a_1]-[a_2]|)\cdot\big [ (\eta_s/\eta_t)^{2} {\bf 1}(|a_1-a_2|\le 3\ell_t^*)+1 \big ]\label{eq:bddE+W},
\end{align}
where we again used the condition \eqref{con_st_ind} in the second step. Plugging the estimates \eqref{res_deccalE_0}, \eqref{51}, and \eqref{eq:bddE+W} into \eqref{int_K-L_ST}, we obtain that 
\be\label{eq:est2KuptoT}
(\mathcal{L} - \mathcal{K})^{(2)}_{\tau, \boldsymbol{\sigma}, \ba}/ \cal T_{\tau,D}(|[a_1]-[a_2]|) \prec (\eta_s/\eta_t)^{5/2} {\bf 1}(|[a_1]-[a_2]|\le 6\ell_t^*) + 1.
\ee

By the induction hypothesis \eqref{Eq:Gdecay+IND}, we know that $\max_{0\le \ell \le n}   {\cal J} _{s,D} (\ell) \prec 1$, i.e., $T\ge s$ with high probability. Combining this with the estimate \eqref{eq:est2KuptoT} and applying a standard continuity argument, we can show that $T\ge t$ with high probability. More precisely, we partition $(t-s)$ into an $N^{-C}$-net by defining $t_k=s+(k-1)\cdot (t-s)/N^C$ for a large enough constant $C>0$. Suppose a stopping time $\tau\in [s, t]$ falls within an interval $[t_{k},t_{k+1})$ for some $0\le k\equiv k_\tau \le N^C$. By \eqref{eq:est2KuptoT}, we can choose \smash{${\cal J}^*_{\tau,D}= W^{\e/2}\left(\eta_s / \eta_t\right)^{5/2}$}. Then, using a standard perturbation argument, we can easily establish that
$$\max_{0\le \ell \le n}   {\cal J} _{t_{k+2},D} (\ell) \le  W^{\e/2}{\cal J}^*_{\tau,D}=W^\e\left(\eta_s / \eta_t\right)^{5/2}\ \implies \ T\ge t_{k+2} \ge \tau+(t-s)/N^C,$$ 
provided that $C$ is chosen sufficiently large. In other words, by losing a small probability (of order $\OO(N^{-D'})$ for any large constant $D'>C$), we can extend $\tau$ slightly to the right. Iterating this process inductively, we conclude that $T\ge t$ with high probability. Together with \eqref{eq:est2KuptoT}, it concludes \eqref{53}.

\subsection{Proof of \Cref{lem:STOeq_NQ}}\label{sec:pf_STOeq_NQ}
Using \eqref{def:XiL} and \eqref{def:XIL-K}, we can bound the quantities on the RHS of \eqref{int_K-LcalE} as follows using the fast decay property shown in \Cref{lem_decayLoop}: for any $\lenk \in \qqq{3,\fn}$ and $\ba\in (\Zn)^\fn$, we have 
\begin{align}\label{CalEbwXi1}
\Big[\OK^{(\lenk)} (\mathcal{L} - \mathcal{K})\Big]^{(\fn)}_{u, \boldsymbol{\sigma},\ba} &\prec 
 W^d\ell_u^d\cdot (W^d\ell_u^d\eta_u)^{-\lenk+1}\cdot  \Xi^{(\cal L-\cal K)}_{u,n-l_{\mathcal{K}}+2}(W^d\ell_u^d\eta_u)^{-(n-l_{\mathcal{K}}+2)} \nonumber\\
&\le \eta_u^{-1}(W^d\ell_u^d\eta_u)^{-\fn }\cdot  \max_{k\in\qqq{2,\fn-1}}\Xi^{(\cal L-\cal K)}_{u,k} ,
\end{align}
where we used the definition \eqref{DefKsimLK} and the bound \eqref{eq:bcal_k} on primitive loops, and the factor $\ell_u^d$ comes from the summation over $[a]$, whose range is restricted by the fast decay property; by definition \eqref{def_ELKLK}, we have 
\begin{align}
    \mathcal{E}_{u, \boldsymbol{\sigma},\fa}^{(\fn)}&\prec W^d\ell_u^d \sum_{k=2}^{\fn} \Xi^{(\cal L-\cal K)}_{u,k} (W^d\ell_u^d\eta_u)^{-k}
 \cdot \Xi^{(\cal L-\cal K)}_{u,\fn-k+2}(W^d\ell_u^d\eta_u)^{-(\fn-k+2)} \nonumber\\
  & \lesssim \eta_u^{-1}(W^d\ell_u^d\eta_u)^{-\fn }\cdot \max_{k\in \qqq{2, \fn} }\left(\Xi^{(\cal L-\cal K)}_{u,k} \Xi^{(\cal L-\cal K)}_{u,\fn-k+2}\right)\cdot(W^d\ell_u^d\eta_u)^{-1}  ;\label{CalEbwXi2}
  \end{align}  
by definition \eqref{def_EwtG}, we have that  
  \begin{align}
  \cW_{u, \boldsymbol{\sigma},\ba}^{(\fn)}&\prec W^d\ell_u^d \cdot  (W^d\ell_u^d\eta_u)^{-1}
 \cdot \left[\Xi^{\cal L }_{u,\fn+1} (W\ell_u\eta_u)^{-\fn}\right]\prec 
  \eta_u^{-1} (W^d\ell_u^d\eta_u)^{-\fn }\cdot  \Xi^{(\cal L)}_{u,\fn+1} , \label{CalEbwXi3} 
    \end{align}  
where in the first step we used \eqref{GavLGEX} and \eqref{lk2safyas} to bound $ \langle \Gc_u(\sigma_k) E_{[a]} \rangle$ by $(W^d\ell_u^d\eta_u)^{-1}$; 
for any \smash{$\ba,\ba'\in (\Zn)^n$}, by \Cref{def:CALE}, we have that   
    \begin{align}
(\mathcal{B} \otimes \mathcal{B})^{(2\fn)}_{u, \boldsymbol{\sigma},\ba,\ba'}\prec W^d\ell_u^d\cdot \Xi^{(\cal L)}_{u, {2\fn+2}} (W^d\ell_u^d\eta_u)^{-2\fn-1 }  \prec \eta_u^{-1}(W^d\ell_u^d\eta_u)^{-2\fn }\cdot \Xi^{(\cal L)}_{u, {2\fn+2}}   .\label{CalEbwXi}
\end{align}
We emphasize that the above estimates \eqref{CalEbwXi1}--\eqref{CalEbwXi} do not depend on the assumption \eqref{NALsigm}.


We now analyze equation \eqref{int_K-LcalE} with the above estimates. 
Under the assumption \eqref{NALsigm}, using \eqref{sum_res_2_NAL}, the above bounds \eqref{CalEbwXi1}--\eqref{CalEbwXi3}, and the estimate \eqref{Eq:L-KGt+IND} on \smash{$(\mathcal{L} - \mathcal{K})^{(\fn)}_{s, \boldsymbol{\sigma}, \ba}$}, we obtain that 
\begin{align}\label{sahwNQ}
(W^d\ell_t^d\eta_t)^{\fn}\cdot      (\mathcal{L} - \mathcal{K})_{t, \boldsymbol{\sigma}, \ba} &\prec 
1 +   \int_{s}^t  \eta_u^{-1} \max_{k\in \qqq{2,\fn-1}}\Xi^{({\cal L-\cal K})}_{u,k}\dd u  +  \int_{s}^t \eta_u^{-1} \max_{k: k\in\qqq{2,\fn} } \frac{\Xi^{({\cal L-\cal K})}_{u,k} \Xi^{({\cal L-\cal K})}_{u,\fn-k+2}}{W^d\ell_u^d\eta_u} \dd u 
    \nonumber \\
    &+ \int_{s}^t \eta_u^{-1} \Xi^{(\cal L)}_{u,\fn+1}\dd u 
   + (W^d\ell_t^d\eta_t)^\fn \int_{s}^t \left(\mathcal{U}^{(\fn)}_{u, t, \boldsymbol{\sigma}} \circ  \dd  \mathcal{B}^{(\fn)}_{u, \boldsymbol{\sigma}}\right)_{\ba}.
\end{align}
By Lemma \ref{lem:DIfREP}, using \eqref{CalEbwXi} and \eqref{sum_res_2_NAL}, we obtain that  
\begin{align} \label{sahwNQ2}
(W^d\ell_t^d\eta_t)^\fn\int_{s}^t \left(\mathcal{U}^{(\fn)}_{u, t, \boldsymbol{\sigma}} \circ  \dd  \mathcal{B}^{(\fn)}_{u, \boldsymbol{\sigma}}\right)_{\ba} & \prec (W^d\ell_t^d\eta_t)^\fn \left\{\int_{s}^t 
   \left(\left(
   \mathcal{U}^{(\fn)}_{u,t,  \boldsymbol{\sigma}}
   \otimes 
   \mathcal{U}^{(\fn)}_{u,t,  \overline{\boldsymbol{\sigma}}}
   \right) \;\circ  \;
   \left( \cB \otimes  \cB \right)^{(2\fn)}
   _{u, \boldsymbol{\sigma}  }
   \right)_{\ba, \ba}\dd u\right\}^{1/2} \nonumber\\
   &\prec \left\{\int_{s}^t 
   \eta_u^{-1}\Xi^{(\cal L)}_{u, {2\fn+2}}\dd u\right\}^{1/2} .
  \end{align} 
Plugging it into \eqref{sahwNQ} and performing the integral over $u$, we conclude \eqref{am;asoiuw}.

It is clear from the above argument that the same estimate \eqref{am;asoiuw} indeed holds for each $\Xi^{({\cal L-\cal K})}_{u,\fn}$ with $u\in[s,t]$. Then, a standard $N^{-C}$-net argument allows us to extend it uniformly to all $u\in[s,t]$ and hence conclude \eqref{am;asoiuw} for \smash{$\sup_{u\in[s,t]}\wh\Xi^{({\cal L-\cal K})}_{u,\fn}$}.

\subsection{Proof of \Cref{lem:STOeq_Qt_weak}} \label{sec:pf_STOeq_Qt_weak}
We first claim that when $\bsig$ satisfies \eqref{NALsig_diff}, the following estimate holds uniformly in $u\in[s,t]$:
\begin{align}\label{jywiiwsoks}
 \left[\cal P\circ (\mathcal{L} - \mathcal{K})^{(\fn)}_{u, \boldsymbol{\sigma}}\right]_{[a_1]}  \prec \ell_u^{-d} (W^d\eta_u)^{-\fn} \Xi^{(\mathcal{L}-\mathcal{K})}_{u, \fn-1}.
\end{align}
To see this estimate, we apply \Cref{lem_WI_K} at the vertex $[a_\fn]$ and write the LHS of \eqref{jywiiwsoks} as 
$$ \frac{1}{2\ii W^d \eta_u}\left[\cal P\circ (\mathcal{L} - \mathcal{K})^{(\fn-1)}_{u, \wh\bsig^{(+,\fn)}}-\cal P\circ (\mathcal{L} - \mathcal{K})^{(\fn-1)}_{u, \wh\bsig^{(-,\fn)}}\right]_{[a_1]}.$$
By \eqref{def:XIL-K}, the two $(\fn-1)$-$G$ loops are controlled by $$(\mathcal{L} - \mathcal{K})^{(\fn-1)}_{u, \wh\bsig^{(\pm,\fn)},\hat \ba^{(n)}} \prec (W^d\ell_u^d\eta_u)^{-(\fn-1)}\Xi^{(\mathcal{L}-\mathcal{K})}_{u, \fn-1}.$$ Moreover, due to the fast decay property of the $(\cal L-\cal K)$-loops, the partial sums over the remaining $(\fn-2)$ vertices lead to an additional \smash{$\ell_u^{d(\fn-2)}$} factor up to a negligible error $\OO(W^{-D})$. This leads to \eqref{jywiiwsoks}.
On the other hand, using \eqref{prop:ThfadC}, we can bound \smash{$\dthn^{(\fn)}_{u, \ba}$ by $(\ell_u^d)^{-(\fn-1)}$}. Together with \eqref{jywiiwsoks}, it implies that 
\begin{align*}
\left[\cal P\circ (\mathcal{L} - \mathcal{K})^{(\fn)}_{u, \boldsymbol{\sigma}}\right]_{[a_1]} \dthn^{(\fn)}_{u, \ba} \prec (W^d\ell_u^{d}\eta_u)^{-\fn}\Xi^{(\mathcal{L}-\mathcal{K})}_{u, \fn-1}
\end{align*}
uniformly in $u\in [s,t]$, which leads to the following bound at $u=t$:
\be\label{eq:Ward_typeP}
(W^d\ell_t^{d}\eta_t)^{\fn} \left[\cal P\circ (\mathcal{L} - \mathcal{K})^{(\fn)}_{t, \boldsymbol{\sigma}}\right]_{[a_1]} \dthn^{(\fn)}_{t, \ba} \prec \Xi^{(\mathcal{L}-\mathcal{K})}_{t, \fn-1}.
\ee

It remains to control ${\cal Q}_t \circ (\mathcal{L} - \mathcal{K})^{(\fn)}_{t, \boldsymbol{\sigma}, \ba}$. For this purpose, we need the following claim on the $(\infty\to \infty)$-norm of the sum zero operator, which follows easily from \Cref{Def:QtPt} and the estimate \eqref{prop:ThfadC}. 

\begin{claim}\label{lem_+Q} 
Let ${\cal A}: (\Zn)^{\fn}\to \mathbb C$ be an $\fn$-dimensional tensor for a fixed $\fn\in \N$ with $\fn\ge 2$. If $\cal A$ satisfies the $(t, \e, D)$-decay property, then we have that 
\begin{align}\label{normQA}
\left\|{\cal Q}_t\circ \cal A\right\|_{\infty} \le W^{C_\fn \e} \|\cal A\|_\infty +W^{-D+C_\fn}
\end{align}
for a constant $C_\fn$ that does not depend on $\e$ or $D$. Furthermore, if $\|\cal A\|_\infty\le W^C$ for a constant $C>0$, then $\cal A_\fa-({\cal Q}_t\circ \cal A)_{\ba}$ satisfies the $(t, \e', D')$-decay property for any constants $\e',D'>0$.  
\end{claim}
\begin{proof}
The $(t, \e', D')$-decay property of $\cal A_\fa-({\cal Q}_t \circ \cal A)_{\ba}$ follows easily from the exponential decay of $\Theta^{(+,-)}_{t}$ given by \eqref{prop:ThfadC}. To show \eqref{normQA}, it suffices to prove that   
\be\label{eq:boundPA}
 \max_{\ba}\left|\left({\cal P} \circ {\cal A}\right)_{[a_1]}\dthn^{(\fn)}_{t, \ba}\right| \le W^{C_\fn \e}\|\cal A\|_\infty + W^{-D+C_\fn}. 
\ee
Using the estimate \eqref{prop:ThfadC} and the $(t,\e,D)$-decay property of $\cal A$, we get that 
\begin{align*}
   \max_{\ba}\left|\left({\cal P} \circ {\cal A}\right)_{[a_1]}\dthn^{(\fn)}_{t, \ba}\right| &\prec  \frac{|1-t|^{\fn-1}}{(\ell_t^d|1-t|)^{\fn-1}}\cdot (W^\e\ell_t^d)^{(\fn-1)}\|\cal A\|_\infty + W^{-D}n^d \\
   &= W^{\e (\fn-1)}\|\cal A\|_\infty + W^{-D}n^d, 
\end{align*}
which concludes \eqref{eq:boundPA}.
\end{proof}

We derive from equation \eqref{eq_L-Keee} that 
\begin{align} 
\dd{\cal Q}_t \circ & (\mathcal{L} - \mathcal{K})^{(\fn)}_{t, \boldsymbol{\sigma}, \ba} 
    = {\cal Q}_t \circ \left[\mathcal{O}_{\cK}^{(2)} (\mathcal{L} - \mathcal{K})\right]^{(\fn)}_{t, \boldsymbol{\sigma}, \ba} 
    + \sum_{l_\mathcal{K}=3}^\fn {\cal Q}_t \circ \Big[\mathcal{O}_{\cK}^{(\lenk)} (\mathcal{L} - \mathcal{K})\Big]^{(\fn)}_{t, \boldsymbol{\sigma}, \ba} + {\cal Q}_t \circ \mathcal{E}^{(\fn)}_{t, \boldsymbol{\sigma}, \ba}\dd t \nonumber
    \\
    & 
    + {\cal Q}_t \circ \mathcal{W}^{(\fn)}_{t, \boldsymbol{\sigma}, \ba} \dd t
    + {\cal Q}_t \circ \dd\mathcal{B}^{(\fn)}_{t, \boldsymbol{\sigma}, \ba} 
    - \left[{\cal P} \circ \left(\mathcal{L} - \mathcal{K}\right)^{(\fn)}_{t,\bsig}\right]_{[a_1]} \partial_t \dthn_{t,\ba}^{(\fn)} \dd t.\label{zjuii1}
\end{align} 
Recalling \Cref{DefTHUST}, we can rewrite the first term on the RHS as 
\begin{align}\label{zjuii2}
{\cal Q}_t \circ \Big[\mathcal{O}_{\cK}^{(2)}(\mathcal{L} - \mathcal{K})\Big]_{t, \boldsymbol{\sigma},\ba}^{(\fn)}
& = \thn^{(\fn)}_{t, \boldsymbol{\sigma}} \circ \left[{\cal Q}_t \circ (\mathcal{L} - \mathcal{K})_{t, \boldsymbol{\sigma}}^{(\fn)}\right]_{\ba} 
+ \left[{\cal Q}_t , \thn^{(\fn)}_{t, \boldsymbol{\sigma}} \right] \circ (\mathcal{L} - \mathcal{K})_{t, \boldsymbol{\sigma}, \ba}^{(\fn)},
\end{align}
where  $[{\cal Q}_t , \thn^{(\fn)}_{t, \boldsymbol{\sigma}} ] = {\cal Q}_t \circ \thn^{(\fn)}_{t, \boldsymbol{\sigma}} - \thn^{(\fn)}_{t, \boldsymbol{\sigma}}\circ \cal Q_t$ denotes the commutator
between ${\cal Q}_t $ and $\thn^{(\fn)}_{t, \boldsymbol{\sigma}}$. Since \({\cal P} \circ {\cal Q}_t = 0\), we notice that the first 5 terms on the RHS of \eqref{zjuii1} satisfy the sum zero property. 
Using 
$${\cal P}  \circ \dthn^{(\fn)}_{t,  \ba}= \sum_{\ba'}\dthn^{(\fn)}_{t,  \ba}\equiv 1,\quad \text{where}\quad \ba'=([a_2],\cdots, [a_\fn]),$$ 
we see that the last term on the RHS of \eqref{zjuii1} also satisfies the sum zero property:
\be\label{eq:sumzero666}
 {\cal P}  \circ \left\{\left[{\cal P} \circ \left(\mathcal{L} - \mathcal{K}\right)^{(\fn)}_{t,\bsig}\right]_{[a_1]} \partial_t \dthn_{t,\ba}^{(\fn)}\right\}
=\left[{\cal P} \circ \left(\mathcal{L} - \mathcal{K}\right)^{(\fn)}_{t,\bsig}\right]_{[a_1]} {\cal P} \circ \left(\partial_t \dthn_{t,\ba}^{(\fn)}\right)=0.
\ee
Next, due to \eqref{eq:sum0PA}, the first term on the RHS of \eqref{zjuii2} also satisfies the sum-zero property. 
Finally, since the LHS of \eqref{zjuii2} has the sum zero property, the second term on the RHS of \eqref{zjuii2} also satisfies that: 
\begin{align}\label{pqthlk}
   {\cal P}\circ \big[{\cal Q}_t , \thn^{(\fn)}_{t,\boldsymbol{\sigma}} \big]\circ (\mathcal{L} - \mathcal{K}) _{t, \boldsymbol{\sigma}, \ba}=0 .
\end{align} 

With Duhamel's principle, we can derive from \eqref{zjuii1} and \eqref{zjuii2} the following counterpart of \eqref{int_K-LcalE}:
 \begin{align}\label{int_K-L+Q}
 & {\cal Q}_t\circ  (\mathcal{L} - \mathcal{K})^{(\fn)}_{t, \boldsymbol{\sigma}, \ba}   = 
\left(\mathcal{U}^{(\fn)}_{s, t, \boldsymbol{\sigma}} \circ  {\cal Q}_s\circ (\mathcal{L} - \mathcal{K})^{(\fn)}_{s, \boldsymbol{\sigma}}\right)_{\ba}  + \sum_{l_\mathcal{K} = 3}^\fn \int_{s}^t \left(\mathcal{U}^{(\fn)}_{u, t, \boldsymbol{\sigma}} \circ  {\cal Q}_u\circ \Big[\mathcal{O}_{\cK}^{(\lenk)} (\mathcal{L} - \mathcal{K})\Big]^{(\fn)}_{u, \boldsymbol{\sigma}}\right)_{\ba} \dd u \nonumber \\
&+ \int_{s}^t \left(\mathcal{U}^{(\fn)}_{u, t, \boldsymbol{\sigma}} \circ  {\cal Q}_u\circ \mathcal{E}^{(\fn)}_{u, \boldsymbol{\sigma}}\right)_{\ba} \dd u + \int_{s}^t \left(\mathcal{U}^{(\fn)}_{u, t, \boldsymbol{\sigma}} \circ  {\cal Q}_u\circ \mathcal{W}^{(\fn)}_{u, \boldsymbol{\sigma}}\right)_{\ba} \dd u + \int_{s}^t \left(\mathcal{U}^{(\fn)}_{u, t, \boldsymbol{\sigma}} \circ  {\cal Q}_u\circ \dd\mathcal{B}^{(\fn)}_{u, \boldsymbol{\sigma}}\right)_{\ba} 
    \nonumber \\
&+ \int_{s}^t \left(\mathcal{U}^{(\fn)}_{u, t, \boldsymbol{\sigma}} 
\circ \left( \left[{\cal Q}_u , \thn^{(\fn)}_{u,\boldsymbol{\sigma}} \right]\circ (\mathcal{L} - \mathcal{K})^{(\fn)}_{u, \boldsymbol{\sigma} }\right) \right)_{\ba} \dd u
   - \int_{s}^t \left(\mathcal{U}^{(\fn)}_{u, t, \boldsymbol{\sigma}} 
     \circ \left\{\left[ {\cal P} \circ
      \left(\mathcal{L} - \mathcal{K}\right)^{(\fn)}_{u, \boldsymbol{\sigma}}\right] \partial_u\dthn_{u}^{(\fn)} \right\}\right)_{\ba} \dd u.
\end{align}
We can control the terms on the RHS of \eqref{int_K-L+Q} by using the improved estimate \eqref{sum_res_2} due to the sum zero and fast decay properties of the terms on the RHS of \eqref{zjuii1}. With \Cref{lem_+Q}, the induction hypothesis \eqref{Eq:L-KGt+IND} at time $s$, the estimates established in \eqref{CalEbwXi1}--\eqref{CalEbwXi},  \Cref{lem:DIfREP} for the martingale term, and the evolution kernel estimate \eqref{sum_res_2}, we can control the first 5 terms on the RHS of \eqref{int_K-L+Q} in a manner similar to that in \Cref{lem:STOeq_NQ}. It remains to address the last two terms on the RHS of \eqref{int_K-L+Q}.

First, with the definition of \smash{$\dthn_u^{(\fn)}$} in \eqref{eq:sumzero_op} and the estimate \eqref{prop:ThfadC}, we can check directly that 
\be\label{eq:derv_Theta} \|\dthn_{u}^{(\fn)}\|_\infty \prec (\ell_u^d)^{-(\fn-1)},\quad 
\|\partial_u\dthn_{u}^{(\fn)}\|_\infty \prec \eta_u^{-1}(\ell_u^d)^{-(\fn-1)}. \ee
Combining the second bound with \eqref{jywiiwsoks}, we obtain that 
\be\label{A5}  
\left\|\left[ {\cal P} \circ \left(\mathcal{L} - \mathcal{K}\right)^{(\fn)}_{u, \boldsymbol{\sigma}}\right] \partial_u\dthn_{u}^{(\fn)}\right\|_\infty \prec \eta_u^{-1}(W^d\ell_u^d\eta_u)^{-\fn}\cdot \Xi^{(\mathcal{L}-\mathcal{K})}_{u, \fn-1}.
\ee
Second, using the definition of $\cal Q_t$ in \eqref{eq:sumzero_op} and the definition of $\thn^{(\fn)}_{u,\bsig}$ in \eqref{def:op_thn}, we can bound that
\begin{align}\label{A4}  
\br{{\cal Q}_u , \thn^{(\fn)}_{u,\boldsymbol{\sigma}} }\circ (\mathcal{L} - \mathcal{K})^{(\fn)}_{u, \boldsymbol{\sigma}, \ba}
   &= \thn^{(\fn)}_{u, \boldsymbol{\sigma}} \circ \left[ \left({\cal P} \circ \left( \mathcal{L} - \mathcal{K} \right)^{(\fn)}_{u,\bsig}\right) \cdot \dthn^{(\fn)}_u \right]_{\ba} - \left[\left( {\cal P} \circ \thn^{(\fn)}_{u, \boldsymbol{\sigma}} \circ (\mathcal{L} - \mathcal{K})^{(\fn)}_{u,\bsig}\right)  \cdot \dthn^{(\fn)}_u \right]_{\ba} \nonumber
\\ 
& \prec \eta_u^{-1}\left\|\dthn^{(\fn)}_{u}\right\|_\infty \cdot   \left\| {\cal P} \circ \left(  \mathcal{L} - \mathcal{K} \right)^{(\fn)}_{u,\boldsymbol{\sigma}} \right\|_{\infty} \prec \eta_u^{-1}(W^d\ell_u^d\eta_u)^{-\fn}\cdot \Xi^{(\mathcal{L}-\mathcal{K})}_{u, \fn-1},
\end{align}
where in the second step, we used the simple fact that $ \|\thn_{u,\bsig}^{(\fn)}\|_{\infty\to\infty} \lesssim \eta_u^{-1}$ for any $\bsig\in \{+,-\}^\fn$ due to \eqref{eq:WardM2}, and in the third step, we used \eqref{jywiiwsoks} and the first bound in \eqref{eq:derv_Theta}. 
Finally, applying the evolution kernel estimate \eqref{sum_res_2} and the estimates \eqref{A5} and \eqref{A4}, and performing the integral over $u$, we can control the last two terms on the RHS of \eqref{int_K-L+Q} by $\p{{\ell_t^{d-1}}/{\ell_s^{d-1}}} \cdot (W^d\ell_t^d\eta_t)^{-\fn}\sup_{u\in[s,t]}\Xi^{({\cal L-\cal K})}_{u,\fn-1}$. This concludes the proof of \Cref{lem:STOeq_Qt_weak}.


\subsection{Proof of \Cref{lem:iterations}}\label{sec:pf_iterations}
By \eqref{Eq:L-KGt+IND}, we know that \smash{$\wh\Xi_{s,\fn}^{(\mathcal{L}-\mathcal{K})} \prec 1$} initially. We claim that under the condition \eqref{eq:iteration_induc}, the following bound holds uniformly in $u\in[s,t]$ for every fixed $k\ge 1$: 
 \begin{align} \label{xiu2n+2psi}
\sup_{v\in[s,u]}\wh\Xi^{({\cal L})}_{v, 2\fn+2} \prec \Psi_u(\fn,k;[s,u])^2=:\Xi^{(\cal L)}_{v, {2\fn+2}} \ .
\end{align}
Given this bound, we obtain from \eqref{am;asoi333} that
\begin{align} 
\sup_{v\in[s,u]}\wh\Xi_{v,\fn }^{(\mathcal{L}-\mathcal{K})}  &\prec \Psi_u(\fn,k;[s,u])  + 
  \sup_{v\in [s,u]}\left( \max_{r\in \qqq{2,\fn-1} }\Xi^{({\cal L-\cal K})}_{v,r}+\max_{r\in \qqq{2, \fn} }\frac{\Xi^{({\cal L-\cal K})}_{v,r}\Xi^{({\cal L-\cal K})}_{v,\fn-r+2}}{W^d\ell_v^d\eta_v}+\Xi^{({\cal L})}_{v,\fn+1}\right)\label{sadui_w0}
 \end{align}
uniformly in $u\in[s,t]$. By the induction hypothesis \eqref{eq:iteration_induc}, we can choose our control parameters as 
\be \nonumber
\Xi^{({\cal L-\cal K})}_{v,r}= \Psi_u(r,k;[s,u]),\quad  \forall r\in \qqq{2,\fn-1},\ \  \text{and} \ \ \Xi^{({\cal L-\cal K})}_{v,\fn}=\sup_{v\in [s,u]}\Psi_v(\fn,k-1;[s,u])\ee
for each $v\in[s,u]$, where we also used that $\Psi_v(r,k;[s,u])\equiv \Psi_v(r,k;[s,u])$ when $k\ge 1$. When $\fn=2$, by \eqref{Eq:Gdecay_w}, we can choose a slightly better \smash{$\Xi^{({\cal L-\cal K})}_{v,2}$}:
\be \nonumber
\Xi^{({\cal L-\cal K})}_{v,2}= \Psi_u(r,k;[s,u])\wedge \p{{\eta_s}/{\eta_u}}^4.\ee
By \eqref{rela_XILXILK},  \eqref{eq:iteration_induc}, and that $\ell_u^d\eta_u\lesssim \ell_v^d\eta_v$ for any $v\in[s,u]$, we can choose {$\Xi^{({\cal L})}_{v,\fn+1}$} as follows: 
$$\sup_{v\in [s,u]}\wh \Xi^{({\cal L})}_{v,\fn+1}\prec 1+(W^d\ell_u^d\eta_u)^{-1}\sup_{v\in [s,u]} \Psi_v(\fn+1,k-1;[s,u]) \prec \Psi_u(\fn,k;[s,u])=:\Xi^{({\cal L})}_{v,\fn+1}.$$
Plugging these parameters into \eqref{sadui_w0} and using that $\Psi_{u}(r,k;[s,u])$ is increasing in $r$, we obtain that   
 \begin{align*}
\sup_{v\in[s,u]}\wh\Xi_{v,\fn }^{(\mathcal{L}-\mathcal{K})}\prec &~\Psi_u(\fn,k;[s,u]) + \sup_{v\in [s,u]} \p{\frac{\eta_s}{\eta_u}}^4\frac{\Psi_{v}(\fn,k-1;[s,u])}{W^d\ell_u^d\eta_u } \\
&~+ \max_{r\in \qqq{3, \fn-1}} \frac{\Psi_{u}(r,k;[s,u])\Psi_{u}(\fn-r+2,k;[s,u])}{W^d\ell_u^d\eta_u } \prec \Psi_u(\fn,k;[s,u]), 
 \end{align*}
where we again used the condition \eqref{con_st_ind} in the second step. 
This concludes \eqref{eq:iteration_improve}.

It remains to prove \eqref{xiu2n+2psi} under the induction hypothesis \eqref{eq:iteration_induc}. For simplicity, given $t\in [0,1]$, $\bsig=(\sigma_1,  \ldots, \sigma_\fn)\in \{+,-\}^\fn,$ and \smash{$\ba = ([a_1], \ldots, [a_{\fn-1}])\in (\Zn)^{\fn-1}$}, we introduce the following notation of a $G$-chain of length $\fn$ as
\begin{equation}\label{defC=GEG}
    {\cal C}^{(\fn)}_{t, \boldsymbol{\sigma}, \ba} = \prod_{i=1}^{\fn-1}\left(G_t(\sigma_i) E_{[a_i]}\right)\cdot  G_t(\sigma_\fn) .
\end{equation}
Given an arbitrary $(2\fn+2)$-$G$ chain $\cL^{(2\fn+2)}_{v,\bsig,\ba}$ with 
$$\bsig=(\sig,\sig_1,\ldots, \sig_\fn, \sig',\sig_1',\ldots, \sig_\fn'),\quad \ba=([b],[a_1],\ldots,[a_\fn],[b'],[a_1'],\ldots,[a_\fn']),$$ 
we can write it is 
\begin{align*}
\cL^{(2\fn+2)}_{v,\bsig,\ba}=W^{-2d}\sum_{x\in[b],x'\in [b']} \left(\cal C^{(\fn+1)}_{v,\bsig_1,\ba_1}\right)_{xx'} \left(\cal C^{(\fn+1)}_{v,\bsig_2,\ba_2}\right)_{x'x} ,  
\end{align*}
where $\bsig_1=(\sig_1,\ldots, \sig_\fn, \sig')$, $\bsig_2=(\sig_1',\ldots, \sig_\fn',\sig)$, $\ba_1=([a_1],\ldots,[a_\fn])$, and $\ba_2=([a_1'],\ldots,[a_\fn'])$. Applying the Cauchy-Schwarz inequality, we obtain that 
\begin{equation}\label{suauwiioo1}
\max_{\ba\in(\Zn)^{2\fn+2}}\max_{\bsig\in \{+,-\}^{2\fn+2}}\left|\cL^{(2\fn+2)}_{v,\bsig,\ba}\right|
\le 
\max_{\ba\in(\Zn)^{\fn}}\max_{\boldsymbol{\sigma} \in\{+,-\}^{\fn+1}} W^{-2d}\sum_{x\in[b],x'\in[b']}
\left|\left({\cal C}^{(\fn+1)} _{v,\boldsymbol{\sigma},\ba}\right)_{xx'} 
\right|^2 .
\end{equation}
Now, we split the $(\fn+1)$-$G$ chain further into an $l_1$-$G$ chain and an $l_2$-$G$ chain, where $l_1=\lfloor(\fn+1)/{2}\rfloor$ and $l_2=\fn+1-l_1$. Applying the Cauchy-Schwarz inequality again, we get that  \begin{align}\label{suauwiioo2}
    W^{-2d}\sum_{x\in[b],x'\in[b']}
\left|\left({\cal C}^{(\fn+1)} _{v,\boldsymbol{\sigma},\ba}\right)_{xx'} 
\right|^2 &\le W^{-2d}\sum_{x\in[b],x'\in[b']} \left(\cal C^{(2l_1)}_{v,\bsig_1',\ba_1'}\right)_{xx} \left(\cal C^{(2l_2)}_{v,\bsig_2',\ba_2'}\right)_{x'x'}  =\cL^{(2l_1)}_{v,\bsig_1',\wt\ba_1}\cL^{(2l_2)}_{v,\bsig_2',\wt\ba_2},
\end{align}
where $\bsig_1'$, $\ba_1'$, $\bsig_2'$, $\ba_2'$ are defined by  
\begin{align*}
\bsig_1'=(\sig_1,\ldots,\sig_{l_1},-\sig_{l_1},\ldots,-\sig_1),\quad  &\ba_1'=([a_1],\ldots, [a_{l_1-1}],[a_{l_1}],[a_{l_1-1}],\ldots, [a_1]),\\
\bsig_2'=(-\sig_{\fn+1},\ldots,-\sig_{l_1+1},\sig_{l_1+1},\ldots,\sig_{\fn+1}),\quad &\ba_2'=([a_{\fn}],\ldots, [a_{l_1+1}],[a_{l_1}],[a_{l_1+1}],\ldots, [a_{\fn}]),
\end{align*}
$\wt\ba_1$ is obtained by adding $[b]$ to $\ba_1'$, and $\wt\ba_2$ is obtained by adding $[b']$ to $\ba_2'$. Plugging \eqref{suauwiioo2} into \eqref{suauwiioo1} and using that $l_1+l_2=\fn+1$, we obtain that 
\begin{align}\label{auskoppw2.0}
\wh\Xi^{({\cal L})}_{v, 2n+2}&= \left(W^d\ell_v^d\eta_v\right)^{2\fn+1}\max_{\ba\in(\Zn)^{2\fn+2}}\max_{\bsig\in \{+,-\}^{2\fn+2}}\left|\cL^{(2\fn+2)}_{v,\bsig,\ba}\right|  \prec \left(W^d\ell_v^d\eta_v\right) \Xi^{({\cal L})}_{v, 2l_1}\Xi^{({\cal L})}_{v, 2l_2}.
\end{align}
Now, by the induction hypothesis \eqref{eq:iteration_induc} (since $\max\{2l_1,2l_2\}\le \fn+2$) and the relation \eqref{rela_XILXILK}, we have
\begin{align}\label{auskoppw2.00}
\wh\Xi_{v,2l_i}^{(\cal L)} \prec 1+\left(W^d\ell_v^d\eta_v\right)^{-1}\Psi_v(k-1,2l_i;[s,u]) \prec 1+ \e_v^{(i)}(k),\quad \forall i\in\{1,2\},
\end{align}
where $\e_v^{(i)}(k)$ is defined as 
$$\e_v^{(i)}(k):=(\ell_u^d/\ell_s^d)^{2l_i-1}\times
    \begin{cases}
        1 , &  \ \text{if}\ k=1\\
       \left(W^d\ell_v^d\eta_v\right)^{-1} (W^d\ell_s^d\eta_s)^{1-(k-1)/4}, &   \ \text{if}\  k\ge 2 
    \end{cases}.$$
Plugging \eqref{auskoppw2.00} into \eqref{auskoppw2.0} gives that 
\begin{align}\label{auskoppw2}
\sup_{v\in[s,u]}\wh\Xi^{({\cal L})}_{v, 2\fn+2}&\prec  \sup_{v\in[s,u]} \left[\left(W^d\ell_v^d\eta_v\right)\prod_{i=1}^2\left(1+\e_v^{(i)}(k)\right)\right] \\
&\lesssim \left((W^d\ell_s^d\eta_s)^{1/2}+(\ell_u^d/\ell_s^d)^{\fn-1}(W^d\ell_s^d\eta_s)^{1-k/4}\right)^2
      =\Psi_u(\fn,k;[s,u])^2 \nonumber
\end{align}
for $k\ge 1$, where we also used the condition \eqref{con_st_ind} in the second step. This concludes \eqref{xiu2n+2psi}, and hence completes the proof of \Cref{lem:iterations}.

\subsection{Step 6 of the proof of \Cref{lem:main_ind}}\label{sec:pf_step6}

In this step, we have the initial estimate \eqref{Eq:Gtlp_exp+IND} at time $s$, sharp local law \eqref{Gt_bound_flow}, and sharp $G$-loop estimates \eqref{Eq:LGxb}, \eqref{Eq:L-KGt-flow}, and \eqref{Eq:Gdecay_flow}. We will use them to establish \eqref{Eq:Gtlp_exp_flow}. 
First, we have established a sharp averaged local law in \eqref{eq:res_ELK_n=1}. Next, we prove an improved bound on its expectation. 

\begin{lemma}\label{lem:improve_exp_aver}
In the setting of \Cref{lem:main_ind}, suppose the estimates \eqref{Gt_bound_flow}, \eqref{Eq:LGxb}, and \eqref{eq:res_ELK_n=1} hold uniformly in $u\in[s,t]$. Then, we have 
\begin{align}\label{res_ELK_n=1} 
  \max_{[a]}  \left|\mathbb E\langle (G_u-M) E_{[a]}\rangle\right|
  \prec (W^d\ell_u^d\eta_u)^{-2},\quad \forall  s\le u \le t .
  \end{align} 
\end{lemma}
\begin{proof}
Plugging \eqref{G-M} into $\mathbb{E} \langle (G_u - M) E_{[a]} \rangle$ gives (recall the notation in \eqref{Eq:defwtG})
\begin{align}
    \mathbb{E} \langle \Gc_u E_{[a]} \rangle &= - \mathbb{E} \langle M(um+V_u) G_u E_{[a]} \rangle 
    = uW^d \sum_{[b]} \mathbb{E}\langle G_u E_{[a]}ME_{[b]} \rangle \langle \Gc_u E_{[b]} \rangle  \nonumber\\
    &= u \sum_{[b]} M^{(+,+)}_{[a][b]}\E\langle \Gc_u E_{[b]} \rangle + uW^d \sum_{[b]} \mathbb{E}\langle \Gc_u E_{[a]}ME_{[b]} \rangle \langle \Gc_u E_{[b]} \rangle, \label{eq:G-MG-M}
\end{align}
where in the second step we applied Gaussian integration by parts to the entries of $V_u$ (whose variance matrix is $uS=u(I_n\otimes \mathbf E)$), and in the third step we recall the notation \eqref{eq:Msig}. Then, we apply \eqref{GavLGEX} to \smash{$W^d\langle \Gc_u E_{[a]}ME_{[b]} \rangle$} and get that 
\be \label{eq:res_ELK_n=1.0} 
W^d\langle \Gc_u E_{[a]}ME_{[b]} \rangle \prec(CW^{-\e_A})^{|[a]-[b]|} (W^d\ell_u^d\eta_u)^{-1},\ee
where we also used that the $([a],[b])$-th block of $M$ has $L^2\to L^2$ 
norm of order {$(C\heta)^{|[a]-[b]|} 
\le (CW^{-\e_A})^{|[a]-[b]|}$} by \eqref{Mbound_AO} and \eqref{eq:cond_A12}. With \eqref{eq:res_ELK_n=1} and \eqref{eq:res_ELK_n=1.0}, we can bound the second term on the RHS of \eqref{eq:G-MG-M} by  
\be\label{eq:second_uGM}
uW^d \sum_{[b]} \mathbb{E}\langle \Gc_u E_{[a]}ME_{[b]} \rangle \langle \Gc_u E_{[b]} \rangle\prec (W^d\ell_u^d\eta_u)^{-2}. 
\ee
Hence, we can rewrite equation \eqref{eq:G-MG-M} as 
\begin{align*}
\sum_{[b]} \left(1-uM^{(+,+)}\right)_{[a][b]} \mathbb{E} \langle \Gc_u E_{[b]} \rangle = \OO_\prec \left((W^d\ell_u^d\eta_u)^{-2}\right).
\end{align*}
Solving this equation and using $\|\Theta_{u}^{(+,+)}\|_{\infty\to \infty}\lesssim 1$ by \eqref{prop:ThfadC_short}, we conclude \eqref{res_ELK_n=1}.
\end{proof}

\begin{proof}[\bf Step 6: Proof of \eqref{Eq:Gtlp_exp_flow}]
Taking the expectation of both sides of equation \eqref{int_K-LcalE} when $\fn=2$, we get that  
\begin{align}\label{Eexpint_K-L}
 \mathbb E (\mathcal{L} - \mathcal{K})^{(2)}_{t, \boldsymbol{\sigma}, \ba}  =
 &~\left(\mathcal{U}^{(2)}_{s, t, \boldsymbol{\sigma}} \circ \E (\mathcal{L} - \mathcal{K})^{(2)}_{s, \boldsymbol{\sigma}}\right)_{\ba} + \int_{s}^t \left(\mathcal{U}^{(2)}_{u, t, \boldsymbol{\sigma}} \circ \E\mathcal{E}^{(2)}_{u, \boldsymbol{\sigma}}\right)_{\ba} \dd u  + \int_{s}^t \left(\mathcal{U}^{(2)}_{u, t, \boldsymbol{\sigma}} \circ \E\cW^{(2)}_{u, \boldsymbol{\sigma}}\right)_{\ba} \dd u .
\end{align} 
At time $s$, by \eqref{Eq:Gtlp_exp+IND}, we have 
\begin{align}\label{jdasfuao01}
  \mathbb E\;  (\mathcal{L} - \mathcal{K})^{(2)}_{s, \boldsymbol{\sigma},\ba}\prec (W^d\ell_s^d\eta_s)^{-3}.
\end{align}
For $\cal E^{(2)}_{u,\bsig,\ba}$ defined in \eqref{def_ELKLK}, using the 2-$G$ loop estimate in \eqref{Eq:Gdecay_flow}, we  bound it by 
\begin{align}\label{jdasfuao02}
 \cal E^{(2)}_{u,\bsig,\ba} \prec W^d\ell_u^d (W^d\ell_u^d\eta_u)^{-4}= \eta_u^{-1} (W^d\ell_u^d\eta_u)^{-3}. 
\end{align}
To control $\E\cW^{(2)}_{u,\bsig,\ba}$, we use the expression \eqref{eq:calW} and write 
\begin{align}
\E\mathcal{W}^{(2)}_{t, \boldsymbol{\sigma}, \ba} &= W^d \sum_{[a]} \left(\E\langle \Gc_u E_{[a]}\rangle {\cal K}^{(3)}_{u, \bsig_3,\ba_3} +\E\langle \Gc_u E_{[a]}\rangle ({\cal L}-\cal K)^{(3)}_{u, \bsig_3,\ba_3}\right)+ c.c. \nonumber\\
&\prec W^d\ell_u^d (W^d\ell_u^d\eta_u)^{-4}= \eta_u^{-1} (W^d\ell_u^d\eta_u)^{-3}, \label{jdasfuao03}
\end{align}
where $\bsig_3:=(+,+,-)$ and $\ba_3:=([a],[a_1],[a_2])$, and in the second step, we used \eqref{eq:res_ELK_n=1}, \eqref{res_ELK_n=1}, the primitive loop bound \eqref{eq:bcal_k}, and the $3$-$G$ loop estimate \eqref{Eq:L-KGt-flow}.

By \Cref{lem_decayLoop}, we observe that all tensors $\E (\mathcal{L} - \mathcal{K})^{(2)}_{s, \boldsymbol{\sigma}}$, $\E\cal E^{(2)}_{u,\bsig}$, and  $\E\cal W^{(2)}_{u,\bsig}$ on the RHS of \eqref{Eexpint_K-L} have the fast decay property. 
If $d=1$, using the bound \eqref{sum_res_1} and the estimates   \eqref{jdasfuao01}--\eqref{jdasfuao03}, we can estimate the RHS of  \eqref{Eexpint_K-L} as 
\begin{align*}
\mathbb E (\mathcal{L} - \mathcal{K})^{(2)}_{t, \boldsymbol{\sigma}, \ba} \prec 
 \frac{\ell_t}{\ell_s}\left(\frac{\ell_s\eta_s}{\ell_t\eta_t}\right)^2 (W\ell_s\eta_s)^{-3}+
 \int_{s}^t \frac{\ell_t}{\ell_u}\left(\frac{\ell_u\eta_u}{\ell_t\eta_t}\right)^2 \cdot  \eta_u^{-1}(W\ell_u\eta_u)^{-3}
 \dd u 
 \prec  (W\ell_t\eta_t)^{-3},
 \end{align*}
where in the second step we used that $\eta_t\ell_t^{2}\lesssim \eta_u\ell_u^{2}$ for  $u\le t$. 
If $\sig_1 = \sig_2$, then using the bound \eqref{sum_res_2_NAL} and the estimates   \eqref{jdasfuao01}--\eqref{jdasfuao03}, we can also get the above bound for $d=2$. To conclude \eqref{Eq:Gtlp_exp_flow}, it remains to deal with the $\sig_1\ne \sig_2$ case when $d=2$. Without loss of generality, we assume $\sig_1=-\sig_2=+$ throughout the following proof. We then adopt a similar idea as in \Cref{sec_sumzero}.  

First, using Ward's identity in \Cref{lem_WI_K} and \eqref{res_ELK_n=1}, we obtain that
\be\label{eq:ELK2}
\br{\cal P\circ \mathbb E (\mathcal{L} - \mathcal{K})^{(2)}_{t, \boldsymbol{\sigma}}}_{[a_1]} = {\im \E \qq{(G_u -M)E_{[a_1]}}}/(W^d\eta_t)  \prec (W^d\ell_t^d\eta_t)^{-2}(W^d\eta_t)^{-1},
\ee
which, together with \eqref{prop:ThfadC}, implies that
\be\label{eq:EPL-K}
\br{\cal P\circ \mathbb E (\mathcal{L} - \mathcal{K})^{(2)}_{t, \boldsymbol{\sigma}}}_{[a_1]} \cdot (1-t)\Theta_{t,[a_1][a_2]}^{(+,-)}\prec (W^d\ell_t^d\eta_t)^{-3}.
\ee
For $\cal Q_t\circ \mathbb E (\mathcal{L} - \mathcal{K})^{(2)}_{t, \boldsymbol{\sigma}, \ba}$, taking the expectation of equation \eqref{int_K-L+Q} with $\fn=2$ yields
\begin{align}\label{int_K-L+QE}
& {\cal Q}_t\circ  \E(\mathcal{L} - \mathcal{K})^{(2)}_{t, \boldsymbol{\sigma}, \ba}   = 
\left(\mathcal{U}^{(2)}_{s, t, \boldsymbol{\sigma}} \circ  {\cal Q}_s\circ \E(\mathcal{L} - \mathcal{K})^{(2)}_{s, \boldsymbol{\sigma}}\right)_{\ba} + \int_{s}^t \left(\mathcal{U}^{(2)}_{u, t, \boldsymbol{\sigma}} \circ  {\cal Q}_u\circ \E \mathcal{E}^{(2)}_{u, \boldsymbol{\sigma}}\right)_{\ba} \dd u \nonumber\\
& + \int_{s}^t \left(\mathcal{U}^{(2)}_{u, t, \boldsymbol{\sigma}} \circ  {\cal Q}_u\circ \E\mathcal{W}^{(2)}_{u, \boldsymbol{\sigma}}\right)_{\ba} \dd u  + \int_{s}^t 
\left(\mathcal{U}^{(2)}_{u, t, \boldsymbol{\sigma}} 
\circ \left( \left[{\cal Q}_u , \thn^{(2)}_{u,\boldsymbol{\sigma}} \right]\circ \E(\mathcal{L} - \mathcal{K})^{(2)}_{u, \boldsymbol{\sigma} }\right) \right)_{\ba} \dd u \nonumber\\
& - \int_{s}^t \left(\mathcal{U}^{(2)}_{u, t, \boldsymbol{\sigma}} \circ \left\{\left[ {\cal P} \circ \E\left(\mathcal{L} - \mathcal{K}\right)^{(2)}_{u, \boldsymbol{\sigma}}\right] \partial_u\dthn_{u}^{(2)} \right\}\right)_{\ba} \dd u.
\end{align}
Note that all tensors on the RHS satisfy the sum zero property \eqref{sumAzero} and the symmetry \eqref{eq:A_zero_sym} (which is due to the translation and parity symmetries of our model on the block level as explained below \Cref{def: BM}). Then, for the first three terms on the RHS, using \Cref{lem_+Q} and the estimates \eqref{sum_res_2_sym} and   \eqref{jdasfuao01}--\eqref{jdasfuao03}, we can bound them by 
\begin{align}\label{eq:EKL_added}
 \left(\frac{\ell_s^d\eta_s}{\ell_t^d\eta_t}\right)^3 (W^d\ell_s^d\eta_s)^{-3}+
 \int_{s}^t  \left(\frac{\ell_u^d\eta_u}{\ell_t^d\eta_t}\right)^3 \cdot  \eta_u^{-1}(W^d\ell_u^d\eta_u)^{-3}
 \dd u 
 \prec  (W^d\ell_t^d\eta_t)^{-3}.
 \end{align} 
It remains to control the last two terms on the RHS of \eqref{int_K-L+QE}. 
First, combining the second bound in \eqref{eq:derv_Theta} (for $\fn=2$) with the estimate \eqref{eq:ELK2}, we get that
\be\label{eq:boundELKQ1}
\left\|\left[ {\cal P} \circ \E\left(\mathcal{L} - \mathcal{K}\right)^{(2)}_{u, \boldsymbol{\sigma}}\right] \partial_u\dthn_{u}^{(2)}\right\|_\infty \prec \eta_u^{-1} (W^d\ell_u^d\eta_u)^{-3}. 
\ee
Second, using a similar estimate as in \eqref{A4} (for $\fn=2$), we can bound that
\begin{align}
\big[{\cal Q}_u , \thn^{(\fn)}_{u,\boldsymbol{\sigma}} \big]\circ \E(\mathcal{L} - \mathcal{K})^{(2)}_{u, \boldsymbol{\sigma}, \ba}
\prec \eta_u^{-1}\left\|\dthn^{(2)}_{u}\right\|_\infty \cdot   \left\| {\cal P} \circ \E\left(  \mathcal{L} - \mathcal{K} \right)^{(2)}_{u,\boldsymbol{\sigma}} \right\|_{\infty}\prec \eta_u^{-1} (W^d\ell_u^d\eta_u)^{-3},\label{eq:boundcommutator}
\end{align} 
where we used the first bound in \eqref{eq:derv_Theta} and \eqref{eq:ELK2} in the second step. 
Finally, using the bound \eqref{sum_res_2_sym} and the estimates \eqref{eq:boundELKQ1} and \eqref{eq:boundcommutator},  
we can also get the above bound \eqref{eq:EKL_added} for the last two terms on the RHS of \eqref{int_K-L+QE}.
This concludes the proof for the $d=2$ case with $\sig_1\ne \sig_2$.
 \end{proof}

\subsection{Proof of \Cref{lem:STOeq_Qt} }\label{sec:add_d=2}

Due to \Cref{lem:STOeq_NQ,lem:STOeq_Qt_weak}, we only need to prove \Cref{lem:STOeq_Qt} for the $d=2$ case and for loops with $\bsig$ satisfying \eqref{NALsig_diff}. Recalling \eqref{eq:Ward_typeP}, to show \Cref{lem:STOeq_Qt}, we still need to control the terms on the RHS of \eqref{int_K-L+Q} by the RHS of \eqref{am;asoi222}.  
We first bound the martingale term. Through a direct calculation, we derive that its quadratic variation $[\cdot]_t$ takes the form 
\begin{align} \label{UEUEdif+Q}
  &\left[\int_{s}^t \left(\mathcal{U}^{(\fn)}_{u, t, \boldsymbol{\sigma}} \circ  {\cal Q}_u\circ \dd\mathcal{B}^{(\fn)}_{u, \boldsymbol{\sigma}}\right)_{\ba}\right]_t
 =  \int_s^t \bigg\{\left(\mathcal{U}^{(\fn)}_{u, t,  \boldsymbol{\sigma}}\otimes \mathcal{U}^{(\fn)}_{u, t,  \overline{\boldsymbol{\sigma}}}\right)\circ \sum_{i,j\in \ZL} S_{ij} \left|\cal Q_u\left( \partial_{ij}\cL^{(\fn)}_{u,\bsig}\right) \right|^2\bigg\}_{\ba,\ba}\dd u  \nonumber\\
 &\qquad \lesssim \int_{s}^t
 \left( \left(
   \mathcal{U}^{(\fn)}_{u, t,  \boldsymbol{\sigma}}
   \otimes 
   \mathcal{U}^{(\fn)}_{u, t,  \overline{\boldsymbol{\sigma}}}
   \right)\circ 
\left( \mathcal{Q}_u\otimes \mathcal{Q}_u\right) \circ \left( \mathcal{B}\otimes  \mathcal{B} \right)_{u,  \boldsymbol{\sigma}}^{(2\fn)}\right)_{\ba, \ba}\dd u,
\end{align}
where $(\mathcal{B}\otimes  \mathcal{B})_{u,  \boldsymbol{\sigma}}^{(2\fn)}$ and $\mathcal{U}^{(\fn)}_{u, t,  \boldsymbol{\sigma}}\otimes \mathcal{U}^{(\fn)}_{u, t,  \overline{\boldsymbol{\sigma}}}$ are defined in \Cref{def:CALE} and \Cref{lem:DIfREP}, respectively, and $\mathcal{Q} _u\otimes \mathcal{Q}_u$ is defined as follows: given a $(2\fn)$-dimensional tensor $\cal C$,
\begin{align}
\left(\left( \mathcal{Q} _u\otimes \mathcal{Q}_u \right)\circ {\cal C}\right)_{\ba, \textbf{b}} 
&:=  
{\cal C }_{\ba, \textbf{b} }-
\delta_{[a_1'][a_1]}\sum_{[a_2'],\ldots, [a_\fn']}{\cal C }_{ \ba' ,  \textbf{b}}\cdot \dthn^{(\fn)}_{u, \ba}
-\delta_{[b_1'] [b_1]}
\sum_{[b_2'],\ldots, [b_\fn']}{\cal C }_{\ba, \textbf{b}'}\cdot \dthn^{(\fn)}_{u, \textbf{b} }
\nonumber \\  
&+\delta_{[a_1'][a_1]}\delta_{[b_1'] [b_1]}
\sum_{[a_2'],\ldots, [a_\fn']}\sum_{[b_2'],\ldots, [b_\fn']} {\cal C }_{\ba', \textbf{b}'}\cdot \left(\dthn^{(\fn)}_{u, \ba}  \dthn^{(\fn)}_{u, \textbf{b}}\right)  ,\label{eq:QQA} 
\end{align}
where we denote $\ba=([a_1],\ldots,[a_\fn])$, $\ba'=([a_1'],\ldots,[a_\fn'])$, and $\mathbf b=([b_1],\ldots,[b_\fn])$, $\mathbf b'=([b_1'],\ldots,[b_\fn'])$. By definition, we can verify that the tensor in \eqref{eq:QQA}, denoted by $\cal A$, satisfies the \emph{double sum zero property}:
\be\label{eq:doublesumzero}
\sum_{[a_2],\ldots,[a_\fn]} \cal A_{\ba,\mathbf b}=0,  \quad \text{and}\quad \sum_{[a_2],\ldots,[a_\fn]} \cal A_{\mathbf b, \ba}=0, \quad \forall [a_1]\in \Zn, \ \mathbf b\in (\Zn)^{\fn}. 
\ee
Furthermore, similar to \Cref{lem_+Q}, we can check that if $\cal A$ satisfies the $(u, \e, D)$-decay property, then 
\begin{align}\label{norm_doubleQA}
\left\|\left({\cal Q}_u\otimes {\cal Q}_u\right) \circ \cal A\right\|_{\infty} \le W^{C_\fn \e} \|\cal A\|_\infty +W^{-D+C_\fn}
\end{align}
for a constant $C_\fn$ that does not depend on $\e$ or $D$. We now show that the bound \eqref{sum_res_2} can be improved if $\cal A$ satisfies both the fast decay and double sum zero properties.

\begin{lemma} 
Let ${\cal A}: (\Zn)^{2\fn}\to \mathbb C$ be a $(2\fn)$-dimensional tensor for a fixed $\fn\ge 2$. Suppose it satisfies the double sum zero property \eqref{eq:doublesumzero} and the $(s,\e,D)$-decay property for some constants $\e,D>0$. Fix any $0\le s \le t < 1$ such that $(1-t)/(1-s)\ge W^{-d}$. Then, the following bound holds for every $\bsig\in \{+,-\}^{2\fn}$:
\begin{align}\label{sum_res_2_doublezero}
    \left\|{\cal U}^{(2\fn)}_{s,t,\boldsymbol{\sigma}} \circ {\cal A}\right\|_\infty \le W^{C_\fn\e} \left(\frac{\ell_s^d\eta_s}{\ell_t^d\eta_t}\right)^{2\fn}  \|{\cal A}\|_{\infty} 
   +W^{-D+C_\fn},
\end{align} 
where $C_\fn>0$ is a constant that does not depend on $\e$ or $D$.
\end{lemma}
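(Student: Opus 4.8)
The plan is to adapt the proof of the single sum-zero estimate \eqref{sum_res_2} (and its symmetric refinement \eqref{sum_res_2_sym}) to the tensor-product setting, exploiting the fact that $\cal A$ now has a sum-zero structure in \emph{two} groups of indices simultaneously. First I would use the identity \eqref{eq:decompU} to expand
\[
  \left({\cal U}^{(2\fn)}_{s,t,\bsig}\circ\cal A\right)_{\ba,\mathbf b}
    = \sum_{\mathbf c,\mathbf d}\prod_{i=1}^{\fn}\left(\delta_{[a_i][c_i]}+\Xi^{(i)}_{[a_i][c_i]}\right)\prod_{i=1}^{\fn}\left(\delta_{[b_i][d_i]}+\widetilde\Xi^{(i)}_{[b_i][d_i]}\right)\cal A_{\mathbf c,\mathbf d},
\]
where $\Xi^{(i)}=(t-s)\Theta_t^{(\sig_i,\sig_{i+1})}$ and $\widetilde\Xi^{(i)}$ is the analogous object for the second block of charges. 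Expanding the product over subsets $A,B\subset\qqq{\fn}$ of indices kept as $\Xi$-factors, the terms with $|A|\ge1$ or $|B|\ge1$ are handled exactly as in \eqref{sum_res_1_red0}: each retained $\delta$ collapses a summation, the decay property \eqref{deccA0} forces the surviving indices into a box of size $W^\e\ell_s$, and each $\Xi$ or $\widetilde\Xi$ factor contributes $\OO(\eta_s/(\eta_t\ell_t^d))$ together with the $L^\infty\to L^\infty$ bound \eqref{Xi_infint}. This already yields a bound of the form $W^{C\e}(\ell_s^d\eta_s/(\ell_t^d\eta_t))^{2\fn-k}$ with $k=|A|+|B|\ge1$, which is no worse than the target.

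The only remaining term is $A=B=\emptyset$, i.e. all factors are $\Xi$'s and $\widetilde\Xi$'s, and here I would invoke the double sum-zero property \eqref{eq:doublesumzero}. As in the proof of \eqref{sum_res_2_red}, I would single out the index $[c_1]$ in the first block and $[d_1]$ in the second block and write $\Xi^{(i)}_{[a_i][c_i]}=\Xi^{(i)}_{[a_i][c_1]}+\widehat\Xi^{(i)}_{[a_i];[c_1][c_i]}$ for $i\ge2$, and similarly $\widetilde\Xi^{(i)}_{[b_i][d_i]}=\widetilde\Xi^{(i)}_{[b_i][d_1]}+\widehat{\widetilde\Xi}^{(i)}_{[b_i];[d_1][d_i]}$. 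Expanding both products, the term in which \emph{every} $\widehat\Xi$ and $\widehat{\widetilde\Xi}$ is replaced by its ``constant in $[c_i]$'' (resp. ``$[d_i]$'') part vanishes by \eqref{eq:doublesumzero} — this is the crucial point: because we have sum-zero in \emph{both} index groups, we gain a first-order finite-difference factor \eqref{prop:BD1} on \emph{at least one} edge in the first block \emph{and} at least one edge in the second block. Using \eqref{prop:BD1} (with $d=2$) to bound each such $\widehat\Xi$ by $\frac{W^c\eta_s}{\eta_t\ell_t^2}\frac{|[c_i]-[c_1]|}{\qq{[a_i]-[c_1]}+\qq{[a_i]-[c_i]}}$, combined with the decay of $\cal A$ forcing $|[c_i]-[c_1]|\lesssim W^\e\ell_s$, each finite-difference factor contributes an extra $\ell_s/\ell_t$. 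The resulting extra factor $(\ell_s/\ell_t)^2$ from the two blocks exactly cancels the $\ell_t^{d-1}/\ell_s^{d-1}=\ell_t/\ell_s$ loss present in the single-block estimate \eqref{sum_res_2}, \emph{twice over} — in fact one cancellation suffices to reach the target exponent $2\fn$ in \eqref{sum_res_2_doublezero}, and the second is slack that we can absorb. The summation over $[c_1]$ (resp. $[d_1]$) is then controlled by the exponential decay of $\Xi^{(1)}$ on scale $\ell_t$ exactly as in \eqref{sum_res_deriv_red} and \eqref{eq:bddfA}, using $\lambda^2\ge W^{-d+c'}$ to keep the volume factors $n^d$ in check.

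The main obstacle I anticipate is bookkeeping rather than conceptual: one must carefully track which mixed terms in the double expansion actually receive a finite-difference gain in \emph{each} of the two blocks, since a term could in principle have all its $\widehat\Xi$-gains concentrated in the first block and none in the second. However, the double sum-zero property \eqref{eq:doublesumzero} applied to the first block alone already kills the subterm with no first-block gain, and applied to the second block alone kills the subterm with no second-block gain, so every surviving term has at least one gain per block; this is precisely the mechanism that was used for a single block in the proof of \eqref{sum_res_2_red} and it localizes cleanly. A secondary technical point is that, unlike \eqref{sum_res_2_sym}, here we do not need the symmetry \eqref{eq:A_zero_sym} — one gain per block gives $(\ell_s/\ell_t)^2$, which already beats the $(\ell_t/\ell_s)$ loss by a full power of $\ell_t/\ell_s$, so there is room to spare and the symmetry input is not required. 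I would therefore structure the write-up as: (i) reduce to $A=B=\emptyset$ via \eqref{sum_res_1_red0}; (ii) perform the double finite-difference splitting and invoke \eqref{eq:doublesumzero} to kill the leading term; (iii) estimate each surviving term using \eqref{prop:ThfadC}, \eqref{prop:BD1}, \eqref{Xi_infint}, the decay property, and the calculus bound analogous to \eqref{eq:Calculus1}, collecting the gain $(\ell_s/\ell_t)^{2}$ and concluding \eqref{sum_res_2_doublezero} with $C_\fn$ depending only on $\fn$.
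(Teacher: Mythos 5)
Your proposal follows essentially the same route as the paper's proof: reduce to the $A=B=\emptyset$ case via the $\delta+\Xi$ expansion, split $\Xi^{(i)}$ around a per-block pivot ($[c_1]$ in the first block, $[d_1]$ in the second), observe that the double sum-zero property kills every term whose $\hat\Xi$'s are all missing from one of the two blocks, and then estimate each surviving term---which by necessity has a finite-difference factor in each block---via \eqref{prop:BD1}, the decay of $\cal A$, and the exponential decay of $\Theta_t$. This matches the paper's argument precisely; the paper even phrases the vanishing condition as ``$f(A)=0$ if $|A^c|\le 1$'', which is a looser (but sufficient) way of saying what you say, that a surviving term needs at least one $\hat\Xi$ per block.

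Two small caveats. First, you do not mention the preliminary reduction to the alternating case $\sigma_k\ne\sigma_{k+1}$ for all $k$; this is needed because \eqref{prop:BD1} applies only when $\sigma_1\ne\sigma_2$, and the paper handles the non-alternating case separately via \eqref{sum_res_2_NAL}. Second, your accounting of the gain is off: you claim that one finite-difference cancellation already suffices and ``the second is slack,'' but this double-counts. Relative to the no-sum-zero bound \eqref{sum_res_1} (loss $\ell_t^d/\ell_s^d=\ell_t^2/\ell_s^2$), the two finite differences together give $(\ell_s/\ell_t)^2$ and the count is exact, with no room to spare; relative to \eqref{sum_res_2} (loss $\ell_t/\ell_s$, which already incorporates one finite-difference gain), exactly one more $\ell_s/\ell_t$ gain is needed, again with no slack. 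A related subtlety worth spelling out if you write this up is that the two pivot sums over $[c_1]$ and $[d_1]$ are \emph{coupled} by the decay of $\cal A$, which forces $|[c_1]-[d_1]|\lesssim W^\e\ell_s$. If you summed over the pivots independently, each contributing a $\ell_t$-factor from the exponential decay of $\Xi^{(1)}$ and $\Xi^{(\fn+1)}$, you would incur a $(\ell_t/\ell_s)^2$ deficit that one finite difference per block cannot close; it is precisely the constraint tying $[d_1]$ to a ball of radius $W^\e\ell_s$ around $[c_1]$ that collapses the joint pivot sum to $\OO(\ell_s^2\log(\ell_t/\ell_s))$ rather than $\OO(\ell_t\cdot\ell_s)$. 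Neither of these is a fatal gap, but the ``slack'' remark would lead you to expect an unavailable extra factor, and keeping track of the pivot coupling is essential when writing the estimate carefully.
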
 
\begin{proof}
If there exists a $k\in\qqq{2\fn}$ such that $\sig_k=\sig_{k+1}$, then \eqref{sum_res_2_doublezero} follows from \eqref{sum_res_2_NAL}. 
Otherwise, the proof of \eqref{sum_res_2_doublezero} is similar to that of \eqref{sum_res_2}, where we need to prove a similar bound as in \eqref{sum_res_2_red}:
$$\sum_{\mathbf a',\mathbf b'\in (\Zn)^\fn} \prod_{i=1}^\fn \Xi^{(i)}_{[a_i][a_i']}  \cdot \prod_{i=1}^\fn \Xi^{(\fn+i)}_{[b_i][b_i']} \cdot \mathcal{A}_{\ba',\mathbf b'} \le W^{C_\fn\e} \left(\frac{\ell_s^d\eta_s}{\ell_t^d\eta_t}\right)^{2\fn} \|{\cal A}\|_{\infty} 
   +W^{-D+C_\fn}.$$
The only modification is that we choose the following decompositions of $\Xi^{(i)}$'s:  
\be\nonumber \Xi^{(i)}_{[a_i][a_i']}=\Xi^{(i)}_{[a_i][a_1']} + \wh \Xi^{(i)}_{[a_i];[a_1'][a_i']},\quad \Xi^{(\fn+i)}_{[b_i][b_i']}=\Xi^{(i)}_{[b_i][b_1']} + \wh \Xi^{(i)}_{[b_i];[b_1'][b_i']},\quad 
\forall i\in\qqq{\fn}. 
\ee
Then, we can define $f(A)$ in a similar way as \eqref{eq:decompXii}. Due to the double sum zero property of $\cal A$, we notice that $f(A)=0$ if $|A^c|\le 1$. When $|A^c|\ge 2$, the derivation below \eqref{eq:decompXii} leads to \eqref{sum_res_2_doublezero} as explained in \eqref{eq:fA_A>2}. 
\end{proof}

Recall that $(\mathcal{B}\otimes  \mathcal{B})_{u,  \boldsymbol{\sigma}}^{(2\fn)}$ is bounded as in \eqref{CalEbwXi}. Then, using \eqref{norm_doubleQA} and \eqref{sum_res_2_doublezero}, we can bound that 
\begin{align*}
 \left\|  \left(
   \mathcal{U}^{(\fn)}_{u, t,  \boldsymbol{\sigma}}
   \otimes 
   \mathcal{U}^{(\fn)}_{u, t,  \overline{\boldsymbol{\sigma}}}
   \right)\circ 
\left( \mathcal{Q}_u\otimes \mathcal{Q}_u\right) \circ \left( \mathcal{B}\otimes  \mathcal{B} \right)_{u,  \boldsymbol{\sigma}}^{(2\fn)}\right\|_\infty \prec  \eta_u^{-1} \Xi^{(\cal L)}_{u, {2\fn+2}}\cdot   (W^d\ell_t^d\eta_t)^{-2\fn }.
\end{align*}
Plugging it into \eqref{UEUEdif+Q} and performing the integral over $u$,  we  obtain that  
$$\int_{s}^t \left(\mathcal{U}^{(\fn)}_{u, t, \boldsymbol{\sigma}} \circ  {\cal Q}_u\circ \dd\mathcal{B}^{(\fn)}_{u, \boldsymbol{\sigma}}\right)_{\ba} \prec (W^d\ell_t^d\eta_t)^{-\fn} \sup_{u\in[s,t]} \left(\Xi^{(\cal L)}_{u, {2\fn+2}}\right)^{1/2}
$$
by using the Burkholder-Davis-Gundy inequality.   

We still need to control the remaining 6 terms on the RHS of \eqref{int_K-L+Q}. It suffices to prove that:  
\begin{align}\label{int_K-L+Q_pf}
\left(\mathcal{U}^{(\fn)}_{s, t, \boldsymbol{\sigma}} \circ  {\cal Q}_s\circ \cal A_0(s)\right)_{\ba}  + \sum_{i = 1}^5 \int_{s}^t \left(\mathcal{U}^{(\fn)}_{u, t, \boldsymbol{\sigma}} \circ  {\cal Q}_u\circ \cal A_{i}(u)\right)_{\ba} \dd u \prec {\left(W^d\ell_t^d \eta_t\right)^{-\fn}}{\Phi},
\end{align}
where we abbreviate that 
\begin{align*}
& \cal A_0(s):=(\mathcal{L} - \mathcal{K})^{(\fn)}_{s, \boldsymbol{\sigma}},\ \ \cal A_1(u):= \sum_{\lenk=3}^\fn \Big[\mathcal{O}_{\cK}^{(\lenk)} (\mathcal{L} - \mathcal{K})\Big]^{(\fn)}_{u, \boldsymbol{\sigma}},\ \ \cal A_2(u):=\mathcal{E}^{(\fn)}_{u, \boldsymbol{\sigma}},\ \ \cal A_3(u):=\mathcal{W}^{(\fn)}_{u, \boldsymbol{\sigma}},  \\
& \cal A_4(u):=\left[{\cal Q}_u , \thn^{(\fn)}_{u,\boldsymbol{\sigma}} \right]\circ (\mathcal{L} - \mathcal{K})^{(\fn)}_{u, \boldsymbol{\sigma} },\ \ \cal A_5(u):=-\left[ {\cal P} \circ
      \left(\mathcal{L} - \mathcal{K}\right)^{(\fn)}_{u, \boldsymbol{\sigma}}\right] \partial_u\dthn_{u}^{(\fn)},\\
&\Phi:=\sup_{u\in [s,t]}
  \left(\max_{k\in \qqq{1,\fn-1} }\Xi^{({\cal L-\cal K})}_{u,k}+(W^d\ell_u^d\eta_u)^{-1}\max_{k\in \qqq{2, \fn} }\left(\Xi^{({\cal L-\cal K})}_{u,k}\Xi^{({\cal L-\cal K})}_{u,\fn-k+2}\right) +\Xi^{({\cal L})}_{u,\fn+1}\right).
\end{align*}
Here, we also used that $\cal Q_u\circ \cal A_i(u)=\cal A_i(u)$, $i\in\{4,5\}$, since $\cal A_4$ and $\cal A_5$ satisfy the sum zero property as shown by \eqref{eq:sumzero666} and \eqref{pqthlk}. 
We take the $\cal A_0$ term as an example to explain heuristically how to get an improved estimate in 2D to eliminate the $\ell_t/\ell_s$ factor in \eqref{am;asoi222_weak}. Note $\cal Q_s\circ \cal A_0$ satisfies the sum zero and fast decay properties. Moreover, similar to the argument below \eqref{int_K-L+QE}, we know its expectation satisfies the symmetry \eqref{eq:A_zero_sym} by the translation invariance and parity symmetry of our model on the block level (as explained below \Cref{def: BM}). Hence, applying the bound \eqref{sum_res_2_sym} instead of \eqref{sum_res_2} leads to the desired improvement for
\smash{$\mathcal{U}^{(\fn)}_{s, t, \boldsymbol{\sigma}} \circ  {\cal Q}_s\circ \E\cal A_0(s)$}.
It remains to deal with the fluctuation term:
\be\label{eq:CLT-Ust}\left(\mathcal{U}^{(\fn)}_{s, t, \boldsymbol{\sigma}} \circ  {\cal Q}_s\circ \Q\left(\cal A_0(s)\right)\right)_{\ba},\quad \text{with}\quad \Q:=1-\E. \ee
We know the kernel \smash{$\mathcal{U}^{(\fn)}_{s, t, \boldsymbol{\sigma}}$} will propagate the  estimate of ${\cal Q}_s\circ \Q(\cal A_0(s))$ on shorter scales of order $\ell_s$ to the larger scale of order $\ell_t$. 
On the other hand, intuitively, $(\cal A_0(s))_{\mathbf b}$ and $(\cal A_0(s))_{\mathbf b'}$ should be asymptotically independent when $\|\mathbf b-\mathbf b'\|_\infty$ is much larger than the typical scale of decay $\ell_s$ for $\cal A_0(s)$. Then, the term \eqref{eq:CLT-Ust} roughly involves a superposition of $\ell_t^2/\ell_s^2$ many asymptotically independent centered random variables, which leads to an improvement of order $\ell_s/\ell_t$ by CLT.   

Now, we present a rigorous proof of the above reasoning. We first bound the expectation of the LHS of \eqref{int_K-L+Q_pf}. By the induction hypothesis \eqref{Eq:L-KGt+IND}, the estimates \eqref{CalEbwXi1}, \eqref{CalEbwXi2}, \eqref{CalEbwXi3}, \eqref{A5}, and \eqref{A4}, we know
\be\label{A0-3} \cal A_0(s) \prec (W^d\ell_s^d\eta_s)^{-\fn},\quad \max_{i=1}^5 |\cal A_i(u)| \prec \eta_u^{-1}(W^d\ell_u^d\eta_u)^{-\fn}\cdot \Phi.\ee
With the above estimates in \eqref{A0-3}, using \Cref{lem_+Q} and the improved bound \eqref{sum_res_2_sym}, we obtain that 
\begin{align*}
\left(\mathcal{U}^{(\fn)}_{s, t, \boldsymbol{\sigma}} \circ  {\cal Q}_s\circ \E\cal A_0(s)\right)_{\ba}  + \sum_{i = 1}^5 \int_{s}^t \left(\mathcal{U}^{(\fn)}_{u, t, \boldsymbol{\sigma}} \circ  {\cal Q}_u\circ \E\cal A_{i}(u)\right)_{\ba} \dd u \prec \frac{\Phi}{(W^d\ell_t^d \eta_t)^{\fn}}.
\end{align*}
To conclude \eqref{int_K-L+Q_pf}, it remains to show that 
\begin{align}\label{int_K-L+Q_pfexp}
\left(\mathcal{U}^{(\fn)}_{s, t, \boldsymbol{\sigma}} \circ  {\cal Q}_s\circ \Q\cal A_0(s)\right)_{\ba}  + \sum_{i = 1}^5 \int_{s}^t \left(\mathcal{U}^{(\fn)}_{u, t, \boldsymbol{\sigma}} \circ  {\cal Q}_u\circ \Q\cal A_{i}(u)\right)_{\ba} \dd u \prec \frac{\Phi}{(W^d\ell_t^d \eta_t)^{\fn}}.
\end{align}
For this purpose, we define the rescaled random variables 
$$Y_0(s):=(W^d\ell_s^d\eta_s)^{\fn} \cal Q_s(\cal A_0(s)),\ \ \text{and}\ \  Y_i(u):=\eta_u(W^d\ell_u^d\eta_u)^{\fn} \cal Q_u(\cal A_i(u))/\Phi,\quad \forall i\in\qqq{5}. $$
By \eqref{A0-3}--\eqref{A4}, we have that $Y_0(s)\prec 1$ and $Y_i(u)\prec 1$ uniformly in $u\in[s,t]$ for any $i\in\qqq{5}$. To show \eqref{int_K-L+Q_pfexp}, it suffices to establish the following bounds for each $u\in[s,t]$ and $  i \in\qqq{5}$:
\begin{align}\label{int_K-L+Y_pfexp}
\left(\mathcal{U}^{(\fn)}_{s, t, \boldsymbol{\sigma}} \circ \Q (Y_0(s))\right)_{\ba} \prec \left(\frac{\ell_s^d\eta_s}{\ell_t^d\eta_t}\right)^{\fn} ,\quad   \left(\mathcal{U}^{(\fn)}_{u, t, \boldsymbol{\sigma}} \circ  \Q (Y_{i}(u))\right)_{\ba}  \prec \left(\frac{\ell_u^d\eta_u}{\ell_t^d\eta_t}\right)^{\fn}.
\end{align}

Without loss of generality, in the following proof of \eqref{int_K-L+Y_pfexp}, we will always take the starting time for the evolution operator as $s$; it is obvious that the same argument applies to \smash{$\mathcal{U}^{(\fn)}_{u, t, \boldsymbol{\sigma}}$}. We pick an $\fn$-dimensional tensor $Y(s)\in \{Y_i(s):i\in \qqq{0,5}\}$, which satisfies the sum zero and $(s,\e,D)$-decay properties for any constants $\e,D>0$. Then, we repeat the argument in the proof of \Cref{lem:sum_decay}. First, in the decomposition \eqref{eq:decomp_U2} (with $\cal A$ replaced by $Y$), it suffices to consider the case $A=\emptyset$, since for 
$A\ne\emptyset$, the desired bound \eqref{int_K-L+Y_pfexp} already follows from \eqref{sum_res_1_red0}. Next, for the decomposition \eqref{eq:decompXii} (with $\cal A$ replaced by $Y$), the leading term $f(A)$ with $A^c=\emptyset$ vanishes due to the sum zero property of $Y$, while the case $|A^c|\ge 2$ can also be controlled as explained around \eqref{eq:fA_A>2}. We are again left with the main challenging case $|A^c|=1$. Without loss of generality, suppose $A^c=\{2\}$. Then, we need to prove that: \be\label{eq:main_challenge}
\sum_{\mathbf b\in (\Zn)^\fn}\prod_{i\ne 2}  \Xi^{(i)}_{[a_i][b_1]} \cdot   \wh \Xi^{(2)}_{[a_2];[b_1][b_2]}  \cdot \Q\left(Y_{\mathbf{b}}\right) \prec \left(\frac{\ell_s^d\eta_s}{\ell_t^d\eta_t}\right)^{\fn},\quad \forall \ba\in (\Zn)^\fn. 
\ee
Using the bounds in \eqref{eq:Xibb} and the fast decay property of $\Xi^{(i)}$, $\wh\Xi^{(2)}$, and $Y$, for any constant $\e,D>0$ we can write the LHS of \eqref{eq:main_challenge} as
\be\label{eq:main_challenge2}
\left(\frac{\eta_s}{\ell_t^d \eta_t}\right)^{\fn}\sum_{\mathbf b} c^\star_{\mathbf b}\frac{|[b_2]-[b_1]|}{\langle [a_2]-[b_1]\rangle + |[b_2]-[b_1]|} 
 \Q\left(Y_{\mathbf{b}}\right) + \OO(W^{-D})\ee
with high probability. Here, 
$c^\star_{\mathbf b}$ is a deterministic tensor satisfying $\|c^\star\|_\infty \prec 1$ and that $c^\star_{\mathbf b}=0$ if $\max_{i=2}^\fn|[b_i]-[b_1]|> W^\e\ell_s$ or $\max_{i=1}^\fn|[a_i]-[b_1]|> W^\e\ell_t$, i.e., 
\be\label{eq:restrict_c} c^\star_{\mathbf b}= c^\star_{\mathbf b}\cdot \mathbf 1\left( \max_{i=2}^\fn|[b_i]-[b_1]|\le W^\e\ell_s\right)\cdot \mathbf 1\left(\max_{i=1}^\fn|[a_i]-[b_1]|\le W^\e\ell_t\right).\ee

We now control the first term in \eqref{eq:main_challenge}. By Markov's inequality, to show \eqref{eq:main_challenge}, it suffices to  establish the following bound on its $(2p)$-th moment for any fixed $p\in\N$ (recall that $Y_{\mathbf b}\prec 1$): 
\be\label{eq:main_challenge3}
\E\Big|\sum_{\mathbf b} \mu^\star_{\mathbf b}  \Q\left(Y_{\mathbf{b}}\right) \Big|^{2p} \prec \left(W^{d\e}\ell^d_s\right)^{2p\fn } ,\quad \mu^\star_{\mathbf b}:=c^\star_{\mathbf b}\frac{|[b_2]-[b_1]|}{\langle [a_2]-[b_1]\rangle + |[b_2]-[b_1]|}. 
\ee
We write the LHS of \eqref{eq:main_challenge3} as 
\be\label{eq:2p_product}\sum_{\mathbf b^{(1)},\ldots, \mathbf b^{(2p)}} \prod_{k=1}^{2p}\mu^\star_{\mathbf b^{(k)}}\cdot \E\bigg\{\prod_{k=1}^{p}   \left(Y_{\mathbf{b}^{(k)}}-\E Y_{\mathbf{b}^{(k)}}\right)\cdot \prod_{k=p+1}^{2p} \left(\overline Y_{\mathbf{b}^{(k)}}-\E \overline Y_{\mathbf{b}^{(k)}}\right)\bigg\}.\ee
First, suppose the following pairing condition holds: 
\be\label{eq:pairingcond}
\forall k\in \qqq{2p},\ \text{ there exists } \ l\ne k \ \text{ such that } \ |b^{(k)}_1-b^{(l)}_1|\le W^\e\ell_s.\ee  
Then, we can divide $\qqq{2p}$ into a disjoint union of $r$ subsets $\qqq{2p}=\sqcup_{i=1}^r E_i,$ constructed such that for each $k\in E_i$, any $l\in \qqq{2p}$ satisfying $|b^{(k)}_1-b^{(l)}_1|\le W^\e\ell_s$ also belongs to $E_i$ (note these subsets $E_i$ are unique up to relabeling of indices).  
According to the pairing condition, we have $r\le p$. Then, we can bound \eqref{eq:2p_product} subject to the condition \eqref{eq:pairingcond} as 
\begin{align}
    \label{eq:2p_product_pair}
\sum^*_{\mathbf b^{(1)},\ldots, \mathbf b^{(2p)}} \prod_{i=1}^{r}\left(\prod_{k\in E_i} \mu^\star_{\mathbf b^{(k)}}\right) \prec (W^{d\e} \ell_s^d)^{2p(\fn-1)}\sum^*_{[b^{(1)}_1],\ldots, [b^{(2p)}_1]} \prod_{i=1}^{r}\left(\prod_{k\in E_i} \frac{W^{\e}\ell_s}{\qq{[a_2]-[b_1^{(k)}]}+ W^{\e}\ell_s}\right) ,
\end{align}
where $\sum^*$ refers to the summation over the region of $\mathbf b^{(1)},\ldots, \mathbf b^{(2p)}$ such that the pairing condition \eqref{eq:pairingcond} holds, and we have used $\|c^\star\|_\infty \prec 1$ and the constraint imposed by \eqref{eq:restrict_c}. To control the RHS of \eqref{eq:2p_product_pair}, we further partition the summation $\sum^*$ into $\OO(1)$ distinct regions, denoted by $\sum^{*,(k_1,\cdots, k_r)}$, where each index $k_i\in E_i$ is chosen to attain the minimal distance within its respective group: 
$$\left|[a_2]-[b_1^{(k_i)}]\right|= \min_{k\in E_i}\left|[a_2]-[b_1^{(k)}]\right|,\quad i\in\qqq{r}.$$ 
We can bound the summation over each region as follows (recall that $d=2$):   
\begin{align*}
\sum^{*,(k_1,\cdots, k_r)}_{[b^{(1)}_1],\ldots, [b^{(2p)}_1]} \prod_{i=1}^{r}\left(\prod_{k\in E_i} \frac{W^{\e}\ell_s}{\qq{[a_2]-[b_1^{(k)}]}+W^{\e}\ell_s}\right)  \prec &~\left(W^{d\e}\ell_s^d\right)^{2p-r} \sum_{[b^{(k_1)}_1],\ldots, [b^{(k_r)}_1]} \prod_{i=1}^{r}\left[\frac{W^\e\ell_s}{\qq{[a_2]-[b_1^{(k_i)}]} + W^\e\ell_s}\right]^{2} \\
\prec &~\left(W^{d\e}\ell_s^d\right)^{2p-r} (W^\e\ell_s)^{2r} = \left(W^{d\e}\ell_s^d\right)^{2p}.
\end{align*}
This gives \eqref{eq:main_challenge3} under the pairing condition \eqref{eq:pairingcond}. 

It remains to control \eqref{eq:2p_product} when \eqref{eq:pairingcond} does not hold. We will prove the following result: if there exists an isolated vertex $\mathbf b^{(i)}$ such that 
\be\label{eq:isolated}\min_{j:j\ne i}\left|[b_1^{(i)}]-[b_1^{(j)}]\right|\ge W^\e\ell_s,\ee 
then the following bound holds for any large constant $D>0$:
\be\label{eq:bound_isolated}
\E\bigg\{\prod_{k=1}^{p}   \left(Y_{\mathbf{b}^{(k)}}-\E Y_{\mathbf{b}^{(k)}}\right)\cdot \prod_{k=p+1}^{2p} \left(\overline Y_{\mathbf{b}^{(k)}}-\E \overline Y_{\mathbf{b}^{(k)}}\right)\bigg\} \le W^{-D}. 
\ee
Without loss of generality, suppose $i=1$ and abbreviate that $V\equiv V_s$ (recall \Cref{Def:stoch_flow}). We regard $Y$ as a function of the entries of $V$. Letting $V'$ be an \emph{independent copy} of $V$, we can write the LHS of \eqref{eq:bound_isolated} as  
\be\label{eq:EfF} \E \left[    \left(Y_{\mathbf{b}^{(1)}}(V)- Y_{\mathbf{b}^{(1)}}(V')\right)\cdot F(V)\right],\ee 
where we abbreviate that 
$$ F(V):=\prod_{k=2}^{p}   \left(Y_{\mathbf{b}^{(k)}}(V)-\E Y_{\mathbf{b}^{(k)}}\right)\cdot \prod_{k=p+1}^{2p} \left(\overline Y_{\mathbf{b}^{(k)}}(V)-\E \overline Y_{\mathbf{b}^{(k)}}\right).$$
Fix an arbitrary ordering $1,2,\ldots, N$ of the vertices in $\ZL$, and consider an arbitrary bijective ordering map on the index set of $V$: 
$$\phi:\left\{(i,j):1 \le i \le j\le N \right\}\to \{1,2,\ldots, \gamma_{\max}:=N(N+1)/2\}.$$
For any $0\le \gamma \le \gamma_{\max}$, we define the Hermitian matrix $V^{\gamma}= (V^{\gamma}_{ij}:1\le i\le j\le N)$ such that $V_{ij}^{\gamma}=\overline V_{ji}^\gamma = V_{ij}$ if $\phi(i,j)\leq \gamma$, and $V_{ij}^{\gamma}=\overline V_{ji}^\gamma = V_{ij}'$ otherwise. Under this definition, we have $V^0=V'$ and $V^{\gamma_{\max}}=V$. Then, we can write \eqref{eq:EfF} as a telescoping sum: 
\be\label{eq:EfF2} 
\sum_{\gamma=1}^{\gamma_{\max}}\E \left[    \left(Y_{\mathbf{b}^{(1)}}(V^{\gamma})- Y_{\mathbf{b}^{(1)}}(V^{\gamma-1})\right)\cdot F(V)\right].\ee
If $\gamma=\phi(i,j)$, then the matrices $V^\gamma$ and $V^{\gamma-1}$ differ only in the $(i,j)$ and $(j,i)$-th entries. In particular, in the summation, we only need to consider $\gamma$ that corresponds to indices $(i,j)$ belonging to the diagonal blocks; otherwise, we trivially have $V^{\gamma}=V^{\gamma-1}$. Now, for such a choice of $\gamma =\phi(i,j)$, 
we view $Y_{\mathbf{b}^{(1)}}(V^{\gamma})$ and $Y_{\mathbf{b}^{(1)}}(V^{\gamma-1})$ as functions $f(V_{ij},V_{ji})$ and $f( V'_{ij},V'_{ji})$, respectively. Then, performing Taylor expansions of 
$g_1(\theta):=f(\theta V_{ij},\theta V_{ji})$ and $g_2(\theta):=f(\theta V'_{ij},\theta V'_{ji})$ around $\theta=0$, we obtain that for any fixed $K\in \N$, 
\begin{align}\label{eq:f-f}
f(V_{ij},V_{ji})-f(V_{ij}',V_{ji}')=\sum_{k = 1}^{K} \frac{1}{k!}\left[ g_1^{(k)}(0)-g_2^{(k)}(0)\right] + \frac{1}{(K+1)!}\left[g_1^{(K+1)}(\xi_1)-g_2^{(K+1)}(\xi_2)\right],
\end{align}
where $\xi_1,\xi_2\in[0,1]$ are random variables depending on $f$,  $V^\gamma$, $V^{\gamma-1}$, and $K$. Let $\partial_{ij}$ and $\partial_{ji}$ denote the partial derivative of $f$ with respect to the first and second arguments, respectively. We can write \eqref{eq:f-f} as 
\begin{align*}
f(V_{ij},V_{ji})-& f(V_{ij}',V_{ji}')=
\sum_{\al,\beta:1\le \al+\beta \le K}\frac{1}{\al!\beta!}\cal I_{\al,\beta} + \OO(\Err_{K+1}),
\end{align*}
where $\cal I_{\al,\beta}$ and $\Err_{K+1}$ are defined as
\begin{align}
\cal I_{\al,\beta}&:=\partial_{ij}^\al\partial_{ji}^\beta f(0,0)\cdot \left[(V_{ij})^\al (V_{ji})^\beta - (V_{ij}')^\al (V_{ji}')^\beta  \right],\label{eq:Ialbeta}\\
\Err_{K+1}&:=\left(|V_{ij}|^{K+1}+ |V_{ij}'|^{K+1}\right)\cdot \max_{\al,\beta:\al+\beta = K+1} \left( \left|\partial_{ij}^\al\partial_{ji}^\beta f(\xi_1V_{ij},\xi_1V_{ji})\right|+ \left|\partial_{ij}^\al\partial_{ji}^\beta f(\xi_2V'_{ij},\xi_2V'_{ji})\right|\right).\label{eq:ErrK}
\end{align}
To control these terms, we claim the following estimates. 
\begin{claim}\label{claim_locallaw}
Fix any $u\in [s,t]$. In the above setting, let $\xi$ be a random variable satisfying $|\xi|\prec W^{-d/2}$ and define $W_u\equiv W^\gamma_u(\xi)$ to be the random matrix obtained by replacing the $(i,j)$-th and $(j,i)$-th entries of $V_u$ with $\xi$ and $\overline \xi$, respectively. Suppose the estimates \eqref{Gt_bound_flow} and \eqref{Eq:Gdecay_w} holds for $G_u$. Then, the resolvent \be\label{eq:Rugamma}R_u\equiv R_u^\gamma(\xi):=(\lambda\Psi+W_u-z_u)^{-1}\ee satisfies the following local law:
\begin{equation}\label{eq:locallaw_Wu}
    \|R_u - M \|\prec (W^d\ell_u^d\eta_u)^{-1/2},
\end{equation}
and the following decay estimate: if $|[a]-[b]|\ge W^{\e'}\ell_u$ for a constant $\e'>0$, then for any constant $D'>0$,
\begin{equation}\label{eq:locallaw_Wu_decay}
    \max_{x\in[a],y\in[b]}|(R_u-M)_{xy}|\prec W^{-D'}.
\end{equation}
\end{claim}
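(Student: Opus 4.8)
The plan is to regard $W_u$ as a rank-$\le 2$ perturbation of $H_u:=\lambda\Psi+V_u$ and to transfer the local law and entrywise decay of $G_u$ to the resolvent $R_u$ in \eqref{eq:Rugamma} through the resolvent identity. Write $W_u=H_u+\Delta$, where $\Delta$ is the Hermitian matrix supported on the positions $\{i,j\}\times\{i,j\}$ with $\Delta_{ij}=y-(V_u)_{ij}$. Since $|y|\prec W^{-d/2}$ by assumption and $(V_u)_{ij}$ is a centered Gaussian of variance $\le uW^{-d}$, we have $\|\Delta\|_{\max}\prec W^{-d/2}$, and as $\Delta$ has rank $\le 2$ also $\|\Delta\|\prec W^{-d/2}$. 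Two elementary facts about the flow parameters will be used repeatedly: first, $\ell_u^d\eta_u\le C$ for all $u$ (when $\ell_u=n$ one has $\eta_u\le\lambda^2 n^{-2}$, and otherwise $\ell_u\lesssim\lambda\eta_u^{-1/2}+1$ gives $\ell_u^d\eta_u\lesssim\lambda^d\eta_u^{1-d/2}+\eta_u\lesssim1$); second, a direct computation from the definitions of $\eta_*$ in \eqref{eq:defeta*} and $\ell_u$ in \eqref{eq:ellt}, together with $\eta_u\ge N^{\fd}\eta_*$, shows $W^d\ell_u^d\eta_u\ge W^{c}$ for some constant $c>0$. In particular the error $(W^d\ell_u^d\eta_u)^{-1/2}$ in \eqref{Gt_bound_flow} is $\le W^{-c/2}$, so $\|G_u-M\|_{\max}=\OO_\prec(1)$ and hence $\|G_u\|_{\max}\prec1$ by \eqref{eq:M-msc}.

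\textbf{Local law for $R_u$.} I would start from $R_u=G_u-G_u\Delta R_u$. Restricting the relation $(R_u)_{b'y}=(G_u)_{b'y}-\sum_{a,b\in\{i,j\}}(G_u)_{b'a}\Delta_{ab}(R_u)_{by}$ to $b'\in\{i,j\}$ gives a $2\times2$ linear system $(I_2+\widehat G\widehat\Delta)\,r=g$, where $\widehat G=((G_u)_{b'a})_{b',a\in\{i,j\}}$, $\widehat\Delta=(\Delta_{ab})_{a,b\in\{i,j\}}$, $r=((R_u)_{by})_{b\in\{i,j\}}$ and $g=((G_u)_{b'y})_{b'\in\{i,j\}}$. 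Since $\|\widehat G\|=\OO_\prec(1)$ and $\|\widehat\Delta\|\prec W^{-d/2}$, the matrix $I_2+\widehat G\widehat\Delta$ is invertible with inverse $I_2+\OO_\prec(W^{-d/2})$, so $\|r\|\prec\|g\|\prec1$. Substituting back into $(R_u)_{xy}=(G_u)_{xy}-\sum_{a,b\in\{i,j\}}(G_u)_{xa}\Delta_{ab}(R_u)_{by}$ for general $x,y$ yields $(R_u)_{xy}=(G_u)_{xy}+\OO_\prec(W^{-d/2})$, whence $\|R_u-M\|_{\max}\prec(W^d\ell_u^d\eta_u)^{-1/2}+W^{-d/2}\prec(W^d\ell_u^d\eta_u)^{-1/2}$, the last step using $\ell_u^d\eta_u\le C$. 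This is \eqref{eq:locallaw_Wu} (read in the $\|\cdot\|_{\max}$-norm, consistent with \eqref{Gt_bound_flow}).

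\textbf{Decay for $R_u$.} I would first note that, exactly as in the proof of \Cref{lem_decayLoop}, the hypotheses \eqref{Gt_bound_flow} and \eqref{Eq:Gdecay_w} feed into \Cref{lem_GbEXP} to give $\max_{x\in[a],y\in[b]}|(G_u)_{xy}|\prec W^{-D''}$ whenever $|[a]-[b]|\ge W^{\e''}\ell_u$, for any $\e'',D''>0$; moreover $|M_{xy}|\prec(C\heta)^{|[a]-[b]|}$ is super-polynomially small there by \eqref{Mbound_AO}. Since only diagonal-block positions matter (for an off-diagonal $(i,j)$ one has $W_u=V_u$ and there is nothing to prove), we have $[i]=[j]$, so for $x\in[a]$, $y\in[b]$ with $|[a]-[b]|\ge W^{\e'}\ell_u$ the triangle inequality forces $|[a]-[i]|\ge\frac12 W^{\e'}\ell_u$ or $|[i]-[b]|\ge\frac12 W^{\e'}\ell_u$. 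In the first case use $R_u=G_u-G_u\Delta R_u$: the correction $-\sum_{a,b\in\{i,j\}}(G_u)_{xa}\Delta_{ab}(R_u)_{by}$ is $\prec W^{-D''}\cdot W^{-d/2}\cdot1$, since $[a]=[i]$ and $\|R_u\|_{\max}\prec1$ from the previous step. In the second case use instead the mirror identity $R_u=G_u-R_u\Delta G_u$, so the far factor $(G_u)_{by}$ (with $[b]=[i]$) is the small one. Either way $|(R_u-M)_{xy}|\le|(G_u)_{xy}|+|M_{xy}|+W^{-D''-d/2}\prec W^{-D''}$; choosing $D''\ge D'$ proves \eqref{eq:locallaw_Wu_decay}.

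\textbf{Main obstacle.} No single step is deep; the point requiring care is the elementary, case-by-case verification that $W^d\ell_u^d\eta_u\in[W^{c},CW^d]$ for all $u$ with $\eta_u\ge N^{\fd}\eta_*$. This is what makes the perturbation error $\OO_\prec(W^{-d/2})$ negligible against, and absorbable into, the local-law error $(W^d\ell_u^d\eta_u)^{-1/2}$, and simultaneously guarantees $\|G_u\|_{\max}\prec1$ for use both in inverting the $2\times2$ system and in the decay bound. There is no circularity: \eqref{eq:locallaw_Wu} is established using only \eqref{Gt_bound_flow}, and the a priori bound $\|R_u\|_{\max}\prec1$ it furnishes is then what the decay argument needs.
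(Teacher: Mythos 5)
Your argument is correct and rests on the same underlying idea as the paper's — treat $\Delta$ as a rank-$\le 2$ perturbation and transfer bounds from $G_u$ to $R_u$ through the resolvent identity — but the implementation differs in a way worth noting. The paper writes $W_u = V_u - \Delta^\gamma_u$ and expands $R_u = (I - G_u\Delta^\gamma_u)^{-1}G_u$ as a finite Neumann series $\sum_{k\le K}(G_u\Delta^\gamma_u)^k G_u$ plus a remainder $(G_u\Delta^\gamma_u)^{K+1}R_u$; each intermediate term is $\OO_\prec(W^{-dk/2})$, and the remainder is controlled by the crude bound $\|R_u\|\le\eta_u^{-1}$ after taking $K$ large depending on $D'$. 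For the decay it then expands each $(G_u\Delta^\gamma_u)^k G_u$ into $2^k$ products each containing $(G_u)_{xw_1}(G_u)_{w_2y}$ with $w_1,w_2\in\{x_i,x_j\}$, and kills at least one factor by the distance dichotomy. You instead observe that $R_u = G_u - G_u\Delta R_u$ restricted to the rows $\{i,j\}$ is a closed $2\times2$ linear system $(I_2+\widehat G\widehat\Delta)r=g$, invert it directly (since $\|\widehat G\widehat\Delta\|\prec W^{-d/2}\ll1$), obtain the a priori bound $\|R_u\|_{\max}\prec1$ in one step, and substitute back. This avoids the infinite series, the $2^k$ counting, and the $\eta_u^{-1}$ crutch altogether, and it localizes exactly where $\|G_u\|_{\max}\prec1$ is used. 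The distance dichotomy and the use of the entrywise decay of $G_u$ (via Lemma~\ref{lem_GbEXP} / Lemma~\ref{lem_decayLoop}, the same ingredients the paper needs) are the same. You are also right to read \eqref{eq:locallaw_Wu} in the $\|\cdot\|_{\max}$ norm — the paper's proof in fact establishes an entrywise bound, consistent with \eqref{Gt_bound_flow}.
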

\begin{proof}
This claim is a simple consequence of the resolvent expansion formula. More precisely, define the following $N\times N$ matrix with only two nonzero entries at $(i,j)$ and $(j,i)$-th locations:
$$(\Delta^{\gamma}_u)_{kl}= ((V_u)_{ij}-\xi) \mathbf 1(k=i, l=j) + \overline {((V_u)_{ij}-\xi)} \mathbf 1(k=j, l=i) . $$
Then, we can write that $W_u=V_u-\Delta^{\gamma}_u$, and we can expand $R_u$ in terms of $G_u$ as follows for each $K\in \N$:
\begin{equation}
R_u=(I-G_u \Delta^{\gamma}_u)^{-1}G_u=G_u +G_u \Delta^{\gamma}_u G_u +(G_u \Delta^{\gamma}_u)^2 G_u +\ldots+ (G_u \Delta^{\gamma}_u)^K G_u + (G_u \Delta^{\gamma}_u)^{K+1}R_u. \label{RESOLVENTEXPANSION}
\end{equation}
By assumptions and \Cref{lem_GbEXP}, we know that $G_u-M$ satisfies the two bounds in \eqref{eq:locallaw_Wu} and \eqref{eq:locallaw_Wu_decay}. For the remaining terms in \eqref{RESOLVENTEXPANSION}, using the local law \eqref{Gt_bound_flow} for $G_u$, the fact $|\xi|+|(V_u)_{ij}|\prec W^{-d/2}$, and the trivial bound $\|R_u\|\le \eta_u^{-1}$, it is easy to see that for any $x,y\in \Zn$,
\begin{align}\label{eq:Gk} 
    \left((G_u \Delta^{\gamma}_u)^k G_u\right)_{xy}&\prec W^{-dk/2}\le (W^d\ell_u^d\eta_u)^{-1/2},\quad  \ 1\le k\le K,\\
    \left((G_u \Delta^{\gamma}_u)^{K+1} R_u\right)_{xy}&\prec W^{-d(K+1)/2}\eta_u^{-1} \le W^{-D'},
\end{align}
as long as we take $K$ sufficiently large depending on $D'$. These conclude \eqref{eq:locallaw_Wu}. To show \eqref{eq:locallaw_Wu_decay}, let $x_i$ and $x_j$ represent the vertices in $\ZL$ indexed by $i$ and $j$. Then, $((G_u \Delta^{\gamma}_u)^k G_u)_{xy}$ can be expanded into a sum of $2^k$ many terms, each of which contains two resolvent entries of the form $(G_u)_{xw_1}(G_u)_{w_2y}$ with $w_1,w_2\in \{x_i,x_j\}$. Since $x_i$ and $x_j$ belong to the same block, we must have $|[a]-[w_1]|\vee |[b]-[w_2]|\ge W^{\e'}\ell_u/2$. Then, by the fast decay property of $G_u$, \smash{$(G_u)_{xw_1}(G_u)_{w_2y}$} can be bounded by $W^{-D'}$ for any large constant $D'>0$, which concludes \eqref{eq:locallaw_Wu_decay}.
\end{proof}

Finally, we use \Cref{claim_locallaw} to control \eqref{eq:Ialbeta} and \eqref{eq:ErrK}. With the local law \eqref{eq:locallaw_Wu}, we obtain that
$$\left|\partial_{ij}^\al\partial_{ji}^\beta f(\xi_1V_{ij},\xi_1V_{ji})\right|+ \left|\partial_{ij}^\al\partial_{ji}^\beta f(\xi_2V'_{ij},\xi_2V'_{ji})\right| \prec N^{C_\fn}$$
for a constant $C_\fn$ that does not depend on $K$. Then, using $|V_{ij}|^{K+1}+ |V_{ij}'|^{K+1}\prec W^{-dK/2}$, for any large constant $D'>0$, we can ensure that 
\be\label{eq:ErrK+1} \Err_{K+1} \prec W^{-D'}\ee
by taking $K$ sufficiently large depending on $C_\fn$ and $D'$. Next, we consider 
$$\E \left[\cal I_{\al,\beta}\cdot F(V)\right]= \E \left\{ \partial_{ij}^\al\partial_{ji}^\beta f(0,0)\cdot \left[(V_{ij})^\al (V_{ji})^\beta - \E\left((V_{ij}')^\al (V_{ji}')^\beta\right) \right]\cdot F(V)\right\}$$
with $1\le \al+\beta \le K$.  
We apply Gaussian integration by parts repeatedly with respect to $V_{ij}$ and $V_{ji}$. For example, we can first apply Gaussian integration by parts with respect to $V_{ij}$ for $\al$ many times and get
\begin{align}\label{eq:GIBP}
\E \left[\cal I_{\al,\beta}\cdot F(V)\right] = \E \left\{ \partial_{ij}^\al\partial_{ji}^\beta f(0,0)\cdot \left[(S_{ij})^{\al}  \partial_{V_{ji}}^\al \left[(V_{ji})^\beta \cdot F(V)\right] - \E\left((V_{ij}')^\al (V_{ji}')^\beta\right) \cdot F(V) \right]\right\}, 
\end{align}
where $S_{ij}$ denotes the variance of $V_{ij}$ (at time $s$, we have $S_{ij}=sW^{-d}$). We expand the first term as a sum of $\OO(1)$ many terms:
$$ (S_{ij})^{\al}  \partial_{V_{ji}}^{\al} \left[(V_{ji})^\beta \cdot F(V)\right]=(S_{ij})^{\al}\sum_{k=0}^\al c_{\beta,k} (V_{ji})^{\beta-k}\partial_{V_{ji}}^{\al-k}F(V),$$
where $c_{\beta,k}=\OO(1)$ are deterministic coefficients. We consider the following three cases.
\begin{itemize}
    \item[(1)] If $k=\al=\beta$, then by Wick's theorem for Gaussian random variables, we have $c_{\al,\al}(S_{ij})^{\al}=\E((V_{ij}')^\al (V_{ji}')^\beta)$, which cancels the second term in \eqref{eq:GIBP}. 
    
    \item[(2)] If $k<\al$, then $\partial_{V_{ji}}^{\al-k}F(V)$ contains at least one $G$ edge between one of the blocks \smash{$[b_i^{(j)}]$}, $i\in\qqq{2,2p}$ and $j\in\qqq{1,\fn}$, and the block, denoted by $[a]$, containing the vertices indexed by $i$ and $j$. On the other hand, \smash{$\partial_{ij}^\al\partial_{ji}^\beta f(0,0)$} also contains at least one $R_{u}^\gamma(\xi=0)$ edge (recall \eqref{eq:Rugamma}) between one of the blocks {$[b_1^{(j)}]$}, $j\in\qqq{1,\fn}$, and the block $[a]$. Hence, using the condition \eqref{eq:isolated} and the fast decay property in \eqref{eq:locallaw_Wu_decay}, we obtain that 
$$\partial_{ij}^\al\partial_{ji}^\beta f(0,0)\cdot (S_{ij})^{\al}\cdot  c_{\beta,k} (V_{ji})^{\beta-k}\partial_{V_{ji}}^{\al-k}F(V)\prec W^{-D'},$$
for any large constant $D'>0$. 

\item[(3)] If $k=\al <\beta$, then we apply another Gaussian integration by parts with respect to $V_{ji}$ and get 
$$\partial_{ij}^\al\partial_{ji}^\beta f(0,0)\cdot(S_{ij})^{\al+1}\cdot c_{\beta,k} (V_{ji})^{\beta-k-1}\partial_{V_{ij}}F(V).$$
With the same argument as in Case (2), we see that this term is also of order $\OO(W^{-D'})$ for any large constant $D'>0$. 
\end{itemize}
In sum, we have derived that 
\be\label{eq:Errkkk}
\E \left[\cal I_{\al,\beta}\cdot F(V)\right]  \prec W^{-D'}.
\ee
Combining \eqref{eq:Errkkk} with \eqref{eq:ErrK+1}, and choosing $D'>0$ sufficiently large, we obtain \eqref{eq:bound_isolated}, which in turn establishes \eqref{eq:main_challenge3}. This leads to \eqref{eq:main_challenge}, which implies \eqref{int_K-L+Y_pfexp} and hence concludes the proof of \Cref{lem:STOeq_Qt}.

\section{Proof of \Cref{lem:propM,lem_propTH}}\label{appd:deter}

\subsection{Proof of \Cref{lem:propM}}
Property (1) follows directly from the definition of $M^{(\sigma_1,\sigma_2)}$.
Property (2) is due to the translation invariance and parity symmetry of $M$ at the block level, which, in turn, arise from the corresponding translation invariance and parity symmetry of $\Psi$ at the block level. 
Using the definition of $M$ in \eqref{def_G0} and Ward's identity in \Cref{lem-Ward}, we obtain that for any $x\in \ZL$ and $z=E+\ii \eta$, 
\be\label{L2M} 
\sum_{y}|M_{xy}(z)|^2  = \sum_{y}|M_{yx}(z)|^2 =\frac{\im M_{xx}(z)}{\eta + \im m(z)}.   
\ee
Together with equation \eqref{eq:averm}, it implies the identity \eqref{eq:WardM1} when $\sigma_1\ne \sigma_2$. 
Combining \eqref{eq:WardM1} with the Cauchy-Schwarz inequality yields \eqref{eq:WardM2}.

To show \eqref{eq:M-msc} for $M$ defined in \eqref{def_G0}, we use that for $p\in\{2,\infty\}$,
$$ \|M-m_{sc}I_N\|_{p\to p} \le \|M+(z+m)^{-1}I_N\|_{p\to p} + |(z+m_{sc})^{-1} -(z+m)^{-1}| \lesssim \|\Psi\|_{p\to p}+|m-m_{sc}|.$$
By \eqref{eq:expandm}, we have $m=m_{sc}+\OO(\lambda^2)$. Plugging it into the above equation and using the condition \eqref{eq:cond_A12}, we derive \eqref{eq:M-msc} immediately.  
For the bound \eqref{Mbound_AO}, 
with the expansion \eqref{eq:expandM}, and using the condition \eqref{eq:cond_A12} and the fact that $\Psi|_{[x][y]}=0$ for $|[x]-[y]|>1$, we can readily obtain \eqref{Mbound_AO}. Finally, for \eqref{Mbound_AO2}, we notice that by \eqref{eq:cond_A12}, there exists a constant $C>0$ such that
$$ \left\|\lambda^k\prod_{i=1}^{k}A_{s_i}\right\|_{\HS}^2 \le C\lambda^2(\|A_0\|_{\HS}^2\vee\|A_1\|_{\HS}^2\vee\|A_2\|_{\HS}^2)\cdot (C\heta)^{2k-2},\ \ \ \forall k\ge 1,\ \ (s_1,\ldots, s_k)\in \{0,1,2\}^k.$$
With this bound, \eqref{Mbound_AO2} can be derived easily from \eqref{eq:expandM} and the assumption $\|A_i\|_{\HS}^2\asymp W^{d}$ for $i\in\{1,2\}$. 

\subsection{Proof of \Cref{lem_propTH}}
The property (1) follows properties (1) and (2) of \Cref{lem:propM}. 
The estimate \eqref{prop:ThfadC_short} can be readily derived from the following expansion for $\sig_1=\sig_2=\sig$:
\begin{align}
\left(1-tM^{(\sigma,\sigma)}\right)^{-1}_{0[x]} &=\left[\left(1-tM^{(\sigma,\sigma)}_{00}I\right)- t \wt M^{(\sigma,\sigma)}\right]^{-1}_{0[x]} = \sum_{k=0}^{\infty}\left(1-tM^{(\sigma,\sigma)}_{00}\right)^{-(k+1)}
 \left(t \wt M^{(\sigma,\sigma)}\right)^{k}_{0[x]},\label{eq:expMLn}
\end{align}
where $\wt M^{(\sigma,\sigma)}:=M^{(\sigma,\sigma)}- M^{(\sigma,\sigma)}_{00}I$ is the matrix obtained by setting the diagonal entries of $M^{(\sigma,\sigma)}$ to 0. By \eqref{eq:M-msc} and \eqref{Mbound_AO2}, we have that
\be\label{eq:op1-tM}\left|1-tM^{(\sigma,\sigma)}_{00}\right|=\left|1-tm_{sc}(\sigma)^2\right|+\oo(1)\gtrsim 1,\quad \left\|\wt M^{(\sigma,\sigma)}\right\|\lesssim \lambda^2.\ee 
Thus, there exists a constant $C>0$ such that  
\begin{align}\label{eq:Tyalor1-M}
\left|\left(1-tM^{(\sigma,\sigma)}\right)^{-1}_{0[x]}\right| \le C \sum_{k=0}^{\infty}
 \left|\left(C t \wt M^{(\sigma,\sigma)}\right)^{k}_{0[x]}\right| \lesssim \mathbf 1_{[x]=0} + \sum_{k=1}^{K}
 \left|\left(C t \wt M^{(\sigma,\sigma)}\right)^{k}_{0[x]}\right|+\OO(W^{-D}),
\end{align}
where $K\in \N$ is a large constant depending on $D$. By \eqref{Mbound_AO2} and \eqref{eq:op1-tM}, it is easy to see that for $1\le k\le K$,  
$$\left|\left(C t \wt M^{(\sigma,\sigma)}\right)^{k}_{0[x]}\right| \lesssim \lambda^{2k}\heta^{2(|[x]|-k)_+},$$
where we recall the notations in \eqref{Japanesebracket2}. 
Plugging it into \eqref{eq:Tyalor1-M}  concludes \eqref{prop:ThfadC_short}.

For the proofs of \eqref{prop:ThfadC}, \eqref{prop:BD1}, and \eqref{prop:BD2}, assume that $\sig_1=-\sig_2=+$ without loss of generality. By \eqref{eq:WardM1},  $M^{(+,-)}$ is a doubly stochastic matrix, which can be regarded as the probability transition matrix $P$ for a random walk on \smash{$\Zn$}. Furthermore, due to the translation invariance of $M^{(+,-)}$, its eigenvectors are given by the plane waves:
$$v_{\mathbf p}:=n^{-d/2}\left(e^{\ii \bp\cdot [x]}:[x]\in  \Zn\right), \quad \bp\in  \T_n^d:= \left({2\pi }/{n}  \right)^d\Zn.$$
Denoting the corresponding eigenvalues of $(1-M^{(+,-)})$ by $e_{\mathbf p}$, it is easy to check that $e_0=0$ and $e_{\mathbf p}$ are non-negative eigenvalues of order $\lambda^2|\mathbf p|^2$. Then, we can express \smash{$\Theta_{t}^{(+,-)}$} through a Fourier series:
\be\label{eq:Fourier_Theta} \Theta_{t}^{(+,-)} ([0],[x])=\frac1{n^d}  \sum_{\mathbf p\in \T_n^d}  \frac{e^{\ii \bp\cdot [x]}}{(1-t)+te_{\mathbf p}}. \ee
With it, we also express the LHS of \eqref{prop:BD1} and \eqref{prop:BD2} by 
\begin{align}\label{eq:Fourier_Theta1}
&\Theta^{(+,-)}_{t}(0, [x])-\Theta^{(+,-)}_{t}(0, [y]) = \frac1{n^d}  \sum_{\mathbf p\in \T_n^d}  \frac{1-e^{\ii\bp\cdot ([y]-[x])}}{(1-t)+te_{\mathbf p}}e^{\ii \bp\cdot [x]},\\
&\Theta^{(+,-)}_{t} (0,[x]+[y]) + \Theta^{(+,-)}_{t} (0,[x]-[y])-  2\Theta^{(+,-)}_{t} (0,[x]) =\frac{2}{n^d}  \sum_{\mathbf p\in \T_n^d}  \frac{\cos(\bp\cdot[y])-1}{(1-t)+te_{\mathbf p}}e^{\ii \bp\cdot [x]}.\label{eq:Fourier_Theta2}
\end{align}
By analyzing these Fourier series using a summation by parts argument, as in Appendix E of \cite{RBSO}, we can derive the estimates \eqref{prop:BD1} and \eqref{prop:BD2}. Here, we omit the detailed proof and instead provide a heuristic explanation for why these bounds hold. 
In \eqref{eq:Fourier_Theta1}, $1-e^{\ii\bp\cdot ([y]-[x])}$ contributes a factor of $|\bp\cdot ([y]-[x])|$. Using the fact that $e_{\mathbf p}\gtrsim \lambda^2|\mathbf p|^2$ for $|\bp|=\oo(1)$, we can bound the RHS of \eqref{eq:Fourier_Theta1} by  
$$\frac{|[y]-[x]|}{n^d}  \sum_{\mathbf p\in \T_n^d} \left( \frac{|\bp|}{|1-t|}\wedge \frac{1}{\lambda^2|\bp|}\right) \prec \frac{|[y]-[x]|}{\lambda^2+|1-t|}$$
in 1D, which establishes \eqref{prop:BD1} for $d=1$. For $d=2$, we first apply a summation by parts argument, which introduces a loss of one $|\bp|$ factor but gains an additional factor of $\qq{[x]}^{-1}$. This allows us to bound \eqref{eq:Fourier_Theta1} by 
$$ \frac{|[y]-[x]|}{\qq{[x]}}\frac{1}{n^d}  \sum_{\mathbf p\in \T_n^d} \frac{1}{|1-t|+t\lambda^2|\bp|^2}  \prec \frac{1}{\lambda^2+|1-t|} \frac{|[y]-[x]|}{\qq{[x]}}.$$
By symmetry, a similar bound holds with $\qq{[x]}$ in the denominator replaced $\qq{[y]}$. This concludes \eqref{prop:BD1} for $d=2$.
Similarly, in \eqref{eq:Fourier_Theta2}, $\cos(\bp\cdot [y])-1$ contributes a factor of $|\bp\cdot [y]|^2$. In $d=1$, applying a summation by parts argument to \eqref{eq:Fourier_Theta2} yields the bound 
$$ \frac{|[y]|^2}{\qq{[x]}}\frac{1}{n^d}  \sum_{\mathbf p\in \T_n^d} \frac{|\bp|}{|1-t|+t\lambda^2|\bp|^2}  \prec \frac{1}{\lambda^2+|1-t|} \frac{|[y]|^2}{\qq{[x]}}$$
in 1D. For $d=2$, applying summation by parts twice to \eqref{eq:Fourier_Theta2} results in a loss of a $|\bp|^2$ factor but gains a factor of $\qq{[x]}^{-2}$. Then, we can bound \eqref{eq:Fourier_Theta2} by 
$$ \frac{|[y]|^2}{\qq{[x]}^2}\frac{1}{n^d}  \sum_{\mathbf p\in \T_n^d} \frac{1}{|1-t|+t\lambda^2|\bp|^2}  \prec \frac{1}{\lambda^2+|1-t|} \frac{|[y]|^2}{\qq{[x]}^2}$$
in 2D. These bounds establish \eqref{prop:BD2}.

Finally, we prove the bound \eqref{prop:ThfadC}. To simplify notations, in the following proof, we denote the lattice \smash{$\Zn$} and its vertices $[x]$ by $\Z_n^d$ and $x$ instead. Recall that $P=M^{(+,-)}$ is the probability transition matrix for a random walk on $\Z_n^d$. Given $K\in \N$, we define $P_K$ as a cutoff of $P$: $$P_K(x,y)=P(0,x)\mathbf 1_{|x-y|\le K}.$$
We then normalize $P_K$ by its row sum (denoted by $b$) to obtain another doubly stochastic matrix \smash{$\wt P$}. Due to \eqref{Mbound_AO2}, given any large constant $D'>0$, we can find a constant $K\in \N$ large enough such that 
\be\label{eq:P-P'}|b-1|\le W^{-D'}\quad \text{and}\quad \|P-\wt P\| \le W^{-D'}.\ee
Using these bounds and the fact that $1-t\ge N^{-1}$, we readily obtain the estimate  
\be\label{eq:P-P'2} \big\|(1-tP)^{-1} - (1-t\wt P)^{-1}\big\| \lesssim N^2W^{-D'}\le W^{-D}\ee
as long as we choose $D'$ large enough. Now, to show \eqref{prop:ThfadC}, it suffices to control the following random walk representation of \smash{$(1- t\wt P)^{-1}$}: 
\begin{align}\label{series_RW}
\left(1 - t\wt P\right)^{-1}(0,x)=\sum_{k=0}^{\infty}\; t^k \wt P^k(0,x)=\sum_{k=0}^{\infty}\; t^k \P(\mathbf S_k=x),
\end{align} 
where $\mathbf S_k$ denotes $X_1+\cdots+X_k$ with $X_i$ being i.i.d.~random variables distributed according to $\wt P(0,\cdot).$
By \eqref{Mbound_AO2} and \eqref{eq:P-P'}, we have $\wt P(0,x)\le C\lambda^2(C\heta)^{2|x|-2}$ for $1\le |x|\le K$, and
\be\label{eq:propP}
\lambda^2 \lesssim \lambda_{d}(\Sigma)\le  \lambda_{1}(\Sigma)=\|\Sigma\| \lesssim \lambda^2, \ee
where $\Sigma$ denotes the covariance matrix for the random walk, i.e., $\Sigma:=\E(X_1X_1^\top)$, and $\lambda_{1}(\Sigma)$ and $\lambda_{d}(\Sigma)$ denote the largest and smallest eigenvalues of $\Sigma$, respectively. 

First, by applying the Bernstein inequality to $\mathbf S_k\cdot \wh x$ with $\wh x$ denoting the unit vector $x/\|x\|_2$, we can derive the following large deviation estimate: there exists a constant $c_1>0$ such that for each $x\in \Z_n^d$,  
\be\label{eq:Pkx}
\P(\mathbf S_k=x)\le 2\exp\left(-c_1\left(\frac{|x|^2}{\lambda^2 k}\wedge |x|\right)\right).
\ee
Furthermore, when $\lambda^2k\gg 1$, we can establish the following local CLT-type bounds on $P^k(0,x)$: there exist constants $c_2,c_3>0$ such that for any constants $\e,D>0$, if $W^\e \le \lambda^{2}k \le c_2 n^{2} $ and $|x|\ll (\lambda^2 k)^{2/3}$, then
\be\label{eq:Pkx1}
\P(\mathbf S_k=x)\prec \left(\lambda^2 k\right)^{-d/2} \exp\left(-c_3|x|^2/(\lambda^2 k)\right) + W^{-D} ; 
\ee
for $\lambda^{2}k \ge c_2 n^{2}$, we have
\be\label{eq:Pkx2}
\P(\mathbf S_k=x)\prec n^{-d}. 
\ee
The estimates \eqref{eq:Pkx1} and \eqref{eq:Pkx2} can be proved via the method of characteristic function and inverse discrete Fourier transform, following the proof of Theorem 1.5 in \cite{RW_Torus}; a similar argument is also presented in Lemma 30 of \cite{Band1D_III}. 
With the above estimates, we can control the series in \eqref{series_RW} as:
\begin{align*}
&(1 - t\wt P)^{-1}(0,x) \prec \sum_{\lambda^2 k\le |x| } t^k e^{-c_1|x|} + \sum_{  |x| \le  \lambda^2k \le W^\e\vee (|x|^{3/2} \log W)} t^k e^{-c_1|x|^2/(\lambda^2k)} +\frac{1}{n^d}\sum_{\lambda^2 k \ge c_2n^2}t^{k}\\
&\quad + 
\sum_{W^\e\vee (|x|^{3/2} \log W)\le \lambda^2 k \le c_2n^2}t^{k}\frac{e^{-c_3|x|^2/(\lambda^2 k)}}{(\lambda^2k)^{d/2}}   + W^{-D}=:I_1 + I_2 + I_3 + I_4+ W^{-D}. 
\end{align*}
Using that $\ell_t\ge 1$ and $|1-t|{\ell}_t^d \lesssim \lambda^2+|1-t|$, we can bound the term $I_1$ by 
\begin{align*}
 I_1\lesssim \left(\frac{1}{1-t}\wedge \frac{|x|}{\lambda^2}\right)e^{-c_1|x|} \lesssim \frac{e^{-c |[x]|/ {\ell}_t}}{|1-t|{\ell}_t^d}  \, ,
\end{align*}
where we also used that $|x|e^{-c_1|x|}\lesssim e^{-c_1|x|/2}$. For term $I_2$, if $|x|\ge W^{2\e/3}$, we can bound it by 
\begin{align*}
 I_2\lesssim \sum_{  |x| \le  \lambda^2k \le |x|^{3/2} \log W}t^k e^{-c_1|x|^{1/2}/\log W}\le W^{-D}.  
\end{align*}
If $|x|\le W^{2\e/3}$, using that $t^{k}\lesssim e^{-c_0(1-t)}$ for an absolute constant $c_0>0$, we can bound $I_2$ by 
\begin{align*}
 I_2\lesssim \sum_{  |x| \le  \lambda^2k \le W^\e \log W}e^{-\frac{c_0}{2}k|1-t|} e^{-\frac{c_0}{2}k|1-t|-\frac{c_1|x|^{2}}{\lambda^2 k}}\prec  \left(\frac{1}{1-t}\wedge \frac{W^\e}{\lambda^2}\right)\exp\left(\frac{-\sqrt{c_0c_1}|x|}{\lambda|1-t|^{-1/2}}\right)  \prec \frac{W^\e e^{-c|x|/\ell_t}}{{|1-t|{\ell}_t^d}}
\end{align*}
for the constant $c=\sqrt{c_0c_1}$. Here, in the second step, we used that 
\be\label{eq:AM-GM}\frac{c_0}{2}k|1-t|+\frac{c_1|x|^2}{\lambda^2k} \ge \sqrt{c_0c_1}\frac{|x|}{\lambda|1-t|^{-1/2}},\ee
and in the third step we used that for any constant $c>0$, 
\be\label{eq:expxl}
\exp\left(-\frac{c|x|}{\lambda|1-t|^{-1/2}}\right)\lesssim \exp\left(-\frac{c|x|}{\ell_t}\right),
\ee
since we have $\lambda|1-t|^{-1/2}\le \ell_t$ if $\lambda|1-t|^{-1/2}\le n$ or $e^{-c|x|/\ell_t}\gtrsim e^{-c|x|/(\lambda|1-t|^{-1/2})}$ if $\lambda|1-t|^{-1/2}\ge n=\ell_t$. The term $I_3$ can be bounded easily by 
$$I_3\lesssim \frac{e^{-c_0c_2 n^2/\lambda^2}}{n^d|1-t|} \lesssim \frac{e^{-c|x|/\ell_t}}{{|1-t|{\ell}_t^d}}.$$
Finally, the term $I_4$ can be bounded in a similar way as $I_2$: 
\begin{align*}
    I_4&\lesssim \sum_k \frac{e^{- c_3|x|^2/(\lambda^2 k)}}{(\lambda^2k)^{d/2}} \mathbf 1\left(\frac{W^\e\vee (|x|^{3/2} \log W)}{\lambda^{2}}\le k \le \frac{|x|}{\lambda|1-t|^{1/2}}\right) \\
   &+ \sum_k \frac{e^{- c_0k|1-t|}}{(\lambda^2k)^{d/2}}\mathbf 1\left(\frac{W^\e\vee (|x|^{3/2} \log W)}{\lambda^{2}} \vee \frac{|x|}{\lambda|1-t|^{1/2}}\le k \le \frac{c_2n^2}{\lambda^2}\right):=I_{41}+I_{42}.
\end{align*}
Note that for the term $I_{41}$ to be nonzero, we have to assume that $|1-t|\le \lambda^2$. This gives $\lambda^d |1-t|^{1-d/2} \gtrsim \lambda^d |1-t|^{1-d/2} + |1-t|\gtrsim |1-t|\ell_t^d$, and we can bound $I_{41}$ by 
\begin{align*}
I_{41} &\prec \frac{1}{\lambda^{d}}\left(\frac{|x|}{\lambda|1-t|^{1/2}}\right)^{1-d/2}\exp\left(-\frac{c_3 |x|}{\lambda|1-t|^{-1/2}} \right)\\
&\lesssim \frac{1}{\lambda^{d}|1-t|^{1-d/2}}\exp\left(-\frac{c_3 |x|}{2\lambda|1-t|^{-1/2}} \right)\lesssim\frac{e^{-{c |x|}/{\ell_t}}}{|1-t|\ell_t^d},    
\end{align*}
where we used \eqref{eq:expxl} in the third step. For the term $I_{42}$, if $|1-t|\le \lambda^2$, we can bound it as follows: for any constant $D>0$,
\begin{align*}
    I_{42} &\le \sum_k \frac{e^{- c_0k|1-t|}}{(\lambda^2k)^{d/2}}\mathbf 1\left(\frac{|x|}{\lambda|1-t|^{1/2}}\le k \le \frac{W^\e}{|1-t|}\right) + \sum_k \frac{e^{- c_0k|1-t|}}{(\lambda^2k)^{d/2}}\mathbf 1\left( k \ge \frac{W^\e}{|1-t|}\right) \\
    &\prec \frac{1}{ \lambda^d}\left( \frac{W^\e}{|1-t|}\right)^{1-d/2}\exp\left(-\frac{c_0 |x|}{\lambda|1-t|^{-1/2}} \right) + W^{-D} \lesssim \frac{W^\e e^{-{c |x|}/{\ell_t}}}{|1-t|\ell_t^d} + W^{-D}.
\end{align*}
If $|1-t|\ge \lambda^2$, we can trivially bound $I_{42}$ by  
$$I_{42}\le \sum_k \frac{e^{- c_0k|1-t|}}{(\lambda^2k)^{d/2}}\mathbf 1\left(\frac{W^\e}{\lambda^{2}} \le k \le \frac{c_2n^2}{\lambda^2}\right) \le W^{-D}.$$
Combing the above estimates for $I_i$, $i=1,2,3,4$, we conclude that 
$$(1 - t\wt P)^{-1}(0,x) \prec \frac{W^\e e^{-c|x|/\ell_t}}{{|1-t|{\ell}_t^d}} + W^{-D}.$$
Together with \eqref{eq:P-P'2}, this concludes \eqref{prop:ThfadC} since $\e$ is arbitrary.  

Finally, it is evident that all the above arguments also apply to the $\Theta$-propagator in \eqref{eq:Theta_WO}, defined for the Wegner orbital model, with $m_{sc}(\sigma_1)m_{sc}(\sigma_2)S^{\LK}$ replacing the role of $M^{(\sig_1,\sig_2)}$.

\end{document}